\documentclass[a4paper, 11pt]{article}
\pdfoutput=1
\usepackage{geometry}
\geometry{left=25mm,right=25mm,top=30mm,bottom=30mm}
\usepackage[nottoc]{tocbibind} %not TOC in TOC
\usepackage{amsmath}
\usepackage{amssymb}
\usepackage{amsthm}
\usepackage{sectsty}
\usepackage[font=small]{caption}
\usepackage{comment} % This should be removed when submission
\usepackage[all]{xy}
\usepackage{color}
\usepackage{amsfonts}
\usepackage[usenames,dvipsnames]{xcolor}
\usepackage{varioref}
\usepackage{tikz}
\usetikzlibrary{decorations.pathreplacing}
\usepackage{tikz-cd}
\usepackage{enumerate}
\usepackage[backref]{hyperref}
\usepackage[noabbrev,capitalise,nameinlink]{cleveref}
\hypersetup{colorlinks={true},linkcolor={blue},citecolor=red}

%%%%%%%%%%%%%%%%%%%%%%%%%%%%%%%
% From here
% tikz command to describe Lie bracket operation in graphs
\newcommand{\selfconn}
{
 \tikz[baseline=-0.1ex]{\draw[densely dotted] (0, -0.05) arc (-90:90: 0.3cm and 0.15cm);
    }
}
\newcommand{\selftriconn}
{
 \tikz[baseline=0ex]{\draw[densely dotted] (0, -0.05) -- (0.3, 0.1);
 \draw[densely dotted] (0, 0.25) -- (0.3, 0.1);
 \draw[densely dotted] (0.3, 0.1) -- (0.55, 0.1);
 \node at (0.75, 0.1) {$s$};
 \draw[fill=black]  (0.55, 0.1) circle (0.035);
  \draw[fill=black]  (0.3, 0.1) circle (0.035);
    }
}
\newcommand{\connedge}{
\tikz[baseline=-0.1ex]{\draw[densely dotted] (0,0) -- (0.5,0);
    \node at (0,0.2) {$v$};
    \node at (0.5, 0.2) {$w$};
    }
}
\newcommand{\conntriedge}{
\tikz[baseline=-0.1ex]{\draw[densely dotted] (0,0) -- (0.7,0);
    \draw[densely dotted] (0.35, 0) -- (0.35, 0.2);
    \node at (0.35, 0.4) {$s$};
    \node at (0,0.2) {$v$};
    \node at (0.7, 0.2) {$w$};
    \draw[fill=black]  (0.35, 0.2) circle (0.035);
     \draw[fill=black]  (0.35, 0) circle (0.035);
    }
}
% until here
%%%%%%%%%%%%%%%%%%%%%%%%%%%%%%%

% \usepackage{showkeys} 

\usepackage{stmaryrd}

\newtheorem{theorem}{Theorem}[subsection]
\newtheorem{lemma}[theorem]{Lemma}
\newtheorem{proposition}[theorem]{Proposition}
\newtheorem{corollary}[theorem]{Corollary}

\theoremstyle{definition}
\newtheorem{definition}[theorem]{Definition}
\newtheorem{example}[theorem]{Example}
\newtheorem{convention}[theorem]{Convention}

\theoremstyle{remark}
\newtheorem{remark}[theorem]{Remark}

\numberwithin{equation}{subsection}
\renewcommand\thesection{\arabic{section}}
\renewcommand\thesubsection{\arabic{section}.\arabic{subsection}}

\newcommand{\ad}{\mathop{\mathrm{ad}}\nolimits}

\newcommand{\Aut}{\mathop{\mathrm{Aut}}\nolimits}

\newcommand{\bZ}{\mathbb{Z}}
\newcommand{\bQ}{\mathbb{Q}}
\newcommand{\bR}{\mathbb{R}}
\newcommand{\bC}{\mathbb{C}}
\newcommand{\calC}{\mathcal{C}}
\newcommand{\calCV}{\mathcal{C}^{\vee}}
\newcommand{\calLie}{\mathcal{C}\mathcal{L}ie}
\newcommand{\calLieV}{{\mathcal{C}\mathcal{L}ie}^{\vee}}

\newcommand{\Conf}{\mathop{\mathrm{Conf}}\nolimits}
\newcommand{\CorH}{\mathop{\mathrm{Cor}_{\mathcal{H}}}\nolimits}
\newcommand{\CorHod}{\mathop{\mathrm{Cor}_{\mathrm{Hod}}}\nolimits}
\newcommand{\CRC}{\mathop{\mathbb{C}^{\times}_{\bC/\bR}}\nolimits}

\newcommand{\Der}{\mathop{\mathrm{Der}}\nolimits}

\newcommand{\End}{\mathop{\mathrm{End}}\nolimits}
\newcommand{\Ext}{\mathop{\mathrm{Ext}}\nolimits}
\newcommand{\ext}{\mathop{\mathrm{ext}}\nolimits}

\newcommand{\gr}{\mathop{\mathrm{gr}}\nolimits}

\newcommand{\Hod}{\mathop{\mathrm{Hod}}\nolimits}

\newcommand{\HSR}{\mathop{\mathrm{HS}_{\mathbb{R}}}\nolimits}
\renewcommand{\Im}{\mathop{\mathrm{Im}}\nolimits}

 %% roman int
\newcommand{\Ker}{\mathop{\mathrm{Ker}}\nolimits}

\newcommand{\MHSR}{\mathop{\mathrm{MHS}_{\mathbb{R}}}\nolimits}

\newcommand{\PM}{\mathop{\mathcal{PM}}\nolimits}

\newcommand{\nil}{\mathop{\mathrm{nil}}\nolimits}

\newcommand{\ord}{\mathop{\mathrm{ord}}\nolimits}

\renewcommand{\Re}{\mathop{\mathrm{Re}}\nolimits}

\newcommand{\sign}{\mathop{\mathrm{sign}}\nolimits}
\newcommand{\sgn}{\mathop{\mathrm{sgn}}\nolimits}

\newcommand{\SmProj}{\mathop{\mathrm{SmProj}}\nolimits}

\newcommand{\Tr}{\mathop{\mathrm{Tr}}\nolimits}

\newcommand{\twi}{\mathop{\mathrm{twistor}}\nolimits}

\newcommand{\Vect}{\mathop{\mathrm{Vect}}\nolimits}

\newcommand{\vol}{\mathop{\mathrm{vol}}\nolimits}

\newcommand{\Id}{\mathop{\mathrm{Id}}\nolimits}

\newcommand{\LieH}{\mathop{\mathrm{Lie_{\Hod}}}\nolimits}

\newcommand\blfootnote[1]{
  \begingroup
  \renewcommand\thefootnote{}\footnote{#1}
  \addtocounter{footnote}{-1}
  \endgroup
}

\makeatletter
\newcommand{\dates}[1]{%
  \let\@@@oldtitle\@title%
  \gdef\@title{\@@@oldtitle\footnotetext{\emph{Date:} #1.}}%
}
\makeatother

%%%
\title{\textbf{Quantum master equation and Hodge correlators}}

\author{Hisatoshi Kodani and Yuji Terashima}
\date{}
% \date{------ \textsf{Draft (ver. \today)} ------}
\begin{document}
\maketitle
\blfootnote{
2020 Mathematics Subject Classification: 14D07, 32G20, 81Q30, 81T18. 
}

% 14D07 Variation of Hodge structures (algebro-geometric aspects)
% 32G20 Period matrices, variation of Hodge structure; degenerations
% 81Q30 Feynman integrals and graphs; applications of algebraic topology and algebraic geometry
% 81T18 Feynman diagrams

\blfootnote{
Keywords: 
Hodge correlators,
quantum master equation,
Chern--Simons perturbation theory,
graph complex.
}
\abstract
We give a generalization of Goncharov's Hodge correlator twistor connection. 
Our generalized version is a connection 1-form with values in a DG Lie algebra of uni-trivalent graphs which may have loops and satisfies some Maurer--Cartan equation.  
This connection and the Maurer--Cartan equation can be viewed as an arithmetic analogue of effective action and quantum master equation respectively in non-acyclic Chern--Simons perturbation theory associated with the trivial local system.

\setcounter{tocdepth}{2}

\tableofcontents

%%%%%%%%%%%%%%%%%%%%%%%%%%%%%%%%%%%%%%%%%%%%%%%%%%%%%%%%%%
\section{Introduction}
Hodge correlators introduced by Goncharov are complex numbers given as certain linear combinations of integrations assigned to a punctured Riemann surface $X \setminus S$ where $X$ is a Riemann surface of genus $g \geq 0$ and $S \subset X$ is a finite set of points in $X$ (\cite{GoncharovHodge1}). 
They contain a wide variety of arithmetic functions such as classical and elliptic polylogarithms, multiple zeta values, and special values of several $L$-functions. 
They provide an alternative description of the standard mixed $\bR$-Hodge structure on the nilpotent completion of the fundamental group of a punctured Riemann surface given by Chen’s iterated integral due to Hain--Zucker (\cite{HZ87}). 
As shown by Goncharov himself, the Hodge correlators can be thought of as tree part of an asymptotic expansion of some quantum field theory and provide a collection of examples of periods given by Feynman diagrams (cf. \cite{Bro17}, \cite{Mar10}). 
In addition, Hodge correlators can be lifted to their motivic avatar, called motivic correlators, which have potential applications to number theory through lifted linear relations from Hodge correlators. 
Recently, Hodge and motivic correlators have been further developed by several authors as in \cite{goncharov2022motivic}, \cite{rudenko2022goncharov}, \cite{malkin2020motivic}, \cite{malkin2020shuffle}.

The dual of Hodge correlators in a family of Riemann surfaces $X \rightarrow B$ with sections canonically gives rise to a connection $\nabla_{\mathcal{G}}= d_{B} + d_{\bC_{\twi}} + \mathbf{G}$, called the Hodge correlator twistor connection, on a vector bundle over $B \times \bC^2$ where $\bC^2$ denotes the twistor plane.
Its connection 1-form $\mathbf{G}$  valued in a Lie algebra of cyclic words of cohomology groups is called the Hodge correlator class and its coefficients are given by integrations associated with uni-trivalent planar tree graphs.
Among other things, the restriction $\mathbf{G}'$ of the Hodge correlator class to the twistor line $\bC_{\twi} \subset \bC^2$ satisfies a Maurer--Cartan equation 
\begin{equation}\label{eq:01}
   ((d_{B} + d_{\bC_{\twi}})\otimes \Id)  \mathbf{G}' + \mathbf{G}' \wedge \mathbf{G}' =0
\end{equation}
which encodes the Griffiths transversality condition needed to define variations of mixed $\bR$-Hodge structures in a canonical way. 
This distinguished feature enables us to take the Green operator, the dual of Hodge correlator map,  as canonical generators of Hodge Galois Lie algebra governing mixed $\bR$-Hodge structure whose other generators were originally given by Deligne in \cite{De_MHSR}. 
Such properties of Hodge correlators can be regarded as a certain realization of ``Hodge field theory'' dreamed up by Beilinson--Levin (\cite{BL94}).

In the present article, we construct one of the generalized versions of the Hodge correlator class and twistor connection by pursuing further quantum field theoretic aspects. 
More precisely, our construction is given from the viewpoints of Chern--Simons perturbation theory or configuration space integrals founded by Axelrod--Singer and Kontsevich (\cite{AS94}, \cite{Kon94}), which was further developed by  Costello, Cattaneo--Mn\"ev, and Iacovino (\cite{Cos07}, \cite{CM}, \cite{I}) in the context of BV formalism and quantum master equations. 
See \cite{BN95}, \cite{Fuk96}, and \cite{Wa18} for other interesting aspects of Chern--Simons perturbation theory as like relations to knot invariants and Morse homotopy.
Now, briefly recall that the effective action $S^{\mathrm{eff}}$ of Chern--Simons theory in family over the unit interval $I=[0,1]$ satisfies the quantum master equation
\begin{equation}\label{eq:02}
    d_I S^{\mathrm{eff}}  + \{S^{\mathrm{eff}}, S^{\mathrm{eff}}\} + \hbar \Delta_{\mathrm{BV}}S^{\mathrm{eff}}= 0
\end{equation}
where $S^{\mathrm{eff}}$ is defined as generating series of Chern--Simons perturbative coefficients of a closed smooth 3-manifold $M$ valued in the polynomial ring of  some cohomology group of $M$ with local coefficients, $\Delta_{\mathrm{BV}}$ is the BV Laplacian and $\{-, -\} $ is the Gerstenhaber bracket on the polynomial ring. 
By taking $\hbar \to 0$, the above equation reduces to the follwing equation as \eqref{eq:01}
\begin{equation}\label{eq:03}
    d_I S^{\mathrm{eff}}_{\mathrm{tree}} + \{S^{\mathrm{eff}}_{\mathrm{tree}}, S^{\mathrm{eff}}_{\mathrm{tree}}\} = 0.
\end{equation}
Note that $S^{\mathrm{eff}}$ is a series of integrations associated with uni-trivalent graphs and $S^{\mathrm{eff}}_{\mathrm{tree}}$ is that with uni-trivalent tree graphs. 
Since the tree part $S^{\mathrm{eff}}_{\mathrm{tree}}$ carries information of Massey products in cohomologies, \eqref{eq:02} and  \eqref{eq:03} imply $S^{\mathrm{eff}}$ can be regarded as a quantum generalization of Massey products. 

By thinking of the Maurer--Cartan equation of Hodge correlator class \eqref{eq:01} as an analogue of the equation \eqref{eq:03} for tree part of non-acyclic perturbative Chern--Simons theory around trivial connection, we define a series $\mathbf{S}^{\mathrm{eff}}$ with values in a DG Lie algebra of uni-trivalent graphs for a family of punctured Riemann surfaces as a generalization of $\mathbf{G}$. The restriction of $\mathbf{S}^{\mathrm{eff}}$ to the twistor line $\bC_{\twi}$ satisfies the following equation (Theorem \ref{thm:main} (2)):
\begin{equation}\label{eq:04}
        ((d_{B} + d_{\bC_{\twi}}) \otimes \Id) \mathbf{S}^{\mathrm{eff}}{}' + \frac{1}{2}[\mathbf{S}^{\mathrm{eff}}{}', \mathbf{S}^{\mathrm{eff}}{}'] + \hbar(\Id \otimes \delta) \mathbf{S}^{\mathrm{eff}}{}'=0 
    \end{equation}
where $\mathbf{S}^{\mathrm{eff}}{}'$ denotes the restriction of $\mathbf{S}^{\mathrm{eff}}$ to the twistor line $\bC_{\twi}$, and $[-,-]$ and $\delta$ are Lie bracket and differential on the DG Lie algebra of uni-trivalent graphs respectively.
Our construction respects the original approach by Goncharov in several parts but, if necessary, we also employ the idea of compactification of configuration spaces especially to apply Stokes theorem. Since the propagators used in Goncharov theory do not extend smoothly to the compactified configuration spaces, more careful treatments are needed in several arguments than usual ones of configuration space integrals.

As other results, we show that our $\mathbf{S}^{\mathrm{eff}}{}'$  recovers the original Hodge correlator class $\mathbf{G}$ in a canonical way (Theorem \ref{thm:tree_red}). 
An explicit relation between one loop term of $\mathbf{S}^{\mathrm{eff}}{}'$ and  2-dimensional Chern--Simons perturbation theory is investigated in Proposition \ref{prop:6.2.1}. 
Moreover, by taking enhanced moduli space  $\mathcal{M}_{0,4}'$ of compact Riemann surfaces of genus $0$ with $4$ points and information of tangential base point as a family of punctured Riemann surfaces, we prove that $\mathbf{S}^{\mathrm{eff}}{}'$ is master homotopic to the formal KZ connection 1-form introduced by Drinfel'd (\cite{Dri90}) in Proposition \ref{prop:M0n'_GHC}. It is interesting to relate this to the work by Rossi--Willwacher (\cite{RW14}).

Finally, we give a remark. 
We note that there is another motivation of the present article coming from arithmetic topology, an area of mathematics based on an analogy between low dimensional topology and number theory (cf. \cite{Mo12}). In our previous works (\cite{KMT17}, \cite{KT23}) with Morishita, we studied some arithmetic analogue of Milnor and Orr invariants which are tree level perturbative invariants of links in quantum topology in the context of Galois action on the fundamental group of a punctured rational curve. One of our purposes is to consider an analogue of loop invariants in this context, since loop invariants were missing there. In some sense, our $\mathbf{S}^{\mathrm{eff}}$ may be regarded as de Rham version of generating series of speculative arithmetic analogues of both tree and loop invariants in this context. 

The organization of this article is as follows. 
In Section \ref{section:2}, we give a brief review of the theory of Hodge correlators by Goncharov. 
In Section \ref{section:3}, we recall the compactification of configuration spaces by Axelrod--Singer, and then we set up compactified configuration space of punctured surfaces which we use in the main part of the article. 
In Section \ref{section:4}, we develop an appropriate graph complex for our purpose and give a detailed explanation of some DG Lie algebra structures of decorated uni-trivalent graphs. 
In Section \ref{section:5}, after recalling the basics of the quantum master equation in BV formalism, we construct a formal effective action which is a framework of algebraic structures of generating series of integrations associated with uni-trivalent graphs. Then, we give a generalized version of the Hodge correlator twistor connection using the tools we set up in the previous sections. 
In Section \ref{section:6}, we study some properties of our twistor connection. For example, we show its relation to the original twistor connection,  discuss its 1-loop terms more explicitly, and study how it relates to the KZ connection. 
In Appendix \ref{section:appendix}, we recall Goncharov's work on some quantum field theory related to Hodge correlator integrals. Then, we explain the 1-loop term in our twistor connection can be written by asymptotic coefficients of the theory similar to the case of tree diagrams.

%%%%%%%%%%%%%%%%%%%%%%%%%%%%%%%%%%%%%%%%%%%%%%%%%%%%%%%%%%
\subsection*{Acknowledgments}
The authors would like to express their appreciation to Hidekazu Furusho, Akishi Ikeda, Kohei Iwaki, Bingxiao Liu, Toshiki Matsusaka, Masanori Morishita, Tatsuro Shimizu, and Tadayuki Watanabe for their helpful communications. 
H.K. is sincerely grateful to Masanori Morishita and Hiroaki Nakamura, and Koji Tasaka for giving him opportunities to give talks on our early results of the present article at conferences Low Dimensional Topology and Number Theory XIII, and Various Aspects of Multiple Zeta Values 2022 respectively. 
This work is supported by  JSPS KAKENHI Grant Number 21K03240 and 22H01117.
%%%%%%%%%%%%%%%%%%%%%%%%%%%%%%%%%%%%%%%%%%%%%%%%%%%%%%%%%%

%%%%%%%%%%%%%%%%%%%%%%%%%%%%%%%%%%%%%%%%%%%%%%%%%%%%%%%%%%
\subsection*{Notation}
%%%%%%%%%%%%%%%%%%%%%%%%%%%%%%%%%%%%%%%%%%%%%%%%%%%%%%%%%%
The ring of integers is denoted by $\mathbb{Z}$. 
The fields of rational numbers,  real numbers, and complex numbers are denoted by $\mathbb{Q}, \mathbb{R}$ and $\mathbb{C}$ respectively. 
For a subring $R$ of $\mathbb{R}$ and an integer $n$, we set $R(n):= (2\pi i)^n \cdot R \subset \mathbb{C}$.

For a finite set $S$, $|S|$ stands for the number of elements of $S$. 

For a vector space $V$, its dual vector space is denoted by $V^{\vee}$. For a vector space $V$, the tensor algebra of $V$ is denoted by $T(V)$. 
For an associative algebra $A$ over a field $k$ with an algebra morphism $\epsilon: A \rightarrow k$, we set the cyclic envelope of $A$ as  $\mathcal{C}(A):= A^{+}/[A^{+},A^{+}]$ where $A^{+}:= \Ker(\epsilon)$ is the augmentation ideal. 
In particular, when $A=T(V)$, the cyclic envelope $\mathcal{C}(T(V))$ of $T(V)$ will be denoted by $\mathcal{C}T(V)$ for simplicity.

For a smooth compact Riemann surface $X$, the space of holomorphic (resp. anti-holomorphic) differential forms is denoted by $\Omega^1_X$ (resp. $\overline{\Omega}^1_X$). 
We let $\mathcal{D}_{X}^{\ast, \ast}$ the Dolbeault complex of currents on $X$. For a smooth manifold $M$, the space of differential forms are denoted by $\Omega^{\bullet}(M)$ or $\mathcal{A}^{\bullet}_M$.

Let $V$ be a graded vector space over a field. For a homogeneous element $v$, we denote by $|v|$ its homogeneous degree. 
For two graded algebras $M, N$ over a field, let $M \hat{\otimes} N$ denote their graded tensor product. Then, its product structure is given by $(a \hat{\otimes} b)(a' \hat{\otimes} b') = (-1)^{|b||a'|}(aa' \hat{\otimes} bb')$. 
For an integer $n \in \mathbb{Z}$, we denote by $V[n]$ the graded vector space $V$ whose degree is shifted by $n$, i.e., $V[n]^d = V^{d+n}$.

For a commutative ring $R$ with unit, the constant sheaf with value  $R$ on a topological space $X$ is denoted by $\underline{R}_X$ or simply $\underline{R}$ when there is no confusion. 
For a continuous map $f : X \rightarrow Y$ between two topological spaces $X$ and $Y$ and a sheaf $\mathcal{F}$ on $X$, $f_{\ast} \mathcal{F}$ stands for the direct image sheaf of $\mathcal{F}$ along $f$ on $Y$. 
Similarly, $q$-th higher direct images are denoted by $R^qf_{\ast}$.

Let $A$ be an object of an abelian category. 
For a decreasing filtration $F^{\bullet}A$ of $A$, its associated graded is denoted by $\gr_F A = \bigoplus_p \gr_F^p A$. 
Similarly, for an increasing filtration $W_{\bullet}A$ of $A$, its associated graded is denoted by $\gr^W A = \bigoplus_p \gr^W_p A$. 

For a finite-dimensional vector space $H$ over $\mathbb{Q}$ and a finite set $S$, $L_{H,S}$ and $A_{H,S}$ denotes the free Lie algebra and the free associative algebra generated by $H \oplus \mathbb{Q}[S]$ respectively.

%%%%%%%%%%%%%%%%%%%%%%%%%%%%%%%%%%%%%%%%%%%%%%%%%%%%%%%%%%%%%%%%%
\section{Review of Hodge correlators}\label{section:2}
%%%%%%%%%%%%%%%%%%%%%%%%%%%%%%%%%%%%%%%%%%%%%%%%%%%%%%%%%%
This section reviews the basics of Hodge correlators and Hodge correlator twistor connection introduced by Goncharov in \cite{GoncharovHodge1}. 
See also \cite{malkin2020shuffle} and \cite{malkin2020motivic} for references for genus 0 and 1 cases respectively.

%%%%%%%%%%%%%%%%%%%%%%%%%%%%%%%%%%%%%%%%%%%%%%%%%%%%%%%%%%
\subsection{Cyclic Lie coalgebra $\calCV_{X, S^{\ast}}$ and Hodge correlator Lie coalgebra $\calLieV_{X, S^{\ast}}$}\label{subsection:2.1}
%%%%%%%%%%%%%%%%%%%%%%%%%%%%%%%%%%%%%%%%%%%%%%%%%%%%%%%%%%
When a compact Riemann surface $X$ of genus $g \geq 0$ with punctures $S = \{s_0, s_1, \ldots, s_{m}\}$ is given, there are associated cyclic Lie coalgebras $\calCV_{X, S^{\ast}}$ and  $\calLieV_{X, S^{\ast}}$. 
Here, $S^{\ast} := S \setminus \{s_0\}$. 
This section recalls their definition and some properties following \cite{GoncharovHodge1}.

For a finite dimensional vector space $V$, its completed tensor algebra $T(V)$ is defined as $\oplus_{n \geq 0}^{\infty} V^{\otimes n}$. 
We denote by $T^{+}(V):= \oplus_{n \geq 1}^{\infty} V^{\otimes n}$ the degree $\geq 1$ part of $T(V)$. 
Let $[T^{+}(V), T^{+}(V)]$ be the subspace of $T^{+}(V)$ generated by commutators $[v,w]=v \otimes w - w \otimes v$ for $v, w \in T^{+}(V)$. 
Note that $[T^{+}(V), T^{+}(V)]$ is not an ideal of $T^{+}(V)$. The \textit{cyclic envelope} $\mathcal{C}T(V)$ of $V$ is defined as the quotient vector space
\begin{equation}
    \mathcal{C}T(V) := \frac{T^{+}(V)}{[T^{+}(V), T^{+}(V)]}.
\end{equation}

Let $\mathbb{C}[S^{\ast}]$ denote the $\mathbb{C}$-vector space spanned by symbols $\{s\}$ for each $s \in S^{\ast}$. 
Then, we set $V_{X, S^{\ast}} := H_1(X; \mathbb{C}) \oplus \mathbb{C}[S^{\ast}]$, where $H_1(X; \mathbb{C})$ and $\mathbb{C}[S^{\ast}]$ are endowed with weights $-1$ and $-2$  respectively. 
Their dual objects are denoted by $V_{X, S^{\ast}}^{\vee} := H^1(X; \mathbb{C}) \oplus \mathbb{C}[S^{\ast}]$.

Then, we define $\calCV_{X,S^{\ast}}$ and $\calC_{X,S^{\ast}}$ as the cyclic envelopes of $V_{X, S^{\ast}}^{\vee}$ and $V_{X, S^{\ast}}$ respectively
\begin{equation}\label{eq:2.1.2}
    \calCV_{X,S^{\ast}}:= \mathcal{C}T(V_{X, S^{\ast}}^{\vee}) \quad \text{and}\quad  \calC_{X,S^{\ast}}:=\mathcal{C}T(V_{X, S^{\ast}}).
\end{equation}
Elements of $\calCV_{X, S^{\ast}}$ are given by linear combinations of cyclic words in $V_{X, S^{\ast}}^{\vee}$ and such cyclic words are denoted, for instance,  as $W=\calC(\{s_1\} \otimes \alpha \otimes \{s_3\} \otimes \{s_1\}\otimes \beta)$ for $\alpha, \beta \in H^1(X;\mathbb{C})$ and $s_i \in S^{\ast}$. This can also be depicted so that each letter in $W$ is put on the circle counterclockwise as Figure \ref{fig:cyclic_word}.

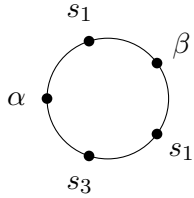
\begin{figure}[h]
\captionsetup{margin=2cm}
\centering
\begin{tikzpicture}
\draw (0,0) circle (0.8cm);
% Draw external vertices on the circle
\foreach \a in {1,2,3,4, 5}{
\fill (\a*360/5 + 36 : 0.8cm) circle (2pt);
}
% Put labels on vertices on the circle
\draw (-1*360/5 + 36: 1.2cm) node{$s_1$};
\draw (-2*360/5 + 36: 1.2cm) node{$s_3$};
\draw (-3*360/5 + 36: 1.2cm) node{$\alpha$};
\draw (-4*360/5 + 36: 1.2cm) node{$s_1$};
\draw (-5*360/5 + 36: 1.2cm) node{$\beta$};
\end{tikzpicture}
\caption{Pictorial expression of the cyclic word $W=\calC(\{s_1\} \otimes \alpha \otimes \{s_3\} \otimes \{s_1\}\otimes \beta)$.}
\label{fig:cyclic_word}
\end{figure}

Next, we define $\calLieV_{X, S^{\ast}}$ and $\calLie_{X, S^{\ast}}$ as follows:
\begin{equation}
    \calLieV_{X, S^{\ast}} := \frac{\calCV_{X,S^{\ast}}}{\text{shuffle relations}} \quad \text{and} \quad \calLie_{X, S^{\ast}} := (\calLieV_{X, S^{\ast}})^{\vee}
\end{equation}
where  \textit{shuffle relations} (also called \textit{ first shuffle relations} in \cite{malkin2020shuffle} and \cite{malkin2020motivic}) is the subspace of $\calCV_{X,S^{\ast}}$ generated by elements
\begin{equation}
    \sum_{\sigma \in \Sigma_{p,q}} (v_0 \otimes v_{\sigma(1)} \otimes \cdots \otimes v_{\sigma(p+q)}), \quad p, q \geq 1
\end{equation}
where $\Sigma_{p,q}$ denotes the set of $(p,q)$-shuffles, i.e., permutations $\sigma \in S_{p+q}$ of $p+q$ elements such that $\sigma(1) < \sigma(2) < \cdots < \sigma(p)$ and $\sigma(p+1) < \sigma(p+2) < \cdots < \sigma(p+q)$. 
Note that we have exact sequences of vector spaces $\calCV_{X,S^{\ast}} \rightarrow \calLieV_{X, S^{\ast}} \rightarrow 0$ and $0 \rightarrow \calLie_{X, S^{\ast}} \rightarrow \calC_{X,S^{\ast}}$ by definition.
\begin{example}[shuffle relations]
Some simple shuffle relations for $(p,q)$ are given as follows:
    \begin{enumerate}[(1)]
        \item $\Sigma_{1,1}:$ $v_0 \otimes v_1 \otimes v_2 + v_0 \otimes v_2 \otimes v_1 =0.$
        \item $\Sigma_{2,1}:$ $v_0 \otimes v_1 \otimes v_2 \otimes v_3 + v_0 \otimes v_1 \otimes v_3 \otimes v_2 + v_0 \otimes v_2 \otimes v_3 \otimes v_1=0$.
        % \item $\Sigma_{1,2}:$ $v_0 \otimes v_1 \otimes v_2 \otimes v_3 + v_0 \otimes v_2 \otimes v_1 \otimes v_3 + v_0 \otimes v_2 \otimes v_3 \otimes v_1=0$.
    \end{enumerate}
\end{example}

Now, we recall the Lie coalgebra structure of $\calCV_{X, S^{\ast}}$. The Lie cobracket $\delta$ on $\calCV_{X, S^{\ast}}$ is defined as the sum of two Lie cobrackets
\begin{equation}
    \delta = \delta_{S^{\ast}} + \delta_{\mathrm{Cas}} : \calCV_{X, S^{\ast}} \rightarrow \wedge^2 \calCV_{X, S^{\ast}}
\end{equation}
where $\delta_{S^{\ast}}$ and $\delta_{\mathrm{Cas}}$ are defined as following (i) and (ii). Let $W \in \calCV_{X, S^{\ast}}$ be a cyclic word given by
\begin{equation}
    W = \mathcal{C}(\gamma_0 \otimes \gamma_1 \otimes \cdots \otimes \gamma_n).
\end{equation}
We express $W$ as a disk on whose boundary circle  $\gamma_0, \ldots, \gamma_n$ are put counterclockwise as Figure \ref{fig:cyclic_word}.

\noindent
(i) The map $\delta_{S^{\ast}}$: Take a point on the boundary circle of the disk labeled by a puncture $s$ in $S^{\ast}$ and consider a line inside the disk from the chosen point to any point on a segment on the circle adjacent to two labeled points. 
Then, along the line, cut the disk into two parts $C_1$ and $C_2$ which share only the point $s$. Here, $C_1$ denotes the parts lying on the counterclockwise of $s$. 
This yields the term $C_1 \wedge C_2$ which is one of the components of $\delta_{S^{\ast}}W$ as Figure \ref{fig:1.1.2}. The map $\delta_{S^{\ast}}W$ is defined as the sum of these coproduct terms over all such cuts. 
Since $W$ is cyclic word, by taking a point of $S^{\ast}$, say $\gamma_0$, then the map $\delta_{S^{\ast}}$ can also be written explicit way as follows:
\begin{equation}
    \delta_{S^{\ast}} W = \mathrm{Cycle}_{0,1, \ldots, n}\left( \sum_{i=1}^n \mathcal{C}(\gamma_0 \otimes \gamma_i \otimes \cdots \otimes \gamma_n) \wedge \mathcal{C}(\gamma_0 \otimes \gamma_1 \otimes \cdots \otimes \gamma_{i-1})\right)
\end{equation}
where $\mathrm{Cycle}_{0,1, \ldots, n}$ denotes the sum over all the cyclic permutations of  $\{0, 1, \ldots, n\}$ preserving cyclic word $W$.

\noindent
(ii) The map $\delta_{\mathrm{Cas}}$: Consider a line inside the disk starting at a point adjacent to two labeled points $y_1$ and $z_1$ and ending at a point adjacent to two labeled points $y_2$ and $z_2$. 
Then, as in case (i), cut the disk along the line into two circles $C_1$ and $C_2$ where $y_1$ and $z_2$ lie on $C_1$ and $y_2$ and $z_1$ lie on $C_2$. 
Then, insert points labelled by $\alpha$ and $\alpha^{\vee}$ between $y_1$ and $z_2$ on $C_1$ and $y_2$ and $z_1$ on $C_2$ respectively. We denote by $C_1'$ and $C_2$ the resulting cyclic words respectively. 
By taking summation over $\alpha$ and $\alpha$ in a fixed symplectic basis $\{\alpha_i\}_{i=1}^{g}$ and its symplectic dual $\{\alpha_i^{\vee}\}_{i=1}^{g}$ of $H^1(X;\bC)$ respectively, we obtain the coproduct term $C_1'\wedge C_2'$. 
The map $\delta_{\mathrm{Cas}} W$ is obtained as the sum of these products over all such cuts.
More explicitly, $\delta_{\mathrm{Cas}} W$ is given as follows:
\begin{equation}
     \delta_{\mathrm{Cas}} W = \sum_{i=0}^n \sum_{j=0}^n \sum_{k=1}^{g} \mathcal{C}(\gamma_i \otimes \cdots \otimes \gamma_{j-1} \otimes \alpha_k) \wedge \mathcal{C}(\gamma_j \otimes \cdots \otimes \gamma_{i-1} \otimes \alpha_k^{\vee})
\end{equation}

%%%%%%%%%% Figure 2 %%%%%%%%%%
\begin{figure}[h]
\captionsetup{margin=2cm}
 \centering
\begin{tikzpicture}
% The first scope describes the left-hand side of the figure
\begin{scope}
% draw outer circle
\draw (0,0) circle (1cm);
% draw S-labeled vertices in 1,4,7
\foreach \a in {1,4,7}{
\draw (\a*360/7 + 2*360/7: 1.4cm) node{$s_{\a}$};
}
% draw H-labeled vertices in 2,3,5,6
\foreach \a in {2,3,5,6}{
\draw (\a*360/7 + 2*360/7: 1.4cm) node{$\omega_{\a}$};
}
% draw external vertices on the outer circle
\foreach \a in {1,2,...,7}{
\fill (\a*360/7  : 1cm) circle (2pt);
}
% draw a line whose endpoints are not external vertices.
\draw[densely dotted] (1*360/7 + 2*360/7: 1cm) -- (-15*360/14 - 2*360/14: 1cm);
\node at (3, 0) {$\mapsto$};
\end{scope}
% The second and third scopes describe the right-hand side of the figure.
\begin{scope}[xshift=5.5cm]
\draw (0,0) circle (0.8cm);
% draw external vertices on outer circle
\foreach \a in {1,2,3}{
\fill (\a*360/3 -30: 0.8cm) circle (2pt);
}
% draw S-labeled vertices in 1,4,7
\draw (-1*360/3-30: 1.2cm) node{$s_{1}$};
\draw (-2*360/3-30: 1.2cm) node{$\omega_3$};
\draw (-3*360/3-30: 1.2cm) node{$\omega_2$};
\node at (1.75, 0) {$\wedge$};
%\node at (-1.9,-0.125) {$\displaystyle \sum_k$};
\end{scope}
% The third scope describes the second term of the right-hand side of the figure
\begin{scope}[xshift=9cm]
\draw (0,0) circle (0.8cm);
% draw external vertices on the outer circle
\foreach \a in {1,2,3,4, 5}{
\fill (\a*360/5: 0.8cm) circle (2pt);
}
\draw (-1*360/5: 1.2cm) node{$\omega_{5}$};
\draw (-2*360/5: 1.2cm) node{$s_4$};
\draw (-3*360/5: 1.2cm) node{$s_1$};
\draw (-4*360/5: 1.2cm) node{$s_7$};
\draw (-5*360/5: 1.2cm) node{$\omega_{6}$};
\end{scope}
\end{tikzpicture}
 \caption{One the components of the differential $\delta_{S^{\ast}}$.}
 \label{fig:1.1.2}
\end{figure}
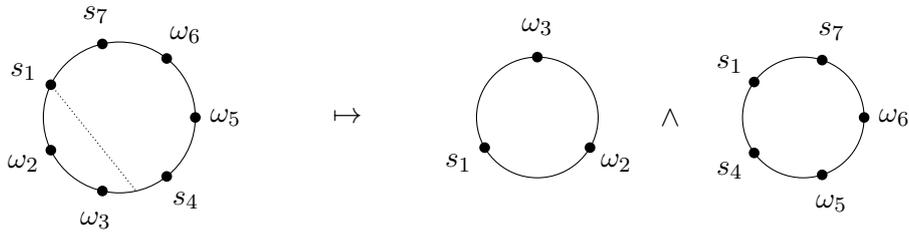
%%%%%%%%%%%%%%%%%%%%%%%%%%%%%%%%%%%%%%%%%%%%%%

%%%%%%%%%% Figure 3 %%%%%%%%%%%%%%%%%%%%%%%%%%
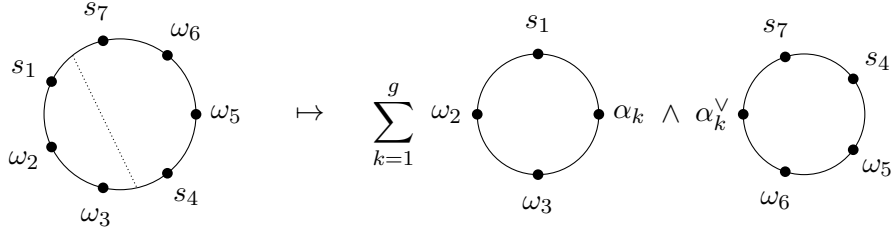
\begin{figure}[h]
\captionsetup{margin=2cm}
 \centering
\begin{tikzpicture}
% The first scope describes the left-hand side of the figure
\begin{scope}
% draw outer circle
\draw (0,0) circle (1cm);
% draw S-labeled vertices in 1,4,7
\foreach \a in {1,4,7}{
\draw (\a*360/7 + 2*360/7: 1.4cm) node{$s_{\a}$};
}
% draw H-labeled vertices in 2,3,5,6
\foreach \a in {2,3,5,6}{
\draw (\a*360/7 + 2*360/7: 1.4cm) node{$\omega_{\a}$};
}
% draw external vertices on the outer circle
\foreach \a in {1,2,...,7}{
\fill (\a*360/7  : 1cm) circle (2pt);
}
% draw a line whose endpoints are not external vertices.
\draw[densely dotted] (-7*360/14 - 2*360/14: 1cm) -- (-15*360/14 - 2*360/14: 1cm);
\node at (2.5, 0) {$\mapsto$};
\end{scope}

% The second and third scopes describe the right-hand side of the figure.
\begin{scope}[xshift=5.5cm]
\draw (0,0) circle (0.8cm);
% draw external vertices on the outer circle
\foreach \a in {1,2,3,4}{
\fill (\a*360/4 : 0.8cm) circle (2pt);
}
% draw S-labeled vertices in 1,2,3,4
\draw (-1*360/4: 1.2cm) node{$\omega_{3}$};
\draw (-2*360/4: 1.2cm) node{$\omega_2$};
\draw (-3*360/4: 1.2cm) node{$s_1$};
\draw (-4*360/4: 1.2cm) node{$\alpha_{k}$};
\node at (1.75, 0) {$\wedge$};
\node at (-1.9,-0.125) {$\displaystyle \sum_{k=1}^{g}$};
\end{scope}
% The third scope describes the second term of the right-hand side of the figure
\begin{scope}[xshift=9cm]
\draw (0,0) circle (0.8cm);
% draw external vertices on the outer circle
\foreach \a in {1,2,3,4, 5}{
\fill (\a*360/5 + 36 : 0.8cm) circle (2pt);
}
\draw (-1*360/5 + 36: 1.2cm) node{$\omega_{5}$};
\draw (-2*360/5 + 36: 1.2cm) node{$\omega_6$};
\draw (-3*360/5 + 36: 1.2cm) node{$\alpha_{k}^{\vee}$};
\draw (-4*360/5 + 36: 1.2cm) node{$s_7$};
\draw (-5*360/5 + 36: 1.2cm) node{$s_4$};
\end{scope}
\end{tikzpicture}
 \caption{One of the components of the differential $\delta_{\mathrm{Cas}}$.}
 \label{fig:1.1.1}
\end{figure}
%%%%%%%%%%%%%%%%%%%%%%%%%%%%%%%%%%%%%%%%%%%%%%

%%%%%%%%%%%%%%%%%%%%%%%%%%%%%%%%%%%%%%%%%%%%%%%%%%%
\subsection{Green functions}\label{section:2.2}
%%%%%%%%%%%%%%%%%%%%%%%%%%%%%%%%%%%%%%%%%%%%%%%%%%%
Next, we recall the basics of (normalized) Green functions on compact Riemann surfaces which serve as parts of the building blocks of the Hodge correlators. 
Since they can be expressed as a combination of Arakelov Green functions, we provide a quick review of them too.
%%%%%%%%%%%%%%%%%%%%%%%%%%%%%%%%%%%%%%%%%%%%%%%%%%%
%%%%%%%%%%%%%%%%%%%%%%%%%%%%%%%%%%%%%%%%%%%%%%%%%%%
\subsubsection{Arakelov Green functions}
%%%%%%%%%%%%%%%%%%%%%%%%%%%%%%%%%%%%%%%%%%%%%%%%%%%
This subsection recalls some facts on the Arakelov Green functions. 
For more information, see, for instance, \cite{Ara74} and \cite{Wen91}.

Let $X$ be a compact Riemann surface and $\Omega^{p, q}(X)$ denote the space of complex-valued $(p,q)$-forms. 
Let $\partial:\Omega^{\bullet, \bullet}(X) \rightarrow \Omega^{\bullet+1, \bullet}(X)$ and $\bar{\partial}: \Omega^{\bullet, \bullet}(X) \rightarrow \Omega^{\bullet, \bullet+1}(X)$ be the Dolbeault operators on $X$. 
Let us fix a volume element $\mu \in \Omega^2(X)$ on $X$ normalized so that
\begin{equation}
	\int_X \mu = 1.
\end{equation}
Then, the \textit{Arakelov Green function} $G_{\mathrm{Ar}}(z,w)$ on $X \times X$ is characterized by the following conditions (i), (ii), (iii) and (iv) (\cite{Ara74}, see also \cite{Wen91}):
% footnote
\footnote{The normalization of Green functions here is different from \cite{Ara74} and \cite{Wen91} but same as \cite{GoncharovHodge1}.}
% footnote
\begin{enumerate}[(i)]
\item $\exp( G_{\mathrm{Ar}}(z,w))$ is a nonnegative smooth function, defined on $\Conf_2(X)$.
\item $\exp( G_{\mathrm{Ar}}(z,w))$ has a first-order zero on the diagonal $\Delta_X$.
\item $\displaystyle \overline{\partial}_z \partial_z G_{\mathrm{Ar}}(z,w) = -p_1^{\ast} \mu$.
\item $G_{\mathrm{Ar}}(z,w) = G_{\mathrm{Ar}}(w,z)$.
\item $\displaystyle \int_{z \in X} G_{\mathrm{Ar}}(z,w) \mu(z) = 0$.
\end{enumerate}
Here, $\Conf_2(X)$ denotes the 2-points configuration space of $X$, 
that is, 
$\Conf_2(X):= (X \times X) \setminus \Delta_X$ where $\Delta_X \subset X \times X$ is the diagonal subspace, 
the subscript $z$ in $\partial_z$ and $\overline{\partial}_z$ indicates that  operators acts on the first variable of $X \times X$, 
and $p_1 : X \times X\rightarrow X$ denotes the projection map $(z,w) \mapsto z$. 
Here are examples of the Arakelov Green functions for projective line $X = \mathbb{P}^1(\mathbb{C})$ and elliptic curves $X = E_{\tau}$.
\begin{example}\label{example:arakelov}
(1) Let $X = \mathbb{P}^1(\mathbb{C})$ and $z$ be an affine coordinate on $\mathbb{P}^1$. Fix a volume element as
	\begin{equation}
		\mu(z) = \frac{1}{2\pi} \frac{dz \wedge d\bar{z}}{(1 + |z|^2)^2}.\end{equation}
	Then
	\begin{equation}
		G_{\mathrm{Ar}}(z,w) = \frac{1}{2\pi \sqrt{-1}} \log\left( \frac{|z-w|}{\sqrt{(1 + |z|^2)(1 +|w|^2)}}\right) .
	\end{equation}
(2) Let $X = E_\tau = \mathbb{C} / (\mathbb{Z} \oplus \mathbb{Z} \tau)$ with $\tau \in \mathbb{C}$ such that $\Im(\tau) >0$ and $z$ be a holomorphic coordinate inherited from that of $\mathbb{C}$. Fix a volume element as
\begin{equation}
	\mu(z) = \frac{1}{\Im(\tau)} dz \wedge d\bar{z}.
\end{equation}
Then, we set
\begin{equation}
	G_{\mathrm{Ar}}(z,w)  = G_{\mathrm{Ar}}(z-w) 
\end{equation}
where $G_{\mathrm{Ar}}(z)$ is given by 
\begin{equation}
	G_{\mathrm{Ar}}(z) = \frac{\tau - \bar{\tau}}{2 \pi \sqrt{-1}} \sum_{\gamma \in (\mathbb{Z} \oplus \mathbb{Z} \tau)\setminus \{(0,0)\}} \frac{\exp(2 \pi \sqrt{-1} \Im(z \bar{\gamma})/\Im(\tau))}{|\gamma|^2}.
\end{equation}
\end{example}

\begin{comment}
    The Arakelov metric is uniquely defined by following conditions (see, for example, \cite{W} and references therein):
\begin{align}
	\bullet\quad  &  \partial_z \bar{\partial}_z \log g^{z\bar{z}} = 4 \sqrt{-1}(g-1)\mu_{z\bar{z}}.\\
	 \bullet \quad & \log g^{z\bar{z}} = \lim_{w \to z} \left(2 \pi \sqrt{-1} G_{\mathrm{Ar}}(z,w) - \log|z-w|^2  \right)
\end{align}
Using the Arakelov metric, the leading singularity of the Arakelov Green function is expressed as
\begin{equation}
	 G_{\mathrm{Ar}}(z,w) \overset{w \to z}{\sim} \frac{1}{2\pi \sqrt{-1}} \log (g^{z\bar{z}}|z-w|^2)
\end{equation}
\end{comment}

\subsubsection{(Normalized) Green functions}

Then, we recall the definition of the Green function on a genus $g \geq 0$ compact Riemann surface $X$. Let $\mathcal{D}^{\bullet}(X)=\bigoplus_{k} \bigoplus_{i+j=k} \mathcal{D}^{i,j}(X)$ be the graded vector space of currents on $X$, i.e., differential forms on $X$ with distribution coefficients. Since $X$ is compact, Dirac delta 2-current $\delta_a \in \mathcal{D}^2(X)$ for any point $a \in X$ can be regarded as a normalized volume element of $X$ by evaluating it with constant function $1 \in \Omega^0(X)$.

To define the Green functions, we recall the canonical Hermitian structure on the space $\Omega^1_X$ of holomorphic differential 1-forms on $X$ defined by
\begin{equation}
\langle \alpha, \beta \rangle := \frac{\sqrt{-1}}{2} \int_X \alpha \wedge \bar{\beta}. 
\end{equation}
Let $\{ \alpha_1, \ldots, \alpha_g \}$ be an orthonormal basis of $\Omega^1_X$ with respect to the Hermitian metric, that is,
\begin{equation}
	\langle \alpha_i, \alpha_j \rangle = \frac{\sqrt{-1}}{2} \int_X \alpha_i \wedge \bar{\alpha}_j =\delta_{ij}.
\end{equation}

\begin{definition}[Green function] \label{def:Green_function}
Let $\partial = \partial_z + \partial_w$ and $\bar{\partial}_z + \bar{\partial}_w$ be the Dolbeault operators on $\Omega^{\bullet, \bullet}(X \times X)$. For a fixed volume element $\mu \in \mathcal{D}^2(X)$, the Green function $G_{\mu}(z,w)$ is defined as a solution to the following differential equation:

\begin{equation}\label{eq:def_eq_green_func}
	\bar{\partial} \partial G_{\mu}(z,w) = \delta_{\Delta_X} - p_1^{\ast} \mu - p_2^{\ast}\mu - \frac{\sqrt{-1}}{2} \left(\sum_{k=1}^g p_1^{\ast}\alpha_k \wedge p_2^{\ast}\bar{\alpha}_k + p_2^{\ast} \alpha_k \wedge p_1^{\ast}\bar{\alpha}_k \right)
\end{equation}
where $\delta_{\Delta_X} \in \mathcal{D}^2(X)$ is the Dirac delta 2-current supported at the diagonal subspace $\Delta_X \subset X \times X$ and $p_i: X \times X \rightarrow X$ is the projection map onto the $i$-th component $(i=1,2)$. 
\end{definition}

\begin{remark}
\begin{enumerate}[(1)]
\item Note that the equation \eqref{eq:def_eq_green_func} is independent of the choice of an orthonormal basis of $\Omega_X^1$. Thus, except for the volume element $\mu$, the equation \eqref{eq:def_eq_green_func} is canonically constructed from the structure of Riemann surface $X$.
\item The above differential equation determines $G_{\mu}(z,w)$ uniquely up to an additive constant.
\item The above differential equation should be understood in the distribution (current) sense. The Green function $G_{\mu}(z,w)$ is smooth on $\Conf_2(X)$ but has singularity of logarithm function near the diagonal $\Delta_{X}$. Since $G_{\mu}(z,w)$ is an integrable function, one may regard the smooth function $G_{\mu}(z,w) \in \Conf_2(X)$ with singularity of logarithm function on $\Delta_X$ as the 0-current on $X \times X$ satisfying the defining equation.
\end{enumerate}
\end{remark}

\begin{definition}
By taking a normalized volume element $\mu$ as the Dirac 2-current $\delta_a$ supported at $\{a\}$ for some point $a \in X$, we set
\begin{equation}
	G_{a}(z,w) := G_{\delta_a}(z,w)
\end{equation}
\end{definition}

As explained in \cite[\S 3.1]{GoncharovHodge1}, the Green function can be expressed as a combination of Arakelov Green functions:
\begin{equation}\label{eq:G_a_by_Ara}
	G_a(z,w) = G_{\mathrm{Ar}}(z,w) - G_{\mathrm{Ar}}(a,w) - G_{\mathrm{Ar}}(z,a) + C
\end{equation}
where $C$ is a constant. Since the Arakelov Green function $G_{\mathrm{Ar}}(z,w)$ has singularities of the form $\log|z|$ at the divisor $z=w$, the above formula implies that the Green function $G_a(z,w)$ has singularities of logarithm function at the divisors $z=w$, $z=a$, and $w=a$.   

Recall that there is a way to normalize Green functions $G_a(z,w)$. Let $v \in T_a X$ be a non-zero tangent vector of $X$ at $a \in X$ and $t$ be a local holomorphic coordinate at $a$ such that $dt (v) =1$. Then, there is the unique soution $G_v(z,w)$ to \eqref{eq:def_eq_green_func} with $\mu= \delta_a$ such that $G_v(z,w)-(2\pi \sqrt{-1})^{-1} \log|t|$ vanishes at $z=a$ (and hence at $w=a$), i.e.,  $\lim_{t \to 0} (G_v(t,w) - (2\pi \sqrt{-1})^{-1} \log|t|) = 0$. We call $G_v(z,w)$ the Green function normalized by a non-zero tangent vector $v \in T_aX$.

\begin{example}
For $X = \mathbb{P}^1(\mathbb{C})$ and $a = \infty$, one can show that
\begin{equation}
	G_{\infty}(z,w) = \frac{1}{2\pi \sqrt{-1}} \log |z-w| +C
\end{equation}	
for some constant $C$ by applying \eqref{eq:G_a_by_Ara} with $a= \infty$ for $G_{\mathrm{Ar}}(z,w)$ given in Example \ref{example:arakelov}(1). The constant $C$ depends on the choice of a tangent vector $v \in T_{\infty}X$.
\end{example}

\begin{remark}
    The Green function $G_{a}$ satisfies the differential equation
    \begin{equation}
        \bar{\partial} \partial G_{a}(z,w) = \delta_{\Delta_X}  - \frac{\sqrt{-1}}{2} \left(\sum_{k=1}^g p_1^{\ast}\alpha_k \wedge p_2^{\ast}\bar{\alpha}_k + p_2^{\ast} \alpha_k \wedge p_1^{\ast}\bar{\alpha}_k \right)
    \end{equation}
    on $(X - \{a\}) \times (X - \{a\})$. In particular, for $X=\mathbb{P}^1(\bC)$, we have 
    \begin{equation}
        \bar{\partial} \partial G_{a}(z,w) = \delta_{\Delta_X}
    \end{equation}
    on $(\mathbb{P}^1(\bC)  - \{a\}) \times (\mathbb{P}^1(\bC) - \{a\})$.
\end{remark}

\begin{convention}
    In the sequel, we always consider Green functions $G_{a}$ associated with volume elements of the forms $\mu = \delta_a$ for some point $a \in X$ or its normalized version $G_{v}$ using a tangent vector $v \in T_aX$.
\end{convention}

%\begin{remark}
%By above Lemma, Green functions $G_a(z,w)$ may be thought of as the Schwartz kernel of the Green operator $G$ associated to the Hodge Laplacian $\Delta$ with respect to the Arakelov metric defined by the Arakelov Green function.
%
%The defining differential equation of the Green function is equivalent to the equation for $G$
%\begin{equation}
%	[d, \delta G] = \Id - \iota \circ  P_{\mathrm{Harm}}.
%\end{equation}
%Here, $\delta$ is the formal adjoint of $d$ with respect to the metric, $P_{\mathrm{Harm}} : \Omega(X) \rightarrow H_{dR}(X)$ is the orthogonal projection map onto the space of harmonic forms and $\iota : H_{dR}(X) \rightarrow \Omega(X)$ is the inclusion of the cohomology groups into the space of differential forms by taking the unique harmonic representatives.
%
%\end{remark}

\subsection{Hodge correlator integrals and Hodge correlator maps}\label{subsection:2.3}

Let us recall Hodge correlators as integrals associated with planar trivalent graphs decorated by cyclic words of the cohomology group of a punctured Riemann surface. 
For details,  the reader is referred to \cite{GoncharovHodge1, malkin2020shuffle, malkin2020motivic}.

Let $X$ be a smooth compact Riemann surface and $S$ be a finite set of points in $X$ with a distinguished point $s_0$, and fix a tangent vector $v \in T_{s_0}X$. 
Set $S^{\ast}:= S \setminus \{s_0\}$ and $V_{X, S^{\ast}}^{\vee}:= \mathbb{C}[S^{\ast}] \oplus (\Omega^1_X \oplus \bar{\Omega}^1_X)$, where $\Omega_X^1$ (resp. $\bar{\Omega}_X^1$) be the spaces of holomorphic (resp. anti-holomorphic) differential 1-forms on $X$. 
Let $\calCV_{X,S^{\ast}}$ be the cyclic envelope of $V_{X, S^{\ast}}^{\vee}$ defined as \eqref{eq:2.1.2}. 

The {\it Hodge correlators} (or {\it Hodge correlator integrals}) $\CorH$ assigned to the data $(X, S, v)$ take a cyclic word $W \in \calCV_{X,S^{\ast}}$ of $V_{X, S^{\ast}}$ as input and give some complex number as output via summation of integrations of some currents constructed from differentials of Green function $G_v$ associated with planar trivalent tree graphs decorated by $W$. For terminology on graphs used in the present article, see Section \ref{section:4.1}.

Suppose that a cyclic word $W = \mathcal{C}(x_1 \otimes x_2 \otimes \cdots \otimes  x_n) \in \calCV_{X,S^{\ast}}$ for some integer $n \geq 2$ is given. Then, the associated Hodge correlator $\CorH(W)$ is defined separately in the following cases (1) and (2); (1) case that $n=2$, (2) case that $n \geq 3$.

(1) When $n=2$, suppose that  $x_1=s_1, x_2=s_2 \in S^{\ast}$ with $s_1 \neq s_2$, i.e., $W= \mathcal{C}(\{s_1\}\otimes \{s_2\})$. Then we set
\begin{equation}
	\mathrm{Cor}_{\mathcal{H}}(\mathcal{C}(\{s_1\}\otimes \{s_2\})) := G_{v} (s_1, s_2).
\end{equation}
In this case, the corresponding planar tree decorated by $W=\mathcal{C}(\{s_1\}\otimes \{s_2\})$ is given as Figure \ref{fig:2.3.1}. Here, the outer circle depicted by the solid line is not considered as a graph but just for representing the cyclic ordering of $\mathcal{C}(\{s_1\}\otimes \{s_2\})$. Unless otherwise stated, we always assume that the outer circle is oriented counterclockwise. 
%%%%%%%%%%%%%%%%%%%%%%% Figure 4 %%%%%%%%%%%%%%%%%%%%%%%%%%%%
\begin{figure}[h]
\captionsetup{margin=2cm}
 \centering
\begin{tikzpicture}
% draw the outer circle
\draw (0,0) circle (1cm);
% draw S-labeled vertices at 1,2
\foreach \a in {1,2}{
\draw (-\a*360/2 - 2*360/2: 1.4cm) node{$s_{\a}$};
}
% Draw a tree with only one edge
\draw[densely dotted] (-1,0) -- (1,0);
% Put labels on  external vertices on the outer circle
\foreach \a in {1,2}{
\fill (\a*360/2  : 1cm) circle (2pt);
}
\end{tikzpicture}
\caption{Decorated planar tree graph associated with a cyclic word $\mathcal{C}(\{s_1\}\otimes \{s_2\})$.}
\label{fig:2.3.1}
\end{figure}
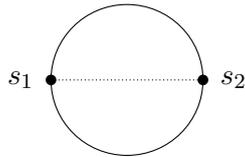
%%%%%%%%%%%%%%%%%%%%%%%%%%%%%%%%%%%%%%%%%%%%%%%%%%%%%%%%%%%%%

\noindent
For other types of cyclic words $W=\mathcal{C}(x_1\otimes x_2)$, we set $\mathrm{Cor}_{\mathcal{H}}(\mathcal{C}(x_1\otimes x_2)) =0$.

(2) Case that $n \geq 3$. Suppose that a cyclic word $W = \mathcal{C}(x_1 \otimes x_2 \otimes \cdots \otimes  x_n)$ contains $p$ purely holomorphic form and $q$ purely anti-holomorphic forms with $p,q \geq 0$. Draw a disk whose boundary circle is oriented counterclockwise. 
Then, put $x_1, \ldots, x_n$ on the boundary circle disjointly so that the orientation of the circle is compatible with the cyclic ordering of $x_1, \ldots, x_n$. 
Consider an embedding of trivalent tree $T$ whose univalent vertices are mapped to the $n$ boundary points $x_1,\ldots, x_n$ and the rest of $T$ onto the interior of the disk (see Figure \ref{fig:2.3.2} for the case of $n=4$) up to ambient isotopy fixing the boundary circle. Here, a trivalent tree means a tree graph whose vertices are of valency one or three (cf. Section \ref{section:4.1}). 
The resulting (equivalent class of) embedded trivalent tree is called a planar trivalent tree decorated by $W$. 

The tree $T$ has $n-2$ internal (i.e., trivalent) vertices and $2n-3$ edges. 
Let $V_T, V_T^{\mathrm{ext}}, V_T^{\mathrm{int}}$ be the sets of vertices, external (i.e., univalent) vertices and internal vertices of $T$ respectively. We set $V_T^{\mathrm{ext, S^{\ast}}}$ the set of external vertices decorated by elements of $S^{\ast}$.
We endow $V_T^{\mathrm{int}} \cup V_T^{\mathrm{ext,S^{\ast}}}$ with any ordering and $V_T^{\mathrm{ext}}, V_T^{\mathrm{int}}$ with the induced one. Let $E_T$ be the set of edges of $T$. Number the edges  $E_T=\{E_1, \ldots, E_{r-1}, E_{r},\ldots, E_{2n-3}\}$ so that $E_1, \ldots, E_{r-1}$ are internal edges (i.e., edges connecting two internal vertices) or external edges (i.e., edges connected to an external vertex) decorated by elements of $S^{\ast}$ and $E_{r},\ldots, E_{2n-3}$ are external edges decorated by 1-forms in $\Omega^1_X$ or $\bar{\Omega}^1_X$. 

For each edge $E=(v_i,v_j) \in E_T$, we define the associated projection map $p_E : X^{V_T^{\mathrm{int}} \cup V_T^{\mathrm{ext,S^{\ast}}}}\rightarrow X^2$ so that $p_E(x_{1},\ldots, x_{n+1})=(x_{i}, x_j)$. 
We define the current $G_E \in \mathcal{D}^{\bullet}(X^{V_{T}})$ associated with $E$ by the following rule: 
When $E = (v_i,v_j)$ is an internal edge, set $G_E:= p^{\ast}_E(G_v (x_1, x_2))$. 
When $E=(v_i, v_j)$ is an external edge decorated by  $s \in S^{\ast}$ at $v_j$, set $G_E := p^{\ast}_{E}(G_v(x_1, s))$. 
When $E = (v_i, w)$ is an external edge decorated by $\omega \in \Omega^1_X$ or $\bar{\Omega}^1_X$ at $w$, set $\omega_E:=G_E:=  p_{x_{v_i}}^{\ast} \omega$. 
Here, $p_{x_{v_i}}: X^{V_T^{\mathrm{int}} \cup V_T^{\mathrm{ext,S^{\ast}}}} \rightarrow X$ is given by $p_{x_v}(x_1,\ldots, x_{n+1}) = x_i \in X$. 
Then, we define a current of degree $2n-4$  by
\begin{equation}\label{eq:kappa}
	\kappa_T := c_T \cdot \mathrm{Or}_T \cdot G_{E_1} d^{\mathbb{C}} G_{E_2} \wedge \cdots \wedge d^{\mathbb{C}} G_{E_{r-1}} \wedge \omega_{E_{r}} \wedge \cdots \wedge \omega_{E_{2n-3}}
\end{equation}
where we set $d^{\mathbb{C}}:=\partial - \bar{\partial}$, $c_T$ is a constant defined by
\begin{equation}
    c_T:= (-2)^r \binom{r}{\frac{1}{2}(r+p-q)}^{-1},
\end{equation}
and $\mathrm{Or}_T$ is the sign difference between orientations on $E_T$ determined by the embedding of $T$ and by the chosen numbering of $E_T$ (for details, see Section \ref{section:4.3}). 
Note that $\kappa_T$ is independent of the choice of the ordering on the set of edges. Consider the projection map
\begin{equation}
	p_T : X^{V_T^{\mathrm{int}} \cup V_T^{\mathrm{ext,S^{\ast}}}} \rightarrow X^{V_T^{\mathrm{ext}, S^{\ast}}},
\end{equation}
and set 
\begin{equation}\label{eq:kappa_int}
\begin{split}
	\mathrm{Cor}_{\mathcal{H}}(W)&:= \sum_{T} p_{T \ast}(\kappa_T) \\
 &= \sum_{T}c_T \cdot \mathrm{Or}_T  \int_{X^{V^{\mathrm{int}}_T}} G_{E_1} d^{\mathbb{C}} G_{E_2} \wedge \cdots \wedge d^{\mathbb{C}} G_{E_{r-1}} \wedge \omega_{E_{r}} \wedge \cdots \wedge \omega_{E_{2n-3}} .
\end{split}
\end{equation}
Here, the summation runs over all planar trivalent tree graphs decorated by the cyclic word $W$. 
We give a remark on the meaning of $p_{T \ast}$. 
In general, for a proper map $p: X \rightarrow Y$ between smooth manifolds $M$ and $N$, there are pullback $p^{\ast}: \Omega^{p}_c(N) \rightarrow \Omega^{p}_c(M)$ on the spaces of differential forms with compact support and its transpose $p_{\ast}: \mathcal{D}^{\dim M - p}(M) \rightarrow \mathcal{D}^{\dim N - p}(N)$ on the spaces of currents. 
For $J \in \mathcal{D}^{\dim M - p}(M)$, the map $p_{\ast}(J)$ is defined by $p_{\ast}(J)(\alpha) := J(p^{\ast}(\alpha))$ for $\alpha \in \Omega^{p}_c(N)$. 
In our situation, $p_T$ can be considered as a proper map $X^{V_T^{\mathrm{int}}} \rightarrow \{\ast\}$ since points in $S^{\ast}$ are fixed points on $X$. 
Therefore, combining the fact that $\kappa_T$ is $2|V_T^{\mathrm{int}}|=2n-4$-current on $X^{V_{T}}$ (\cite[Lemma 3.3]{GoncharovHodge1}), the map $p_{T \ast}: \mathcal{D}^{2n-4}(X^{V_T^{\mathrm{int}}}) \rightarrow  \mathcal{D}^{0}(\{\ast\}) \simeq \bC$ assigns $\kappa_T$ to some number defined by integration as in \eqref{eq:kappa_int}.
\begin{remark}
The original definition of $\mathrm{Cor}_{\mathcal{H}}(W)$ in \cite{GoncharovHodge1} uses a polydifferential operator by which the appearance of the integrand $\kappa_T$ in \eqref{eq:kappa} becomes more complicated. Our definition here follows the one given in \cite{malkin2020shuffle, malkin2020motivic} since it has a more simplified form.
\end{remark}

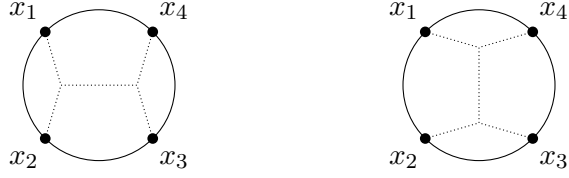
\begin{figure}[h]
\captionsetup{margin=2cm}
 \centering
\begin{tikzpicture}
% The first scope describes the left-hand side of the figure
\begin{scope}
% draw outer circle
\draw (0,0) circle (1cm);
% draw S-labeled vertices in 1,4,7
\foreach \a in {1,2,3,4}{
\draw (\a*360/4 +45: 1.4cm) node{$x_{\a}$};
}
% draw external vertices on outer circle
\foreach \a in {1,2, 3, 4}{
\fill (\a*360/4 + 45 : 1cm) circle (2pt);
}
\draw[densely dotted] (-0.5,0) -- (0.5,0);
\draw[densely dotted] (1*360/4 + 45 : 1cm) -- (-0.5,0);
\draw[densely dotted] (2*360/4 + 45 : 1cm) -- (-0.5,0);
\draw[densely dotted] (3*360/4 + 45 : 1cm) -- (0.5,0);
\draw[densely dotted] (4*360/4 + 45 : 1cm) -- (0.5,0);
\end{scope}
\begin{scope}[xshift=5cm]
% draw outer cricle
\draw (0,0) circle (1cm);
% draw S-labeled vertices in 1,4,7
\foreach \a in {1,2,3,4}{
\draw (\a*360/4 +45: 1.4cm) node{$x_{\a}$};
}
% draw external vertices on outer circle
\foreach \a in {1,2, 3, 4}{
\fill (\a*360/4 + 45 : 1cm) circle (2pt);
}
\draw[densely dotted] (0,-0.5) -- (0,0.5);
\draw[densely dotted] (1*360/4 + 45 : 1cm) -- (0,0.5);
\draw[densely dotted] (2*360/4 + 45 : 1cm) -- (0,-0.5);
\draw[densely dotted] (3*360/4 + 45 : 1cm) -- (0,-0.5);
\draw[densely dotted] (4*360/4 + 45 : 1cm) -- (0,0.5);
\end{scope}
\end{tikzpicture}
\caption[Decorated planar trivalent tree graphs associated with a cyclic word $W=\mathcal{C}(x_1 \otimes x_2 \otimes x_3 \otimes x_4)$]{Decorated planar trivalent tree graphs associated with a cyclic word $W=\mathcal{C}(x_1 \otimes x_2 \otimes x_3 \otimes x_4)$. Note that when $n=4$ there are only two types of planar trivalent trees decorated by $W$.}
\label{fig:2.3.2}
\end{figure}

\begin{example}[Hodge correlators and polylogarithms (\cite{GoncharovHodge1}, \cite{Levin2001}, see also \cite{malkin2020shuffle})]
We consider the case of $X = \mathbb{P}^1(\mathbb{C})$ with $s_0 = \infty$ and recall that Hodge correlator integral at some specific cyclic words give polylogarithms. For this, first, recall polylogarithms and their single-valued versions. Let $Li_n(z)$ be the Euler $n$-logarithm for $n \in \mathbb{N}$, i.e., $Li_n(z)$  is a multi-valued analytic function on $\mathbb{P}^1(\mathbb{C}) \setminus \{0,1,\infty\}$ defined as the analytic continuation of
\begin{equation}
    Li_n(z) = \sum_{k=1}^{\infty} \frac{z^k}{k^n} , \quad |z| <1.
\end{equation}
There is a single valued cousin $\mathcal{L}_n(z)$ of polylogarithms introduced by  Zagier (\cite{zagier90}) by following formula:
	\begin{align}
	\mathcal{L}_n(z) & =  \pi_n\left(\sum_{k=0}^{n-1} \frac{2^k B_k}{k!} Li_{n-k}(z)\log^k |z| \right).
\end{align}
Here, for each complex number $z \in \bC$, $\pi_n$ is defined by
\begin{equation}
    \pi_n(z) = \begin{cases}
         \Re(z) & (\text{$n$:odd}),\\
         \sqrt{-1} \Im(z) & (\text{$n$:even}),
    \end{cases}
\end{equation}
and $B_k$ denotes $k$-th Bernoulli number defined by
\begin{equation}
	\frac{2t}{\exp(2t) - 1} = \sum_{k=0}^{\infty} \frac{2^k B_k}{k!} t^k.
\end{equation}
Then, the relations between Hodge correlators and polylogarithms are described as follows:
\begin{enumerate}[(1)]
\item For a trivalent planar tree decorated by a cyclic word $\mathcal{C}(\{z_1\} \otimes\{z_2\})$ as Figure \ref{fig:planar_trees} (A), the associated Hodge correlator integral is given by
\begin{equation}
\begin{split}
    \mathrm{Cor}_{\mathcal{H}}(\mathcal{C}(\{z_1\} \otimes\{z_2\}) &= G_{v}(z_1, z_2) = \frac{1}{2 \pi \sqrt{-1}} \log |z_1 - z_2|\\
    &=- \frac{1}{2\pi \sqrt{-1}} \mathrm{Re}(Li_1(1+z_2 - z_1)) =  - \frac{1}{2\pi \sqrt{-1}} \mathcal{L}_1(1 + z_2 - z_1).
    \end{split}
\end{equation}
for a chosen $v \in T_{\infty} X$. In particular, for $z_1 = 1, z_2 = z$, we have
\begin{equation}
    \mathrm{Cor}_{\mathcal{H}}(\mathcal{C}(\{1\} \otimes\{z\}) =  - \frac{1}{2\pi \sqrt{-1}} \mathcal{L}_1(z).
\end{equation}
\item Next, consider a decorated trivalent planar tree as Figure \ref{fig:planar_trees} (B). The associated Hodge correlator integral is then given by 
\begin{equation}
\begin{split}
     \mathrm{Cor}_{\mathcal{H}}(\mathcal{C}(\{z_1\} \otimes\{z_2\}\otimes \{z_3\}) &= -  \frac{1}{8}  \int_{x \in \mathbb{C}}\frac{1}{(2\pi \sqrt{-1})^3} \log| x - z_0| d^{\mathbb{C}} \log| x- z_1| \wedge d^{\mathbb{C}} \log| x- z_2|\\
     &=-\frac{1}{(2 \pi \sqrt{-1})^2} \mathcal{L}_2\left(\frac{z_3 - z_1}{z_2 - z_1} \right).
     \end{split}
\end{equation}
In particular, for $z_1= 0$, $z_2 = 1, z_3 = z$, we have
\begin{equation}
     \mathrm{Cor}_{\mathcal{H}}(\mathcal{C}(\{0\} \otimes\{1\}\otimes \{z\}) = -\frac{1}{(2 \pi \sqrt{-1})^2} \mathcal{L}_2(z).
\end{equation}
\item More generally, consider the cyclic word $W_n$ of length $n+1$ defined by
\begin{equation}
    W_n = \mathcal{C}(\{1\} \otimes\{z\}\otimes \{0\} \otimes \cdots \otimes \{0\})\quad (\text{$n-1$ iteration of $\{0\}$}).
\end{equation}
Then, we have
    \begin{equation}
	\mathrm{Cor}_{\mathcal{H}}(W_n) = - \frac{1}{(2\pi \sqrt{-1})^n} \mathbb{L}_n(z)
\end{equation}
Here, $\mathbb{L}_n(z)$ is the Levin's polylogarithm function (\cite[Definition 4.2.1]{Levin2001}) defined as follows:
\begin{equation}
\mathbb{L}_{n}(z) = \sum_{\substack{0 \leq k \leq n-2 \\ k \ \text{even}}} \frac{2^k (n-2)!(2n - k -3)!}{(2n - 3)!(k+1)! (n-k-2)!} \mathcal{L}_{n-k}(z)\log^k|z|.
\end{equation}
Note that only the decorated trivalent planar tree as Figure \ref{fig:planar_trees} (C) contribute to $\mathrm{Cor}_{\mathcal{H}}(W_n)$.
\end{enumerate}
\end{example}

\begin{figure}[h]
\captionsetup{margin=2cm}
 \centering
\begin{tikzpicture}
% The first scope describe the left hand side of the figure
\begin{scope}
\draw (0,0) circle (1.5cm); % outer cricle
\draw[densely dotted] (-1.5,0) -- (1.5,0); % trivalent graph without internal vertex
\node at (1.5,0) {$\bullet$};
\node at (1.5+0.5,0) {$z_2$};
\node at (-1.5, 0) {$\bullet$};
\node at (-1.5-0.5, 0) {$z_1$};
\node at (0, -2.3) {(A)};
% \draw[densely dotted] (-7*360/14 - 2*360/14: 1cm) -- (-15*360/14 - 2*360/14: 1cm);
\end{scope}
\begin{scope}[xshift=4.5cm]
\draw (0,0) circle (1.5cm);
\node at (0,0) {$\bullet$};
% draw external vertices on outer circle
\foreach \a in {1,2,3}{
\draw (\a*360/3 -30: 1.5cm) node {$\bullet$};
\draw[densely dotted] (\a*360/3 -30: 1.5cm) -- (0,0);
}
% draw S-labeled vertices in 1,4,7
\draw (1*360/3-30: 1.9cm) node{$z_1$};
\draw (2*360/3-30: 1.9cm) node{$z_2$};
\draw (3*360/3-30: 1.9cm) node{$z_3$};
\node at (0, -2.3) {(B)};
\end{scope}
\begin{scope}[xshift=9.0cm]
	\draw (0,0) circle (1.5cm);
	\coordinate (a0) at (-1.5,0);
 \node at (-1.9, 0) {$1$};
	\coordinate (a1) at (1.5,0);
 \node at (1.9, 0) {$z$};
	\node at (a0) {$\bullet$};
	\node at (a1) {$\bullet$};
	\draw[densely dotted] (-1.0, 0) -- (-1.0, 1.1);
	\draw[densely dotted] (-0.5, 0) -- (-0.5, 1.4);
	\draw[densely dotted] (1.0, 0) -- (1.0, 1.1);
	\draw[densely dotted] (0.5, 0) -- (0.5, 1.4);
 \foreach \x in {(-1.0, 0),(-1.0, 1.1),(-0.5, 0),(-0.5, 1.4),(1.0, 0),(1.0, 1.1),(0.5, 0) ,(0.5, 1.4)  }{
 \node at \x {$\bullet$};
 }
	\node [above] at (-1.0, 1.3) {$0$};
	\node [above] at (-0.5, 1.6) {$0$};
	\node [above] at (1.0, 1.3) {$0$};
	\node [above] at (0.5, 1.6) {$0$};
	\node [above] at (0, 1.6) {$\cdots$};
	\node at (0, 0.75) {$\cdots$};
	\draw[densely dotted] (a0) -- (a1);
 \node at (0, -2.3) {(C)};
\end{scope}
\end{tikzpicture}
\caption{Examples of decorated trivalent planar tree graphs.}
\label{fig:planar_trees}
\end{figure}
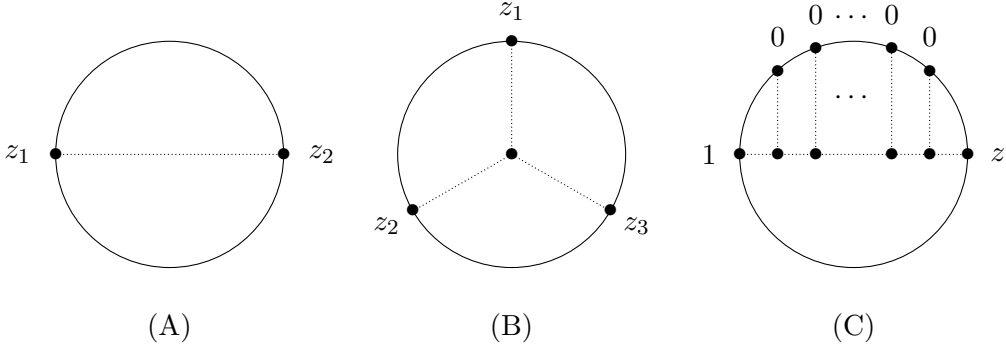

The Hodge correlators satisfy a family of \textit{(first) shuffle relations} as follows (\cite[Proposition 3.5]{GoncharovHodge1}): For $v_0, v_1, \ldots, v_{p+q} \in V_{X,S^{\ast}}^{\vee}$ and $p, q \geq 1$, we have
\begin{equation}\label{eq:first_shuffle}
\sum_{\sigma \in \Sigma_{p,q}} \CorH(\mathcal{C}( v_0 \otimes v_{\sigma(1)} \otimes v_{\sigma(2)} \otimes \cdots \otimes v_{\sigma(p+q)})) = 0.
\end{equation}

\begin{remark}
    In \cite{malkin2020shuffle} and \cite{malkin2020motivic}, Malkin gives another type of shuffle relations, called \textit{second shuffle relations} for Hodge correlators when $X = \mathbb{P}^1(\bC)$ or elliptic curve and their relations under degeneration of elliptic curve to a nodal projective line. 
    For $X = \mathbb{P}^1(\bC)$, it is conjectured the first and the second shuffle relations give all the linear relations among the Hodge correlators. 
    For more details on this direction, see [loc.cit.].
\end{remark}

Since the Hodge correlators $\CorH(\mathcal{C}( v_0 \otimes v_1 \otimes \cdots \otimes v_n))$ satisfy cyclic relation by definition
\begin{equation}
    \CorH(\mathcal{C}( v_0 \otimes v_1 \otimes \cdots \otimes v_n)) = \CorH(\mathcal{C}(v_1 \otimes v_2 \otimes \cdots \otimes v_n \otimes v_0)), 
\end{equation}
together with the (first) shuffle relation \eqref{eq:first_shuffle}, Hodge correlators induces the following map, called the \textit{Hodge correlator map},  
\begin{equation} 
    \CorH : \calLieV_{X, S^{\ast}}(1) \rightarrow \mathbb{C}; \quad \mathcal{C}( v_0 \otimes v_1 \otimes \cdots \otimes v_n)(1) \mapsto \CorH(\mathcal{C}( v_0 \otimes v_1 \otimes \cdots \otimes v_n)).
\end{equation}
Here, we need to take the Tate twist $\calLieV_{X, S^{\ast}}(1):= \calLieV_{X, S^{\ast}} \otimes H_2(X)$ of $\calLieV_{X, S^{\ast}}$ in order the Lie cobracket $\delta$ to be a morphism of mixed $\bR$-Hodge structures. 
Note that $H_2(X) \simeq \bR(1)$. 
In the sequel, we often call values of $\CorH$ at some elements of $\calLieV_{X, S^{\ast}}(1)$ the \textit{Hodge correlator integrals}.

The dual to the Hodge correlator map is called the \textit{Green operator} and is denoted by
\begin{equation} \label{eq:green_operator}
    \mathbf{G}_{v} := (\CorH)^{\vee} \in \calLie_{X, S^{\ast}}(-1).
\end{equation}
Note that it has the decomposition by Hodge bigrading
\begin{equation}\label{eq:2.3.21}
    \mathbf{G}_{v} =\sum_{p, q\geq 1}  \mathbf{G}_{p,q}
\end{equation}
where $\mathbf{G}_{p,q}$ is of Hodge bidegree $(-p, -q)$ with $p,q \geq 1$. Moreover, they satisfy the relations
\begin{equation}\label{eq:2.3.22}
     \overline{\mathbf{G}}_{p,q} = - \mathbf{G}_{q,p}.
\end{equation}

\subsection{Mixed $\bR$-Hodge structures and Hodge correlators}

This section recalls the upgraded version of Hodge correlators in Hodge theoretical viewpoints following \cite{GoncharovHodge1}. See also \cite{malkin2020shuffle} and  \cite{malkin2020motivic}.

\subsubsection{Hodge Galois Lie algebra}

Let $\HSR$ and $\MHSR$ be the categories of $\bR$-Hodge structures and of $\bR$-mixed Hodge structures respectively. The category $\HSR$ is equivalent to the category of representation of the algebraic group $\mathbb{S} = \mathrm{Res}^{\bC}_{\bR} \mathbb{G}_m$, where  $\mathrm{Res}^{\bC}_{\bR} \mathbb{G}_m$ is the restriction of scalars of the multiplicative group $\mathbb{G}_m$ from $\bC$ to $\bR$ (see \cite[Chapter 2]{mixed_hodge}), sometimes called Deligne torus. Following \cite{GoncharovHodge1}, we denote it by $\mathbb{C}^{\times}_{\bC/\bR}$ by abuse of notation.

The category $\MHSR$ is a Tannakian category with a canonical fiber functor $\omega_{\Hod}: \MHSR \rightarrow \mathrm{Vect}_{\bR}$ which assigns each real mixed Hodge structure to its underlying real vector space. The \textit{Hodge Galois group} $G_{\Hod}$ is defined as the automorphism group of the fiber functor $\omega_{\Hod}$ preserving tensor structures: $G_{\Hod}:= \Aut^{\otimes}(\omega_{\Hod})$. The Hodge Galois group is a pro-algebraic group over $\bR$ and the category $\MHSR$ is equivalent to the category of representations of the pro-algebraic group $G_{\Hod}$, i.e., $\omega_{\Hod}$ induces the equivalence
\begin{equation}
    \omega_{\Hod}: \MHSR \overset{\sim}{\rightarrow} \text{$G_{\Hod}$-modules}.
\end{equation}

We have now two canonical functors $\gr^W: \MHSR \rightarrow \HSR$ and $\iota: \HSR \rightarrow \MHSR$ with $\gr^W \circ \iota = \Id$ so that we obtain the induced split exact sequence
\begin{equation}\label{eq:gal_split}
   \begin{tikzcd}
0 \arrow[r] & U_{\Hod} \arrow[r] & G_{\Hod} \arrow[r, "p"] & \bC^{\times}_{\bC/\bR} \arrow[r] & 0.
\end{tikzcd}
\end{equation}
Here, we set $U_{\Hod}:= \Ker p$ that is a real pro-unipotent group.  The \textit{Hodge Galois Lie algebra} $\LieH$ is defined as the Lie algebra of $U_{\Hod}$. By split exact sequence \eqref{eq:gal_split}, $\bC^{\times}_{\bC/\bR}$ acts on $U_{\Hod}$ via the section $s: \CRC \rightarrow G_{\Hod}$ so that $U_{\Hod}$ and $\LieH$ are endowed with $\bR$-Hodge structures, and so $\LieH$ is a Lie algebra in the category $\HSR$. Thus, the $G_{\Hod}$-modules are nothing else but the $\LieH$-module in the category  $\HSR$ and $\LieH = \mathrm{Der}^{\otimes}(\gr^W)$, the Lie algebra of tensor derivations of the functor $\gr^W: \MHSR \rightarrow \HSR$ . In this way, the category $\MHSR$ is equivalent to the category of $\LieH$-module in the category  $\HSR$. 

Since for any simple object $H \in \HSR$ the higher Ext-groups vanish $\Ext^{p}_{\MHSR}(\bR(0), H) = 0$ ($p \geq 2$) (cf. \cite[Proposition 3.35]{mixed_hodge}), we conclude that the Hodge Galois group $\LieH$ is a free Lie algebra and, in fact, it is generated by
\begin{equation}
    \bigoplus_{[H]} \Ext^1_{\MHSR}(\bR(0), H)^{\vee} \otimes H
\end{equation}
where the sum runs over all the isomorphism classes of simple objects $H$ in $\HSR$. For details on some algebraic facts of the Tannakian category described above, see \cite[Appendice A]{DG05}.

\begin{example}
    Let $X$ be a compact Riemann surface and $S \subset X$ be a finite set of points on $X$. Let $s_0 \in S$ be a distinguished point and $v_0 \in T_{s_0} X$ be a distinguished tangent vector at $s_0$. Then, it is known that the pronilpotent completion $\pi_1^{\nil}(X\setminus S, v_0)$ of the fundamental group $\pi_1(X\setminus S, v_0)$ carries a real mixed Hodge structure which depends on $v_0$ so that there is a map
    \begin{equation}
        \LieH \rightarrow \Der(\gr^W \pi_1^{\nil}(X\setminus S, v_0)).
    \end{equation}
   In particular, when a real mixed Hodge structure is described by \textit{special derivations} of $\gr^W \pi_1^{\nil}(X\setminus S, v_0)$ which acts by 0 on the small loop around $s_0$ and preserve the conjugacy classes of all the small loops around $s \in S \setminus \{s_0\}$, there is the corresponding map
   \begin{equation}\label{eq:2.4.5}
        \LieH \rightarrow \Der^S(\gr^W \pi_1^{\nil}(X\setminus S, v_0)).
    \end{equation}
    Here $\Der^S(\gr^W \pi_1^{\nil}(X\setminus S, v_0))$ denotes the Lie algebra of special derivations of $\gr^W \pi_1^{\nil}(X\setminus S, v_0)$.
\end{example}

%%%%%%%%%%%%%%%%%%%%%%%%%%%%%%%%%%%%%%%%%%%%%%%%%%%%%%%%%%%%%%%%%%%%%%%%
\subsubsection{Hodge Galois Lie algebra, period map, and Hodge correlators}
%%%%%%%%%%%%%%%%%%%%%%%%%%%%%%%%%%%%%%%%%%%%%%%%%%%%%%%%%%%%%%%%%%%%%%%%
% We end this section by recalling the relations between Hodge Galois Lie algebra, period map and Hodge correlators. The Green operator which is the dual of Hodge correlator morphisms serves as a canonical generator of Hodge Galois Lie algebra which gives rise to a canonical period map.
The Hodge Galois Lie algebra $\LieH$ is a free Lie algebra generated by certain element $g_{p,q}$ $(p,q \geq 1)$ which satisfies the only relation $\bar{g}_{p,q} = - g_{q,p}$ by the result of Deligne (\cite{De_MHSR}). One of the important features of Hodge correlators is that the Green operator $\mathbf{G}_v = \sum_{p,q \geq 1} \mathbf{G}_{p,q}$ gives a canonical generator $\{\mathbf{G}_{p,q}\}$ of  $\LieH$, which generalizes the previous result by Levin for the mixed $\bR$-Hodge--Tate structures (the case that $p=q$) (\cite{Levin2001}).

Recall that by choosing generators $n_{p,q}$ of $\LieH \otimes \bC$ with $\overline{n}_{p,q} = - n_{q,p}$, we obtain a period map
\begin{equation}
    p_{[H]}: \LieH^{\vee} \rightarrow i \bR
\end{equation}
for simple $\bR$-Hodge structure $H$ as follows. The chosen generator of $\LieH$ defines an inclusion $\bigoplus_{[H]} \Ext^1_{\MHSR}(\bR(0), H)^{\vee} \otimes H \rightarrow \LieH$. By taking its dual, we have a projection
\begin{equation}
    \LieH^{\vee} \rightarrow \bigoplus_{[H]} \Ext^1_{\MHSR}(\bR(0), H) \otimes H^{\vee}.
\end{equation}
Since $\Ext^1_{\MHSR}(\bR(0), H) = \mathrm{Coker}(W_0 H \oplus F^0(W_0 H) \rightarrow W_0H_{\bC})$ for any $\bR$-Hodge structure $H$ (cf. \cite[Theorem 3.31]{mixed_hodge}), we have $\Ext^1_{\MHSR}(\bR(0), H) = H \otimes \bC / H =  H \otimes_{\bR} i \bR $ for simple $\bR$-Hodge structure $H$. Combining these with canonical pairing $H\otimes H^{\vee} \rightarrow \bR$, one obtains the desired period map
\begin{equation}
    p_{[H]} : \LieH^{\vee} \rightarrow \bigoplus_{[H]} \Ext^1_{\MHSR}(\bR(0), H) \otimes H^{\vee} = \bigoplus_{[H]}  H\otimes i\bR  \otimes H^{\vee} \rightarrow i \bR.
\end{equation}
While originally certain generating set of $\LieH$ is defined by Deligne in \cite{De_MHSR} as above,  Goncharov proposed to use the Green operator $\mathbf{G}_{v}$ as a canonical generating set instead of Deligne's one by the following reasons. When describing variations of real Hodge structures, the Griffiths transversality condition is needed to define them. The Green operator $\mathbf{G}_{v}$ satisfies an explicit Maurer--Cartan differential equation which encodes this condition canonical manner, while this condition for Deligne's generators is complicated and difficult to write down.

Therefore, in the above sense,  we call the period map $p_{[H]}$ defined by choosing $\mathbf{G}_{v}$ as generators of  $\LieH$ \textit{canonical period map} and denote it by $\mathcal{P}$.

Since there is an injective morphism $\calLie_{X, S^{\ast}} \rightarrow \Der(\gr^W \pi_1^{\nil}(X\setminus S, v_0)) \otimes \bC$ whose image is the special derivations of $\gr^W \pi_1^{\nil}(X\setminus S, v_0) \otimes \bC$ (cf. \cite[\S 8.2]{GoncharovHodge1}), the Green operator $\mathbf{G}_{v}$ defines a real mixed Hodge structures on $\pi_1^{\nil}(X\setminus S, v_0)$. Therefore, it gives rise to the \textit{Hodge correlator morphism} of Lie coalgebras
\begin{equation}\label{eq:Hodge_correlator_morph}
    \mathrm{Cor}_{\Hod}: \calLie_{X, S^{\ast}}^{\vee}(1) \rightarrow \LieH^{\vee},
\end{equation}
the dual of the map \eqref{eq:2.4.5} induced by the action of $\mathbf{G}_{v}$. Note that, for  $x \in \calLie_{X, S^{\ast}}^{\vee}(1)$ of type $(p,q)$, $\mathrm{Cor}_{\Hod}(x)$  is the element of $\LieH^{\vee}$ induced by a framing $\bR(p,q) + \bR(q,p) \rightarrow \gr^W_{p+1} \pi_1^{\nil}(X\setminus S, v_0)$ provided by $x$ (cf. \cite{BGSV}, \cite[\S 1.4]{Gon97}).

\begin{theorem}{\cite[Theorem 1.12, Theorem 1.14]{GoncharovHodge1}}\label{thm:Gon1} \begin{enumerate}[(1)]
\item Let $\mathcal{P}$ denote the canonical period map. Then, the following  diagram commutes, that is, for $x \in \calLie_{X, S^{\ast}}^{\vee}(1)$, we have $\CorH(x) = \mathcal{P} \circ \CorHod(x)$:
\begin{figure}[h]
    \centering
    \begin{tikzcd}
 \calLie_{X, S^{\ast}}^{\vee}(1) \arrow[rd, "\CorH", swap] \arrow[r, "\CorHod"] & \LieH^{\vee}\otimes \mathbb{C}  \arrow[d, "\mathcal{P}"]\\
& \bC
\end{tikzcd}
\end{figure}
\item The mixed $\bR$-Hodge structure on $\pi_1^{\nil}(X\setminus S, v_0)$ determined by the Green operator $\mathbf{G}_v = \CorH^{\vee}$ coincides with the standard mixed $\bR$-Hodge structure on $\pi_1^{\nil}(X\setminus S, v_0)$.
\end{enumerate}
\end{theorem}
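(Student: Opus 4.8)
The plan is to treat the two parts asymmetrically: statement (1) is a bookkeeping identity that follows once the normalizations are matched, while statement (2) carries the genuine analytic content.

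For (1), I would unwind the three maps involved. By \eqref{eq:green_operator} the Green operator is \emph{defined} as $\mathbf{G}_v = (\CorH)^{\vee}$; the canonical period map $\mathcal{P}$ is by construction the period map obtained by choosing the components $\mathbf{G}_{p,q}$ of $\mathbf{G}_v$ as the generating set of $\LieH$; and $\CorHod$ is dual to the map \eqref{eq:2.4.5} induced by the action of $\mathbf{G}_v$. Chasing an element $x \in \calLie_{X, S^{\ast}}^{\vee}(1)$ of pure type $(p,q)$ through $\CorHod$ and then through $\mathcal{P}$ amounts to extracting the coefficient of $x$ against the generator $\mathbf{G}_{p,q}$ under the canonical pairing, and since $\mathbf{G}_{p,q}$ is by definition the $(p,q)$-component of $(\CorH)^{\vee}$, this coefficient is exactly $\CorH(x)$. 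The only genuine point to verify is that the identification $\Ext^1_{\MHSR}(\bR(0),H) = H \otimes_{\bR} i\bR$ and the reality relation $\overline{\mathbf{G}}_{p,q} = -\mathbf{G}_{q,p}$ from \eqref{eq:2.3.22} are compatible, so that the composite indeed lands in $i\bR \subset \bC$ with the asserted value; this is a short check and does not use (2).

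The substance is in (2). Both the Green-operator mixed $\bR$-Hodge structure and the standard Hain--Zucker one on $\pi_1^{\nil}(X\setminus S, v_0)$ share the same weight-graded object $\gr^W \pi_1^{\nil}(X\setminus S, v_0)$, determined by $H^1(X)$ and $\bC[S^{\ast}]$, so they differ only by the pro-unipotent twist measured by a class in $U_{\Hod}$; equivalently, it suffices to show that the two resulting maps $\LieH \rightarrow \Der^S(\gr^W \pi_1^{\nil}(X\setminus S, v_0))$ agree. My strategy is to characterize the Green-operator structure through the flatness of its twistor connection: the Maurer--Cartan equation \eqref{eq:01} is precisely the integrability condition making $\mathbf{G}_v$ a variation of mixed $\bR$-Hodge structure satisfying Griffiths transversality. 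I would therefore (a) establish \eqref{eq:01} by differentiating the currents $\kappa_T$ of \eqref{eq:kappa}, substituting the defining equation \eqref{eq:def_eq_green_func} for $G_v$ (whose diagonal Dirac term yields the collision contributions and whose Casimir term $\sum_k \alpha_k \wedge \bar{\alpha}_k$ yields the handle contributions) and applying Stokes' theorem on $X^{V_T^{\mathrm{int}}}$, so that the boundary terms reassemble into the quadratic term dual to the cobracket $\delta_{S^{\ast}} + \delta_{\mathrm{Cas}}$; and (b) match the flat connection so produced with the one whose monodromy is computed by Chen's iterated integrals, which is how the standard structure is defined. The latter reduces to an equality of periods, for which the explicit evaluations of $\CorH$ as single-valued polylogarithms in Section \ref{subsection:2.3} furnish the base cases, the general identification then propagating by functoriality and the freeness of $\LieH$.

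The main obstacle is step (a). The Green functions $G_v$ carry logarithmic singularities along the diagonals and along $z = a$, $w = a$, so the currents $\kappa_T$ do not extend smoothly to any naive compactification and Stokes' theorem cannot be applied directly. One must pass to a suitably blown-up configuration space, carefully account for the boundary strata arising as points collide, and verify that the singular boundary integrals recombine into exactly $\delta_{S^{\ast}} + \delta_{\mathrm{Cas}}$ with no anomalous remainder, while the contributions of the degenerate and non-planar configurations cancel in pairs. Controlling these boundary terms is the technical heart of the matter and is precisely where the careful treatment of compactified configuration spaces, anticipated in the introduction, becomes indispensable.
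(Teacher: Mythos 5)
First, a structural point: the paper itself contains no proof of this statement. Theorem \ref{thm:Gon1} is quoted from \cite[Theorem 1.12, Theorem 1.14]{GoncharovHodge1} inside the review Section \ref{section:2}, so there is no in-paper argument to compare yours against; the benchmark is Goncharov's proof. With that understood, your reading of part (1) is consistent with the paper's setup: since $\mathcal{P}$ is \emph{defined} as the period map attached to the generating set $\{\mathbf{G}_{p,q}\}$, and $\CorHod$ is the dual of the map \eqref{eq:2.4.5} induced by the action of $\mathbf{G}_v=(\CorH)^{\vee}$, the triangle commutes essentially by unwinding definitions, modulo the reality check against \eqref{eq:2.3.22} that you correctly flag. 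One caveat you should make explicit: for $\CorHod$ to exist at all, $\mathbf{G}_v$ must already define a mixed $\bR$-Hodge structure, which requires the integrability input of \eqref{eq:01}; so (1) is not quite independent of the analytic content, only of the identification in (2). Your step (a) for part (2) --- the Maurer--Cartan equation via Stokes on blown-up configuration spaces, with boundary strata recombining into $\delta_{S^{\ast}}+\delta_{\mathrm{Cas}}$ --- is sound and is in fact the same mechanism this paper implements in its generalization (Theorem \ref{thm:key_thm}, where hidden faces are handled by Kontsevich's vanishing lemma and infinite faces by Lemma \ref{lem:infinite_vanishing_lemma}).

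The genuine gap is in step (b) of part (2). Matching the flat connection to Chen's iterated integrals by checking the polylogarithm evaluations of Section \ref{subsection:2.3} and then invoking ``functoriality and the freeness of $\LieH$'' does not close the argument. The explicit evaluations verify only finitely many genus-zero periods, and freeness of $\LieH$ concerns the Hodge Galois Lie algebra itself, not the comparison of two particular $\LieH$-actions on $\pi_1^{\nil}(X\setminus S, v_0)$: nothing in freeness prevents two distinct mixed $\bR$-Hodge structures on $\pi_1^{\nil}$ of a higher-genus curve from agreeing with all genus-zero computations. What is actually required --- and what Goncharov supplies --- is a rigidity argument \emph{in families}: flatness of the twistor connection makes $\mathbf{G}_v$ a variation of mixed $\bR$-Hodge structures over the enhanced moduli space $\mathcal{M}_{g,n}'$ (cf.\ Theorem \ref{thm:Gon2}); both variations have the same associated graded, and their difference is pinned down by compatibility with specialization at tangential base points and with degeneration to nodal curves at the boundary of moduli, which reduces the comparison to genus zero and there to the known polylogarithmic cases (Levin's result for the Hodge--Tate part). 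Without this family/rigidity step, your pointwise identification of the two structures does not follow from the base cases you propose.
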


When the data $(X, S,v_0)$ varies, there is a generalization of Theorem \ref{thm:Gon1} for variations of mixed $\bR$-Hodge structures as follows. 
Let $\mathcal{M}_{g,n}'$ be the enhanced moduli space of Riemann surfaces of genus $g$ with $n=|S|$ punctures and additional information of tangent vectors at a distinguished point.
Varying the data $(X, S,v_0)$ over $\mathcal{M}_{g,n}'$, the family $\mathbf{V}:=L_{X_{/B},S^{\ast},v_0}$ of free Lie algebras $L_{X,S^{\ast},v_0}$ generated by $\gr^W H^1(X \setminus S)$ forms a variation of $\bR$-Hodge structures over $\mathcal{M}_{g,n}'$. Then, a family of Green operator $\mathbf{G}_{v}$ over $\mathcal{M}_{g,n}'$ is an element of endomorphism $\End(\mathbf{V}_{\infty})$ and determines a mixed $\bR$-Hodge structure on $\mathbf{V}$ fiberwise. Here, $\mathbf{V}_{\infty}:= \mathbf{V} \otimes_{\mathbb{C}} C_{\mathcal{M}_{g,n}'}^{\infty}$ and $C_{\mathcal{M}_{g,n}'}^{\infty}$ denotes the sheaf of smooth complex functions on $\mathcal{M}_{g,n}'$. Provided further an operator-valued 1-form $g_{0,0}$, the pair $(\mathbf{G}_{v_0}, g_{0,0})$ can be transformed into a flat connection $\nabla_{\mathcal{G}}$, called \textit{twister connection}, on $\pi^{\ast} \mathbf{V_{\infty}}$ where $\pi: \mathcal{M}_{g,n}' \times \bC_{\twi} \rightarrow \mathcal{M}_{g,n}'$ and $\bC_{\twi} \subset \bC^2$ denotes the twistor line. 
\begin{theorem}{\cite[Theorem 1.12]{GoncharovHodge1}}\label{thm:Gon2}
\begin{enumerate}[(1)]
\item A restriction $\nabla^{1/2}_{\mathcal{G}}$ of $\nabla_{\mathcal{G}}$ to $\left(\frac{1}{2}, \frac{1}{2}\right)\in \bC_{\twi}$ is a flat connection on $\mathbf{V_{\infty}}$ which defines a variation of mixed $\bR$-Hodge structures over $\mathcal{M}_{g,n}'$.
\item  This variation is isomorphic to the standard variation of mixed $\bR$-Hodge structures on $\pi_1(X \setminus S, v_0)$.
\end{enumerate}
\end{theorem}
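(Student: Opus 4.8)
The plan is to deduce both parts from the single fact that the full twistor connection $\nabla_{\mathcal{G}}$ is flat on $\pi^{\ast}\mathbf{V}_{\infty}$ over $\mathcal{M}_{g,n}' \times \bC_{\twi}$, which is the content of the Maurer--Cartan equation \eqref{eq:01} for $\mathbf{G}'$. First I would extract the purely base-directional component of the curvature identity $\nabla_{\mathcal{G}}^2 = 0$ and restrict it to the point $(\tfrac12,\tfrac12)$, obtaining
\[
 d_B\big(\mathbf{G}'|_{(1/2,1/2)}\big) + \mathbf{G}'|_{(1/2,1/2)} \wedge \mathbf{G}'|_{(1/2,1/2)} = 0,
\]
so that $\nabla^{1/2}_{\mathcal{G}} = d_B + \mathbf{G}'|_{(1/2,1/2)}$ is a flat connection on $\mathbf{V}_{\infty}$; this settles the flatness assertion in (1). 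The remaining, conceptual content is the twistor dictionary: a flat connection of this shape, together with the weight grading transported along the fibres of $\bC_{\twi}$, is the same data as a variation of real mixed Hodge structures, with $(\tfrac12,\tfrac12)$ singled out as the \emph{real point} of the twistor line where the holomorphic filtration $F$ and its complex conjugate $\bar F$ balance against the weight grading to produce a genuine real structure.

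For the variation claim in (1) I would verify the axioms of a VMHS one at a time. The weight filtration $W_\bullet$ is the one underlying $\mathbf{V}$, and it is preserved by $\nabla^{1/2}_{\mathcal{G}}$ because each component $\mathbf{G}_{p,q}$ has Hodge bidegree $(-p,-q)$ with $p,q\geq 1$ and hence strictly lowers weight, so $\mathbf{G}'|_{(1/2,1/2)}$ is a connection form compatible with $W$. The associated graded $\gr^W \mathbf{V}$ is a polarizable variation of $\bR$-Hodge structures by hypothesis, giving the graded-polarizability axiom, and fiberwise the pair $(W,F)$ produced by the Green operator is a bona fide mixed $\bR$-Hodge structure by Theorem \ref{thm:Gon1}(2). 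The one nontrivial axiom is Griffiths transversality, which is precisely what \eqref{eq:01} is designed to encode: the Hodge filtration $F$ on $\mathbf{V}_{\infty}$ is read off from the holomorphic structure along the twistor direction, and the vanishing of the mixed base/twistor component of $\nabla_{\mathcal{G}}^2$ translates, through the twistor dictionary and the grading $1$-form $g_{0,0}$, into the relation $\nabla F^p \subset F^{p-1}\otimes \Omega^1_{\mathcal{M}_{g,n}'}$ at the real point. Combining the verified axioms yields that $\nabla^{1/2}_{\mathcal{G}}$ underlies a VMHS.

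For part (2) I would argue by rigidity. Both $\nabla^{1/2}_{\mathcal{G}}$ and the standard variation are carried by the same underlying object, namely the family of free Lie algebras $\mathbf{V} = \gr^W \pi_1^{\nil}(X\setminus S, v_0)$ on which $\calLie_{X,S^{\ast}}$ acts by special derivations, so their $\gr^W$ data coincide as variations of $\bR$-Hodge structures. At every point of $\mathcal{M}_{g,n}'$ the Hodge filtration of $\nabla^{1/2}_{\mathcal{G}}$ agrees with the standard one, again by the fiberwise Theorem \ref{thm:Gon1}(2). Since a VMHS on a fixed local system is determined by its Hodge filtration as a holomorphic subbundle, and a holomorphic subbundle is pinned down by its fibres, the two filtered flat bundles coincide, giving the asserted isomorphism of variations.

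The step I expect to be the main obstacle is Griffiths transversality, i.e.\ making precise the twistor mechanism by which flatness of $\nabla_{\mathcal{G}}$ in the mixed base/twistor directions encodes transversality of $F$ along $\mathcal{M}_{g,n}'$ at the real point; this requires careful bookkeeping of how $g_{0,0}$ interacts with the bigraded pieces $\mathbf{G}_{p,q}$ and is exactly where the choice $(\tfrac12,\tfrac12)$, rather than an arbitrary point of $\bC_{\twi}$, becomes essential. A secondary difficulty is upgrading the pointwise identification in part (2) to an identification of \emph{variations}, which rests on knowing that the flat connection $\nabla^{1/2}_{\mathcal{G}}$ genuinely reproduces the Gauss--Manin connection on the family of $\pi_1^{\nil}$'s, and not merely an abstractly isomorphic local system.
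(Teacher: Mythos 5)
First, a point of comparison: the paper you are reading does \emph{not} prove this statement --- Theorem \ref{thm:Gon2} is quoted verbatim from \cite[Theorem 1.12]{GoncharovHodge1} as part of the review in Section \ref{section:2}, so the only proof to measure your proposal against is Goncharov's. With that said, your treatment of flatness in part (1) is sound: decomposing $\mathbf{G}' = A(b,u) + B(b,u)\,du$, the base--base component of the Maurer--Cartan equation \eqref{eq:01} at fixed $u$ gives $d_B A + A \wedge A = 0$, so restriction to any point of $\bC_{\twi}$ is flat, and $(\tfrac12,\tfrac12)$ is singled out as the fixed point of the real structure $c \circ \sigma$, exactly as you say. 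The genuine gap in part (1) is that the ``twistor dictionary'' you invoke for Griffiths transversality is not bookkeeping but is itself the central structural theorem of Goncharov's paper: the equivalence between $\bC^{\times}\times\bC^{\times}$-equivariant flat connections of this shape on $B \times \bC_{\twi}$ and variations of mixed $\bR$-Hodge structures (a DG generalization of Deligne's description of the category $\MHSR$, cf.\ \eqref{eq:gal_split} and the discussion of $\LieH$). Your plan would have to reprove this equivalence --- in particular how $g_{0,0}$ and the bidegree-$(-p,-q)$, $p,q \geq 1$, components $\mathbf{G}_{p,q}$ of \eqref{eq:2.3.21} assemble into the Hodge filtration and force $\nabla F^p \subset F^{p-1} \otimes \Omega^1$ --- and you correctly identify this as the hard point, but you do not supply it; citing it as a ``dictionary'' hides the main content.

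The more serious defect is in part (2), where your rigidity argument is circular at exactly the step you flag. The claim ``a VMHS on a fixed local system is determined by its Hodge filtration, and a holomorphic subbundle is pinned down by its fibres'' presupposes that $(\mathbf{V}_{\infty}, \nabla^{1/2}_{\mathcal{G}})$ and the standard (Hain--Zucker) variation already share an underlying flat bundle; but identifying $\nabla^{1/2}_{\mathcal{G}}$ with the Gauss--Manin structure on the family of $\pi_1^{\nil}(X\setminus S, v_0)$ is precisely the assertion of part (2), not an input to it. Fibrewise agreement from Theorem \ref{thm:Gon1}(2) produces an isomorphism of mixed Hodge structures over each point of $\mathcal{M}_{g,n}'$, but such pointwise isomorphisms need not assemble into a horizontal morphism of variations: horizontality is a differential identity relating $\mathbf{G}$ and $g_{0,0}$ to the Gauss--Manin connection, which must be proved. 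In Goncharov's argument this comes from the construction of the Green current in families (the form $\nu$ and the grading $1$-form $g_{0,0}$ are built from $\nabla_{GM}$, as recalled in Section \ref{section:Green_family}), combined with the fibrewise comparison; your proposal would need to make the canonical comparison map (e.g.\ via functorial splittings of each fibre) explicit and verify it is flat, rather than deduce the identification from pointwise data alone.
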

One of the goals in the sequel is to give one of the generalized versions of the twistor connection $\nabla_{\mathcal{G}}$ by using general decorated uni-trivalent graphs unrestricted to planar trivalent trees.

\begin{remark}
   One can upgrade Hodge correlators to \textit{motivic correlators} using conjectural abelian category $\mathcal{MM}_F$ of mixed motives over a number field $F$. Relations on Hodge correlators can be lifted to those on motivic correlators provided that some conditions are satisfied (\cite[Lemma 3]{malkin2020motivic}). Under the assumption of the motivic formalism, the motivic correlators allow us to translate these relations in the Hodge correlators to those in $l$-adic realization. For details, see \cite[\S 1.11]{GoncharovHodge1} and \cite[\S 2.2.5]{malkin2020motivic}.
\end{remark}

\section{Compactified configuration spaces and Green functions} \label{section:3}
%%%%%%%%%%%%%%%%%%%%%%%%%%%%%%%%%%%%%%%%%%%%%%%%%%%%%%%%%%%%%%%%%%%%%%%%
Here, we recall and prepare some basic results around the compactification of configuration spaces. To begin with, let us review the notion of compactified configuration space introduced by Axelrod--Singer (Section \ref{ssecion:3.1}). Then, we set up compactified configuration space of closed surfaces with marked points, which is needed in subsequent sections, as a two-dimensional analogue of Bott--Taubes's compactified configuration spaces of manifolds containing embedded submanifolds (Section \ref{secion:BT_cmpct}, \ref{ssecion:3.3}, \ref{section:boundary_strata}). Finally, we argue for a smooth extension of some differential of Green functions to compactified configuration spaces (Section \ref{ssecion:3.5}).

\subsection{Compactification of configuration spaces by Axelrod--Singer}\label{ssecion:3.1}
To begin with, we recall the compactification configuration spaces as manifold with corners introduced by Axelrod--Singer as a differential geometric analogue of the algebra geometric compactification by Fulton--MacPherson. For more details see \cite[\S 5]{AS94} and \cite{FM94}.

Let $M$ be an oriented closed smooth manifold of dimension $m$. For a positive integer $V$, we set $\underline{V}:=\{1,\ldots, V\}$.
Then, by definition of the direct product, we have $M^{V}=M^{\underline{V}} = \mathrm{Maps}(\underline{V},M)=\prod_{i\in \underline{V}}M_{i}$ where $\mathrm{Maps}(\underline{V},M)$ is the set of all maps from $\underline{V}$ to $M$ and $M_{i}=M$ is just a copy of $M$. 
For a subset $S \subset \underline{V}$ with $|S| \geq 2$, we denote by $\Delta_{S}\simeq M$ the smallest diagonal in  $M^{S}=\mathrm{Map}(S,M)$ consisting of constant maps from $S$ to $M$. 
Let $\pi_S: M^{\underline{V}} \rightarrow M^S$ be the projection map which maps $(x_v)_{v \in \underline{V}}$ to $(x_s)_{s \in S}$ by forgetting $x_v$ for $v \notin S$. 
We set $\overline{\Delta}_S \subset M^{\underline{V}}$ as the inverse image of $\Delta_S$ under the projection $\pi_S$, i.e., $\overline{\Delta}_S := \pi_S^{-1}(\Delta_S)$

Note that $\Delta_{S}$ is a closed embedded submanifold of $M^{S}$. 
We denote by $B\ell(M^{S},\Delta_{S})$ the geometric blow-up of $M^{S}$ along $\Delta_{S}$ with the blow-down map $q_S: B\ell(M^{S},\Delta_{S}) \rightarrow M^S$. It is compact manifold with boundary whose interior is $M^S \setminus \Delta_{S}$ and boundary is the sphere bundle $S\nu_{\Delta_{S}}$ of the normal bundle $\nu_{\Delta_{S}}$ to $\Delta_{S}$. The restriction of the boundary of the blowdown map $\partial q_S:= q_S|_{\partial B\ell(M^{S},\Delta_{S})}: \partial B\ell(M^{S},\Delta_{S})\rightarrow \Delta_{S} \subset M^S$ is given by the bundle projection map $S\nu_{\Delta_{S}}\rightarrow \Delta_{S}$.

For a positive integer $V$, we consider the $V$-points configuration space of $M$ defined by
\begin{equation}
	\mathrm{Conf}_V(M):= \{ (x_1, \ldots, x_V) \in M^{\underline{V}} \mid x_i \neq x_j (i \neq j)\}.
	\label{eq:1.2.2}
\end{equation}
For a subset $S \subset \underline{V}$ with $|S| \geq 2$, the projection map $\pi_S: M^{\underline{V}}\rightarrow M^S$ restricts to the map (by abuse of notation) $\pi_S:\Conf_V(M) \rightarrow  B\ell(M^{S},\Delta_{S})$ whose image lies on the interior of $B\ell(M^{S},\Delta_{S})$. 
Setting $\Phi_V:= \iota \times \prod\pi_S$ where $\iota:\mathrm{Conf}_V(M)\rightarrow M^{\underline{V}}$ is the canonical inclusion and the product runs over all $S \subset \underline{V}$ with $|S|\geq 2$, we obtain the following injective smooth map:
\begin{equation}
	\Phi_{V}:\mathrm{Conf}_V(M)\rightarrow M^{\underline{V}} \times \prod_{S 
\subset \underline{V}, |S| \geq 2} B\ell(M^S, \Delta_S)=:\mathcal{B}_V(M).
\label{eq:1.2.3}
\end{equation}
Note that the target space of $\Phi_{V}$ is a product of compact manifolds with 
boundary, i.e., compact manifold with corners.

The compactification $C_n(M)$ of $\mathrm{Conf}_n(M)$ is defined 
as the closure of the image of $\mathrm{Conf}_n(M)$ via $\Phi_{n}$ 
equipped with the induced smooth structure, i.e.,
\begin{equation}
	C_V(M) := \overline{\Phi_{V}(\mathrm{Conf}_V(M))} \subset M^{\underline{V}} \times \prod_{S 
\subset \underline{V}, |S| \geq 2} B\ell(M^S, \Delta_S)=\mathcal{B}_V(M).
\label{eq:1.2.4}
\end{equation}

Next, we recall the characterization of $C_{V}(M) \subset \mathcal{B}_V(M)$ as a point set. 
For this, we prepare some notations. 
A point in $\mathcal{B}_V(M)$ is a pair $(\vec{x}, \{\vec{x}_{B,S}\})$ where $\vec{x} \in M^{\underline{V}}$ and $\vec{x}_{B,S} \in B\ell(M^S, \Delta_S)$ for each $S \subset \underline{V}$ with $|S| \geq 2$. 
We set $\vec{x}_S:=q_S(\vec{x}_{B,S}) \in M^S$, the image of $\vec{x}_{B,S}$ under the blowdonw map $q_S: B\ell(M^{S},\Delta_{S}) \rightarrow M^S$. 
If $\vec{x}_{S} \notin \Delta_S$, $\vec{x}_{B,S}$ coincides with $\vec{x}_{S}$, that is, $\vec{x}_{B,S}= \vec{x}_{S}$. Otherwise, $\vec{x}_{B,S} $ is an element of the fiber of $S\nu_{\Delta_S}$ at $\vec{x}_{S}$, that is, $\vec{x}_{B,S} \in S\nu_{\Delta_S}|_{\vec{x}_{S}}$. When $\vec{x}_{S}=(z,\ldots, z) \in \Delta_S$ for some $z \in M$, we have an isomorphism $\nu_{\Delta_S}|_{\vec{x}_{S}} \simeq (T_zM)^S/T_zM$ where $T_zM$  acts on $(T_zM)^S$ by overall translations. Then, the fiber of sphere bundle $S\nu_{\Delta_S}|_{\vec{x}_{S}}$ is identified with $\mathbb{R}_+\backslash ((T_zM)^S/T_zM - \{0\})$ where the action of $\mathbb{R}_+$ is given by dilations. Its any element is called \textit{screen} for $S$ at $z$ and written as a class $[\vec{u}_S]$ represented by $\vec{u}_S=(u_s)_{s \in S} \in (T_zM)^{S}$ such that not all of $u_s$ are the same. If $M$ is endowed with a Riemannian metric $g^{TM}$, elements of the fiber $S\nu_{\Delta_S}|_{\vec{x}_{S}}$ are given by $\vec{u}_S=(u_s)_{s \in S} \in (T_zM)^{S}$ with norm $||\vec{u}_S||_{g^{TM}} \equiv 1$ and $\sum_{s \in S} u_s =0$. For such $\vec{u}_S=(u_s)_{s \in S}$, it can be regarded as vector $\vec{u}_S \in (T_zM)^{\underline{V}}$ by setting $u_{S,v} = 0$ for $v \notin S$.

As described in \cite[\S 5.2]{AS94},  $C_{V}(M)$ is characterized as the subset of $\mathcal{B}_V(M)$ consisting of points $(\vec{x}, \{\vec{x}_{B,S}\})$ satisfying the following conditions (C1) and (C2):
\begin{enumerate}[(C1)]
\item $\vec{x}_S = \vec{x}|_{S}$ for $S \subset \underline{V}, |S| \geq 2$ where $\vec{x}_S=q_S(\vec{x}_{B,S})$.
\item(Compatibility condition for screens) Suppose that $S' \subset S$ with $|S| \geq 2$, $\vec{x}|_{S} = (z, \ldots, z) \in \Delta_S$, $S'$-components of $u_S$ are not all equal. Then, $[u_{S'}] = [u_S|_{S'}]$
\end{enumerate}

Now we describe the structure of stratified space of $C_{V}(M)$. Recall that subsets $S_1, S_2$ of $\underline{V}$ are called \textit{nested} if they are either disjoint or else one contains the other. Then, $C_{V}(M)$ has the decomposition 
\begin{equation}
	C_V(M) = \bigsqcup_{\mathcal{S}} \partial_{\mathcal{S}} C_V(M)^o
\end{equation}
where $\mathcal{S}$ runs over collections of nested subsets $S$ with $|S| \geq 2$ of $\underline{V}$ and  $\partial_{\mathcal{S}} C_V(M)^o$ is the open strata consisting of the elements $(\vec{x}, \{\vec{x}_{B,S}\}) \in C_V(M)$ satisfying the following conditions (i), (ii) and (iii):
\begin{enumerate}[(i)]
\item $\vec{x}|_S \in \Delta_S$ if and only if $S \subset S'$ for some $S' \in \mathcal{S}$.
\item For a subset $S' \in \underline{V}$ with $|S'|\geq 2$, if $S$ is the smallest set in $\mathcal{S}$ such that $S$ contains $S'$, then $[\vec{u}_{S'}]=[\vec{u}_S|_{S'}]$.
\item For $S_1, S_2 \in \mathcal{S}$ with $S_1 \subsetneq S_2$, all $S_1$-components of $u_{S_2}$ are all the same.
\end{enumerate}

Then, the structure of the manifold with corners on $C_V(M)$ is described by Axelrod--Singer  as follows (\cite[\S5.3, \S 5.4]{AS94}):
\begin{enumerate}[(S1)]
    \item $\partial_{\mathcal{S}} C_V(M)^o$ is a smooth (noncompact) manifold of codimension $|\mathcal{S}|$ in $C_V(M)$, i.e., of dimension $\dim(M)\cdot V - |\mathcal{S}|$, in particular, $M(\emptyset)^o = \Conf_V(M)$.
    \item The closed strata  $\partial_{\mathcal{S}} C_V(M)$, the closure of $\partial_{\mathcal{S}} C_V(M)^o$ in $C_V(M)$,  is given by
    \begin{equation}
        \partial_{\mathcal{S}} C_V(M) = \bigcup_{\mathcal{T} \supseteq \mathcal{S}} \partial_{\mathcal{T}} C_V(M)^o.
    \end{equation}
    \item The codimension-$k$ boundary to $C_V(M)$ denoted by $\partial_k C_V(M)$ is the union of the codimension-$k$ closed strata,
    \begin{equation}
        \partial_k C_V(M) = \bigcup_{\mathcal{S}; |\mathcal{S}|=k} \partial_{\mathcal{S}} C_V(M).
    \end{equation}
    \item $\partial_k C_V(M) \setminus \partial_{k+1}C(V)$ is the open set in the codimension-$k$ boundary $\partial_k C_V(M)$ given by
    \begin{equation}
        \partial_k C_V(M) \setminus \partial_{k+1}C(V) = \bigcup_{\mathcal{S}; |\mathcal{S}|=k} \partial_{\mathcal{S}} C_V(M)^o.
    \end{equation}
\end{enumerate}

Thus, in particular, a codimension $1$ open stratum $\partial_{\mathcal{S}} C_V(M)^o$ for $\mathcal{S}=\{S\}$  is given by the fiber product
\begin{equation}
   \partial_{\mathcal{S}} C_V(M)^o \simeq \Conf_{V - |S| +1}(M) \times_{\Delta_S} \left(\Conf_{|S|}(T M)/TM \rtimes \mathbb{R}_+ \right)_{/\Delta_S}
\end{equation}
where $\Conf_{V - |S| +1}(M) \rightarrow M \simeq \Delta_S$ is $(V - |S| +1)$-th projection  and  $\left(\Conf_{|S|}(T M)/TM \rtimes \mathbb{R}_+ \right)_{/\Delta_S}$ denotes the vector bundle over $\Delta_S$ whose fiber at $(z,\ldots,z) \in \Delta_S$ is given by $\Conf_{|S|}(T_zM)/T_zM \rtimes \mathbb{R}_+$.

Finally, we recall the explicit description of coordinates on $C_V(M)$. Especially, the coordinates of the codimension-1 open stratum are important in the subsequent sections. Taking a Riemannian metric $g^{TM}$, for a collection $\mathcal{S}$ of nested subsets containing at least two elements of $\underline{V}$, we define a map 
\begin{equation}\label{eq:3.1.9}
    \psi: \partial_{\mathcal{S}} C_V(M)^o \times [\bR_{\geq 0}]^{\mathcal{S}} \rightarrow M^V; \quad (c, t)\mapsto (\underline{x}_1(c,t), \ldots, \underline{x}_V(c,t))
\end{equation}
where $c=(\vec{x},\{[\vec{u}_S]; S \in \mathcal{S} \}) \in \partial_{\mathcal{S}} C_V(M)^o$ with $\sum_{i \in S} u_{S,i} =0$ and $||\vec{u}_S|| =1$, and
\begin{equation}
    \begin{split}
        \underline{x}_i(c,t):= \exp_{x_i} \left(\sum_{S \in \mathcal{S}} \left(\prod_{S' \in \mathcal{S}; S' \supset S} t_{S'} \right) u_{S,i} \right) \quad (i=1,\ldots, V).
    \end{split}
\end{equation}

Then, there exist an open neighbourhood $U$ of a point $c^{(0)} \in \partial_{\mathcal{S}} C_V(M)^o \subset C_V(M)$ and an open neighbourhood $W$ of $0 \in [\bR_{\geq 0}]^{\mathcal{S}}$ such that the restriction $\psi_0:=\psi|_{U \times W \setminus \partial W}$ maps $U \times W \setminus \partial W$ diffeomorphicaly onto its image. Moreover, $\psi_0$ extends continuously to a homeomorphism $\psi_B: U \times W \rightarrow \Im(\psi_B) \subset C_V[M]$ onto its image with the following properties:
\begin{itemize}
\item $\psi_B(c, 0) = c$
\item  Let $\mathcal{S}' \subset \mathcal{S}$ be a subset. If $S$-component $t_S$ of $\vec{t}$ is $0$ for all $S \in \mathcal{S}'$, then $\psi_B(c, \vec{t}) \in \partial_{\mathcal{S}'} C_V(M)^0$.
\end{itemize}

Then, it turns out that the set of such maps $\{ (\psi_B, U \times W)\}_{c^{(0)}}$ where $c^{(0)}$ varies over $C_V(M)$ forms a system of coordinates on $C_V(M)$ which gives it a structure of manifolds with corners. This structure is independent of the choice of Riemannian metrics. With this coordinate system, the inclusion map $C_V(M) \hookrightarrow \mathcal{B}_V(M)$ becomes smooth map and $\partial_{\mathcal{S}} C_V(M)^o$ becomes an open subset of smooth part of codimension  $|\mathcal{S}|$ boundary of $C_V(M)$.

Finally, we focus on the case of $V=2$. Let $\Delta\subset 
M\times M$ denotes the diagonal. Then, we have
\begin{equation}
C_2(M) = B\ell(M^{2},\Delta) = (M \times M \setminus \Delta) \cup 
S\nu_{\Delta}
\label{eq:1.2.5}
\end{equation}
with blow down map $q : C_2(M) \rightarrow M^2$ given by identity on the interior and  bundle projection $q_{\partial}:=q|_{S\nu_{\Delta}}: S\nu_{\Delta} \rightarrow \Delta$ on the boundary.

Note that the unit normal bundle $S\nu_{\Delta}$ is identified with the unit tangent bundle $S(TM)$ by
\begin{equation}
	S\nu_{\Delta} \overset{\sim}{\rightarrow} S(TM), \quad ((x,x), 
(v,-v)) \mapsto (x, v).
\label{eq:1.2.6}
\end{equation}
Then $\partial C_2(M)= \partial_{\{\{1,2\}\}}  C_2(M)\simeq S\nu_{\Delta}\simeq S(TM)$. A local coordinate around a point $ ((x,x), (v,-v)) \in \partial C_2(M)$ is given by
\begin{equation}
    \psi:  \partial C_2(M) \times \mathbb{R}_{\geq 0} \rightarrow C_2(M); (((x,x),(v,-v)), t) \rightarrow (\exp_x(tv), \exp_x(-tv)).
\end{equation}

%%%%%%%%%%%%%%%%%%%%%%%%%%%%%%%%%%%%%%%%%%%%%%%%%%%%%%%%%%%%%%%%%%%%%%%%
\subsection{Compactification of configuration spaces with embedding information}\label{secion:BT_cmpct}
%%%%%%%%%%%%%%%%%%%%%%%%%%%%%%%%%%%%%%%%%%%%%%%%%%%%%%%%%%%%%%%%%%%%%%%%
Next, we recall the Bott-Taubes' way of compactification of configuration spaces of a manifold with disjointly embedded compact submanifolds (\cite[Appendix]{BT94}).

Let $M$ be an oriented closed smooth manifold. Let $N_1, \ldots, N_k \subset M$ are compact submanifolds dosjointly embedded into $M$. Take positive integers $n_1, \ldots, n_k$. Then, there is the canonical map
\begin{equation}\label{eq:3.2.1}
	\iota : \prod_{i=1}^k C_{n_i}(N_i) \rightarrow C_{\sum_{i=1}^k n_i}(M).
\end{equation}
For $t \geq 0$, let $C_t(M; (N_1, n_1),\ldots, (N_k, n_k))$ denote the pullback bundle of the projection map $\pi: C_{\sum_{i=1}^k n_i + t}(M) \rightarrow C_{\sum_{i=1}^k n_i}(M)$ forgetting the last $t$ points by $\iota$ given in \eqref{eq:3.2.1}:
\begin{equation}\label{eq:3.2.2}
	\begin{tikzcd}
		C_t(M; (N_1, n_1),\ldots, (N_k, n_k))  \arrow[r] \arrow[d] & C_{\sum_{i=1}^k n_i + t}(M) \arrow[d, "\pi"] \\
		 \prod_{i=1}^k C_{n_i}(N_i) \arrow[r, "\iota"] & C_{\sum_{i=1}^k n_i}(M).
	\end{tikzcd}
\end{equation}
\begin{proposition}{\cite[Proposition A.3]{BT94}} \label{prop:BT}
	The space $C_t(M; (N_1, n_1),\ldots, (N_k, n_k))$ is a compact manifold with corners with the projection induced smooth maps to $C_{\sum_{i=1}^k n_i + t}(M)$ and $\prod_{i=1}^k C_{n_i}(N_i)$. In fact, the map $C_t(M; (N_1, n_1),\ldots, (N_k, n_k)) \rightarrow \prod_{i=1}^k C_{n_i}(N_i)$ is strata compatible (for definition, see loc.cit.). Furthermore, if a smooth manifold $\mathcal{K}$ parametrizes a family of embeddings of $\bigsqcup_{i=1}^k N_i \hookrightarrow M$, then the parametrized version of $C_t(M; (N_1, n_1),\ldots, (N_k, n_k))$ is a smooth manifold with corners which fibers over $\mathcal{K}$.
\end{proposition}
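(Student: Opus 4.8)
The plan is to exhibit $C_t(M;(N_1,n_1),\ldots,(N_k,n_k))$ as the pullback of a locally trivial fibration, so that being a compact manifold with corners is inherited from the base $\prod_{i=1}^k C_{n_i}(N_i)$, which is itself compact with corners by the Axelrod--Singer construction (Section \ref{ssecion:3.1}) applied to each closed manifold $N_i$. Writing $N:=\sum_{i=1}^k n_i$, the central claim to establish is that the forgetful map $\pi:C_{N+t}(M)\to C_N(M)$, which drops the last $t$ points, is a smooth fibre bundle in the category of manifolds with corners whose fibre over a configuration is the compactified configuration space of $t$ further points of $M$ relative to the $N$ already-placed ones. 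Granting this, pulling $\pi$ back along the smooth embedding $\iota$ of \eqref{eq:3.2.1} produces a fibre bundle over $\prod_i C_{n_i}(N_i)$ with the same fibre; and the total space of a fibre bundle whose base and fibre are compact manifolds with corners is again a compact manifold with corners, with the two legs of the pullback square furnishing the asserted smooth projections.

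For the fibration claim I would argue by local triviality, exploiting the naturality of the Axelrod--Singer compactification: the assignment $M\mapsto C_V(M)$ is functorial for diffeomorphisms and compatible with all forgetful maps. Thus, given a base point $c_0\in C_N(M)$, one chooses a smooth family of diffeomorphisms $\varphi_b$ of $M$, for $b$ in a neighbourhood $U$ of $c_0$ and generated by flows of compactly supported vector fields, carrying the configuration represented by $c_0$ to the one represented by $b$; functoriality then trivialises $\pi$ over $U$ as $U\times \pi^{-1}(c_0)$. The delicate point is that $c_0$ may lie on a boundary stratum $\partial_{\mathcal S}C_N(M)^o$, where several of the $N$ base points have collided onto a common screen; there the fibre is the corresponding \emph{relative} compactified configuration space and one must check that the trivialisation extends smoothly across the corner. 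This is precisely where the screen compatibility conditions (C1), (C2) enter, and it is the substantive content of \cite[Appendix]{BT94}: since the extended corner coordinates $\psi_B$ built from \eqref{eq:3.1.9} depend smoothly on the base configuration, the family $\varphi_b$ acts smoothly on the corner charts and $\pi$ is a fibre bundle of manifolds with corners.

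The remaining assertions then follow. The map $\iota$ is a smooth embedding: each factor $C_{n_i}(N_i)\hookrightarrow C_{n_i}(M)$ is the smooth inclusion induced by $N_i\hookrightarrow M$, and the disjointness of the $N_i$ guarantees that points of different submanifolds never coincide, so the product lands in $C_N(M)$ and is injective and immersive. Pulling the fibration $\pi$ back along $\iota$ realises $C_t(M;(N_1,n_1),\ldots,(N_k,n_k))\to\prod_i C_{n_i}(N_i)$ as a fibre bundle, hence as a compact manifold with corners, and its other leg to $C_{N+t}(M)$ is smooth. Strata compatibility is immediate from the fibration structure: over each open stratum of $\prod_i C_{n_i}(N_i)$ the map restricts to a locally trivial fibration and the corner structure of the total space is the product of those of the base and the fibre, which is exactly the strata-compatibility requirement of \cite{BT94}.

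Finally, the parametrised statement is obtained by running the identical construction in a family over $\mathcal K$. A smooth family of embeddings $\bigsqcup_i N_i\hookrightarrow M$ indexed by $\mathcal K$ yields a smooth map $\tilde\iota:\mathcal K\times\prod_i C_{n_i}(N_i)\to C_N(M)$, again a fibrewise embedding by disjointness, and pulling $\pi$ back along $\tilde\iota$ produces a smooth manifold with corners fibred over $\mathcal K\times\prod_i C_{n_i}(N_i)$, hence over $\mathcal K$; smoothness in the parameter follows because the trivialising diffeomorphisms $\varphi_b$ and the corner charts $\psi_B$ may be chosen to depend smoothly on the embedding. I expect the main obstacle throughout to be the boundary analysis of the second step: verifying that $\pi$ is genuinely a fibre bundle of manifolds with corners across the collision strata, equivalently that it is a fibration compatible with the corner structure rather than merely a smooth surjection on interiors.
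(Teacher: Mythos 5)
There is a genuine gap at the heart of your argument: the forgetful map $\pi: C_{N+t}(M) \to C_N(M)$ is \emph{not} a locally trivial fibre bundle in the category of manifolds with corners, so the reduction of the proposition to ``pullback of a fibre bundle along an embedding'' does not go through. The diffeomorphism type of the fibre — indeed its corner combinatorics — jumps across boundary strata of the base. Already for $\pi: C_3(M) \to C_2(M)$: over an interior point $(x_1,x_2)$ with $x_1 \neq x_2$ the fibre is $B\ell(M,\{x_1,x_2\})$, a compact manifold with smooth boundary and no corners, whereas over a point of $\partial C_2(M)$ (a collided pair with a screen) the fibre acquires codimension-two corners coming from nested strata such as $\{\{1,3\},\{1,2,3\}\}$. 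Since the fibres over adjacent strata are not diffeomorphic as manifolds with corners, no local trivialization across the boundary can exist; this is the same phenomenon as the universal curve over a compactified moduli space failing to be a bundle. Your proposed trivializing mechanism also fails for a structural reason: a smooth family of ambient diffeomorphisms $\varphi_b$ of $M$ acts on $C_N(M)$ preserving each stratum, so it can never carry a boundary point $c_0 \in \partial_{\mathcal S} C_N(M)$ to nearby interior configurations $b$; functoriality of the Axelrod--Singer compactification trivializes $\pi$ only within a single stratum, which you cannot avoid here because the image of $\iota$ in \eqref{eq:3.2.1} genuinely meets boundary strata (collisions within each $N_i$). For the same reason your final claim that ``the corner structure of the total space is the product of those of the base and the fibre'' is not correct, and strata compatibility cannot be read off from a (nonexistent) product structure.

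For comparison: the paper itself offers no proof — the proposition is quoted verbatim from \cite[Proposition A.3]{BT94} — and the Bott--Taubes argument is precisely designed to circumvent the failure of local triviality. They work in the explicit corner charts of the compactification (the charts $\psi_B$ built from the data in Section \ref{ssecion:3.1}) and show that in such charts $\pi$ is a coordinate projection compatible with the stratifications, i.e.\ a strata-compatible submersion in the corners category; pulling such a map back along the embedding $\iota$ (or its $\mathcal{K}$-parametrized version) then yields a manifold with corners by a chart-by-chart verification, with the fibres allowed to vary over strata. Your outer scaffolding — the pullback square, smoothness and injectivity of $\iota$ from disjointness of the $N_i$, and the parametrized statement via a family version of $\iota$ — is fine and matches the intended use, but the ``central claim'' of your second paragraph must be replaced by this weaker submersion/strata-compatibility statement, since the stronger bundle statement is false.
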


%%%%%%%%%%%%%%%%%%%%%%%%%%%%%%%%%%%%%%%%%%%%%%%%%%%%%%%%%%%%%%%%%%%%%%%%
\subsection{Compactification of configuration spaces of a closed surface with marked points}\label{ssecion:3.3}
%%%%%%%%%%%%%%%%%%%%%%%%%%%%%%%%%%%%%%%%%%%%%%%%%%%%%%%%%%%%%%%%%%%%%%%%
Now, we explain a version of compactified configuration spaces of closed surfaces with marked points mainly used in the present article.

Let us apply the procedure in Section \ref{secion:BT_cmpct} to our situation with a little modification. Let $X$ be a closed surface and $S := \{s_0, s_1,\ldots, s_k\} \subset X$ be a finite set of distinct points on $X$. Take a distinguished point $s_0 \in S$ and a tangent vector $v \in T_{s_0} X$ at $s_0$. Then, we define a  compactification $C_2(X)_{(s_0, v)}$ of two point configuration space $\Conf_2(X)$ with respect to $(s_0,v)$ as the pullback of $\pi_3: C_3(X) \rightarrow X$ by the canonical embedding
\begin{equation}
	\iota: \{s_0\} \hookrightarrow X
\end{equation}
with the distinguished element $v$ on the boundaries corresponding to $x_1=s_0$ and $x_2 = s_0$. If there is no fear of confusion, we often denote $C_2(X)_{(s_0,v)}$ by $C_2(X)_{s_0}$ or $C_2(X)$ for simplicity. Similarly, we define $C_n(X;(s_1,\ldots, s_k))_{(s_0,v)}$ as the pullback of $\pi: C_{|S| + 1 + n}(X) \rightarrow C_{|S|}(X)$ by the canonical embedding
\begin{equation}
	\iota_{S,(s,v)}: \{s_1\} \times \cdots \{s_k\} \times \{s_0\} \hookrightarrow C_{|S|+1}(X)
\end{equation}
with the distinguished element $v$ on the boundaries corresponding to $x_i=s$ $(1\leq i \leq n)$.

Then, by Proposition \ref{prop:BT}, we deduce the following.
\begin{corollary}
\begin{enumerate}[(1)]
\item The space $C_n(X;(s_1,\ldots, s_k))_{(s_0,v)}$ is compact manifolds with corners. Moreover, the induced map $C_n(X;(s_1,\ldots, s_k))_{(s_0,v)} \rightarrow \{s_1\} \times \cdots \{s_k\} \times \{s_0\} $ is a fibration map. 
\item Furthermore, if we consider a family of compact Riemann surfaces $X \rightarrow B$ with a smooth divisor $S \subset X$ in $X$ over $B$. Let $s_0, s_1,\ldots, s_n:B \rightarrow X$ be a section corresponding to a finite number of intersections of $S$ with $X$ at each fiber. Then, parametrized version of $C_n(X;(s_1,\ldots, s_k))_{(s_0,v)}$ is a smooth manifold with corners which fibers over $B$.
\end{enumerate}
\end{corollary}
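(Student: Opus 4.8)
The plan is to recognize that $C_n(X;(s_1,\ldots,s_k))_{(s_0,v)}$ is, by its very definition, an instance of the Bott--Taubes pullback construction of Section~\ref{secion:BT_cmpct} applied to the oriented closed surface $M = X$ equipped with the disjointly embedded compact (zero-dimensional) submanifolds $N_i = \{s_i\}$ for $i = 0, 1, \ldots, k$, each taken with multiplicity $n_i = 1$, and with $t = n$ free points. Concretely, it is the pullback of the forgetful fibration $\pi\colon C_{|S|+1+n}(X) \to C_{|S|}(X)$ along the embedding $\iota_{S,(s,v)}$. Granting this identification, part (1) is a direct application of Proposition~\ref{prop:BT}: that proposition asserts precisely that such a pullback is a compact manifold with corners carrying strata-compatible induced projections to $C_{|S|+1+n}(X)$ and to $\prod_{i} C_{n_i}(N_i)$.

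For the fibration assertion in part (1), I would observe that since each submanifold $N_i = \{s_i\}$ is a single point, the configuration space $C_{n_i}(N_i) = C_1(\{s_i\})$ is a one-point space, so the base $\prod_{i} C_{n_i}(N_i) = \{s_1\}\times\cdots\times\{s_k\}\times\{s_0\}$ is itself a point. The strata-compatible projection furnished by Proposition~\ref{prop:BT} is then just the map to this point, which is trivially a fibration, so the substantive content of the statement is exactly the manifold-with-corners structure already obtained. The tangent vector $v \in T_{s_0}X$ enters only as framing data: it selects a distinguished point on the boundary sphere $S\nu_{\Delta}$ governing the collisions at $s_0$ (equivalently, a tangential base point), and this choice does not alter the underlying smooth manifold-with-corners structure, so no further verification is required at this stage.

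Part (2) should follow from the parametrized clause of Proposition~\ref{prop:BT}, which says that whenever a smooth manifold $\mathcal{K}$ parametrizes a family of embeddings $\bigsqcup_i N_i \hookrightarrow M$, the parametrized pullback is a smooth manifold with corners fibering over $\mathcal{K}$. I would take $\mathcal{K} = B$ and use the sections $s_0, s_1, \ldots, s_k\colon B \to X$ cut out by the smooth divisor $S \subset X$ as the family of point-embeddings. Fiberwise disjointness of the sections is guaranteed by $S$ being a smooth divisor with distinct components, so for each $b \in B$ the points $s_i(b)$ are distinct and the hypotheses of the parametrized statement hold. To keep the framing coherent across the family, the single tangent vector $v$ must be upgraded to a smooth nowhere-vanishing section of the relative tangent bundle $s_0^{\ast} T_{X/B}$ along $s_0$; granting such a section, the parametrized Proposition~\ref{prop:BT} yields immediately that the parametrized version fibers over $B$ as a smooth manifold with corners.

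The only point that goes beyond a verbatim citation of Proposition~\ref{prop:BT} is the bookkeeping of the tangential framing $v$ (resp.\ the smooth section of $s_0^{\ast} T_{X/B}$ in the family): one must check that recording the distinguished boundary point it determines is compatible with the corner structure and varies smoothly over $B$. This is where I expect the only genuine care to be needed, and it is mild, since $v$ merely pins down a point in the fiber of an already-smooth boundary sphere bundle, while the disjointness and smoothness of the point-embeddings reduce everything else to the cited proposition.
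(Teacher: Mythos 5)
Your proposal is correct and follows essentially the same route as the paper: both parts are direct applications of Proposition \ref{prop:BT}, with part (1)'s fibration claim reduced to the observation that the base $\{s_1\}\times\cdots\times\{s_k\}\times\{s_0\}$ is a $0$-dimensional (indeed one-point) space, and part (2) obtained from the parametrized clause of that proposition applied to the pullback diagram \eqref{eq:3.2.2} fibered over $B$. Your extra remarks on the tangential framing $v$ (and its upgrade to a section of the relative tangent bundle in the family case) are harmless bookkeeping that the paper handles implicitly in its construction in Section \ref{ssecion:3.3}.
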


\begin{proof}
(1) The first statement is just a translation of Proposition \ref{prop:BT}. The second one follows from the fact that $C_n(X;(s_1,\ldots, s_k))_{(s_0,v)} \rightarrow \{s_1\} \times \cdots \{s_k\} \times \{s_0\} $ is smooth by 	Proposition \ref{prop:BT} and $\{s_1\} \times \cdots \{s_k\} \times \{s_0\} $ is 0-dimensional manifold. 

\noindent
(2) The parametrized version of (1) follows immediately by considering \eqref{eq:3.2.2} with fibers over $B$.
\end{proof}

\begin{remark}\label{rem:3.3.2}
    The space $C_n(X;(s_1,\ldots, s_k))_{(s_0,v)}$ may be regarded as $n$-point configuration space of punctured Riemann surface $X \setminus S$ where $s_0$ plays a role of the point at  infinity. For succinctness, $C_n(X;(s_1,\ldots, s_k))_{(s_0,v)}$ is often denoted by $C_n(X;(s_1,\ldots, s_k))_{s_0}$ or $C_n(X;(s_1,\ldots, s_k))$. Similarly, it's interior or boundary open stratum is denoted by $\Conf_n(X;(s_1,\ldots, s_k))_{(s_0,v)}$, $\Conf_n(X;(s_1,\ldots, s_k))_{s_0}$, or $\Conf_n(X;(s_1,\ldots, s_k))$.
\end{remark}

\subsection{Boundary Strata of compactified configuration spaces}\label{section:boundary_strata}
With the same notations as the previous section, we recall that the codimension-1 open strata $\partial  C_n(X;(s_1,\ldots, s_k))_{(s_0,v)}^0$ is classified into the following three cases:
\begin{itemize}
    \item \textit{principal strata}: strata corresponding to collisions of exactly two points away from $s_0$.
    \item \textit{hidden strata}: strata corresponding to collisions of more than two points away from $s_0$.
    \item \textit{infinite strata}: strata corresponding to collisions of points with $s_0$.
\end{itemize}

\begin{remark}
    In \cite{BT94}, anomalous strata (anomalous face in loc.cit.), corresponding to the collision of all points, are considered separately from hidden strata. In our case, different from their case (3-dimensional case), anomalous strata do not yield any problem and we can treat them as a special case of hidden strata. Therefore, we use the classification of the boundary strata of compactified configuration spaces as above.
\end{remark}

%%%%%%%%%%%%%%%%%%%%%%%%%%%%%%%%%%%%%%%%%%%%%%%%%%%%%%%%%%%%%%%%%%%%%%%%
\subsection{Smooth extension of differentials of Green functions to compactified configuration spaces}\label{ssecion:3.5}
%%%%%%%%%%%%%%%%%%%%%%%%%%%%%%%%%%%%%%%%%%%%%%%%%%%%%%%%%%%%%%%%%%%%%%%%
Here, we consider an extension of some differentials of Green functions to compactified configuration spaces as in \cite{AS94} where this type of consideration first appeared. We use the same notation and setting as Section \ref{section:2.2}.

\begin{theorem}\label{thm:ext_green}
Let $X$ be a compact Riemann surface and $G_{\mathrm{Ar}}(x,y) \in \mathcal{D}^0(X^2)$ be the Arakelov Green function for some normalized volume element on $X$. We set $d^{\bC}:= \partial - \bar{\partial}: \Omega^{\bullet}(X^2) \rightarrow \Omega^{\bullet +1}(X^2)$. Then, the smooth differential one form $d^{\mathbb{C}}  G_{\mathrm{Ar}}(x,y) \in \Omega^1(\Conf_2(X))$ extends smoothly to the differential one form $\widetilde{d^{\mathbb{C}} G_{\mathrm{Ar}}(z,w)} \in \Omega^1(C_2(X))$ on $C_2(X)$ with following boundary condition
\begin{equation}
	\frac{1}{2}\widetilde{d^{\mathbb{C}} G_{\mathrm{Ar}}(z,w)}|_{\partial C_2(M)} = \omega + q^{\ast}_{\partial} (\psi)
\end{equation}
where $\omega \in \Omega^1(S(TX))$ is a fiberwise volume form of $S^1$, $\psi \in \Omega^1(X)$ is a one form on $X$, and $q_{\partial}: \partial C_2(X) \simeq S(TX) \rightarrow X$ is the blowdown map restricted to the boundary $\partial C_2(X)$.
\end{theorem}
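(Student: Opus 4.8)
The plan is to reduce everything to a local computation near the diagonal $\Delta \subset X \times X$. Away from $\Delta$ the form $d^{\mathbb{C}} G_{\mathrm{Ar}}(z,w)$ is already a smooth $1$-form, and $C_2(X)$ coincides with $\Conf_2(X)$ there, so the only issue is the behaviour at the exceptional divisor $\partial C_2(X) = S\nu_{\Delta} \simeq S(TX)$. As recorded in the characterising properties and the explicit formulas of Example~\ref{example:arakelov}, in a local holomorphic coordinate the Arakelov Green function decomposes as
\begin{equation}
G_{\mathrm{Ar}}(z,w) = \frac{1}{2\pi\sqrt{-1}}\log|z-w| + h(z,w),
\end{equation}
where the leading coefficient is universal and $h$ is smooth up to $\Delta$; a change of holomorphic coordinate only alters $\log|z-w|$ by a function smooth across $\Delta$ (the log modulus of a non-vanishing holomorphic ratio), which is harmlessly absorbed into $h$.

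The heart of the matter is that the singular part becomes smooth on the blow-up, and this is precisely where the operator $d^{\mathbb{C}} = \partial - \bar{\partial}$ (rather than $d$) is essential. Writing $u := z-w = r\,e^{\sqrt{-1}\theta}$, one computes
\begin{equation}
d^{\mathbb{C}}\log|z-w| = \tfrac{1}{2}\Bigl(\frac{du}{u} - \frac{d\bar u}{\bar u}\Bigr) = \tfrac{1}{2}\Bigl(\bigl(\tfrac{dr}{r} + \sqrt{-1}\,d\theta\bigr) - \bigl(\tfrac{dr}{r} - \sqrt{-1}\,d\theta\bigr)\Bigr) = \sqrt{-1}\,d\theta.
\end{equation}
Thus the genuinely singular radial parts $\tfrac{dr}{r}$ cancel and only the angular form survives. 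In the Axelrod--Singer manifold-with-corners coordinates of Section~\ref{ssecion:3.1}, with $r$ a boundary-defining function and $\theta$ the fibre angle of $S\nu_{\Delta} \simeq S(TX)$ under the identification \eqref{eq:1.2.6}, the form $\sqrt{-1}\,d\theta$ is smooth up to and including $\{r=0\}$. I expect the verification of this genuine smoothness in the corner structure --- as opposed to mere continuity --- to be the main obstacle; it amounts to checking that $(r,\theta)$ are compatible with the charts $\psi_B$ and that $\sqrt{-1}\,d\theta$ is a smooth section there. The remaining term is harmless: $h$ is smooth on $X^2$ and the blow-down $q\colon C_2(X)\to X^2$ is smooth, so $q^{\ast}(d^{\mathbb{C}} h)$ is smooth on $C_2(X)$. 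Declaring $\widetilde{d^{\mathbb{C}} G_{\mathrm{Ar}}(z,w)}$ to equal $d^{\mathbb{C}} G_{\mathrm{Ar}}$ on the interior then produces the asserted smooth extension to $\Omega^1(C_2(X))$.

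Finally I would read off the boundary value. On $\partial C_2(X)\simeq S(TX)$ the surviving angular term $\sqrt{-1}\,d\theta$ restricts on each fibre circle to the standard rotation-invariant angular form; together with the scalar factors this is, by definition, a fibrewise volume form $\omega$ of $S^1$ (the $\tfrac12$ in the statement fixing its normalisation), and it is canonical, being determined by the circle bundle $S(TX)$ alone. For the smooth remainder, since $q$ collapses each boundary fibre onto its base point of $\Delta \cong X$, restriction commutes with pullback and gives $q^{\ast}(d^{\mathbb{C}} h)|_{\partial C_2(X)} = q_{\partial}^{\ast}\bigl(\iota_{\Delta}^{\ast}\, d^{\mathbb{C}} h\bigr)$, where $\iota_{\Delta}\colon X \simeq \Delta \hookrightarrow X^2$; this is $q_{\partial}^{\ast}\psi$ for the $1$-form $\psi := \tfrac12\,\iota_{\Delta}^{\ast}\, d^{\mathbb{C}} h$ on $X$. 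Adding the two contributions yields $\tfrac12\widetilde{d^{\mathbb{C}} G_{\mathrm{Ar}}(z,w)}|_{\partial C_2(X)} = \omega + q_{\partial}^{\ast}(\psi)$. A change of local coordinate rotates $\theta$ by the argument of the transition Jacobian, which is constant along each fibre, hence only shifts the horizontal term $q_{\partial}^{\ast}\psi$ and leaves $\omega$ well defined.
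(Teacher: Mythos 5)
Your proposal is correct and is essentially the paper's own argument: the same splitting of $G_{\mathrm{Ar}}$ into the universal logarithmic singularity $\frac{1}{2\pi\sqrt{-1}}\log|z-w|$ plus a remainder smooth across the diagonal, the same key cancellation of the radial $\frac{dr}{r}$ terms under $d^{\mathbb{C}}$ leaving only the fiberwise angular form, and the same boundary identification $\psi = \bigl(\tfrac{1}{2}d^{\mathbb{C}}\eta\bigr)|_{\Delta_X}$. The one step you flag as the ``main obstacle'' but do not carry out --- that your polar coordinates $(r,\theta)$ built from $u=z-w$ in a holomorphic chart are compatible with the Axelrod--Singer corner charts --- is precisely what the paper's proof executes explicitly, by pulling back along $F(((z,v)),r)=(\exp_z(rv),\exp_z(-rv))$ and using the exponential-map asymptotics $\lim_{r\to 0}F^{\ast}(x-y)/r=2v$ and $F^{\ast}(dx^i-dy^i)\sim 2\,d(rv^i)$; since the manifold-with-corners structure of $C_2(X)$ is independent of the chosen metric (as the paper records), your chart-level shortcut closes in the same way, so this is a deferral of a routine verification rather than a genuine gap.
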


\begin{proof}
To begin with, we prepare some notations for coordinates near the diagonal. Since $X$ is a Riemann surface, we can assume that $X$ is endowed with a conformal metric $g^{TX}$ compatible with the holomorphic structure. For a tangent vector $v$ of $T_x X$ at $x \in X$, we denote by $||v||$ the norm of $v$ with respect to $g^{TX}$, and for $x,y \in X$, we denote by $d(x,y)$ the distance between $x$ and $y$ with respect to $g^{TX}$.  Let us take $\epsilon>0$ to be smaller than the injectivity radius of $TX$ and let
\begin{equation}
	N(\epsilon) = \{(z,u) \in TX \ | \ ||u|| < \epsilon \} \subset TX
\end{equation}
be a subbundle of $TX$ whose fibers consist of tangent vectors with length less than $\epsilon$. We define a map
\begin{equation}
	E: N(\epsilon)  \longrightarrow X \times X; \quad (z,u) \mapsto (\exp_z(u), \exp_z(-u))
\end{equation}
which is a diffeomorphism onto its image containing $\Delta_X$.  Putting
\begin{equation}
	N(0) = \{(z,u) \in TX \ | \ u=0 \} \subset TX,
\end{equation}
the restriction $E|_{N(\epsilon)\setminus N(0)}$ of $E$ to $N(\epsilon)\setminus N(0)$ induces a diffeomorphism
\begin{equation}
	E|_{N(\epsilon)\setminus N(0)} : N(\epsilon)\setminus N(0)  \overset{\sim}{\longrightarrow} E(N(\epsilon)) \setminus \Delta_X \subset \Conf_2(X) \subset X \times X
\end{equation}

Note that one can identify $N(\epsilon)\setminus N(0)$ with $S(TX) \times (0,\epsilon)$ by 
\begin{equation}
	N(\epsilon)\setminus N(0)  \overset{\sim}{\longrightarrow} S(TX) \times (0,\epsilon); \quad (z,u) \mapsto \left(\left(z, \frac{u}{||u||}\right), ||u||\right)
\end{equation}

Using local coordinates $z=(z^1, z^2)$ on an open subset $z \in U$ in $X$, we define local coordinates on $N \cap TU$ via
\begin{equation}
	N \cap TU \rightarrow \mathbb{R}^2 \times \mathbb{R}^2;\quad (z,u) \mapsto (z^1,z^2, u^1, u^2)
\end{equation}
where $u = u^1 \frac{\partial}{\partial z^1} + u^2 \frac{\partial}{\partial z^2}$.

Recall that in a holomorphic coordinate near the diagonal the Green function is expressed as the sum of the singular part and the smooth part:
\begin{equation}
	G_{\mathrm{Ar}} (x,y) =  \frac{1}{2\pi \sqrt{-1}} \cdot  \log\left(\frac{1}{|x-y|}\right) + \eta 
\end{equation}
where $\eta$ is a smooth function on $X \times X$. We assume that the metric is written as $g^{TX} = \lambda(x,y) (dx^2 + dy^2)$ in this coordinate in terms of a positive smooth function $\lambda$.  Then, using the coordinates defined above, $(x,y)=((x^1, x^2), (y^1,y^2))$ can be regarded as a function on $(z,u)$. 

By direct computation, the singular part of $d^{\mathbb{C}} G_{\mathrm{Ar}}(x,y)$ is given by
\begin{align}
	d^{\mathbb{C}} G_{\mathrm{Ar}}(x,y) =  \frac{1}{\pi} \cdot \left( \frac{(x^1 - y^1)(dx^2 - dy^2)}{|x-y|^2} - \frac{(x^2 - y^2)(dx^1 - dy^1)}{|x-y|^2} \right) + d^{\mathbb{C}} \eta
\end{align}
where we used the following equation:
\begin{equation}
\begin{split}
d^{\mathbb{C}}_z & = \partial_z - \partial_{\bar{z}} \\
	& = \frac{1}{2}\left(\frac{\partial}{\partial x} - \sqrt{-1} \frac{\partial}{\partial y} \right) \otimes e(dx + \sqrt{-1} dy) -  \frac{1}{2}\left(\frac{\partial}{\partial x} + \sqrt{-1} \frac{\partial}{\partial y} \right) \otimes e(dx - \sqrt{-1} dy)\\
	&= - \sqrt{-1} \left( \frac{\partial}{\partial y}\otimes e(dx) - \frac{\partial}{\partial x} \otimes e(dy)\right).
 \end{split}
\end{equation}
Here, $e$ denotes the exterior product operation. Next, after pulling back this form along the map
\begin{equation}
	F : S(TX) \times (0,\epsilon) \overset{\sim}{\rightarrow} N(\epsilon) \setminus N(0) \rightarrow   \Conf_2(X); ((z,v),r) \mapsto (\exp_z(rv), \exp_z(-rv)),
\end{equation}
we check whether  $F^{\ast}(d^{\mathbb{C}} G_{\mathrm{Ar}}(x,y))$ can extend smoothly on $S(TX) \times [0, \epsilon)$, keeping in mind the following commutative diagram:
\begin{equation}
\begin{tikzcd}
S(TX) \times (0, \epsilon)  \arrow[r, hook] \arrow[d, "F"] & S(TX) \times [0, \epsilon) \arrow[r] & B\ell(X^2, \Delta_X) \supset   \partial C_2[X]\simeq S(TX) \arrow[d, "q"] \\
\Conf_2(X) & &  X^2 \supset \Delta_X \simeq X
\end{tikzcd}
\end{equation}

Since $|x-y| \overset{x, y \to z}{\sim}  \frac{1}{\lambda(z)}( d(z,x) + d(z,y)) = \frac{2}{\sqrt{\det(g_{pq}(z))}} ||rv||_z$ (the equality follows from \cite[Lemma 2.3.A.3]{Jos06}), the pullback of the singular part of $d^{\mathbb{C}} G_{\mathrm{Ar}}(x,y)$ becomes
\begin{equation}
	\frac{1}{\pi} \cdot  \sqrt{\det(g_{pq}(z))} \cdot \left( \frac{F^{\ast}(x^1 - y^1) F^{\ast}(dx^2 - dy^2)}{4 r^2||v||^2} - \frac{F^{\ast}(x^2 - y^2) F^{\ast}(dx^1 - dy^1)}{4 r^2 ||v||^2} \right).
\end{equation}

Since 
\begin{equation}
	\frac{\partial}{\partial u} x(z, 0) = \frac{\partial}{\partial u} y(z, 0) =  \Id
\end{equation}
and 
\begin{equation}
	\frac{\partial}{\partial z} \left( x(z, 0) - y(z,0)\right) = 0,
\end{equation}
we see that $F^{\ast}(dx^i - dy^i) \sim 2 d(rv^i)$ near $r=0$ and
\begin{equation}
\lim_{r \to 0} \frac{F^{\ast}((x-y)}{r} = \lim_{r \to 0} \frac{(\exp_z(rv) - \exp_z(-rv))}{r} = 2 v
\end{equation}
(cf. \cite[Lemma 2.3.A.2]{Jos06}). Therefore, we finally get the pullback of the singular part of $id^{\mathbb{C}} G_{\mathrm{Ar}}(x,y)$ near $r=0$ as
\begin{equation}
	 \frac{1}{\pi} \cdot   \sqrt{\det(g_{pq}(z))} \left(v^1 dv^2 - v^2 dv^1\right) + O(r)
\end{equation}
and hence we conclude that $F^{\ast}(d^{\mathbb{C}} G_{\mathrm{Ar}}(x,y))$ extends smoothly on $S(TX) \times [0,\epsilon)$. The boundary value of this part is given by its restriction to $S(TX) \times \{0\}$ as follows:
\begin{equation}
\frac{1}{2\pi} \cdot   \sqrt{\det(g_{pq}(z))} \left(v^1 dv^2 - v^2 dv^1\right)=:\omega  \in \Omega^1(S(TX))
\end{equation}
and this form is a fiberwise volume form of $S^1$ normalized so that $\int_{S(T_zX)} \omega = 1$.
Thus, the whole boundary value is given as
\begin{equation}\label{eq:extension}
\begin{split}
	 \frac{1}{2}  \lim_{r \to 0} F^{\ast}\left(d^{\mathbb{C}} G_{\mathrm{Ar}}(x,y)\right) & =  \frac{1}{2} F^{\ast}\left(d^{\mathbb{C}} G_{\mathrm{Ar}}(x,y)\right)|_{S(TX) \times \{0\}} \\
  & = \omega +  q^{\ast}_{\partial}\psi
  \end{split}
\end{equation}
where we put $\psi:= (\frac{1}{2} d^{\mathbb{C}}\eta)|_{\Delta_X}\in \Omega^1(\Delta_X) \simeq \Omega^1(X)$. Finally, note that we give a proof of \eqref{eq:extension} for the chosen local coordinate but this property itself does not depend on the choice of the coordinate system. This completes the proof.
\end{proof}

\begin{corollary}\label{cor:extended_prop}
Let $X$ be a compact Riemann surface with a tangential base point $v \in T_{s_0}X$ at $s_0$ for some point $s_0 \in X$. Let $C_2(X)_{(s_0,v)}$ denote the compactification of $\Conf_2(X)$ with respect to $(s_0,v)$. Then, the smooth differential one form $d^{\mathbb{C}}  G_v(x,y) \in \Omega^1(\Conf_2(X))$ extends smoothly to the differential one form $\widetilde{d^{\mathbb{C}} G_v(z,w)} \in \Omega^1(C_2(X)_{(s_0, v)})$ with following boundary condition:

\begin{enumerate}[(1)]
\item  On the boundary component $\partial_{\{\{1,2\}\}} C_2(X)_{(s_0, v)}\simeq S(TX)$ corresponding to $x_1 = x_2$,
\begin{equation}
	\frac{1}{2}  \widetilde{d^{\mathbb{C}} G_v(z,w)}|_{\partial_{\{\{1, 2\}\}} C_2(X)} = \omega +  q^{\ast}|_{\partial_{\{\{1, 2\}\}} C_2(X)} (\psi)
\end{equation}
where $\omega \in \Omega^1(S(TX))$ is a fiberwise normalized volume form of $S^1$ and $\psi \in \Omega^1(X)$ is a one form on $X$. 
\item On the boundary component $\partial_{\{\{i, s_0\}\}} C_2(X)_v \simeq S(T_{s_0}X)$ corresponding to $x_i = s_0$ for $i=1,2$,
\begin{equation}
	\frac{1}{2} \widetilde{d^{\mathbb{C}} G_v(z,w)}|_{\partial_{\{\{i, s_0\}\}} C_2(X)} = \vol_{S^1} 
\end{equation}
where $\vol_{S^1} \in \Omega^1(S(T_{s_0} X))\simeq\Omega^1(S^1) $ is a volume form of $S^1$.\end{enumerate}
Moreover, the differential of $\widetilde{d^{\mathbb{C}} G_v(z,w)}$ is given by
\begin{equation}
	\frac{1}{2} d \widetilde{d^{\mathbb{C}} G_v(z,w)} =  - \frac{\sqrt{-1}}{2} \left(\sum_{k=1}^g p_1^{\ast}\alpha_k \wedge p_2^{\ast}\bar{\alpha}_k + p_2^{\ast} \alpha_k \wedge p_1^{\ast}\bar{\alpha}_k \right).
\end{equation}

\end{corollary}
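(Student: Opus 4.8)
The plan is to reduce the statement to the Arakelov Green function, for which Theorem \ref{thm:ext_green} already supplies the smooth extension and the diagonal boundary value, and then to read off the remaining data from the decomposition \eqref{eq:G_a_by_Ara} and from the normalization of $G_v$ at $s_0$. Applying $d^{\mathbb{C}}$ to \eqref{eq:G_a_by_Ara} with $a=s_0$ (so $d^{\mathbb{C}}C=0$), I would write
\[
	d^{\mathbb{C}} G_v(z,w) = d^{\mathbb{C}} G_{\mathrm{Ar}}(z,w) - d^{\mathbb{C}}_w G_{\mathrm{Ar}}(s_0,w) - d^{\mathbb{C}}_z G_{\mathrm{Ar}}(z,s_0),
\]
the last two terms being the one-variable forms obtained by pinning an argument at $s_0$. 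The first term extends smoothly to $C_2(X)$ by Theorem \ref{thm:ext_green}, and stays smooth after passing to $C_2(X)_{(s_0,v)}$ because it is regular wherever a moving point meets $s_0$ (the extra blow-ups at $s_0$ only compose it with a smooth blowdown). The two one-variable terms are singular only along the infinite strata, where smooth extension will come for free from the normalization of $G_v$ treated below. Granting this, the sum defines $\widetilde{d^{\mathbb{C}} G_v(z,w)}\in\Omega^1(C_2(X)_{(s_0,v)})$.

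For the principal stratum (part (1)) I would restrict the three terms to $\partial_{\{\{1,2\}\}}C_2(X)\simeq S(TX)$. Near the diagonal away from $s_0$ the two one-variable terms are smooth, and in the blow-up coordinates of Theorem \ref{thm:ext_green} their transverse ($dr$) components drop on $\{r=0\}$ while the remaining components are pullbacks, via the blowdown $q$, of smooth $1$-forms on $\Delta_X\simeq X$. The main term contributes $\tfrac12\,\widetilde{d^{\mathbb{C}} G_{\mathrm{Ar}}}|_{\partial}=\omega+q^{\ast}_{\partial}\psi_0$ by Theorem \ref{thm:ext_green}. Absorbing the one-variable contributions into $\psi_0$ yields $\omega+q^{\ast}\psi$, with $\omega$ the fiberwise normalized $S^1$-volume form.

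For the infinite strata (part (2)) the cleanest route is the normalization of $G_v$ itself. Fixing the coordinate $t$ with $dt(v)=1$, write $G_v(z,w)=(2\pi\sqrt{-1})^{-1}\log|t|+h(z,w)$ near $z=s_0$, where $h$ is smooth and, by the defining property of $G_v$, vanishes identically on the face $\{z=s_0\}$. Hence $d^{\mathbb{C}}h$ restricts to zero on the boundary circle: its $dz$-part is $\propto dr$ and dies at $r=0$, while its $dw$-part equals $d^{\mathbb{C}}_w\big(h(s_0,w)\big)=0$. Since $d^{\mathbb{C}}\log|t|=\sqrt{-1}\,d\theta$ for $\theta=\arg t$, this also shows smooth extension across the infinite stratum and gives
\[
	\tfrac12\,\widetilde{d^{\mathbb{C}} G_v(z,w)}\big|_{\partial_{\{\{1,s_0\}\}}C_2(X)} = \tfrac12\,(2\pi\sqrt{-1})^{-1}\sqrt{-1}\,d\theta = \tfrac{1}{4\pi}\,d\theta,
\]
a volume form of $S^1=S(T_{s_0}X)$; the face $\{w=s_0\}$ is handled symmetrically.

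Finally, for the differential I would use $d\,d^{\mathbb{C}}=2\bar{\partial}\partial$ together with the relation recorded after Definition \ref{def:Green_function}, namely $\bar{\partial}\partial G_v=\delta_{\Delta_X}-\tfrac{\sqrt{-1}}{2}\sum_k\big(p_1^{\ast}\alpha_k\wedge p_2^{\ast}\bar\alpha_k+p_2^{\ast}\alpha_k\wedge p_1^{\ast}\bar\alpha_k\big)$ on $(X\setminus\{s_0\})^2$. As $\widetilde{d^{\mathbb{C}} G_v(z,w)}$ is a genuinely smooth form on the manifold with corners $C_2(X)_{(s_0,v)}$, its exterior derivative is smooth and determined by its restriction to the dense interior $\Conf_2(X)$, where $\delta_{\Delta_X}$ vanishes; thus $\tfrac12\,d\,\widetilde{d^{\mathbb{C}} G_v}=-\tfrac{\sqrt{-1}}{2}\sum_k(\cdots)$ there, and this globally smooth form on $X^2$ pulls back to the asserted identity on all of $C_2(X)_{(s_0,v)}$ by continuity. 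The main obstacle, and the part requiring care, is the simultaneous smooth extension across all strata and the precise boundary bookkeeping: showing that the transverse $dr$-pieces drop, that the base forms on the infinite strata cancel (equivalently, that $h|_{\{z=s_0\}}=0$), and that the normalizations and orientations line up to give exactly $\omega+q^{\ast}\psi$ and the stated $S^1$-volume form.
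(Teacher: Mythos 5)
Your proposal is correct and takes essentially the same route as the paper: the paper's proof likewise reduces everything to the decomposition \eqref{eq:G_a_by_Ara} with $a=s_0$ and Theorem \ref{thm:ext_green}, handling the infinite strata by the single observation that $q^{\ast}|_{\partial_{\{\{i, s_0\}\}} C_2(X)} (\psi|_{x_i=s_0}) = 0$ for any $\psi \in \Omega^1(X)$. Your treatment of part (2) via the normalization --- writing $G_v=(2\pi\sqrt{-1})^{-1}\log|t|+h$ with $h(s_0,\cdot)\equiv 0$, so that both the $dz$- and $dw$-components of $d^{\mathbb{C}}h$ die on the fiber over $s_0$ --- is just an equivalent repackaging of that observation, and your density/continuity argument for the final differential identity makes explicit what the paper leaves implicit in ``follows immediately.''
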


\begin{proof}
This follows immediately by applying Theorem \ref{thm:ext_green} to $G_{s_0} (z,w) = G_{\mathrm{Ar}}(z,w) -  G_{\mathrm{Ar}}(s_0,w) -  G_{\mathrm{Ar}}(z,s_0) + C$. For (ii), note that $q^{\ast}|_{\partial_{\{\{i, s_0\}\}} C_2(X)} (\psi|_{x_i=s_0}) = 0$ for any $\psi \in \Omega^1(X)$.
\end{proof}

\begin{remark}
\begin{enumerate}[(1)]
\item From Corollary \ref{cor:extended_prop}, $\frac{1}{2} d^{\mathbb{C}} G_{s_0}(z,w)$ can be regraded as a 2-dimensional Chern--Simons propagator for $X \setminus\{s_0\}$. More precisely, it may be thought of as a 2-dimensional propagator for surfaces punctured at $s_0$ obtained from those for closed surfaces given in \cite{CW} by a similar trick used in \cite{BC} to construct a closed propagator aimed at defining invariants for rational homology 3-spheres.
\item As mentioned in \cite[Page 33]{Jos06}, in general, the exponential map is not holomorphic and only the differentiable structure of $X$ is preserved. However, we will use the exponential map and compactification of configuration spaces of $X$ just to argue the convergence of integrals of differential forms. Therefore, it does not cause any trouble in most of the present article.
\end{enumerate}
\end{remark}

%%%%%%%%%%%%%%%%%%%%%%%%%%%%%%%%%%%%%%%%%%%%%%%%%%%%%%%%%%%%%%%%%%%%%%%%%
\section{Graph complex}\label{section:4}
This section introduces an appropriate graph complex for our purpose of generalizing the Hodge correlator twistor connection.
%%%%%%%%%%%%%%%%%%%%%%%%%%%%%%%%%%%%%%%%%%%%%%%%%%%%%%%%%%%%%%%%%%%%%%%%
%%%%%%%%%%%%%%%%%%%%%%%%%%%%%%%%%%%%%%%%%%%%%%%%%%%%%%%%%%%%%%%%%%%%%%%%
\subsection{(Decorated) Graphs}\label{section:4.1}
%%%%%%%%%%%%%%%%%%%%%%%%%%%%%%%%%%%%%%%%%%%%%%%%%%%%%%%%%%%%%%%%%%%%%%%%
In this section, we prepare some terminologies of graphs we use in the present article. Throughout this article, graphs are always assumed to be \textit{finite} graphs, i.e., the sets of vertices and edges are both finite sets. 

\begin{definition}
\begin{enumerate}[(1)]
    \item A \textit{uni-trivalent graph} $\Gamma$ is a graph with vertices whose valencies are univalent or trivalent. Sometimes, a uni-trivalent graph is also called a \textit{trivalent} graph for simplicity.
    \item An edge which connects a single vertex is called \textit{self-loop}. If two distinct vertices are connected by exactly one edge, then this edge is said to be \textit{regular}. An edge of a connected graph is called \textit{cycle edge} if the graph remains connected after removing the edge.   
    \item For a graph $\Gamma$, $V_{\Gamma}$ and $E_{\Gamma}$ denote the sets of  vertices and edges of $\Gamma$ respectively.
    \item For a graph $\Gamma$, a non-univalent (resp. univalent) vertex of $\Gamma$  is  also called \textit{internal (resp. external) vertex} of $\Gamma$. The set of internal (resp. external) vertices of $\Gamma$ is denoted by $V^{\mathrm{int}}_{\Gamma}$ (resp. $V^{\mathrm{ext}}_{\Gamma}$) and $V_{\Gamma}$ has the decomposition $V_{\Gamma} = V_{\Gamma}^{\mathrm{int}} \sqcup V_{\Gamma}^{\mathrm{ext}}$.
    \item  For a graph $\Gamma$,  an \textit{external edge} is an edge connecting a univalent vertex and an internal vertex, and an \textit{internal edge} is an edge connecting two internal vertices. Then, there is a decomposition $E_{\Gamma} = E^{\mathrm{ext}}_{\Gamma} \sqcup E_{\Gamma}^{\mathrm{int}}$ where $E^{\mathrm{ext}}_{\Gamma}$ is the set of external edges of $\Gamma$ and $E_{\Gamma}^{\mathrm{int}}$ is the set of internal edges of $\Gamma$.
    % \item For a graph $\Gamma$, a \textit{vertex orientation} of $\Gamma$ is a collection of a cyclic ordering at each internal vertex of $\Gamma$. A  graph $\Gamma$ is said to be \textit{vertex oriented} if $\Gamma$ is endowed with a vertex orientation.
    \item A graph $\Gamma$ is said to be \textit{ordered} if $E_{\Gamma}$ is ordered.
    \item For a graph $\Gamma$, its \textit{number of loops} $l(\Gamma)$ is defined as the first Betti number of (geometric realization of) $\Gamma$.
\end{enumerate}
\end{definition}

\begin{remark}
    For a connected graph $\Gamma$, the Euler characteristic of $\Gamma$ implies $|V_{\Gamma}| - |E_{\Gamma}| = 1 - l(\Gamma)$.
\end{remark}

\begin{definition}{(decoration of a graph)}
Let $\Gamma$ be a graph and $R$ be a set.  A \textit{decorated graph} is a graph $\Gamma$ equipped with a map $V_{\Gamma}^{\mathrm{ext}} \rightarrow R$ called \textit{decoration} or \textit{$R$-decoration}.
\end{definition}

\begin{example}
Let $X$ be a compact Riemann surface with a non-zero tangent vector $v$ at a point $s_0 \in X$. Let $S^{\ast}=\{s_1, \ldots, s_k\}$ be a collection of distinct points of $X$ different from $s_0$. As in Section \ref{subsection:2.1}, put
\begin{equation}
	V^{\vee}_{X, S^{\ast}} = \mathbb{C}[S^{\ast}] \oplus (\Omega^1_X \oplus \overline{\Omega}^1_X).
\end{equation}
Then, one can consider $V^{\vee}_{X, S^{\ast}}$-decorated graphs. Let $\Gamma$ be a graph with $V^{\vee}_{X, S^{\ast}}$-decoration given by $\mathsf{d}: V_{\Gamma}^{\ext} \rightarrow V^{\vee}_{X, S^{\ast}}$. Now, the direct sum decomposition of $V^{\vee}_{X, S^{\ast}}$ induces the associated decomposition of $V_{\Gamma}^{\ext}$ given by
\begin{equation}\label{decomposition_ext}
    V_{\Gamma}^{\ext} = V_{\Gamma}^{\ext, S^{\ast}} \sqcup V_{\Gamma}^{\ext, \mathrm{sp}}
\end{equation}
where we set
\begin{equation}
V_{\Gamma}^{\ext, S^{\ast}}:= \mathsf{d}^{-1}(\mathbb{C}[S^{\ast}]),\quad   V_{\Gamma}^{\ext, \mathrm{sp}}:= \mathsf{d}^{-1}(\Omega^1_X \oplus \overline{\Omega}^1_X).
\end{equation}
Elements of $V_{\Gamma}^{\ext, S^{\ast}}$ are called \textit{$S^{\ast}$-decorated} external vertices and those of $V_{\Gamma}^{\ext, \mathrm{sp}}$ are called \textit{special decorated} external vertices. Several examples of $V^{\vee}_{X, S^{\ast}}$-decorated trivalent graphs are given in Figure \ref{fig:4.1.1}. 

\begin{figure}[h]
\captionsetup{margin=2cm}
\begin{center}
\begin{tikzpicture}
\node at (0,-1.8) {$\Gamma_1$};
\node at (5,-1.8) {$\Gamma_2$};
\node at (10,-1.8) {$\Gamma_3$};
\begin{scope}
%circle with 4 points
	% \draw (0,0) circle (1.5cm);
	\coordinate (a3) at (1.5*0.707106, -1.5*0.707106) node [below right] at (a3) {$\alpha_1$};
	\coordinate (a0) at (-1.5*0.707106, -1.5*0.707106) node [below left] at (a0) {$s$};
	\coordinate (a1) at (-1.5*0.707106, 1.5*0.707106) node [above left] at (a1) {$s$};
	\coordinate (a2) at (1.5*0.707106, 1.5*0.707106) node [above right] at (a2) {$s'$};
	\node at (a0) {$\bullet$};
	\node at  (a1) {$\bullet$};
	\node at (a2) {$\bullet$};
	\node at  (a3) {$\bullet$};
%tree graph part
\coordinate (y) at (0.6,0) node [right] at (y) {};
\coordinate (x) at (-0.6,0) node [left] at (x) {};
\node at (x) {$\bullet$};
\node at (y) {$\bullet$};
\draw[densely dotted] (x) -- (y);
\draw[densely dotted] (x) -- (a0);
\draw[densely dotted] (x) -- (a1);
\draw[densely dotted] (y) -- (a2);
\draw[densely dotted] (y) -- (a3);
\end{scope}

\begin{scope}[xshift=5cm]
	% \draw (0,0) circle (1.5cm);
	\coordinate (a0) at (-1.5,0) node [left] at (a0) {$s$};
	\coordinate (a1) at (1.5,0) node [right] at (a1) {$s$};
	\node at  (a0)  {$\bullet$};
	\node at (a1) {$\bullet$};
	\draw[densely dotted] (0,0) circle (0.8cm);
	\draw[densely dotted] (a0) -- (-0.8,0);
	\draw[densely dotted] (0.8,0) -- (a1);
	\draw[densely dotted] (0,0.8) -- (0, -0.8);
 \node at (-0.8, 0) {$\bullet$};
 \node at (0.8, 0) {$\bullet$};
 \node at (0, 0.8) {$\bullet$};
 \node at (0, -0.8) {$\bullet$};
\end{scope}

\begin{scope}[xshift=10cm]
	% \draw (0,0) circle (1.5cm);
	\coordinate (a1) at (0, 1.5) node [above] at (a1) {$s$};
	\coordinate (a0) at (-1.50*.8660254, -1.5*0.5) node [below left] at (a0) {$\alpha_1$};
	\coordinate (a2) at (1.50*.8660254, -1.5*0.5) node [below right] at (a2) {$\bar{\alpha}_1$};
	\node at (a1) {$\bullet$};
	\node at  (a0) {$\bullet$};
	\node at (a2) {$\bullet$};
 \draw[densely dotted] (a0) -- (a2);
	\draw[densely dotted] (0,-0.75) -- (0, -0.15);
	\draw[densely dotted] (0,0.85) -- (a1);
	\draw[densely dotted] (0, 0.35) circle (0.5);
 \node at (0, -0.75) {$\bullet$};
 \node at (0, -0.15) {$\bullet$};
 \node at (0, 0.85) {$\bullet$};
\end{scope}
\end{tikzpicture}	
\end{center}
\caption[Examples of $V^{\vee}_{X, S^{\ast}}$-decorated trivalent graphs] {$V^{\vee}_{X, S^{\ast}}$-decorated trivalent graphs. Here, decorations are given by $s, s' \in S^{\ast}$, $\alpha_1 \in \Omega^1_X$  and $\overline{\alpha}_1 \in \overline{\Omega}^1_X$}
\label{fig:4.1.1}
\end{figure}
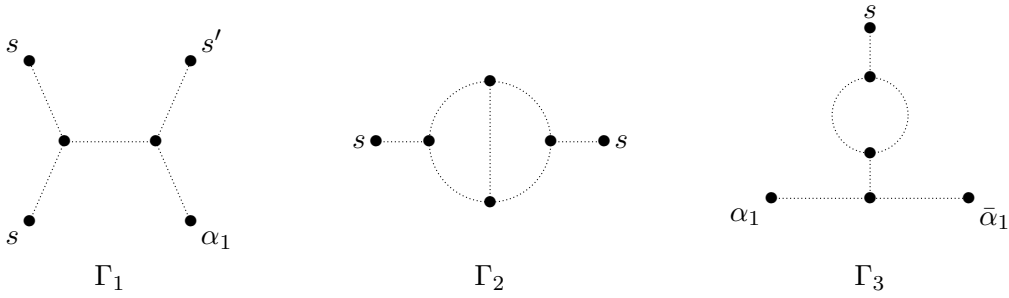
\end{example}

\begin{convention}\label{conv:sym_dec}
    For connected $R$-decorated graphs $\Gamma_1, \Gamma_2$ with decorations $W_1, W_2:V_{\Gamma}^{\mathrm{ext}} \rightarrow R$ respectively. Then, we have the induced $R$-decoration on the disjoint union $\Gamma_1\sqcup \Gamma_2$ of $\Gamma_1$ and $\Gamma_2$. We denote it by $W_1 \sqcup W_2$ and call it the \textit{symmetric product} of $W_1$ and $W_2$. Note that $W_2 \sqcup W_1 = W_1 \sqcup W_2$ as a map $V_{\Gamma_1}^{\mathrm{ext}}\sqcup V_{\Gamma_2}^{\mathrm{ext}}  \rightarrow R$.
\end{convention}

\subsection{Automorphism group of (Feynman) graphs}\label{ss:automorphism_graph}
Here, we recall some basics of the notion of the automorphism group of a Feynman graph. Roughly speaking,  a Feynman graph is a graph represented as partitions of the set of half-edges. Such partitions correspond to the edge set and the vertex set. The automorphisms of a Feynman graph are permutations of half-edges preserving such partitions.\footnote{The authors learned the definitions of Feynman graphs and their automorphisms given in the present article from Hiroyuki Fuji and Akishi Kato (indirectly). Besides, the version here is partially based on the work \cite[\S 6.6]{KL23} with Bingxiao Liu.}

The following definition of a Feynman graph is not a standard one but is a version adapted for our later usage.

\begin{definition}[Feynman graph]
\begin{enumerate}[(1)]

\item  Let $n$ be a non-negative integer. For each $k \geq 3$, we set $n_k$ be a non-negative integer and suppose that $n_k =0$ for all $k \geq 3$ but only finitely many $k$. Let $h_i$ be a set of $\sum_{k \geq 3} n_k$ elements, called the \textit{set of internal half-edges} and $h_e$ be a set of $n$-elements, called the \textit{set of external half-edges}. Then, we consider partitions $P$ of $h:= h_i \cup h_e$ as follows:
    \begin{itemize}
        \item a partition into pairs of half-edges which we call \textit{edges} and denote it by $E(h,P)$.
    \item a partition of $h_i$ into $n_k$ sets of cardinality $k$ which we call \textit{internal vertices} with valency $k$ $(k \geq 3)$ and denote it by $V^{\mathrm{int}}(h,P)$,
    \item a partition of $h_e$ into $n$ sets of cardinality $1$ which we call \textit{external vertices} with valency $1$ and denote it by $V^{\mathrm{ext}}(h,P)$.
    \end{itemize}
The set $E(h,P)$ is further decomposed into two disjoint sets $E(h,P)=E^{\mathrm{int}}(h,P)\sqcup E^{\mathrm{ext}}(h,P)$ where $E^{\mathrm{int}}(h,P)$ consists of pairs of internal half-edges and $E^{\mathrm{ext}}(h,P)$ consists of pairs of internal and external half-edges or those of external half-edges.

From such partitions, we can obtain a topological graph $\Gamma(h, P)$ by assigning pairs of half-edges in $E(h,P)$ to edges and sets of cardinality $k$ in  $V^{\mathrm{int}}(h,P) \cup  V^{\mathrm{ext}}(h,P)$ to vertices of valency $k$ respectively. If $\Gamma(h_1, P_1)$, $\Gamma(h_2, P_2)$ are two topological graphs, they are said to be equivalent to each other if there is a bijection between $h_1$ and $h_2$ which maps the partitions $P_1$ of $h_1$ to the ones $P_2$ of $h_2$. Then, such an equivalence class is called a \textit{Feynman graph}.

\item With the same situation as (1), in addition to the partitions $P$ of $h$, we further impose information of an $R$-decoration for a set $R$, i.e., a map $W:V^{\mathrm{ext}}(h,P) \rightarrow R$.
For two  $\Gamma(h_1, P_1)$, $\Gamma(h_2, P_2)$ topological graphs with information of an $R$-decoration of external vertices as above, they are said to be equivalent to each other if there is a bijection between $h_1$ and $h_2$ which maps the partitions $P_1$ of $h_1$ to the ones $P_2$ of $h_2$ preserving the $R$-decoration of the set of external vertices. Then, such an equivalence class is called a \textit{decorated Feynman graph}.

\end{enumerate}
\end{definition}

\begin{definition}[Automorphism group of a (Feynman) graph]\label{def:auto_feynman}
\begin{enumerate}[(1)]
\item Let $\Gamma$ be a Feynman graph represented by $\Gamma(h, P)$. An \textit{automorphism} of $\Gamma$ is a permutation of $h$ preserving its partitions $P$. We denote by $\Aut(\Gamma)$  the group of automorphisms of $\Gamma$.
\item Similarly, let $R$ be a set and let $\Gamma$ be an $R$-decorated Feynman graph represented by $\Gamma(h, P)$. An \textit{automorphism} of $\Gamma$ is a permutation of $h$ preserving its partitions $P$ and the $R$-decoration of external vertices. We denote by $\Aut(\Gamma)$  the group of automorphisms of $\Gamma$.
\end{enumerate}
\end{definition}

Note that an automorphism of a Feynman graph is well-defined, i.e., it is independent of the choice of representatives as a pair $(h, P)$. In the sequel, a Feynman graph and the automorphism of a (decorated) Feynman graph will be referred to as just a (topological) graph and the automorphism of a (decorated) graph respectively.
\begin{convention}
    For a $V_{X, S^{\ast}}^{\vee}$-decorated graph $\Gamma$, the underlying decorated Feynman graph $\Gamma(h,P)$ for $\Gamma$ is taken so that the decoration of the set of external vertices $V^{\mathrm{ext}}_{\Gamma}$ in \eqref{decomposition_ext} corresponds to that of $V^{\mathrm{ext}}(h,P)$ in Definition \ref{def:auto_feynman}(2).
\end{convention}

\begin{example}
Let us consider decorated uni-trivalent graphs given in Figure \ref{fig:4.1.1}. Then, the numbers of automorphism groups $|\Aut(\Gamma_i)|$ for decorated graphs $\Gamma_i$ in Figure \ref{fig:4.1.1} are given by
\begin{equation}
    |\Aut(\Gamma_1)| = 2, |\Aut(\Gamma_2)| = 4, |\Aut(\Gamma_3)| = 2.
\end{equation}
\end{example}

%%%%%%%%%%%%%%%%%%%%%%%%%%%%%%%%%%%%%%%%%%%%%%%%%%%%%%%%%%%%%%%%%%%%%%%%
\subsection{Orientation torsor of graphs}\label{section:4.3}
%%%%%%%%%%%%%%%%%%%%%%%%%%%%%%%%%%%%%%%%%%%%%%%%%%%%%%%%%%%%%%%%%%%%%%%%
We recall the notion of the orientation torsor of graphs following \cite[\S 3.2]{GoncharovHodge1}.

For a finite set $\mathcal{X}=\{x_1, \ldots, x_{|\mathcal{X}|}\}$, there is a $\mathbb{Z}/2\mathbb{Z}$-torsor $\mathrm{or}_{\mathcal{X}}$ called the \textit{orientation torsor} whose elements are expressions like
\begin{equation}
    \pm x_1 \wedge \cdots \wedge x_{|\mathcal{X}|}
\end{equation}
subject to the relation, for any permutation $\sigma \in S_{|\mathcal{X}|}$ on the set $\mathcal{X}$,
\begin{equation}
    \sign(\sigma) x_{\sigma(1)} \wedge \cdots \wedge x_{\sigma(|\mathcal{X}|)} = x_1 \wedge \cdots \wedge x_{|\mathcal{X}|}.
\end{equation}
The orientation torsor $\mathrm{or}_{\mathcal{X}}$ consists of two equivalent classes represented respectively by 
\begin{equation}
    x_1 \wedge \cdots \wedge x_{|\mathcal{X}|},\quad \text{and} \quad - x_1 \wedge \cdots \wedge x_{|\mathcal{X}|}.
\end{equation}
For a connected graph $\Gamma$, the \textit{orientation torsor} $\mathrm{or}_{\Gamma}$ of $\Gamma$ is defined as the orientation torsor $\mathrm{or}_{E_{\Gamma}}$ of $E_{\Gamma}$, the set of edges of $\Gamma$.  If $\Gamma$ is a connected graph endowed with an ordering on the set $E_{\Gamma}$, i.e., a fixed bijection $E_{\Gamma} \overset{\sim}{\rightarrow} \{1,\ldots, |E_{\Gamma}|\}$, then we have a canonical choice of an element of orientation torsor associated with the ordering of $E_{\Gamma}$, i.e., $E_1 \wedge \cdots \wedge E_{|E_{\Gamma}|}$. Suppose that for a connected graph $\Gamma$, we give two different orderings on $E_{\Gamma}$. Two such orderings are said to be equivalent if their associated canonical elements of orientation torsors coincide. Then, such an equivalence class can be identified with a pair $(\Gamma; \mathrm{Or}_{\Gamma})$ of a connected (topological) graph $\Gamma$ and an element of the orientation torsor $\mathrm{Or}_{\Gamma} \in \mathrm{or}_{\Gamma}$ of $\Gamma$. If we fix one element $\mathrm{Or}_{\Gamma}$ of orientation torsor of $\Gamma$, there are exactly two equivalence classes  $(\Gamma; \mathrm{Or}_{\Gamma})$ and $(\Gamma; -\mathrm{Or}_{\Gamma})$.

Now we give a detail on the canonical orientation for planar trivalent trees. Let $n$ be an integer with $n \geq 3$. For a uni-trivalent planar tree graph $T$ with $n$-external vertices, we can define a canonical orientation torsor as used in Section \ref{subsection:2.3}. In \cite{GoncharovHodge1, malkin2020shuffle}, they used the canonical orientation without details, so here we give a detailed explanation for completeness. First, choose one of the external edges of $T$ as the root edge. We number this edge as $E_1$. Then, take a small neighbourhood of $T$ whose boundary has the induced orientation from that of the ambient plane. Starting from $E_1$, number the other edges going along the oriented boundary as $E_2,\ldots, E_{2n-3}$. Then, by this ordering on $E_T$, one obtains the corresponding orientation torsor $\mathrm{Or}_T$.

\begin{lemma}\label{lemm:or_T}
    Keeping the notation as above, the element $\mathrm{Or}_T$ of orientation torsor does not depend on the choice of root edge of $T$.
\end{lemma}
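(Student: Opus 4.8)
The plan is to reinterpret the numbering procedure and then reduce to a single elementary re-rooting move whose sign I can control. First I would observe that numbering the edges along the oriented boundary of a regular neighbourhood of $T$, starting from the root edge, is exactly listing the edges in the order of their \emph{first appearance} along the contour walk; equivalently it is the depth-first pre-order of the edges rooted at the univalent endpoint of the chosen root edge. Each edge is met twice by the contour (once along each side), and for a tree the two sides of an \emph{external} edge are adjacent, so choosing an external edge as root amounts to starting the contour at the corresponding tip. In particular the $n$ external edges are visited cyclically, and it suffices to show that re-rooting from one external edge $e$ to the next external edge $e'$ met along the contour changes the ordering by an even permutation: since these moves cycle through all external edges, any two choices of root then differ by an even permutation, i.e. give the same element $\mathrm{Or}_T$.

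Next I would write down explicitly how the ordering transforms under this elementary move. Let $g_1,\dots,g_k$ be the internal edges on the descending path from $e$ to $e'$ and let $\beta_1,\dots,\beta_{k+1}$ denote the maximal side-branches hanging off the vertices of this path, each regarded as a block of consecutive edges in its own depth-first order. Because the ambient orientation fixes the rotational order at every vertex, the two contour walks agree on the interior of each branch (every $\beta_i$ is entered through the same edge and explored identically), so the first-appearance orders are
\begin{equation*}
\Lambda=(e,g_1,\dots,g_k,e',\beta_{k+1},\beta_k,\dots,\beta_1),\qquad
\Lambda'=(e',\beta_{k+1},g_k,\beta_k,\dots,g_1,\beta_1,e),
\end{equation*}
where $\Lambda$ corresponds to root $e$ and $\Lambda'$ to root $e'$. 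Thus the move is a block permutation preserving the internal order of each block.

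The sign is then computed from two parity facts. The arithmetic key is that every side-branch $\beta_i$ contains an \emph{odd} number of edges: a branch is either a single external edge or an edge into a trivalent vertex carrying two further branches, so its edge-count satisfies $b=1+b'+b''$ and is odd by induction; together with the fact that the total number of edges $2n-3$ is odd, this means \emph{every} block occurring in $\Lambda,\Lambda'$ (the singletons $e,e',g_i$ and the branches $\beta_i$) has odd length. Consequently the sign of the block permutation equals $(-1)$ raised to the number of inverted block-pairs, independent of the block sizes. A direct count of the inverted pairs between $\Lambda$ and $\Lambda'$ gives $4k+2+2\binom{k}{2}$, which is even, so the move is an even permutation and the lemma follows.

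The step I expect to be the main obstacle is the parity bookkeeping in the middle paragraph: one must verify carefully that the branch blocks occur with identical internal order in both $\Lambda$ and $\Lambda'$ (so that the transformation really is a block permutation), and the whole argument hinges on the observation that each branch has an odd number of edges — without this the sign would appear to depend on the branch sizes. Once these two points are in place, the inversion count is routine.
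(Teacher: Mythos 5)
Your proof is correct, and its core strategy is the same as the paper's: reduce to the elementary move that re-roots at the next external edge met along the contour, and control the sign via the parity of branch sizes. The difference is in how completely the elementary move is handled. The paper writes $\mathrm{Or}_T = E_1 \wedge E_2 \wedge E_3 \wedge \mathrm{Or}_{T_1} \wedge \mathrm{Or}_{T_2}$ and compares it with $E_3 \wedge \mathrm{Or}_{T_1} \wedge E_2 \wedge \mathrm{Or}_{T_2} \wedge E_1$ by a short wedge-shuffle using the oddness of $|E_{T_1}|$ and of the total edge count $2n-3$; this is precisely your move in the special case $k=1$ (exactly one internal edge between the two consecutive tips), and the paper then asserts that all other re-rootings reduce to iterations of this replacement. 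Your block-permutation formulation treats arbitrary $k$ uniformly — including $k=0$, where the two tips share an internal vertex and the move is a cyclic shift of three blocks — so it actually fills in cases the paper leaves implicit. I checked the two points you flagged: the identical internal order of the branch blocks in $\Lambda$ and $\Lambda'$ does follow from the fixed rotational order at each vertex (each $\beta_i$ is entered through its root edge in both walks), and your inversion count is right — a direct tally gives $(2k+2) + \binom{k}{2} + k + \binom{k+1}{2} = k^2+3k+2 = (k+1)(k+2)$, agreeing with your $4k+2+2\binom{k}{2}$ and manifestly even. The induction $b = 1 + b' + b''$ showing each branch has odd size is exactly what makes the sign of the block permutation equal $(-1)^{\#\text{block inversions}}$ independently of block sizes, which is the same parity input the paper uses; so your argument is a more general and self-contained version of the paper's proof rather than a different one.
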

\begin{proof}
Note that $\mathrm{Or}_T$ can be written as 
\begin{equation}
    \mathrm{Or}_T = E_1 \wedge E_2 \wedge E_3 \wedge \mathrm{Or}_{T_1} \wedge \mathrm{Or}_{T_2}
\end{equation}
where $T_1, T_2$ denote uni-trivalent trees obtained from $T$ be removing $E_1, E_2, E_3$. Here, $T_1$ is the resulting tree adjacent to $E_3$ and $T_2$ to $E_1$. Let us choose as  root edge the external edge whose external vertex lies right after the $E_1$ along the oriented boundary circle of the ambient disk. Since $T$ is a uni-trivalent planer tree, this edge is numbered as $E_3$ in the previous ordering on $E_T$. In the new ordering by choosing $E_3$ as root, the corresponding orientation torsor is given by the form
    \begin{equation}
        E_3 \wedge \mathrm{Or}_{T_1} \wedge E_2 \wedge \mathrm{Or}_{T_2} \wedge E_1.
    \end{equation}
    Since the number of edges of the uni-trivalent tree is odd, 
    \begin{equation}
        \begin{split}
            &E_3 \wedge \mathrm{Or}_{T_1} \wedge E_2 \wedge \mathrm{Or}_{T_2} \wedge E_1\\
            =&E_1 \wedge E_3 \wedge \mathrm{Or}_{T_1} \wedge E_2 \wedge \mathrm{Or}_{T_2}\\
            =& - E_1 \wedge E_3 \wedge E_2 \wedge \mathrm{Or}_{T_1} \wedge \mathrm{Or}_{T_2}\\
            =& E_1 \wedge E_2 \wedge E_3 \wedge \mathrm{Or}_{T_1} \wedge \mathrm{Or}_{T_2}\\
            =& \mathrm{Or}_T.
        \end{split}
    \end{equation}
    Other choices of root edge can be reduced to an iteration of the above type of replacement. Therefore, $\mathrm{Or}_T$ is independent of the choice of root edge of $T$.
\end{proof}
Lemma \ref{lemm:or_T} state that the orientation torsor $\mathrm{Or}_T$ depends only on the orientation of the plane, so this orientation torsor $\mathrm{Or}_T$ is called \textit{canonical} orientation torsor of $T$.

%%%%%%%%%%%%%%%%%%%%%%%%%%%%%%%%%%%%%%%%%%%%%%%%%%%%%%%%%%
\subsection{Graph complex $\mathcal{D}^{\vee, \bullet}_{H,S^{\ast}}$} \label{section:4.4}
%%%%%%%%%%%%%%%%%%%%%%%%%%%%%%%%%%%%%%%%%%%%%%%%%%%%%%%%%%
Here, we define an appropriate graph complex $\mathcal{D}^{\vee, \bullet}_{H,S^{\ast}}$ for our purpose with the following steps similar to \cite[\S 6]{GoncharovHodge1}. First, we introduce graded vector space $\mathcal{CD}^{\vee, \bullet}_{H,S^{\ast}}$ of decorated connected graphs. Then, graded commutative algebra $\mathcal{D}^{\vee, \bullet}_{H,S^{\ast}}$ generated by $\mathcal{CD}^{\vee, \bullet}_{H,S^{\ast}}$ is defined. Finally, differential operator $\partial$ is introduced on $\mathcal{D}^{\vee, \bullet}_{H,S^{\ast}}$. 

Until the end of Section \ref{section:4.5}, we keep the following notation: Let $H^{\vee}$ be a finite-dimensional symplectic vector space over $\bC$ and $S^{\ast}:= \{s_1, \ldots, s_k\}$ be a finite set. We set $V_{H,S}^{\vee} := H^{\vee} \oplus \mathbb{C}[S^{\ast}]$ where an inner product $(-, -)$ on $\bC[S]$ is defined by setting $(\{s_i\}, \{s_j\}) = \delta_{ij}$ where $\delta_{ij}$ denotes the Kronecker delta. 
%%%%%%%%%%%%%%%%%%%%%%%%%%%%%%%%%%%%%%%%%%%%%%%%%%%%%%%%%%
\subsubsection{Graded spaces $\mathcal{CD}^{\vee, \bullet}_{H,S^{\ast}}$ and $\mathcal{D}^{\vee, \bullet}_{H,S^{\ast}}$ of decorated graphs}
%%%%%%%%%%%%%%%%%%%%%%%%%%%%%%%%%%%%%%%%%%%%%%%%%%%%%%%%%%
To begin with, following \cite[\S 3]{KMV13}, we recall the notion of the defect and (loop) order of a graph. For a connected graph $\Gamma$, its \textit{defect} is defined as
\begin{equation}
	\mathrm{def}(\Gamma) = 2 |E_{\Gamma}| - 3 | V^{\mathrm{int}}_{\Gamma}| - | V^{\mathrm{ext}}_{\Gamma}|.
\end{equation}

 The defect may be viewed as a measure of the failure of $\Gamma$ to be uni-trivalent since $\mathrm{def}(\Gamma) = 0$ for a uni-trivalent graph $\Gamma$.  
 For a connected graph $\Gamma$, the \textit{order} is defined as
\begin{equation}
    \mathrm{ord}(\Gamma) = |E_{\Gamma}| - |V_{\Gamma}| = l(\Gamma) - 1 = -\chi(\Gamma)
\end{equation}
where $\chi(\Gamma)$ is the Euler characteristic number of $\Gamma$.
\begin{definition}[Graded space of connected decorated graphs]
	Define $\mathcal{CD}^{\vee, \bullet}_{H,S^{\vee}}$ as the graded $\mathbb{C}$-vector space generated by triples $(\Gamma, W; \mathrm{Or}_{\Gamma})$, where $\Gamma$ is a connected graph, $W$ is a $V^{\vee}_{H,S}$-decoration of $\Gamma$, and $\mathrm{Or}_{\Gamma}$ is an element of the orientation torsor of $\Gamma$ induced by a choice of ordering of $E_{\Gamma}$, subject to the relations:
	\begin{itemize}
	\item $(\Gamma, W; \mathrm{Or}_{\Gamma}) = - (\Gamma, W; -\mathrm{Or}_{\Gamma})$,
	\item $(\Gamma, \lambda W_1 + \lambda_2 W_2; \mathrm{Or}_{\Gamma}) = \lambda_1 (\Gamma, W_1; \mathrm{Or}_{\Gamma}) + \lambda_2(\Gamma, W_2; \mathrm{Or}_{\Gamma})$ for $\lambda_1, \lambda_2 \in \bC$,
 \item  $(\Gamma, W; \mathrm{Or}_{\Gamma}) =0$ if 
 \begin{enumerate}[(A)]
     \item $\Gamma$ has at least one self-loop, as Figure \ref{fig:4.3.1} (A),
     \item $\Gamma$ is the unique connected uni-trivalent graph without internal vertex and its two external vertices are labeled by the same  $\gamma = s \in S^{\ast}$ or $\gamma = \alpha \in H^{\vee}$, as Figure \ref{fig:4.3.1} (B),
     \item $\Gamma$ has an internal vertex $v$ which is connected to two external vertices labeled by the same $\gamma = s \in S^{\ast}$ or $\gamma = \alpha \in H^{\vee}$, as Figure \ref{fig:4.3.1} (C),
     \item $\Gamma$ has a non-regular edge, as Figure \ref{fig:4.3.1} (D).
 \end{enumerate}
	\end{itemize}
	The degree $\deg(\Gamma, W; \mathrm{Or}_{\Gamma}) $ of $(\Gamma, W; \mathrm{Or}_{\Gamma})$ is defined as
	\begin{equation}
 \begin{split}
		\deg(\Gamma, W; \mathrm{Or}_{\Gamma}) := \deg(\Gamma):=&\mathrm{def}(\Gamma) - l(\Gamma) + 1=|E_{\Gamma}| - 2 |V_{\Gamma}^{\mathrm{int}}|.
\end{split}
	\end{equation}
\end{definition}

\begin{figure}[h]
\captionsetup{margin=2cm}
 \centering
 \begin{tikzpicture}
 %%%% self-loops
 \node at (-0.5, -1.5) {(A)};
  \node at (3, -1.5) {(B)};
  \node at (7, -1.5) {(C)};
  \node at (10, -1.5) {(D)};
 \begin{scope}
 \draw[densely dotted] (0,0) circle (0.5cm);
    \draw[densely dotted] (-0.5,0.0)--(-1.5,0.5);
     \draw[densely dotted] (-0.5,0.0)--(-1,1);
     \draw[densely dotted] (-0.5,0.0)--(-1.5,-0.5);
     \draw[densely dotted] (-0.5,0.0)--(-1,-1);
     \node at (-1.25, 0.1) {$\vdots$};
     \node at (-0.5, 0) {$\bullet$};
 \end{scope}
 %%%% the unique connected trivalent graph without internal vertex labeled by the same element 
 \begin{scope}[xshift=3cm]
 \draw[densely dotted] (-0.5,0) -- (0.5,0);
 \node at (0.5, 0) {$\bullet$};
 \node at (0.9, 0) {$\gamma$};
 \node at (-0.5,0) {$\bullet$};
 \node at (-0.9, 0) {$\gamma$};
 \end{scope}
 %%%% Y-shaped graph labelled by the same element
 \begin{scope}[xshift=6cm]
 \draw[densely dotted] (1-1/1.414,0) -- (1,0);
 	\draw[densely dotted] (1,0) -- (1.5, 0.5);
 	\draw[densely dotted] (1,0) -- (1.5, -0.5);
  \node at (1.5, 0.5) {$\bullet$};
   \node at (1.5, -0.5) {$\bullet$};
   \node at (1,0) {$\bullet$};
   \node at (1,-0.3) {$v$};
    % \node at (1-1/1.414,0) {$\bullet$};
  \node at (1.5+0.3, 0.5+0.3) {$\gamma$};
  \node at (1.5+0.3, -0.5-0.3) {$\gamma$};
 \end{scope}
 \begin{scope}[xshift=10cm]
  \draw[densely dotted] (0,0) circle (0.5cm);
   \draw[densely dotted] (0.5,0.0)--(1.0,0.0);
    \draw[densely dotted] (-0.5,0.0)--(-1.0,0.0);
    \node at (0.5, 0) {$\bullet$};
     \node at (-0.5, 0) {$\bullet$};
 \end{scope}
 \end{tikzpicture}
 \caption{Several typical (sub)graphs which are defined to be zero. }
 \label{fig:4.3.1}
\end{figure}
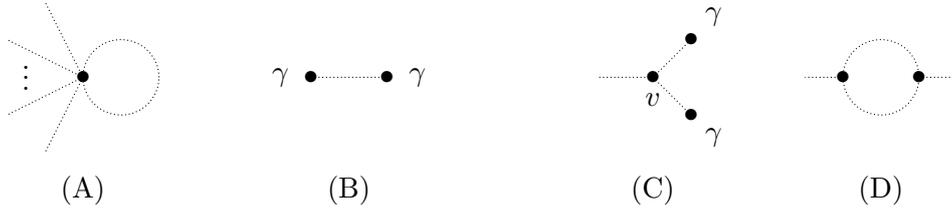

\begin{remark}
The degree $\deg$ of a triple $(T, W; \mathrm{Or}_T)$ for a planar tree $T$ is defined as
\begin{equation}
    \deg((T, W; \mathrm{Or}_T)) := 1 + \sum_{v ;  v \in V_T^{\mathrm{int}}}(\mathrm{val}(v) - 3)
\end{equation}
in \cite[(6.2)]{GoncharovHodge1}. Here, $\mathrm{val}(v)$ denotes the valency of $v$. This definition coincides with our definition using the notion of defect $\deg$. Indeed, we have
\begin{equation}
	\mathrm{def}(T, W; \mathrm{Or}_T) = \deg(T, W; \mathrm{Or}_T) -1 = \sum_{v \in V_T^{\mathrm{int}}}(\mathrm{val}(v) - 3).
\end{equation}
This can be checked as follows: Since $T$ is a planar tree, one has
	\begin{equation}
		|V^{\mathrm{int}}_T| = |V^{\mathrm{ext}}_T| - 2 -\sum_{v \in V_T^{\mathrm{int}}}(\mathrm{val}(v) - 3), \quad |E_T| = 2 |V^{\mathrm{ext}}_T| - 3 -\sum_{v \in V_T^{\mathrm{int}}}(\mathrm{val}(v) - 3).
	\end{equation}
	By substituting them for $\mathrm{def}(T) = 2 |E_T| - 3 |V_T^{\mathrm{int}}| -  |V^{\mathrm{ext}}_T|$, we obtain the desired formula.
\end{remark}

Note that the graded vector space $\mathcal{CD}^{\vee, \bullet}_{H, S^{\ast}}$ can be regarded as that with grading by the total degree of bigraded vector space graded by $\mathrm{def}$ and $-\ord$, that is,
\begin{equation}\label{eq:dec_cd}
  \mathcal{CD}^{\vee, \bullet}_{H,S^{\ast}} = \bigoplus_{k}\mathcal{CD}^{\vee, k}_{H,S^{\ast}}= \bigoplus_{k} \bigoplus_{d + l = k}  \mathcal{CD}^{\vee,(d, l)}_{H,S^{\ast}}
\end{equation}
where $\mathcal{CD}^{\vee,(d, l)}_{H,S^{\ast}}$ denotes the subspace of $\mathrm{def}=d$ and $\ord = -l$ connected graphs.

\begin{definition}[Graded commutative algebra of decorated  graphs]
	Let us define the graded commutative algebra $\mathcal{D}^{\vee, \bullet}_{H,S^{\ast}}$ as the (graded) symmetric algebra 
	\begin{equation}
		\mathcal{D}^{\vee, \bullet}_{H,S^{\ast}}:= S^{\bullet}(\mathcal{CD}^{\vee, \bullet}_{H,S^{\ast}})
	\end{equation} such that
	\begin{itemize}
	\item the underlying vector space is generated by
	\begin{equation}\label{eq:gen_vec_cd}
		(\Gamma_1 \sqcup \cdots \sqcup \Gamma_m, W_1 \sqcup \cdots \sqcup W_m; \mathrm{Or}_{\Gamma_1} \wedge \cdots  \wedge\mathrm{Or}_{\Gamma_m}) 
	\end{equation}
	where $\Gamma_1 \sqcup \cdots \sqcup \Gamma_m$ is the disjoint unions of connected graphs $\Gamma_i$, $W_1 \sqcup \cdots \sqcup W_m$ is the symmetric product of $V^{\vee}_{H,S^{\ast}}$-decorations $W_i: V_{\Gamma_i}^{\mathrm{ext}} \rightarrow  V^{\vee}_{H,S^{\ast}}$ (cf. Convention \ref{conv:sym_dec}), and $\mathrm{Or}_{\Gamma_1} \wedge \cdots  \wedge\mathrm{Or}_{\Gamma_m}$ is the wedge product of $\mathrm{Or}_{\Gamma_i} \in \mathrm{or}_{\Gamma_i}$ $(1\leq i \leq m)$. The relations between the generators are induced by those on $\mathcal{CD}^{\vee, \bullet}_{H,S^{\ast}}$,

	\item the grading is defined as
	\begin{equation} \label{eq:degree}
 \begin{split}
		\deg(\Gamma, W;\mathrm{Or}_{\Gamma}) :=& \deg(\Gamma)=\sum_{i=1}^m \deg(\Gamma_i)\\
  =& \sum_{i=1}^m \left(\mathrm{def}(\Gamma_i) - l(\Gamma_i) +1\right) = \sum_{i=1}^m (|E_{\Gamma_i}| - 2|V_{\Gamma_i}^{\mathrm{int}}|)
  \end{split}
	\end{equation}
	when $\Gamma = \Gamma_1 \sqcup \cdots \sqcup \Gamma_m$ for connected graphs $\Gamma_1,\ldots, \Gamma_m$,
	\item the product in $\mathcal{D}^{\vee, \bullet}_{H,S}$ is given by disjoint union respecting their decorations as
	\begin{equation}
		(\Gamma_1, W_1; \mathrm{Or}_{\Gamma_1}) \wedge (\Gamma_2, W_2; \mathrm{Or}_{\Gamma_2}) := (\Gamma_1 \sqcup \Gamma_2 , W_1 \sqcup W_2; \mathrm{Or}_{\Gamma_1}\wedge \mathrm{Or}_{\Gamma_2}).
	\end{equation}
	\end{itemize} 
\end{definition}
The above product $\wedge$ gives graded commutative algebra structure on $\mathcal{D}^{\vee, \bullet}_{H,S^{\ast}}$, that is,  for $(\Gamma_1, W_1; \mathrm{Or}_{\Gamma_1}),  (\Gamma_2, W_2; \mathrm{Or}_{\Gamma_2}) \in \mathcal{D}^{\vee, \bullet}_{H,S^{\ast}}$, the following equation holds:
\begin{equation}\label{eq:product_sign}
    \begin{split}
        & (\Gamma_1, W_1; \mathrm{Or}_{\Gamma_1}) \wedge (\Gamma_2, W_2; \mathrm{Or}_{\Gamma_2}) \\
        =& (-1)^{\deg(\Gamma_1)\deg(\Gamma_2)} (\Gamma_2, W_2; \mathrm{Or}_{\Gamma_2}) \wedge (\Gamma_1, W_1; \mathrm{Or}_{\Gamma_1}).
    \end{split}
\end{equation}
Indeed, noting that $\deg(\Gamma_i)=|E_{\Gamma_i}| -2 |V_{\Gamma_i}^{\mathrm{int}}|$ as defined in \eqref{eq:degree},
\begin{equation}
    \begin{split}
        & (\Gamma_1, W_1; \mathrm{Or}_{\Gamma_1}) \wedge (\Gamma_2, W_2; \mathrm{Or}_{\Gamma_2}) \\
        =& (\Gamma_1 \sqcup \Gamma_2, W_1 \sqcup W_2; \mathrm{Or}_{\Gamma_1} \wedge \mathrm{Or}_{\Gamma_2})\\
        =& (\Gamma_2 \sqcup \Gamma_1, W_2 \sqcup W_1; (-1)^{|E_{\Gamma_1}||E_{\Gamma_2}|}\mathrm{Or}_{\Gamma_2} \wedge \mathrm{Or}_{\Gamma_1})\\
        =& (-1)^{|E_{\Gamma_1}||E_{\Gamma_2}|}  (\Gamma_2, W_2; \mathrm{Or}_{\Gamma_2}) \wedge (\Gamma_1, W_1; \mathrm{Or}_{\Gamma_1})\\
        =& (-1)^{\deg(\Gamma_1)\deg(\Gamma_2)} (\Gamma_2, W_2; \mathrm{Or}_{\Gamma_2}) \wedge (\Gamma_1, W_1; \mathrm{Or}_{\Gamma_1}).
    \end{split}
\end{equation}

% \begin{remark}
% In \eqref{eq:gen_vec_cd}, we use the symmetric product $\mathrm{Or}_{\Gamma_1} \sqcup \cdots  \sqcup\mathrm{Or}_{\Gamma_m}$ instead of wedge product used in \cite[Page 71]{GoncharovHodge1} in order to emphasize that exchange of the order of product \eqref{eq:product_sign} does not yield any sign change from exchange of the order of $\mathrm{Or}_{\Gamma_1}$ and $\mathrm{Or}_{\Gamma_2}$.
% \end{remark}
% If we denote the symmetric algebra $S^{\bullet}(\mathcal{T}^{\vee, \bullet}_{H,S})$ by $\mathcal{F}^{\vee, \bullet}_{H,S}$, the graded commutative algebra of decorated forest (\cite[P.71]{GoncharovHodge1}, then $\mathcal{F}^{\vee, \bullet}_{H,S} \subset \mathcal{D}^{\vee, \bullet}_{H,S}$ as graded commutative algebra.

As in the case of $\mathcal{CD}^{\vee, \bullet}_{H, S^{\ast}}$, the underlying graded vector space of $\mathcal{D}^{\vee, \bullet}_{H, S^{\ast}}$ can be viewed as the graded vector space graded by the total degree of bigraded vector space graded by $\mathrm{def}$ and $\ord$:
\begin{equation}
    \mathcal{D}^{\vee, \bullet}_{H, S^{\ast}} = \bigoplus_{k} \mathcal{D}^{\vee, k}_{H, S^{\ast}}= \bigoplus_{k} \bigoplus_{d+l=k} \mathcal{D}^{\vee, (d,l)}_{H, S^{\ast}}
\end{equation}
where $\mathcal{D}^{\vee, (d,l)}_{H, S^{\ast}}$ denotes the subspace consisting of products of graphs with $\mathrm{def}=d$ and $\ord = -l$.
\subsubsection{CDGA $\mathcal{D}^{\vee, \bullet}_{H,S^{\ast}}$ of decorated graphs}
By formally extending the differential on connected decorated graphs given in \cite[\S 6]{GoncharovHodge1}, we shall define a differential operator
\begin{equation}
	\partial : \mathcal{D}^{\vee, \bullet}_{H,S^{\ast}} \rightarrow \mathcal{D}^{\vee, \bullet+1}_{H,S^{\ast}}
\end{equation}
which gives a commutative differential graded algebra (CDGA) structure on $\mathcal{D}^{\vee, \bullet}_{H,S}$.  The differential $\partial$ is the extension by Leibniz rule  of  $\partial$ acting on connected  decorated  graphs described as follows: The differential $\partial$ splits into three pieces
\begin{equation}
	\partial = \partial_{\Delta} + \partial_{\mathrm{Cas}} + \partial_{S^{\ast}}.
\end{equation}

\noindent
(i) The map $\partial_{\Delta}$. Let  $(\Gamma, W_{\Gamma}; \mathrm{Or}_{\Gamma})$ be a triple of a connected  graph $\Gamma$, its decoration $W_{\Gamma}$ and an element of its orientation torsor, and $E$ be an internal edge of $\Gamma$. Then, as Figure \ref{fig:part_delta}, we set $\Gamma/E$ as the connected graph obtained from $\Gamma$ by contracting the internal edge $E$, and $\mathrm{Or}_{\Gamma/E}$ so that $\mathrm{Or}_{\Gamma}=E \wedge\mathrm{Or}_{\Gamma/E}$. The map $\partial_{\Delta}$ is defined by
\begin{equation}
\partial_{\Delta}(\Gamma,W;\mathrm{Or}_{\Gamma}) := 	\sum_{\substack{E \in E_{\Gamma}^{\mathrm{int}}}}(\Gamma/E, W_{\Gamma}; \mathrm{Or}_{\Gamma/E}).
\end{equation}

\noindent
(ii) The map $\partial_{\mathrm{Cas}}$. Let $E$ be an edge of $\Gamma$. We divide into two cases whether $E$ is a cycle edge or not. First, let us assume that $E$ is not a cycle edge so that if we cut the graph $\Gamma$ along $E$ we get two connected graphs $\Gamma_1$ and $\Gamma_2$ with inherited partial decorations $W_1'$ and $W_2'$ so that $W = W_1' \sqcup W_2'$. Let $E_1$ and $E_2$ be the new external edges of the graphs $\Gamma_1$ and $\Gamma_2$ obtained by cutting the edge $E$. 

We decorate the resulting new external edges by the Casimir element $\mathrm{id}: H^{\vee} \rightarrow H^{\vee}$ as follows. We choose a basis $\{\alpha_k\}$ of $H^{\vee}$ and $\{\alpha_k^{\vee}\}$ be its dual basis with respect to the symplectic form $(-, -)$ on $H^{\vee}$, that is, $(\alpha_k, \alpha_l^{\vee}) = \delta_{kl}$. Then, $\mathrm{id}$ can be written as
\begin{equation}
    \mathrm{id} = \sum_{k} \alpha_k^{\vee} \otimes \alpha_k.
\end{equation}
Decorating $E_1$ by $\alpha_k$ and $E_2$ by $\alpha_k^{\vee}$, we obtain a decoration  on $\Gamma_1$ and $\Gamma_2$ denoted by $W_1' \sqcup \alpha_k$ and $\alpha_k^{\vee} \sqcup W_1'$. We choose their orientations so that $\mathrm{Or}_{\Gamma} = E \wedge \mathrm{Or}_{\Gamma_1\setminus E_1}\wedge \mathrm{Or}_{\Gamma_2 \setminus E_2}$ and $\mathrm{Or}_{\Gamma_1}\wedge \mathrm{Or}_{\Gamma_2} = E_1 \wedge E_2 \wedge \mathrm{Or}_{\Gamma_1\setminus E_1}\wedge \mathrm{Or}_{\Gamma_2 \setminus E_2}$. As in Figure \ref{fig:part_cas}, we set
\begin{equation}\label{eq:partial_cas1}
\partial_{\mathrm{Cas}}(\Gamma, W_{\Gamma}; \mathrm{Or}_{\Gamma}) := \sum_{\substack{E \in E_{\Gamma}\\ \text{$E$ is not cycle edge}}} \sum_{k=1}^g (\Gamma_1,W_1' \sqcup \alpha_k; \mathrm{Or}_{\Gamma_1})\wedge (\Gamma_2, \alpha^{\vee}_k \sqcup W_2'; \mathrm{Or}_{\Gamma_2}).
\end{equation}
%%%%
Note that the above definition does not depend on the choice of numbering of the resulting graphs $\Gamma_1$ and $\Gamma_2$. This can be checked by using the relation
\begin{equation}
    \mathrm{id} = \sum_{k} \alpha_k^{\vee} \otimes \alpha_k=  \sum_{k} \alpha_k \otimes (-\alpha_k^{\vee})
\end{equation}
and the definition of induced orientations of $\Gamma_1$ and $\Gamma_2$.

Next, we consider the case that $E$ is a cycle edge so that if we cut the graph $\Gamma$ along $E$ we get a connected graph $\Gamma'$ with partial decoration $W'$. Let $E_1$ and $E_2$ be the new external edges of the graph $\Gamma'$ obtained by cutting the edge $E$. We decorate $E_1$ and $E_2$ by $\alpha_k$ and  $\alpha_k^{\vee}$ respectively and denote the resulting decoration by $W' \sqcup \alpha_k \sqcup \alpha_k^{\vee}$ We choose the orientation of $\Gamma'$ so that $\mathrm{Or}_{\Gamma} = E \wedge \mathrm{Or}_{\Gamma'\setminus \{E_1, E_2\}}$ and $\mathrm{Or}_{\Gamma'} = E_1 \wedge E_2 \wedge \mathrm{Or}_{\Gamma'\setminus\{E_1, E_2\}}$.
\begin{equation}\label{eq:partial_cas2}
\partial_{\mathrm{Cas}}(\Gamma, W_{\Gamma}; \mathrm{Or}_{\Gamma}) := \sum_{\substack{E \in E_{\Gamma}\\ \text{$E$ is cycle edge}}} \sum_{k=1}^g (\Gamma',W' \sqcup \alpha_k \sqcup \alpha_k^{\vee}; \mathrm{Or}_{\Gamma'}).
\end{equation}
%%%
Similar to the former case, this definition is independent of the choice of the numbering of the resulting edges $E_1$ and $E_2$.

\noindent
(iii) The map $\partial_{S^{\ast}}$. Assume that $\Gamma$ is not the tree graph with exactly two external vertices. Let $E$ be an external $S^{\ast}$-decorated edge of $\Gamma$. As in Figure \ref{fig:S-deco}, let us remove $E$ together with a little neighborhood of its vertices. Then, one of the vertices connected by $E$ is of valency $v \geq 3$. Depending on its global structure, $\Gamma$ is replaced by $\Gamma_1, \ldots, \Gamma_k$ for some $k \leq v-1$. We choose their orientations $\mathrm{Or}_{\Gamma_i}$ so that $\mathrm{Or}_{\Gamma} = E \wedge \mathrm{Or}_{\Gamma_1} \wedge \cdots \wedge \mathrm{Or}_{\Gamma_k}$.

\begin{equation}\label{eq:paartial_S1}
\frac{\partial}{\partial E} (\Gamma, W_{\Gamma}; \mathrm{Or}_{\Gamma}) := (\Gamma_1, W_{\Gamma_1}; \mathrm{Or}_{\Gamma_1}) \wedge \cdots \wedge (\Gamma_{k}, W_{\Gamma_{k}}; \mathrm{Or}_{\Gamma_{k}}) 
\end{equation}

\begin{equation}
	\partial_{S^{\ast}}:= \sum_{E \in E_{}^{\mathrm{ext}}: \text{$S$-decorated}}\frac{\partial}{\partial E}
\end{equation}

Then, we have the following proposition which is a generalization of  \cite[Theorem 6.4]{GoncharovHodge1}.

\begin{theorem}
    The  map $\partial = \partial_{\Delta} + \partial_{\mathrm{Cas}} + \partial_{S^{\ast}}$ is a differential on $\mathcal{D}^{\vee, \bullet}_{H,S}$, i.e., $\partial^2 = 0$. In particular, 
\begin{equation}
	\partial_{\mathrm{Cas}}(\partial_{\Delta} + \partial_{S^{\ast}}) = - (\partial_{\Delta} + \partial_{S^{\ast}})\partial_{\mathrm{Cas}},\quad  \partial_{\Delta}^2 = 0, \quad \partial_{\mathrm{Cas}}^2 = 0,\quad  (\partial_{\Delta} + \partial_{S^{\ast}})^2 = 0.
\end{equation} 
\end{theorem}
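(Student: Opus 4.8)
The plan is to exploit the fact that $\partial$ is, by construction, extended from connected graphs to all of $\mathcal{D}^{\vee,\bullet}_{H,S^{\ast}}$ as a degree $+1$ derivation for the wedge product of \eqref{eq:product_sign}. Consequently $\partial^2 = \tfrac12[\partial,\partial]$, the graded self-commutator of the odd derivation $\partial$, is again a derivation, now of even degree, so it is completely determined by its restriction to the algebra generators, namely the connected decorated graphs spanning $\mathcal{CD}^{\vee,\bullet}_{H,S^{\ast}}$. Thus the first reduction is: it suffices to prove $\partial^2(\Gamma, W; \mathrm{Or}_\Gamma) = 0$ for every \emph{connected} triple, bearing in mind that $\partial$ already takes a connected graph to a genuine product (as in \eqref{eq:partial_cas1} and in $\partial_{S^{\ast}}$), so the Leibniz signs $(-1)^{\deg\Gamma_1 \deg\Gamma_2}$ must be carried throughout.

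Second, I would separate the three constituents by their effect on the bigrading $(\mathrm{def}, \ord)$. A direct vertex-and-edge count shows that $\partial_\Delta$ and $\partial_{S^{\ast}}$ shift $(\mathrm{def},\ord)$ by $(+1,0)$, whereas $\partial_{\mathrm{Cas}}$ shifts it by $(0,-1)$: cutting a bridge raises the number of connected components, cutting a cycle edge drops the first Betti number, and in both cases $\ord$ drops by one while $\mathrm{def}$ is unchanged. Hence the homogeneous components of $\partial^2$, indexed by the bidegree shifts $(+2,0)$, $(+1,-1)$ and $(0,-2)$, must vanish independently. These are precisely the three displayed identities $(\partial_\Delta + \partial_{S^{\ast}})^2 = 0$, $\partial_{\mathrm{Cas}}(\partial_\Delta + \partial_{S^{\ast}}) = -(\partial_\Delta + \partial_{S^{\ast}})\partial_{\mathrm{Cas}}$ and $\partial_{\mathrm{Cas}}^2 = 0$, with $\partial_\Delta^2 = 0$ being the leading piece of the first. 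So the whole statement reduces to these.

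Third, I would verify each identity locally, by enumerating the finitely many configurations on which two successive operations overlap and checking the sign in each. For $(\partial_\Delta + \partial_{S^{\ast}})^2 = 0$ this is essentially Goncharov's tree-level computation: $\partial_\Delta^2 = 0$ because contracting two internal edges $E, E'$ in the two orders yields the same quotient with orientations $E\wedge E' \wedge \mathrm{Or}_{\Gamma/\{E,E'\}}$ and $E'\wedge E\wedge \mathrm{Or}_{\Gamma/\{E,E'\}}$, which differ by a sign; $\partial_{S^{\ast}}^2 = 0$ and the anticommutator $\{\partial_\Delta, \partial_{S^{\ast}}\} = 0$ follow the same pattern, now with the extra wrinkle that a contraction or expansion can create a self-loop or a non-regular edge, which are killed by relations (A) and (D). One therefore must check that such terms are either produced in cancelling pairs or are individually zero. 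For $\partial_{\mathrm{Cas}}^2 = 0$ and the cross-term, the essential input is the antisymmetry of the symplectic Casimir, $\mathrm{id} = \sum_k \alpha_k^\vee \otimes \alpha_k = \sum_k \alpha_k \otimes (-\alpha_k^\vee)$, together with the orientation conventions $\mathrm{Or}_\Gamma = E \wedge \mathrm{Or}_{\Gamma_1\setminus E_1}\wedge \mathrm{Or}_{\Gamma_2\setminus E_2}$, which are engineered exactly so that cutting two edges in opposite orders cancels.

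Finally, the genuinely new point beyond \cite[Theorem 6.4]{GoncharovHodge1} — and the step I expect to be the real obstacle — is the treatment of cycle edges, where $\partial_{\mathrm{Cas}}$ outputs the single-graph term \eqref{eq:partial_cas2} rather than a product. In $\partial_{\mathrm{Cas}}^2$ one must handle the mixed cases (a cycle edge then a bridge, or two cycle edges, in either order) and check that a term obtained by a bridge-cut after a first cut genuinely cancels one obtained by a cycle-edge cut, once the two inserted Casimirs and all orientation signs are tallied; the antisymmetry of the Casimir is what makes the "same edge cut twice" diagonal terms vanish and pairs up the off-diagonal ones. Likewise, in $\{\partial_{\mathrm{Cas}}, \partial_\Delta + \partial_{S^{\ast}}\} = 0$ the delicate configurations are those where the cut edge is adjacent to, or coincides after contraction with, the contracted edge or the expanded $S^{\ast}$-vertex. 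Here I would organize the local pictures so that "cut then contract" is matched bijectively, with opposite sign, to "contract then cut", the sign discrepancy arising uniformly from the orientation torsor and the Leibniz rule. Carrying the orientation torsor consistently through all these overlaps is the main bookkeeping burden of the argument.
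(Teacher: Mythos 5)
Your step 3 contains a genuine mathematical error: you assert that $\partial_{S^{\ast}}^2=0$ and the anticommutator $\{\partial_{\Delta},\partial_{S^{\ast}}\}=0$ hold separately, ``following the same pattern'' as $\partial_{\Delta}^2=0$. Neither is true; only their sum $(\partial_{\Delta}+\partial_{S^{\ast}})^2=0$ vanishes, which is exactly why the theorem (and Goncharov's Theorem 6.4, to which the paper's proof reduces) states the combined identity rather than the termwise ones. Concretely, take the $H$-shaped uni-trivalent tree: internal vertices $u,w$ joined by an internal edge, with external edges at $u$ decorated by a $1$-form $\alpha$ and by $s\in S^{\ast}$, and external edges at $w$ decorated by $1$-forms $\beta,\gamma$. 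Applying $\partial_{S^{\ast}}$ at the $s$-edge produces the segment with decorations $(\alpha,s)$ times the tripod with decorations $(s,\beta,\gamma)$, where the tripod's $s$-decoration is \emph{newly created} by the cut; applying $\partial_{S^{\ast}}$ again at this new $s$-edge yields the nonzero product of the three segments $(\alpha,s)$, $(\beta,s)$, $(\gamma,s)$, so $\partial_{S^{\ast}}^2\neq 0$. The cancelling term sits in the cross term: $\partial_{\Delta}$ contracts the internal edge to a $4$-valent star, and $\partial_{S^{\ast}}$ at the star's $s$-edge (with $k=v-1=3$) produces the same three segments with opposite sign, while $\partial_{\Delta}\partial_{S^{\ast}}$ vanishes here. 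So the essential cancellation in the first displayed identity is between ``cut, then cut again at the adjacent vertex using the freshly created $S^{\ast}$-decoration'' and ``contract the joining internal edge, then cut at the merged higher-valent vertex''; a proof organized around termwise vanishing fails precisely at these adjacent-vertex configurations, and relations (A)--(D) do not rescue it.

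Relatedly, the bidegree bookkeeping in your step 2 is wrong, consistently with the above. Removing an $S^{\ast}$-edge at a $v$-valent vertex changes $\mathrm{def}$ by $3-v$, hence by $0$ on uni-trivalent graphs; the degree $+1$ of $\partial_{S^{\ast}}$ comes from the change in component and loop counts (indeed, when the cut vertex lies on a cycle, $\partial_{S^{\ast}}$ lowers the loop order exactly as $\partial_{\mathrm{Cas}}$ does on a cycle edge), not from a $(+1,0)$ shift in $(\mathrm{def},\mathrm{ord})$. Consequently your homogeneous decomposition of $\partial^2$ into shifts $(+2,0)$, $(+1,-1)$, $(0,-2)$ does not separate the displayed identities: in the example above, the nonzero contributions of $\partial_{S^{\ast}}^2$ and of $\partial_{S^{\ast}}\partial_{\Delta}$ carry the same bidegree shift, which is what permits (and forces) their mutual cancellation. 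The direction you actually need --- that the four displayed identities imply $\partial^2=0$ --- requires no grading at all, just summation; but the identities themselves must be proved with the correct cancellation pairs, as in Goncharov's argument that the paper invokes, rather than by the termwise scheme you propose.
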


\begin{proof}
The proof of the assertion goes along the same lines as \cite[Theorem 6.4]{GoncharovHodge1}, where the proof is given for the planar tree case. The only different point is that the presence of the (loop) order of $\Gamma$ in the definition of the degree \eqref{eq:degree} ensures that the map $\partial$ is of degree $+1$. Except for this part, we can apply similar arguments in [loc.cit.] since the existence of loops does not affect the main points of the proof.
\end{proof}

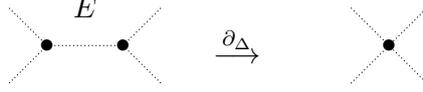
\begin{figure}[h]
 \centering
 \begin{tikzpicture}
 	\draw[densely dotted] (0,0) -- (1,0);
 	\draw[densely dotted] (0,0) -- (-0.5, 0.5);
 	\draw[densely dotted] (0,0) -- (-0.5, -0.5);
 	\draw[densely dotted] (1,0) -- (1.5, 0.5);
 	\draw[densely dotted] (1,0) -- (1.5, -0.5);
  \node at (0,0) {$\bullet$};
  \node at (1,0) {$\bullet$};
 	\node at (0.5, 0.5) {$E$};
 	\node at (2.5,0) {$\overset{\partial_{\Delta}}{\longrightarrow}$};
 	\draw[densely dotted] (4, -0.5) -- (5, 0.5);
 	\draw[densely dotted] (4, 0.5) -- (5, -0.5);
 	\node at (4.5,0) {$\bullet$};
% 	\fill (1,0) circle (0.1cm);
% 	\fill (0,0) circle (0.1cm);
 \end{tikzpicture}
 \caption{The map $\partial_{\Delta}$ for an internal edge $E$}
 \label{fig:part_delta}
\end{figure}

\begin{figure}[h]
 \centering
 \begin{tikzpicture}
 	\draw[densely dotted] (0,0) -- (1,0);
 	\draw[densely dotted] (0,0) -- (-0.5, 0.5);
 	\draw[densely dotted] (0,0) -- (-0.5, -0.5);
 	\draw[densely dotted] (1,0) -- (1.5, 0.5);
 	\draw[densely dotted] (1,0) -- (1.5, -0.5);
   \node at (0,0) {$\bullet$};
  \node at (1,0) {$\bullet$};
 	\node at (0.5, 0.5) {$E$};
 	\node at (3,0) {$\overset{\partial_{\mathrm{Cas}}}{\longrightarrow} \quad \displaystyle\sum_k$};
	\draw[densely dotted] (4.5,0.5) -- (5, 0);
	\draw[densely dotted] (5,0) -- (4.5, -0.5);
	\draw[densely dotted] (5,0) -- (5.5,0);
  \node at (7,0) {$\bullet$};
  \node at (5,0) {$\bullet$};
	\node at (5.5,0) {$\bullet$};
	\node at (6.5, 0) {$\bullet$};
	\draw[densely dotted] (6.5,0) -- (7,0);
	\draw[densely dotted] (7, 0) -- (7.5, 0.5);
	\draw[densely dotted] (7, 0) -- (7.5, -0.5);
	\node at (5.5, 0.5) {$\alpha_k$};
	\node at (6.5, 0.5) {$\alpha_k^{\vee}$};
 	 \end{tikzpicture}
 \caption{The map $\partial_{\mathrm{Cas}}$ for an internal edge $E$}
 \label{fig:part_cas}
\end{figure}

\begin{figure}[h]
 \centering
 \begin{tikzpicture}
 	\draw[densely dotted] (0,0) -- (0.9,0);
 	\draw[densely dotted] (0,0) -- (-0.5, 0.5);
 	\draw[densely dotted] (0,0) -- (-0.5, -0.5);
 	\node at (0.5, 0.5) {$E$};
  \node at (0,0) {$\bullet$};
 	\node at (1,0) {$\bullet$};
 	\node at (1.4, 0) {$s$};
 	\node at (3,0) {$\overset{\partial_{\mathrm{Cas}}}{\longrightarrow} \quad \displaystyle\sum_k$};
	\draw[densely dotted] (4.5,0.5) -- (5, 0);
	\draw[densely dotted] (5,0) -- (4.5, -0.5);
	\draw[densely dotted] (5,0) -- (5.5,0);
	\node at (5.5,0) {$\bullet$};
	\node at (6.5, 0) {$\bullet$};
	\draw[densely dotted] (6.5,0) -- (6.9,0);
	\node at (7,0) {$\bullet$};
 \node at (5,0) {$\bullet$};
	\node at (5.5, 0.5) {$\alpha_k$};
	\node at (6.5, 0.5) {$\alpha_k^{\vee}$};
	\node at (7.4, 0) {$s$};
 	 \end{tikzpicture}
 \caption{The map $\partial_{\mathrm{Cas}}$ for an $S^{\ast}$-decorated external edge $E$}
 \label{fig:Ca_S}
\end{figure}

\begin{figure}[h]
 \centering
 \begin{tikzpicture}
 	\draw[densely dotted] (0,0) -- (0, -0.9);
 	\draw[densely dotted] (0,0) -- (-0.7071, 0.7071);
 	\draw[densely dotted] (0,0) -- (0.7071, 0.7071);
 	\draw[densely dotted] (0.7071, 0.7071) -- (0.7071+0.8, 0.7071);
 	\draw[densely dotted] (0.7071, 0.7071) -- (0.7071, 0.7071+0.8);
 	\draw[densely dotted] (0.7071+0.8, 0.7071) -- (0.7071+0.8+0.424, 0.7071-0.424);
 	\draw[densely dotted] (0.7071+0.8, 0.7071) -- (0.7071+0.8+0.424, 0.7071+0.424);
 	\draw[densely dotted] (0.7071, 0.7071+0.8) -- (0.7071+0.424, 0.7071+0.424+0.8);
 	\draw[densely dotted] (0.7071, 0.7071+0.8) -- (0.7071-0.424, 0.7071+0.424+0.8);
 	\draw[densely dotted] (-0.7071, 0.7071+0.8) -- (-0.7071-0.424, 0.7071+0.424+0.8);
 	\draw[densely dotted] (-0.7071, 0.7071+0.8) -- (-0.7071+0.424, 0.7071+0.424+0.8);
 	\draw[densely dotted] (-0.7071-0.8, 0.7071) -- (-0.7071-0.8-0.424, 0.7071-0.424);
 	\draw[densely dotted] (-0.7071-0.8, 0.7071) -- (-0.7071-0.8-0.424, 0.7071+0.424);
 	\draw[densely dotted] (-0.7071, 0.7071) -- (-0.7071-0.8, 0.7071);
 	\draw[densely dotted] (-0.7071, 0.7071) -- (-0.7071, 0.7071+0.8);
 	\node at (0,-1) {$\bullet$};
  \foreach \x in {(0,0),(0.7071, 0.7071),(0.7071+0.8, 0.7071), (0.7071, 0.7071+0.8), (-0.7071, 0.7071),(-0.7071-0.8, 0.7071), (-0.7071, 0.7071+0.8)}{
   \node at \x {$\bullet$};
  }
 	\node at (0,-1.5) {$s$};
 	\node at (-0.5,-0.5) {$E$};
 	\node at (3, 0) {$\overset{\partial_{S^{\ast}}}{\longrightarrow}$};
 \begin{scope}[xshift=6cm]
 	\draw[densely dotted] (0,0) -- (-0.7071, 0.7071);
 	\draw[densely dotted] (-0.7071, 0.7071+0.8) -- (-0.7071-0.424, 0.7071+0.424+0.8);
 	\draw[densely dotted] (-0.7071, 0.7071+0.8) -- (-0.7071+0.424, 0.7071+0.424+0.8);
 	\draw[densely dotted] (-0.7071-0.8, 0.7071) -- (-0.7071-0.8-0.424, 0.7071-0.424);
 	\draw[densely dotted] (-0.7071-0.8, 0.7071) -- (-0.7071-0.8-0.424, 0.7071+0.424);
 	\draw[densely dotted] (-0.7071, 0.7071) -- (-0.7071-0.8, 0.7071);
 	\draw[densely dotted] (-0.7071, 0.7071) -- (-0.7071, 0.7071+0.8);
 	\foreach \x in {(0,0), (-0.7071, 0.7071),(-0.7071-0.8, 0.7071), (-0.7071, 0.7071+0.8)}{
   \node at \x {$\bullet$};
  }
 	\node at (0,-0.5) {$s$};
 \end{scope}
\begin{scope}[xshift=7cm]
 	\draw[densely dotted] (0,0) -- (0.7071, 0.7071);
 	\draw[densely dotted] (0.7071, 0.7071) -- (0.7071+0.8, 0.7071);
 	\draw[densely dotted] (0.7071, 0.7071) -- (0.7071, 0.7071+0.8);
 	\draw[densely dotted] (0.7071+0.8, 0.7071) -- (0.7071+0.8+0.424, 0.7071-0.424);
 	\draw[densely dotted] (0.7071+0.8, 0.7071) -- (0.7071+0.8+0.424, 0.7071+0.424);
 	\draw[densely dotted] (0.7071, 0.7071+0.8) -- (0.7071+0.424, 0.7071+0.424+0.8);
 	\draw[densely dotted] (0.7071, 0.7071+0.8) -- (0.7071-0.424, 0.7071+0.424+0.8);
 	\foreach \x in {(0,0),(0.7071, 0.7071),(0.7071+0.8, 0.7071), (0.7071, 0.7071+0.8)}{
   \node at \x {$\bullet$};
  }
 	\node at (0,-0.5) {$s$};
 \end{scope}
 \end{tikzpicture}
 \caption{The map $\partial_{S^{\ast}}$}
 \label{fig:S-deco}
\end{figure}
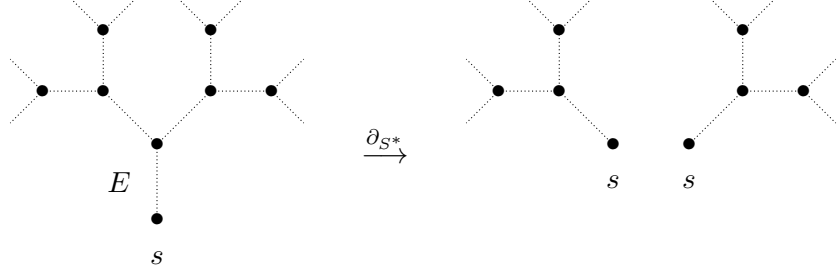

\subsection{DG Lie coalgebra  $H_{\partial_{\Delta}}^0(\mathcal{CD}^{\vee, \bullet}_{H,S^{\ast}})$ and DG Lie algebra  $H^{\partial_{\Delta}}_0(\mathcal{CD}^{\vee, \bullet}_{H,S^{\ast}})$} \label{section:4.5}
At the end of this section, we describe algebraic structures of the 0-th cohomology groups $H_{\partial_{\Delta}}^0(\mathcal{CD}^{\vee, \bullet}_{H,S^{\ast}})$ which we use in the subsequent sections.

\subsubsection{Inner product on the space of decorated graphs}
First of all, we shall introduce an inner product on the space of (decorated) graphs in a similar way as in \cite[\S 3.4]{KMV13}. 

For a decorated graph $(\Gamma, W; \mathrm{Or}_{\Gamma})$, we define $(\Gamma, W; \mathrm{Or}_{\Gamma})^{\vee}$ by 
\begin{equation}
    (\Gamma, W; \mathrm{Or}_{\Gamma})^{\vee} := (\Gamma, W^{\vee}; \mathrm{Or}_{\Gamma})
\end{equation} $W^{\vee}: V_{\Gamma}^{\mathrm{ext}} \rightarrow V_{H,S^{\ast}}$ is defined by composition of $W:  V_{\Gamma}^{\mathrm{ext}} \rightarrow V_{H,S^{\ast}}^{\vee}$ with the dualization map $V_{H,S^{\ast}}^{\vee} \overset{\sim}{\rightarrow} V_{H,S^{\ast}}$ with respect to the inner product on $\bC[S^{\ast}]$ and the symplectic form on $H$. Then, let us define a non-degenerate pairing $\langle - ,- \rangle : \mathcal{CD}^{\vee, k}_{H,S^{\ast}} \times \mathcal{CD}^{\vee, k}_{H,S^{\ast}}  \rightarrow \mathbb{R}$ on the space of decorated connected graphs by
\begin{equation}\label{eq:inner_prod_graph}
	\langle \Gamma_1, \Gamma_2 \rangle := \delta_{\Gamma_1, \Gamma_2^{\vee}} |\Aut(\Gamma_1)| (= \delta_{\Gamma_1, \Gamma_2^{\vee}} |\Aut(\Gamma_2^{\vee})|)
\end{equation}
for $\Gamma_1, \Gamma_2 \in \mathcal{CD}^{\vee, k}_{H,S^{\ast}}$, where $\delta$ is the Kronecker delta respecting decorations,  $|\Aut(\Gamma)|$ denotes the number of automorphisms of $\Gamma$ as decorated graphs.

In terms of the pairing and taking dual to $V^{\vee}_{H,S^{\ast}}$, we can define an isomorphism
\begin{equation}
	\mathcal{CD}^{\vee, k}_{H,S^{\ast}} \overset{\sim}{\rightarrow} \mathcal{CD}^{k}_{H,S^{\ast}}, \quad (\Gamma, W; \mathrm{Or}_{\Gamma}) \mapsto \langle (\Gamma, W; \mathrm{Or}_{\Gamma}), -\rangle.
\end{equation}

\subsubsection{Cohomology groups $H_{\partial_{\Delta}}^{\bullet}(\mathcal{CD}^{\vee, \bullet}_{H,S^{\ast}})$}
Next, we introduce the cohomology groups $H_{\partial_{\Delta}}^{\bullet}(\mathcal{CD}^{\vee, \bullet}_{H,S^{\ast}})$. Note that $\partial_{\Delta}$ increases defect degree by 1 and preserves connected components of graphs, one can define the complex $(\mathcal{CD}^{\vee, \bullet}_{H,S^{\ast},\mathrm{def}}, \partial_{\Delta})$ where $\mathcal{CD}^{\vee, \bullet}_{H,S^{\ast},\mathrm{def}}$ is defined as
\begin{equation}
    \mathcal{CD}^{\vee, \bullet}_{H,S^{\ast},\mathrm{def} } = \bigoplus_{d \geq 0} \mathcal{CD}^{\vee, d}_{H,S^{\ast},\mathrm{def}}
\end{equation}
where $\mathcal{CD}^{\vee, d}_{H,S^{\ast},\mathrm{def}} \subset \mathcal{CD}^{\vee, \bullet}_{H,S^{\ast}}$ denotes the subspace consisting of connected graphs with defect degree $d$. Define
\begin{equation}
    H_{\partial_{\Delta}}^{\bullet}(\mathcal{CD}^{\vee, \bullet}_{H,S^{\ast}}) := H^{\bullet}((\mathcal{CD}^{\vee, \bullet}_{H,S^{\ast},\mathrm{def}}, \partial_{\Delta}))
\end{equation}
Since $\partial_{\Delta}$ preserves the (loop) order $\ord$, the decomposition \ref{eq:dec_cd} induces the corresponding decomposition in cohomology groups as follows:
\begin{equation}
    H_{\partial_{\Delta}}^{\bullet}(\mathcal{CD}^{\vee, \bullet}_{H,S^{\ast}}) = \bigoplus_{d \geq 0} H_{\partial_{\Delta}}^{d}(\mathcal{CD}^{\vee, \bullet}_{H,S^{\ast}}) = \bigoplus_{d \geq 0} \bigoplus_{l \geq -1} H_{\partial_{\Delta}}^{d}(\mathcal{CD}^{\vee, (\bullet, -l)}_{H,S^{\ast}})
\end{equation}
where $H_{\partial_{\Delta}}^{d}(\mathcal{CD}^{\vee, (\bullet, -l)}_{H,S^{\ast}})$ denotes the $d$-th cohomology group of the subcomplex $(\bigoplus_d \mathcal{CD}^{\vee, (d, -l)}_{H,S^{\ast}}), \partial_{\Delta})$ with $\ord = -l$. 

In particular, the 0-th cohomology group is defined by
\begin{equation}
    H_{\partial_{\Delta}}^0(\mathcal{CD}^{\vee, \bullet}_{H,S^{\ast}}) = \Ker\left(\partial_{\Delta}: \mathcal{CD}^{\vee, \bullet}_{H,S^{\ast}, \mathrm{def}} \rightarrow \mathcal{CD}^{\vee, \bullet+1}_{H,S^{\ast}, \mathrm{def}}\right).
\end{equation}
and if we focus on total degree $\geq 0$ part, the $0$-th cohomology group $H_{\partial_{\Delta}}^{0}(\mathcal{CD}^{\vee, \bullet}_{H,S^{\ast}})$ has truncated decomposition as
\begin{equation}
\begin{split}
    H_{\partial_{\Delta}}^0(\mathcal{CD}^{\vee, \bullet}_{H,S^{\ast}})_{d -l \geq 0} =& H_{\partial_{\Delta}}^{0}(\mathcal{CD}^{\vee, (\bullet, 1)}_{H,S^{\ast}}) \oplus H_{\partial_{\Delta}}^{0}(\mathcal{CD}^{\vee, (\bullet, 0)}_{H,S^{\ast}})  \\
    =& H_{\partial_{\Delta}}^0(\mathcal{CD}^{\vee, \bullet}_{H,S^{\ast}})_{\text{tree}} \oplus H_{\partial_{\Delta}}^0(\mathcal{CD}^{\vee, \bullet}_{H,S^{\ast}})_{\text{$1$-loop}}
    \end{split}
\end{equation}
where $H_{\partial_{\Delta}}^0(\mathcal{CD}^{\vee, \bullet}_{H,S^{\ast}})_{\text{tree}}$ and  $H_{\partial_{\Delta}}^0(\mathcal{CD}^{\vee, \bullet}_{H,S^{\ast}})_{\text{$1$-loop}}$ denote the tree part and 1-loop part respectively. 

\begin{lemma}
    Let $\Ker \partial_{\Delta} \subset\mathcal{D}^{\vee, \bullet}_{H,S^{\ast}}$ be the kernek of the differential operator $\partial_{\Delta}$. Then, $\Ker \partial_{\Delta}$ is preserved by $\partial_{\mathrm{Cas}} + \partial_{S^{\ast}}$.
\end{lemma}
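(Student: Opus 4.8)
The plan is to reduce the statement to a single quadratic identity and then to the combinatorial co-Jacobi property of the $S^{\ast}$-cutting operation. Write $\partial = \partial_{\Delta} + \partial_{\mathrm{Cas}} + \partial_{S^{\ast}}$, so that $\partial_{\mathrm{Cas}} + \partial_{S^{\ast}} = \partial - \partial_{\Delta}$, and recall $\partial^2 = 0$ from the preceding theorem. Let $x \in \Ker\partial_{\Delta}$. Then $(\partial_{\mathrm{Cas}}+\partial_{S^{\ast}})x = \partial x - \partial_{\Delta}x = \partial x$, and since $\partial_{\Delta} = \partial - (\partial_{\mathrm{Cas}}+\partial_{S^{\ast}})$,
\[
\partial_{\Delta}(\partial_{\mathrm{Cas}}+\partial_{S^{\ast}})x = \partial_{\Delta}\partial x = \partial^2 x - (\partial_{\mathrm{Cas}}+\partial_{S^{\ast}})\partial x = -(\partial_{\mathrm{Cas}}+\partial_{S^{\ast}})^2 x .
\]
Thus $\Ker\partial_{\Delta}$ is preserved by $\partial_{\mathrm{Cas}}+\partial_{S^{\ast}}$ as soon as $(\partial_{\mathrm{Cas}}+\partial_{S^{\ast}})^2$ vanishes on $\Ker\partial_{\Delta}$; in fact I would show it vanishes identically. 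Note that this reduction uses only $\partial^2=0$ and $\partial_{\Delta}x=0$.

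Next I would peel off the Casimir part. We already have $\partial_{\mathrm{Cas}}^2 = 0$. To obtain $\partial_{\mathrm{Cas}}\partial_{S^{\ast}} + \partial_{S^{\ast}}\partial_{\mathrm{Cas}} = 0$ I would exploit the defect grading: $\partial_{\Delta}$ raises $\mathrm{def}$ by exactly $1$, whereas neither $\partial_{\mathrm{Cas}}$ nor $\partial_{S^{\ast}}$ raises it ($\partial_{\mathrm{Cas}}$ preserves it, while a cut at an $S^{\ast}$-leg attached to a valence-$m$ vertex changes $\mathrm{def}$ by $3-m\le 0$). Hence in the relation $\partial_{\mathrm{Cas}}(\partial_{\Delta}+\partial_{S^{\ast}}) = -(\partial_{\Delta}+\partial_{S^{\ast}})\partial_{\mathrm{Cas}}$ of the preceding theorem, the defect-raising component is precisely $\partial_{\mathrm{Cas}}\partial_{\Delta}+\partial_{\Delta}\partial_{\mathrm{Cas}}$ and the non-raising component is $\partial_{\mathrm{Cas}}\partial_{S^{\ast}}+\partial_{S^{\ast}}\partial_{\mathrm{Cas}}$; since these live in distinct defect degrees, each must vanish separately, giving $\partial_{\mathrm{Cas}}\partial_{S^{\ast}}+\partial_{S^{\ast}}\partial_{\mathrm{Cas}}=0$. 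Combined with $\partial_{\mathrm{Cas}}^2=0$ this yields $(\partial_{\mathrm{Cas}}+\partial_{S^{\ast}})^2 = \partial_{S^{\ast}}^2$, so the whole lemma comes down to the single identity $\partial_{S^{\ast}}^2 = 0$.

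The main work, and the main obstacle, is therefore to prove $\partial_{S^{\ast}}^2 = 0$. This is the co-Jacobi identity for the $S^{\ast}$-cutting operation; it is \emph{not} formally implied by the grouped relations of the preceding theorem (which only records $(\partial_{\Delta}+\partial_{S^{\ast}})^2=0$, differing from what we need by the cross term $\partial_{\Delta}\partial_{S^{\ast}}+\partial_{S^{\ast}}\partial_{\Delta}$), so it must be checked directly, following the pattern of \cite[Theorem 6.4]{GoncharovHodge1}. Expanding $\partial_{S^{\ast}}^2$ as a signed sum over ordered pairs of $S^{\ast}$-leg removals $\frac{\partial}{\partial E'}\frac{\partial}{\partial E}$, I would organize the terms into cancelling pairs: two removals at distinct internal vertices produce the same product of components but with opposite Koszul/orientation signs, arising from the wedge orderings $\mathrm{Or}_{\Gamma}=E\wedge E'\wedge\cdots$ versus $E'\wedge E\wedge\cdots$ fixed in the definition of $\partial_{S^{\ast}}$; the terms in which $E'$ is a leg created by the first cut are matched against the remaining same-vertex contributions using those induced-orientation conventions. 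The genuinely delicate point is the sign bookkeeping for the orientation torsors under two successive cuts (together with the degenerate graphs that are set to zero in $\mathcal{CD}^{\vee,\bullet}_{H,S^{\ast}}$); the Casimir decorations play no role here, since $\partial_{S^{\ast}}$ never touches the $H^{\vee}$-labels, which is why this step decouples cleanly from the Casimir grading. Granting $\partial_{S^{\ast}}^2=0$, the display in the first paragraph gives $\partial_{\Delta}(\partial_{\mathrm{Cas}}+\partial_{S^{\ast}})x=0$ for every $x\in\Ker\partial_{\Delta}$, which is exactly the assertion.
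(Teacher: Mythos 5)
Your first two steps are sound: the identity $\partial_{\Delta}(\partial_{\mathrm{Cas}}+\partial_{S^{\ast}})x=-(\partial_{\mathrm{Cas}}+\partial_{S^{\ast}})^{2}x$ for $x\in\Ker\partial_{\Delta}$ is correct algebra, and your defect-grading trick legitimately splits the grouped relation into $\partial_{\mathrm{Cas}}\partial_{\Delta}+\partial_{\Delta}\partial_{\mathrm{Cas}}=0$ and $\partial_{\mathrm{Cas}}\partial_{S^{\ast}}+\partial_{S^{\ast}}\partial_{\mathrm{Cas}}=0$ (the former is exactly what the paper invokes, without justification, when it says $\partial_{\mathrm{Cas}}$ and $\partial_{\Delta}$ anticommute). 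The fatal gap is your pivotal claim $\partial_{S^{\ast}}^{2}=0$: it is false on $\mathcal{D}^{\vee,\bullet}_{H,S^{\ast}}$. Take $\Gamma$ to be the tree with two trivalent vertices $u,v$ joined by one internal edge, with legs decorated $s_{1},s_{2}$ at $u$ and $s_{3},s_{4}$ at $v$, the $s_{i}\in S^{\ast}$ pairwise distinct. Cutting at the $s_{1}$-leg gives $(s_{1}s_{2})\wedge Y$, where $Y$ is the tripod at $v$ with legs $s_{1}$ (newly created), $s_{3}$, $s_{4}$; cutting then at that new $s_{1}$-leg produces $\pm(s_{1}s_{2})\wedge(s_{1}s_{3})\wedge(s_{1}s_{4})$. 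Enumerating all ordered pairs of cuts on $\Gamma$ shows this decorated graph occurs exactly once in $\partial_{S^{\ast}}^{2}\Gamma$, so $\partial_{S^{\ast}}^{2}\Gamma\neq 0$: the nested terms you propose to cancel "against the remaining same-vertex contributions" have no partner inside $\partial_{S^{\ast}}^{2}$. They are cancelled instead by $\partial_{S^{\ast}}\partial_{\Delta}\Gamma$ (contract the internal edge, then cut at $s_{1}$ at the resulting $4$-valent vertex). This is precisely why the preceding theorem, following \cite[Theorem 6.4]{GoncharovHodge1}, records only the grouped identity $(\partial_{\Delta}+\partial_{S^{\ast}})^{2}=0$: the true relation is $\partial_{S^{\ast}}^{2}=-(\partial_{\Delta}\partial_{S^{\ast}}+\partial_{S^{\ast}}\partial_{\Delta})$, with a genuinely nonzero right-hand side.

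Once the identical vanishing fails, your fallback — proving $(\partial_{\mathrm{Cas}}+\partial_{S^{\ast}})^{2}x=0$ only for $x\in\Ker\partial_{\Delta}$ — is circular: on the kernel, the grouped relation gives $\partial_{S^{\ast}}^{2}x=-\partial_{\Delta}\partial_{S^{\ast}}x$, so that statement is verbatim the lemma, and your first display is a reformulation rather than a reduction. The paper's proof takes the opposite route and uses the kernel hypothesis essentially: for $\Gamma\in\Ker\partial_{\Delta}$ it analyzes $\partial_{\Delta}\partial_{S^{\ast}}\Gamma$ directly, observing that after an $S^{\ast}$-cut at a vertex $w$ the edges formerly incident to $w$ are no longer internal, so the only contractions contributing are at internal edges of $\Gamma$ away from the cut; for such disjoint pairs the cut and the contraction anticommute termwise (as in the proof of \cite[Theorem 6.4]{GoncharovHodge1}), which converts $\partial_{\Delta}\partial_{S^{\ast}}\Gamma$ into an expression involving $\partial_{\Delta}\Gamma=0$. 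To repair your argument you would have to abandon the identity $\partial_{S^{\ast}}^{2}=0$ and carry out this kind of termwise bookkeeping on $\partial_{\Delta}\partial_{S^{\ast}}$, i.e., essentially reproduce the paper's proof.
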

\begin{proof}
    Since $\partial_{\mathrm{Cas}}$ and $\partial_{\Delta}$ anticommutts, it suffices to show that $\partial_{S^{\ast}}$ presrves $\Ker \partial_{\Delta}$. For any $\Gamma \in  \Ker \partial_{\Delta}$, consider $\partial_{\Delta} \partial_{S^{\ast}} \Gamma$. The differential $\partial_{S^{\ast}}$ is the sum of the differentials associated with $S^{\ast}$-decorated external edges, whereas $\partial_{\Delta}$ is the sum of the differentials associated with internal edges. Therefore, the nontrivial contribution to $\partial_{\Delta}$ is contractions associated with internal edges $F$ of $\Gamma$ which is not adjacent to any $S^{\ast}$-decorated external edge of $\Gamma$. In this case, as in the proof of \cite[Theorem 6.4]{GoncharovHodge1}, $\partial_{S^{\ast}}$ and $\partial_{\Delta}$ anticommutes. Therefore, $\partial_{\Delta} \partial_{S^{\ast}} \Gamma=0$, and hence we conclude that $\partial_{\Delta} \circ (\partial_{\mathrm{Cas}}+\partial_{S^{\ast}}) \Gamma=0$ for any $\Gamma \in  \Ker \partial_{\Delta}$.
\end{proof}
Since $\partial^2 =0$  and $\partial_{\mathrm{Cas}} + \partial_{S^{\ast}}$ preserve the $\Ker \partial_{\Delta}$ in $\mathcal{D}^{\vee, \bullet}_{H,S^{\ast}}$, $\partial_{\mathrm{Cas}} + \partial_{S^{\ast}}$ induces a DG Lie coalgebra structure on $H_{\partial_{\Delta}}^0(\mathcal{CD}^{\vee, \bullet}_{H,S^{\ast}})$. We denote the associated differential and the Lie cobracket by $\delta^{\vee}$ and $[-, -]^{\vee}$ respectively. Corresponding to the decomposition $\partial_{\mathrm{Cas}} + \partial_{S^{\ast}}$, we have the decomposition
\begin{equation}
    \delta^{\vee} = \delta_{\mathrm{Cas}}^{\vee} + \delta_{S^{\ast}}^{\vee}:  H_{\partial_{\Delta}}^0(\mathcal{CD}^{\vee, (\bullet, -l)}_{H,S^{\ast}}) \rightarrow H_{\partial_{\Delta}}^0(\mathcal{CD}^{\vee, (\bullet, -(l-1)}_{H,S^{\ast}})
\end{equation}
and 
\begin{equation}
    [-, -]^{\vee} = [-, -]^{\vee}_{\mathrm{Cas}} + [-, -]^{\vee}_{S^{\ast}}: H^0_{\partial_{\Delta}}(\mathcal{CD}^{\bullet}_{H,S^{\ast}}) \rightarrow S^2(H^0_{\partial_{\Delta}}(\mathcal{CD}^{\bullet}_{H,S^{\ast}})) 
\end{equation}

Dually, we have
\begin{equation}
	H_0^{\partial_{\Delta}}(\mathcal{CD}^{\bullet}_{H,S^{\ast}}):= \mathrm{Coker}\left(\partial_{\Delta}^{\vee}: \mathcal{CD}^{1}_{H,S^{\ast}, \mathrm{def}} \rightarrow \mathcal{CD}^{0}_{H,S^{\ast},\mathrm{def}}\right)\simeq  \mathcal{CD}^{0}_{H,S^{\ast},\mathrm{def}} \ / \ \text{IHX}
\end{equation}
where IHX means the linear relation given as in Figure \ref{fig:IHX} and the last isomorphism can be shown by applying the same arguments in the proof of \cite[Proposition 3.29]{KMV13}.

\begin{figure}[h]
\captionsetup{margin=2cm}
 \centering
 \begin{tikzpicture}
\begin{scope}
\draw[densely dotted] (1.0,0.5) -- (-0,0.5);
\draw[densely dotted] (-0.0,-0.5) -- (1.0,-0.5);
\draw[densely dotted] (0.5,0.5) -- (0.5,-0.5);
\node at (0.5,0.5) {$\bullet$};
\node at (0.5,-0.5) {$\bullet$};
\node at (-0.3, 0.5) {$i$};
\node at (1.3, 0.5) {$j$};
\node at (-0.3, -0.5) {$l$};
\node at (1.3, -0.5) {$k$};
\node at (0.8, 0.0) {$m$};
\node at (2, 0) {$+$};
\node at (5, 0) {$+$};
\node at (8, 0) {$=$};
\end{scope}
\begin{scope}[xshift=3cm]
\draw[densely dotted] (-0.0,-0.5) -- (-0,0.5);
\draw[densely dotted] (1.0,0.5) -- (1.0,-0.5);
\draw[densely dotted] (-0.0,0) -- (1.0,0);
\node at (1,0) {$\bullet$};
\node at (0,0) {$\bullet$};
\node at (-0.3, 0.5) {$i$};
\node at (1.3, 0.5) {$j$};
\node at (-0.3, -0.5) {$l$};
\node at (1.3, -0.5) {$k$};
\node at (0.5, 0.3) {$m$};
\end{scope}
\begin{scope}[xshift=6cm]
\draw[densely dotted] (1.0,-0.5) --(-0,0.5);
\draw[densely dotted] (1.0,0.5) -- (-0.0,-0.5);
\draw[densely dotted] (0.2,-0.3) -- (0.8,-0.3);
\node at (0.2,-0.3) {$\bullet$};
\node at (0.8,-0.3) {$\bullet$};
\node at (-0.3, 0.5) {$i$};
\node at (1.3, 0.5) {$j$};
\node at (-0.3, -0.5) {$l$};
\node at (1.3, -0.5) {$k$};
\node at (0.5, -0.5) {$m$};
\end{scope}
\begin{scope}[xshift=9cm]
\node at (0,0) {$0.$};
\end{scope}
\end{tikzpicture}
 \caption[IHX relation]{IHX relation. Here, three decorated graphs are identical except within the depicted region. The labeling by $i,j,k,l,m$ corresponds to some numbering of edges of each graph}
 \label{fig:IHX}
\end{figure}

Similar to the above, the differential $\partial^{\vee}$ gives rise to an DG Lie algebra structure on $H_0^{\partial_{\Delta}}(\mathcal{CD}^{\bullet}_{H,S^{\ast}})$ given as
\begin{equation}
    \delta = \delta_{\mathrm{Cas}} + \delta_{S^{\ast}}: H^{\partial_{\Delta}}_0(\mathcal{CD}^{\vee, (\bullet, -(l-1))}_{H,S^{\ast}}) \rightarrow H^{\partial_{\Delta}}_0(\mathcal{CD}^{\vee, (\bullet, -l)}_{H,S^{\ast}})
\end{equation}
and 
\begin{equation}
	[- ,- ]= [-, -]_{\mathrm{Cas}} + [-,-]_{S^{\ast}} : S^2(H_0^{\partial_{\Delta}}(\mathcal{CD}^{\bullet}_{H,S^{\ast}})) \rightarrow H_0^{\partial_{\Delta}}(\mathcal{CD}^{\bullet}_{H,S^{\ast}}).
\end{equation}
More concrete definitions of these maps are described as follows. 

\noindent
(i) The map $\delta_{\mathrm{Cas}}$. For $\Gamma=[(\Gamma, W; \mathrm{Or}_{\Gamma})] \in H_0^{\partial_{\Delta}}(\mathcal{CD}^{\bullet}_{H,S^{\ast}})$, we take two special decorated external edges $e=(v,\alpha)$ and $e'=(w, \beta)$ where we denote by $v, w$  internal vertices and by $\alpha, \beta$ external vertices decorated by differential 1-forms $\alpha, \beta$ by abuse of notation. Then, we define a graph $\Gamma':=\Gamma^v_w \selfconn$ by gluing $e$ and $e'$ along their external vertices as Figure \ref{fig:dual_part_cas}. By forgetting the decorations $\alpha, \beta$, we obtain the induced decoration $W'$ on $\Gamma'=\Gamma^v_w \selfconn$. We choose the orientation so that $\mathrm{Or}_{\Gamma'} = E \wedge \mathrm{Or}_{\Gamma \setminus \{e, e'\}}$ and $\mathrm{Or}_{\Gamma} = e \wedge e' \wedge \mathrm{Or}_{\Gamma\setminus\{e, e'\}}$ where $E$ is the edge obtained by gluing $e$ and $e'$. Then, by summing over all distinct pairs of special decorated external edges with weight $\langle \alpha, \beta \rangle$, we define 
\begin{equation}
    \delta_{\mathrm{Cas}}([(\Gamma, W; \mathrm{Or}_{\Gamma})]) := \sum_{e=(v,\alpha), e'=(w, \beta) \in E_{\Gamma}^{\ext, \mathrm{sp}}} \langle \alpha, \beta \rangle \cdot [(\Gamma'=\Gamma^v_w\selfconn , W'; \mathrm{Or}_{\Gamma'})].
\end{equation}
For simplicity of notation, we often denote it as
\begin{equation}
    \delta_{\mathrm{Cas}}(\Gamma) := \sum_{e=(v,\alpha), e'=(w, \beta) \in E_{\Gamma}^{\ext, \mathrm{sp}}} \langle \alpha, \beta \rangle \cdot \Gamma^v_w\selfconn
\end{equation}
by suppressing decorations and orientations in the sequel.

\noindent
(ii) The map $\delta_{S^{\ast}}$. For $\Gamma=[(\Gamma, W; \mathrm{Or}_{\Gamma})] \in H_0^{\partial_{\Delta}}(\mathcal{CD}^{\bullet}_{H,S^{\ast}})$, we set
\begin{equation}
     \delta_{S^{\ast}}(\Gamma) := \sum_{e=(v,s), e'=(w, s') \in E_{\Gamma}^{\ext, S^{\ast}}} \langle s, s' \rangle \cdot \Gamma^v_w \selftriconn
\end{equation}
where $\Gamma^v_w \selftriconn$ denotes the uni-trivalent graph obtained from $\Gamma$ by connecting $S^{\ast}$-deocrated edges $e=(v,s)$ and $e'=(w,s')$ as Figure \ref{fig:dual_S-deco}. Here, $v,w$ means internal vertices, and $s, s'$ are external vertices decorated by $s, s'$ respectively. The induced decorations and orientations are determined so that they are compatible with \eqref{eq:paartial_S1}.

\noindent
(iii) The maps $[-, -]_{\mathrm{Cas}}$. With the similar notations with (i), for $\Gamma, \Gamma' \in H_0^{\partial_{\Delta}}(\mathcal{CD}^{\bullet}_{H,S^{\ast}})$, we set 
\begin{equation}
    [\Gamma, \Gamma']_{\mathrm{Cas}} := \sum_{e=(v,\alpha) \in E_{\Gamma}^{\ext, \mathrm{sp}}} \sum_{e'=(w,\beta) \in E_{\Gamma'}^{\ext, \mathrm{sp}}} \langle \alpha, \beta \rangle \cdot (\Gamma \connedge
    \Gamma')
\end{equation}
where $\Gamma \tikz[baseline=-0.1ex]{\draw[densely dotted] (0,0) -- (0.5,0);
    \node at (0,0.2) {$v$};
    \node at (0.5, 0.2) {$w$};
    }
    \Gamma'$ means the uni-trivalent graph obtained from $\Gamma$ and $\Gamma'$ by connecting them along special decorated external edges $e=(v,\alpha)$ of $\Gamma$ and $e'=(w,\beta)$ of $\Gamma'$, which is graphically described as Figure \ref{fig:dual_Ca_S}. The way of defining the decorations and orientations of the resulting graphs is given to be compatible with \eqref{eq:partial_cas1}.

\noindent
(iv) The map $[-,-]_{S^{\ast}}$. With the similar notations with (ii),  we set, for $\Gamma, \Gamma' \in H_0^{\partial_{\Delta}}(\mathcal{CD}^{\bullet}_{H,S^{\ast}})$,
\begin{equation}
    [\Gamma, \Gamma']_{S^{\ast}} := \sum_{e=(v,s) \in E_{\Gamma}^{\ext, S^{\ast}}} \sum_{e'=(w,s') \in E_{\Gamma'}^{\ext, S^{\ast}}} \langle s, s'\rangle \cdot (\Gamma \conntriedge
    \Gamma')
\end{equation}
where $\Gamma \conntriedge
    \Gamma'$ means similarly the uni-trivalent graph obtained from $\Gamma$ and $\Gamma'$ by connecting them along $S^{\ast}$-deocrated external edges $e=(v,s)$ of $\Gamma$ and $e'=(w,s')$ of $\Gamma'$ as Figure \ref{fig:dual_S-deco}. The decorations and orientations are given by the same manner as (i).

\begin{figure}[h]
 \centering
 \begin{tikzpicture}
 \begin{scope}[xshift=6cm]
 	\draw[densely dotted] (0,0) -- (1,0);
 	\draw[densely dotted] (0,0) -- (-0.5, 0.5);
 	\draw[densely dotted] (0,0) -- (-0.5, -0.5);
 	\draw[densely dotted] (1,0) -- (1.5, 0.5);
 	\draw[densely dotted] (1,0) -- (1.5, -0.5);
   \node at (1,0) {$\bullet$}; %right internal vertex
 \node at (0,0) {$\bullet$}; %left internal vertex
 \node at (0, -0.5) {$v$};
	\node at (1, -0.5) {$w$};
  \node at (-1.3, 0) {$\langle \alpha, \beta\rangle$};
  \node at (-2.5, 0) {$\longrightarrow$};
  \end{scope}
  \begin{scope}[xshift=-5cm]
	\draw[densely dotted] (4.5,0.5) -- (5, 0);
	\draw[densely dotted] (5,0) -- (4.5, -0.5);
	\draw[densely dotted] (5,0) -- (5.5,0);
	\node at (5.5,0) {$\bullet$}; % \alpha
	\node at (6.5, 0) {$\bullet$}; %\beta
 \node at (7,0) {$\bullet$}; %right internal vertex
 \node at (5,0) {$\bullet$}; %left internal vertex
	\draw[densely dotted] (6.5,0) -- (7,0);
	\draw[densely dotted] (7, 0) -- (7.5, 0.5);
	\draw[densely dotted] (7, 0) -- (7.5, -0.5);
	\node at (5.5, 0.5) {$\alpha$};
	\node at (6.5, 0.5) {$\beta$};
 \node at (5, -0.5) {$v$};
	\node at (7, -0.5) {$w$};
 \end{scope}
 	 \end{tikzpicture}
 \caption{The map $\partial_{\mathrm{Cas}}^{\vee}$}
 \label{fig:dual_part_cas}
\end{figure}

\begin{figure}[h]
 \centering
 \begin{tikzpicture}
 \begin{scope}[xshift=6cm]
  \node at (0,0) {$\bullet$}; %left internal vertex
 \node at (0, -0.5) {$v$};
  \node at (-1.3, 0) {$\langle \alpha, \beta\rangle$};
  \node at (-2.5, 0) {$\longrightarrow$};
 	\draw[densely dotted] (0,0) -- (0.9,0);
 	\draw[densely dotted] (0,0) -- (-0.5, 0.5);
 	\draw[densely dotted] (0,0) -- (-0.5, -0.5);
 	\node at (0.5, 0.5) {$E$};
  \node at (1,0) {$\bullet$};
 	\node at (1.4, 0) {$s$};
  \end{scope}
  \begin{scope}[xshift=-5cm]
	\draw[densely dotted] (4.5,0.5) -- (5, 0);
	\draw[densely dotted] (5,0) -- (4.5, -0.5);
	\draw[densely dotted] (5,0) -- (5.5,0);
	\node at (5.5,0) {$\bullet$};
	\node at (6.5, 0) {$\bullet$};
	\draw[densely dotted] (6.5,0) -- (6.9,0);
	\node at (7,0) {$\bullet$};
	\node at (7.4, 0) {$s$};
 \node at (5.5, 0.5) {$\alpha$};
	\node at (6.5, 0.5) {$\beta$};
 \node at (5,0) {$\bullet$}; %left internal vertex
 \node at (5, -0.5) {$v$};
 \end{scope}
 	 \end{tikzpicture}
 \caption{The map $\partial_{\mathrm{Cas}}^{\vee}$ for an $S^{\ast}$-decorated external edge}
 \label{fig:dual_Ca_S}
\end{figure}

\begin{figure}[h]
 \centering
 \begin{tikzpicture}
 \begin{scope}[xshift=6.5cm, yshift=0.5cm]
 % rightgraph
 	\draw[densely dotted] (0,0) -- (0, -0.9);
 	\draw[densely dotted] (0,0) -- (-0.7071, 0.7071);
  \node at (-0.7071, 0.7071) {$\bullet$}; %left internal vertex
  \node at (-1, 0.9) {$v$}; %left
  \node at (1, 0.9) {$w$}; %right 
   \node at (0.7071, 0.7071) {$\bullet$}; %right internal vertex
 	\draw[densely dotted] (0,0) -- (0.7071, 0.7071);
 	\draw[densely dotted] (0.7071, 0.7071) -- (0.7071+0.8, 0.7071);
 	\draw[densely dotted] (0.7071, 0.7071) -- (0.7071, 0.7071+0.8);
 	\draw[densely dotted] (-0.7071, 0.7071) -- (-0.7071-0.8, 0.7071);
 	\draw[densely dotted] (-0.7071, 0.7071) -- (-0.7071, 0.7071+0.8);
 	\node at (0,-1) {$\bullet$};
  \node at (0, 0) {$\bullet$};
 	\node at (0,-1.5) {$s_i$};
  \end{scope}
 	\node at (3.5, 0) {$\longrightarrow$};
  \node at (4.5, 0) {$\delta_{ij}$};
 \begin{scope}[xshift=0cm] %\Gamma_1
% 	\draw[densely dotted] (0,0) -- (0, -0.9);
 	\draw[densely dotted] (0,0) -- (-0.7071, 0.7071);
  \node at (-0.7071, 0.7071) {$\bullet$}; %left internal vertex
  \node at (-1, 0.9) {$v$}; %left
 	\draw[densely dotted] (-0.7071, 0.7071) -- (-0.7071-0.8, 0.7071);
 	\draw[densely dotted] (-0.7071, 0.7071) -- (-0.7071, 0.7071+0.8);
 	\node at  (0,0) {$\bullet$};
 	\node at (0,-0.5) {$s_i$};
 \end{scope}
\begin{scope}[xshift=1cm] %\Gamma_2
 \node at (1, 0.9) {$w$}; %right 
   \node at (0.7071, 0.7071) {$\bullet$}; %right internal vertex
 	\draw[densely dotted] (0,0) -- (0.7071, 0.7071);
 	\draw[densely dotted] (0.7071, 0.7071) -- (0.7071+0.8, 0.7071);
 	\draw[densely dotted] (0.7071, 0.7071) -- (0.7071, 0.7071+0.8);
 	\node at  (0,0) {$\bullet$};
  
 	\node at (0,-0.5) {$s_j$};
 \end{scope}
 \end{tikzpicture}
 \caption{The map $\partial_{S^{\ast}}^{\vee}$}
 \label{fig:dual_S-deco}
\end{figure}
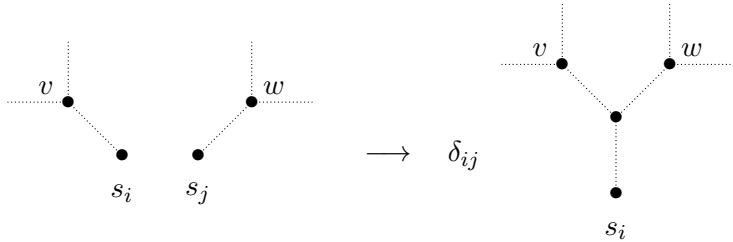

\section{Quantum master equation and (generalized) Hodge correlator twistor connection}\label{section:5}
This section introduces our generalized version of the Hodge correlator twistor connection which satisfies some Maurer--Cartan type equation analogous to the quantum master equation in BV formalism (cf. \cite{Cos11}, \cite{CW}, \cite{I}). For this, we first recall the notions of quantum master equation and master homotopy (Section \ref{ssection:QMEMH}), then we define an effective action $\mathfrak{s}$ in graph complex level (Section \ref{ssection:FEA})  and our generalized version of Hodge correlator twistor connection as its image under a morphism assigning graphs to associated integrations (Section \ref{ssection:HC_QME}).

\subsection{Quantum master equation and master homotopy}\label{ssection:QMEMH}
Following \cite{I}, we recall the quantum master equation and its variation called the quantum master homotopy equation for the case of finite dimensional vector spaces. For more details, see also \cite{CM}. 

Let $(H,  \langle \cdot, \cdot \rangle)$ be a finite dimensional $(\mathbb{Z}/2\mathbb{Z})$-graded real vector space $H = H^{\mathrm{even}} \oplus H^{\mathrm{odd}}$ equipped with an odd symplectic form $\langle \cdot, \cdot \rangle: H \times H \rightarrow \bR$, i.e.,  non-degenerate skew-symmetric bilinear form  such that if $\langle \alpha, \beta \rangle \neq 0$ then $|\alpha| + |\beta| = 1$. Note that the existence of $\langle \cdot, \cdot \rangle$ implies that $\dim(H^{\mathrm{odd}}) = \dim(H^{\mathrm{even}})=:m$ and so $H$ is $2m$-dimensional real vector space. Then, by choosing a basis for $H$, one can consider Darboux coordinates $x_1,\ldots, x_m, y_1,\ldots, y_m$ for $H$ so that $x_i$ are even and $y_i$ are odd variables respectively.

Next, let us recall the BV Laplacian $\Delta_{\mathrm{BV}}$ and the Gerstenhaber bracket $\{\cdot, \cdot\}$ associated with $(H,  \langle \cdot, \cdot \rangle)$, which are defined as follows. Let $\mathcal{O}(H)$ be the (graded) symmetric algebra $\mathcal{O}(H) = \mathrm{Sym}(H^{\vee})=\bigoplus_{k\geq 0} \mathrm{Sym}^k (H^{\vee})$ of the dual space $H^{\vee}$ to $H$. The \textit{BV Laplacian} $\Delta_{\mathrm{BV}}$ is a second order differential operator $\Delta_{\mathrm{BV}}: \mathcal{O}(H) \rightarrow \mathcal{O}(H)$ given by
\begin{equation}\label{eq:BVLaplacian}
    \Delta_{\mathrm{BV}} := \sum_{i=1}^m \frac{\partial}{\partial x_i} \frac{\partial}{\partial y_i}.
\end{equation}
Note that $\Delta_{BV}$ does not depend on the choice of basis of $H$. The \textit{Gerstenhaber bracket} $\{\cdot, \cdot\}: \mathcal{O}(H) \times \mathcal{O}(H) \rightarrow \mathcal{O}(H)$ is defined by setting for any homogeneous elements $f$ and $g$ in $\mathcal{O}(H)$,
\begin{equation}\label{eq:Gerstenhaber}
\begin{split}
    \{f, g\} &:= \Delta_{\mathrm{BV}}(fg) - \Delta_{\mathrm{BV}}(f) g - (-1)^{|f|} f \Delta_{\mathrm{BV}}(g)\\
    & =\sum_{i=1}^m\left((-1)^{|f|} \frac{\partial f}{\partial x_i} \frac{\partial g}{\partial y_i} + \frac{\partial f}{\partial y_i} \frac{\partial g}{\partial x_i}\right),
\end{split}
\end{equation}
and extending it linearly.

Now we are in the position to explain the quantum master equation. Let  $\mathcal{O}(H)[[\hbar]] :=  \mathcal{O}(H)\otimes_{\mathbb{R}} \bR[[\hbar]]$ denote the ring of formal power series in a formal variable $\hbar$ with coefficients in $\mathcal{O}(H)$. For an even element $S \in \mathcal{O}(H)[[\hbar]]$, we call that $S$ satisfies the \textit{quantum master equation} if the following equation holds
\begin{equation}
    \Delta_{\mathrm{BV}} e^{S/\hbar} = 0.
\end{equation}
By direct computation, one sees that it is equivalent to
\begin{equation}\label{eq:QME}
    \frac{1}{2} \{ S, S \} + \hbar \Delta_{\mathrm{BV}} S = 0.
\end{equation}

We end this subsection by recalling the quantum master homotopy equation which is a one-parameter family version of the quantum master equation. Let $I = [0,1]$ be the unit interval. Let us consider the space $\Omega^{\bullet}(I) \otimes_{\mathbb{R}} \mathcal{O}(H)[[\hbar]]$ and extend the operator $\Delta_{\mathrm{BV}}$ on $\Omega^{\bullet}(I) \otimes_{\mathbb{R}} \mathcal{O}(H)[[\hbar]]$ by acting trivially on $\Omega^{\bullet}(I)$. Note that, then, $\{\cdot, \cdot\}$ also extend to $\Omega^{\bullet}(I) \otimes_{\mathbb{R}} \mathcal{O}(H)[[\hbar]]$ via $\Delta_{\mathrm{BV}}$. For an even element $\widetilde{S} \in \Omega^{\bullet}(I) \otimes_{\bR} \mathcal{O}(H)[[\hbar]]$, we say that $\widetilde{S}$ satisfies the \textit{quantum master homotopy equation} if it satisfies
\begin{equation}\label{eq:QMHE}
    d_I \widetilde{S} + \frac{1}{2} \{ \widetilde{S}, \widetilde{S}\} + \hbar \Delta_{\mathrm{BV}} \widetilde{S} = 0
\end{equation}
where $d_I$ is the trivial extension of the exterior differential operator $d_I : \Omega^{\bullet}(I) \rightarrow \Omega^{\bullet+1}(I)$ to $\Omega^{\bullet}(I) \otimes_{\mathbb{R}} \mathcal{O}(H)[[\hbar]]$. Two even elements $S, S' \in \mathcal{O}(H)[[\hbar]]$ are said to be \textit{master homotopic} if there is an even element $\widetilde{S} \in \Omega^{\bullet}(I) \otimes_{\bR} \mathcal{O}(H)[[\hbar]]$ with  $\widetilde{S}|_{t=0} = S$ and $\widetilde{S}|_{t=1} = S'$, which satisfies the quantum master equation \eqref{eq:QMHE}.
\begin{remark}
    \begin{enumerate}[(1)]
    \item Corresponding to the direct sum decomposition $\Omega^{\bullet}(I) \otimes_{\bR} \mathcal{O}(H)[[\hbar]]= \Omega^{0}(I) \otimes_{\bR} \mathcal{O}(H)[[\hbar]] \oplus \Omega^{1}(I) \otimes_{\bR} \mathcal{O}(H)[[\hbar]]$, the quantum master homotopy equation \eqref{eq:QMHE} is decomposed into the following two differential equations
    \begin{equation}
        \begin{split}
            &\frac{1}{2} \{ A(t), A(t)\} + \hbar \Delta_{\mathrm{BV}} A(t) = 0,\\
            &\frac{d}{d t} A(t) + \{ B(t), A(t)\} + \hbar \Delta_{\mathrm{BV}} B(t) = 0,
        \end{split}
    \end{equation}
    where $A(t), B(t) \in \mathcal{O}(H)[[\hbar]]$ are defined by $\widetilde{S}(t) = A(t) + B(t) dt$.
    \item If $\widetilde{S}$ is a constant function on $I$ with $\widetilde{S}|_t = S$ for any $t \in I$, then the quantum master homotopy equation \eqref{eq:QMHE} reduces to the quantum master equation \eqref{eq:QME} for $S$.
    \item When $H = 0$, the quantum master equation \eqref{eq:QME} is empty. In addition, the quantum master homotopy equation \eqref{eq:QMHE} is reduced to the differential equation
    \begin{equation}
        d_I \widetilde{S} = 0
    \end{equation}
    for $\widetilde{S} \in \Omega^{\bullet}(I) \otimes_{\mathbb{R}} \bR[[\hbar]]$ so that a solution $\widetilde{S}$ to the above equation is a constant function on $I$.
    \end{enumerate}
\end{remark}

\subsection{Formal effective action}\label{ssection:FEA}

In this section, we introduce effective action as a formal series of uni-trivalent graphs in a graph complex. 
This series encodes the algebraic structure of effective actions from which one can obtain genuine effective action through CDGA morphisms to de Rham complexes.

Let $S = \{s_1, \ldots, s_k\}$ be a finite set, $\bC[S]$ be the vector space spanned by $S$, and $H^{\vee}$ be a finite-dimensional symplectic vector space over $\bC$. We define an inner product on $\bC[S]$ by setting $(\{s_i\}, \{s_j\}) = \delta_{ij}$. We set
\begin{equation}
    V_{H,S}^{\vee} := \bC[S] \oplus H^{\vee}
\end{equation}
and $ V_{H,S}$ be its dual with respect to the inner product and the symplectic form on $\bC[S]$ and $H^{\vee}$ respectively. We choose a basis $\{\alpha_1, \ldots, \alpha_g\}$  of $H^{\vee}$ and let $\{h_1, \ldots, h_g \}$ be its dual with respect to the symplectic form. Then, we define the Casimir element as
\begin{equation}
    \gamma|\gamma^{\vee}:= \sum_{i=1}^k \{s_i\} \otimes \{s_i\} + \sum_{i=1}^g \alpha_i \otimes h_i \in V_{H,S}^{\vee} \otimes V_{H,S}.
\end{equation}

For a connected uni-trivalent graph $\Gamma$, consider a decoration $W$ such that it assigns one of the elements of the (chosen) basis of $ V_{H,S}^{\vee}$ for each external vertex. We write this decorated graph by $(\Gamma, W)$. Taking dual decoration $W^{\vee}$ of $W$ results in a decorated graph $(\Gamma, W^{\vee})$. We sometimes call the sum of all such decorations weighted by the numbers of automorphisms of decorated graphs (cf. Section \ref{ss:automorphism_graph})
\begin{equation}
    \sum_{W} \frac{1}{|\Aut((\Gamma, W))|} (\Gamma, W; \mathrm{Or}_{\Gamma}) \otimes (\Gamma, W^{\vee}; \mathrm{Or}_{\Gamma})
\end{equation}
the \textit{pairwise decoration of $\Gamma \otimes \Gamma^{\vee} $ by the Casimir element $\gamma|\gamma^{\vee}$}. Note that the sign of $(\Gamma, W; \mathrm{Or}_{\Gamma}) \otimes (\Gamma, W^{\vee}; \mathrm{Or}_{\Gamma})$ does not depend on the choice of elements of orientation torsor $\mathrm{Or}_{\Gamma}$ of $\Gamma$.

In terms of this terminology, the \textit{formal effective action}  is defined by
\begin{equation}
    \mathfrak{s}:= \sum_{\Gamma} \sum_{W} \frac{ \hbar^{l(\Gamma)}}{|\Aut((\Gamma, W))|} (\Gamma, W; \mathrm{Or}_{\Gamma}) \otimes [(\Gamma, W^{\vee}; \mathrm{Or}_{\Gamma})] \in \mathcal{CD}^{\vee,0}_{H, S^{\ast},\mathrm{def}} \otimes H_0^{\partial_{\Delta}}(\mathcal{CD}^{\bullet}_{H, S^{\ast}}[-1])[[\hbar]]
\end{equation}
where the summation runs over all the topological connected uni-trivalent graph $\Gamma$, and all the decorations of $\Gamma$ by basis of $ V_{H,S}^{\vee}$. In the sequel, for simplicity of notations, $\mathfrak{s}$ will be often denoted as 
\begin{equation}
    \mathfrak{s}= \sum_{\Gamma} \frac{\hbar^{l(\Gamma)}}{|\Aut(\Gamma)|} \Gamma \otimes \Gamma^{\vee} = \sum_{n=0}^{\infty} \sum_{\Gamma; l(\Gamma)=n} \frac{\hbar^{n}}{|\Aut(\Gamma)|} \Gamma \otimes \Gamma^{\vee}
\end{equation}
where the summation taken over all connected uni-trivalent graph $\Gamma$ with $\Gamma \otimes \Gamma^{\vee}$ pairwise deocrated by the Casimir element $\gamma|\gamma^{\vee}$.

We define the bracket operation $[-,-]$ on  $\mathcal{CD}^{\vee,0}_{H, S^{\ast},\mathrm{def}} \otimes H_0^{\partial_{\Delta}}(\mathcal{CD}^{\bullet}_{H, S^{\ast}}[-1])$ by setting
\begin{equation}
    [\Gamma_1 \otimes \Gamma_1',\Gamma_2 \otimes \Gamma_2'] := \Gamma_1 \wedge \Gamma_2 \otimes [\Gamma_1', \Gamma_2']
\end{equation}
for $\Gamma_1, \Gamma_2 \in \mathcal{CD}^{\vee,0}_{H, S^{\ast},\mathrm{def}}$ and $\Gamma_1', \Gamma_2' \in H_0^{\partial_{\Delta}}(\mathcal{CD}^{\bullet}_{H, S^{\ast}}[-1])$.
\begin{theorem}[formal quantum master equation]\label{thm:formal_qme}
    The following differential equation holds:
    \begin{equation}\label{eq:qme_formal}
        (\partial \otimes \Id) \mathfrak{s} - \frac{1}{2} [\mathfrak{s}, \mathfrak{s}] - \hbar(\Id \otimes \delta ) \mathfrak{s} = 0.
    \end{equation}
\end{theorem}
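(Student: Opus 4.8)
The plan is to read $\mathfrak{s}$ as the \emph{canonical (Casimir) element} of the pairing $\langle-,-\rangle$ of \eqref{eq:inner_prod_graph}, graded by loop number, and to prove \eqref{eq:qme_formal} by sliding the differential $\partial$ from the first tensor factor to the second. Since $\langle\Gamma,\Gamma'\rangle=\delta_{\Gamma,\Gamma'^{\vee}}|\Aut(\Gamma)|$, the basis dual to a connected decorated graph $\Gamma$ is $|\Aut(\Gamma)|^{-1}\Gamma^{\vee}$, so the copairing of $\langle-,-\rangle$ is precisely $\sum_{\Gamma}|\Aut(\Gamma)|^{-1}\Gamma\otimes\Gamma^{\vee}$, and $\mathfrak{s}$ is this copairing with each graph weighted by $\hbar^{l(\Gamma)}$. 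First I would record the elementary sliding identity: for a copairing $C$ and any homogeneous operator $A$ one has $(A\otimes\Id)C=(\Id\otimes A^{t})C$, where $A^{t}$ is the graded transpose with respect to $\langle-,-\rangle$ (the degree shift $[-1]$ and the orientation conventions only enter through signs here). The whole identity \eqref{eq:qme_formal} is then the assertion that the transpose of $\partial=\partial_{\Delta}+\partial_{\mathrm{Cas}}+\partial_{S^{\ast}}$, reorganised according to connectivity, is exactly $\hbar\delta$ together with the $\tfrac12$-bracket.

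I would then treat the three pieces of $\partial$ in turn. For $\partial_{\Delta}$: edge-contraction is transpose to the edge-expansion operator $\partial_{\Delta}^{\vee}$, whose image is the IHX subspace by which $H_{0}^{\partial_{\Delta}}(\mathcal{CD}^{\bullet}_{H,S^{\ast}})$ was formed. Writing $\widehat{\mathfrak{s}}$ for the lift of $\mathfrak{s}$ before passing to $H_{0}^{\partial_{\Delta}}$ in the second factor, the sliding identity gives $(\partial_{\Delta}\otimes\Id)\widehat{\mathfrak{s}}=(\Id\otimes\partial_{\Delta}^{\vee})\widehat{\mathfrak{s}}$, and projecting the second factor into $H_{0}^{\partial_{\Delta}}$ annihilates it. As $\partial_{\Delta}$ is the only constituent of $\partial$ that raises the defect of a first-factor graph (cutting edges and removing $S^{\ast}$-legs keeps graphs uni-trivalent), this term has to vanish on its own, which it does.

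For $\partial_{\mathrm{Cas}}$ and $\partial_{S^{\ast}}$ I would split each sum over the cut edge $E$ into the two cases already isolated in \eqref{eq:partial_cas1}--\eqref{eq:partial_cas2}: $E$ a non-cycle edge, so that cutting disconnects $\Gamma$ into $\Gamma_{1}\sqcup\Gamma_{2}$, and $E$ a cycle edge, so that cutting keeps $\Gamma$ connected and lowers $l(\Gamma)$ by one. The non-cycle case reproduces $\tfrac12[\mathfrak{s},\mathfrak{s}]$: cutting a non-cycle edge and inserting the Casimir is inverse to the Casimir-gluing defining $[\Gamma_{1}^{\vee},\Gamma_{2}^{\vee}]$, and since the pairwise decoration is by the \emph{symmetric} Casimir $\gamma|\gamma^{\vee}$ the dualisation of decorations commutes with gluing, so that $\Gamma^{\vee}$ equals $[\Gamma_{1}^{\vee},\Gamma_{2}^{\vee}]$ term by term; the factor $\tfrac12$ and the passage between $|\Aut(\Gamma)|^{-1}$ and $|\Aut(\Gamma_{1})|^{-1}|\Aut(\Gamma_{2})|^{-1}$ is the orbit--stabiliser (symmetry-factor) identity relating the automorphisms of a glued graph to those of its two pieces and the choice of glued legs, the $\tfrac12$ absorbing the $\Gamma_{1}\leftrightarrow\Gamma_{2}$ symmetry of the bracket. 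The cycle case reproduces $\hbar(\Id\otimes\delta)\mathfrak{s}$: cutting a cycle edge of $\Gamma$ is transpose to self-gluing two legs of $\Gamma^{\vee}$, i.e.\ to $\delta_{\mathrm{Cas}}$ resp.\ $\delta_{S^{\ast}}$, and the extra power of $\hbar$ is exactly the loop created by that self-gluing, since $l(\Gamma)$ exceeds the loop number of the cut graph by one. In both cases the signs are forced by the conventions $\mathrm{Or}_{\Gamma}=E\wedge\mathrm{Or}_{\cdots}$ fixed in the definitions of $\partial$, $\delta$ and $[-,-]$, hence are automatically compatible, and assembling the three matchings yields \eqref{eq:qme_formal}.

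The hard part will be the uniform bookkeeping of symmetry factors and signs in the $\partial_{\mathrm{Cas}}/\partial_{S^{\ast}}$ step: one must check, across all connected topologies --- including graphs with several independent cycles and with repeated or equal decorations, where $|\Aut|$ can jump --- that the weights $\hbar^{l}/|\Aut|$ reassemble correctly under cutting and gluing and that the $\tfrac12$ precisely cancels the overcounting from ordering $\Gamma_{1},\Gamma_{2}$. This is the genuinely new content beyond the planar-tree computation of \cite[Theorem 6.4]{GoncharovHodge1}: the presence of loops forces the cycle/non-cycle dichotomy and the $\hbar$-grading, and the self-loop and equal-decoration graphs that are set to zero in $\mathcal{CD}^{\vee,\bullet}_{H,S^{\ast}}$ must be checked not to contribute spuriously. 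Once this symmetry-factor identity is in place, the three matchings above combine to give the formal quantum master equation.
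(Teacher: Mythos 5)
Your proposal is correct, and for the crucial first step it takes a genuinely different route from the paper. You read $\mathfrak{s}$ as the canonical copairing of the inner product \eqref{eq:inner_prod_graph} (weighted by $\hbar^{l(\Gamma)}$) and dispose of the $\partial_{\Delta}$-term by pure adjunction: since $\partial_{\Delta}$ preserves loop number, the sliding identity applies degree by degree, the transpose of edge-contraction is the expansion operator $\partial_{\Delta}^{\vee}$, and its image is precisely what is quotiented out in $H_{0}^{\partial_{\Delta}}(\mathcal{CD}^{\bullet}_{H,S^{\ast}})\simeq \mathcal{CD}^{0}_{H,S^{\ast},\mathrm{def}}/\mathrm{IHX}$, so the slid term dies in the second factor. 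The paper reaches the same conclusion by a much more hands-on count: it re-expands $\mathfrak{s}$ over partitions of half-edges (via the fibres of the map $\pi_{k,n}$ and the identity $|\pi_{k,n}^{-1}(\Gamma)|=|G_V|/|\Aut(\Gamma)|$) and checks that for each graph $\Gamma'$ with one $4$-valent vertex the three resolutions $\Gamma_I,\Gamma_H,\Gamma_X$ occur with equal coefficient, whence IHX kills them. Both arguments run on the same mechanism (IHX in the homology), but yours buys a shorter proof at the cost of having to verify once that the $1/|\Aut|$ weights really make $\mathfrak{s}$ the canonical element (which your dual-basis computation does, using $|\Aut(\Gamma)|=|\Aut(\Gamma^{\vee})|$), while the paper's half-edge expansion makes the equal-coefficient claim visibly true without any duality formalism. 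For the $\partial_{\mathrm{Cas}}$ and $\partial_{S^{\ast}}$ pieces your plan coincides with the paper's: the cycle/non-cycle dichotomy of \eqref{eq:partial_cas1}--\eqref{eq:partial_cas2} matches the $\hbar(\Id\otimes\delta)\mathfrak{s}$ and $\tfrac12[\mathfrak{s},\mathfrak{s}]$ terms respectively, the $\hbar$-bookkeeping is exactly your observation that self-gluing raises the loop number by one while $l(\Gamma)=l(\Gamma_1)+l(\Gamma_2)$ in the disconnecting case, and the symmetry-factor identities you flag as the hard part are exactly the ones the paper proves: $|\Aut(\Gamma)\cdot e|=|\Aut(\Gamma)|/|\Aut(\Gamma)_e|$, the equality $\Aut(\Gamma)_e=\Aut(\Gamma\setminus\{e\})_e$ giving $|\Aut(\Gamma)\cdot e|/|\Aut(\Gamma)|=|\Aut(\Gamma\setminus\{e\})\cdot e|/|\Aut(\Gamma\setminus\{e\})|$, and $|\Aut(\Gamma\setminus\{e\})|=2\,|\Aut(\Gamma')|\,|\Aut(\Gamma'')|$, whose factor $2$ cancels the $\tfrac12$ of the bracket just as you predict. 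You state rather than execute these checks, but since they are true and you have named them precisely (including the delicate cases of repeated decorations and the swap symmetry when the two halves are isomorphic), your argument would go through; indeed your opening observation that $\delta$ and the bracket are the connectivity-reorganised transpose of $\partial_{\mathrm{Cas}}+\partial_{S^{\ast}}$ packages the entire theorem as one adjunction statement plus weight bookkeeping, which is a tidier global picture than the paper's case-by-case matching.
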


\begin{proof}
    To begin with, let us show that $(\partial \otimes \Id) \mathfrak{s} = ((\partial_{\mathrm{Cas}} + \partial_{S^{\ast}}) \otimes \Id) \mathfrak{s}$, i.e., we have $(\partial_{\Delta} \otimes \Id) \mathfrak{s} =0$. This follows from IHX relation on $H_0^{\partial_{\Delta}}(\mathcal{CD}^{\bullet}_{H, S^{\ast}})$ as follows. Since $\partial_{\Delta}$ preserves the number of loops and external vertices of uni-trivalent graphs and their decorations, it is enough to prove it for
    \begin{equation}\label{eq:5.2.4}
        (\partial_{\Delta} \otimes \Id)\left(\sum_{\substack{\text{$\Gamma$: connected}\\\text{$l(\Gamma)=n, |V_{\Gamma}^{\ext}|=k$}}}\frac{1}{|\Aut(\Gamma)|} \Gamma \otimes \Gamma^{\vee} \right) \quad (n,k \geq 0)
    \end{equation}
    with a fixed decoration on external vertices. Note that, for uni-trivalent graph $\Gamma$ with $n$-loops and $k$-external vertices, the equations $|V_{\Gamma}^{\mathrm{int}}|= k + 2n -2$ and $|E_{\Gamma}^{\mathrm{int}}|= k + 3n - 3$ hold.
    As in \cite{KL23}, we rewrite the sum in \eqref{eq:5.2.4} in terms of partitions of the set of half-edges to simplify arguments. Let $P_{k,n}$ be the set of partitions of the set $h_{k,n}$ consinsting of $3|V_{\Gamma}^{\mathrm{int}}|= 3k + 6n -6$ internal half-edges and $|V_{\Gamma}^{\mathrm{int}}| =k$ external half-edges into $|E_{\Gamma}|= 2k + 3n - 3$ pairs. Then, fixing a partition $V$ of $3|V_{\Gamma}^{\mathrm{int}}|$ half-edges into the $|V_{\Gamma}^{\mathrm{int}}|$ sets of cardinality 3, we consider the surjective map
    \begin{equation}
        \pi_{k,n}: P_{k,n} \rightarrow G_{k,n}
    \end{equation}
    which maps each partition $E$ in $P_{k,n}$ to the equivalent class of a (topological) graph $\Gamma$ with $k$-external vertices and $n$-loops corresponding to partitions $(E,V)$ with the fixed decoration. Here, $G_{k,n}$ denotes the set of equivalence classes of such decorated graphs. For a topological decorated graph $\Gamma$ represented by $(h_{k,n}, (E,V))$, we denote by $G_{V}$ the subgroup of the permutation group of $h_{k,n}$ preserving the partition $V$ and respecting the decoration. Then, we have
    \begin{equation}
        |\pi_{k,n}^{-1}(\Gamma)| = \frac{|G_{V}|}{|\Aut(\Gamma)|}
    \end{equation}
    so that 
    \begin{equation}
        \begin{split}
            &\sum_{\substack{\text{$\Gamma$: connected}\\\text{$l(\Gamma)=n, |V_{\Gamma}^{\ext}|=k$}}}\frac{1}{|\Aut(\Gamma)|} \Gamma \otimes \Gamma^{\vee} \\
            =&  \sum_{\substack{\text{$\Gamma$: connected}\\\text{$l(\Gamma)=n, |V_{\Gamma}^{\ext}|=k$}}}\frac{|\pi_{k,n}^{-1}(\Gamma)|}{|G_{V}|} \Gamma \otimes \Gamma^{\vee}\\
            =& \frac{1}{|G_V| (2k + 3n -3)!} \sum_{\substack{E \in P_{n,k} \\ \pi_{n,k}(E): \text{connected}}} \Gamma(h_{k,n}, (E,V)) \otimes \Gamma(h_{k,n}, (E,V))^{\vee}.
            % =& \frac{1}{(3!)^{k+3n -3} (k+3n - 3)!} \sum_{\substack{E \in P_{n,k} \\ \pi_{n,k}(E): \text{connected}}} \Gamma(h_{k,n}, (E,V)) \otimes \Gamma(h_{k,n}, (E,V))^{\vee}.
        \end{split}
    \end{equation}
    Here, the last summation runs over all partitions $E \in P_{n,k}$ which give connected uni-trivalent graphs $\Gamma(h_{k,n}, (E,V))$ and all numberings of edges of $\Gamma(h_{k,n}, (E,V))$. Note that, after applying the map $\partial_{\Delta} \otimes \Id$, we get terms of the form $\partial_{\Delta, e} \Gamma \otimes \Gamma^{\vee}$ where $\partial_{\Delta, e} \Gamma$ is the connected graph, whose all vertices are uni-trivalent but only one vertex is valency 4, obtained from $\Gamma$ by contracting an internal edge $e \in E_{\Gamma}^{\mathrm{int}}$. Then, all such terms vanish by IHX relation on $H_0^{\partial_{\Delta}}(\mathcal{CD}_{H,S^{\ast}}^{\bullet})$, since all the possible ways of partitions which give connected uni-trivalent graphs are considered in the last equation. Indeed, setting $\Gamma':= \partial_{\Delta, e} \Gamma$,  we know that there are only 3 possible ways to insert an edge to $\Gamma'$ at the valency 4 vertex to obtain uni-trivalent graphs. We denote by $\Gamma_I, \Gamma_H, \Gamma_X$ the resulting graphs obtained by inserting IHX-shaped graphs as in Figure \ref{fig:IHX} respectively. Thus, for $\Gamma'$ we must have the terms
    \begin{equation}
        \Gamma' \otimes \Gamma_I + \Gamma' \otimes \Gamma_H + \Gamma' \otimes \Gamma_X
    \end{equation}
    with the same coefficient, and so they vanish by IHX relations.

    Therefore, the equation \eqref{eq:qme_formal} reduces to the following equation.
     \begin{equation}\label{eq:qme_formal_dash}
        ((\partial_{\mathrm{Cas}} + \partial_{S^{\ast}}) \otimes \Id) \mathfrak{s}  - \frac{1}{2} [\mathfrak{s}, \mathfrak{s}] - \hbar(\Id \otimes \delta ) \mathfrak{s} = 0.
    \end{equation}
   Note that the above equation \eqref{eq:qme_formal_dash} holds if the following two equations hold:
    \begin{equation}\label{eq:qme_formal_dash_divided}
        (\partial_{\natural}  \otimes \Id) \mathfrak{s}  - \frac{1}{2} [\mathfrak{s}, \mathfrak{s}]_{\natural} - \hbar(\Id \otimes \delta_{\natural} ) \mathfrak{s} = 0,\quad \natural \in \{\mathrm{Cas}, S^{\ast}\}.
    \end{equation}
    First, we shall consider the case of \eqref{eq:qme_formal_dash_divided} with $\natural = \mathrm{Cas}$. 
    By applying $\partial_{\mathrm{Cas}}$ to a uni-trivalent graph $\Gamma$, one obtains connected uni-trivalent graphs with exactly one less loop than $\Gamma$ or disjoint union of two connected uni-trivalent graphs whose total number of loops is the same as $\Gamma$. Let $\Gamma \setminus \{e\}$ be the connected uni-trivalent graph which is obtained from $\Gamma$ cutting along an internal edge $e$. 
    Then, consider the subgroup of $\Aut(\Gamma)$ consisting of automorphisms fixing the pair of two half-edges representing $e$ and denote it by $\Aut(\Gamma)_e$.  Therefore, we have 
    \begin{equation}
         |\Aut(\Gamma)\cdot e| = \frac{|\Aut(\Gamma)|}{|\Aut(\Gamma)_e)|}.
    \end{equation}
    where $\Aut(\Gamma) \cdot e$ denotes the $\Aut(\Gamma)$-orbit of $e$.
    By definition of automorphisms of a (Feynman) graph, one obtains $|\Aut(\Gamma)\cdot e|$-connected graphs which are isomorphic to $\Gamma \setminus \{e\}$ from $\Gamma$. This implies that we get the term (up to sign)
    \begin{equation}
        \frac{|\Aut(\Gamma)\cdot e|}{|\Aut(\Gamma)|} [\Gamma \setminus \{e\}] \otimes \Gamma^{\vee}
    \end{equation}
    from $\frac{1}{|\Aut(\Gamma)|}[\Gamma] \otimes \Gamma$ via $\partial_{\mathrm{Cas}} \otimes \Id$. This term is also obtained from $\hbar(\Id \otimes \delta_{\mathrm{Cas}} ) \mathfrak{s}$, since we have
    \begin{equation}
    \begin{split}
        & \hbar (\Id \otimes \delta_{\mathrm{Cas}})\left(\frac{1}{|\Aut(\Gamma \setminus \{e\})|} [\Gamma \setminus \{e\}] \otimes [\Gamma \setminus \{e\}^{\vee}]\right)\\
        =&  \hbar \left(\pm \frac{|\Aut(\Gamma \setminus \{e\}) \cdot e|}{|\Aut(\Gamma \setminus \{e\})|} [\Gamma \setminus \{e\}] \otimes \Gamma^{\vee} + \cdots \right)\\
        =&  \hbar \left(\pm \frac{|\Aut(\Gamma) \cdot e|}{|\Aut(\Gamma)|} [\Gamma \setminus \{e\} ]\otimes \Gamma^{\vee} + \cdots \right)
        \end{split}
    \end{equation}
    where $\Aut(\Gamma \setminus \{e\}) \cdot e$ denotes the $\Aut(\Gamma \setminus \{e\})$-orbit of $e$ as pair of half-edges. Indeed, the subgroups of $\Aut(\Gamma)$ and $\Aut(\Gamma \setminus \{e\})$ fixing the pair $e$ coincide, i.e., $\Aut(\Gamma)_e = \Aut(\Gamma \setminus \{e\})_e$ since as subgroups of permutations of the set of half-edges $\Aut(\Gamma)$ and $\Aut(\Gamma \setminus \{e\})$ differ only by the condition preserving $e$ or not. Therefore, the following equations
    \begin{equation}
        |\Aut(\Gamma)\cdot e| = \frac{|\Aut(\Gamma)|}{|\Aut(\Gamma)_e|}, \quad   |\Aut(\Gamma \setminus \{e\})\cdot e|= \frac{|\Aut(\Gamma \setminus \{e\})|}{|\Aut(\Gamma \setminus \{e\})_e|}
    \end{equation}
    and $|\Aut(\Gamma)_e| = |\Aut(\Gamma \setminus \{e\})_e|$ read
    \begin{equation}\label{eq:conn_fixing_sub}
        \frac{|\Aut(\Gamma)\cdot e|}{|\Aut(\Gamma)|} = \frac{|\Aut(\Gamma \setminus \{e\})\cdot e|}{|\Aut(\Gamma \setminus \{e\})|}.
    \end{equation}
    Next, we consider the case that $\partial_{\mathrm{Cas}}$ breaks a connected uni-trivalent graph $\Gamma$ into two connected uni-trivalent graphs $\Gamma'$ and $\Gamma''$. As we will see below, the essential idea of the proof is the same as above. Similarly, let $e$ be an internal edge of $\Gamma$ along which we cut and $\Aut(\Gamma) \cdot e$ be the $\Aut(\Gamma)$-orbit of $e$. Then, we have the relation
    \begin{equation}
        |\Aut(\Gamma) \cdot e| = \frac{|\Aut(\Gamma)|}{|\Aut(\Gamma)_e|}.
    \end{equation}
    where $\Aut(\Gamma)_e$ denotes the stabilizer of $e$ under the action of $\Aut(\Gamma)$.
    Thus,  one obtains the term (up to sign) from  $\frac{1}{|\Aut(\Gamma)|}[\Gamma] \otimes \Gamma^{\vee}$ via $\partial_{\mathrm{Cas}} \otimes \Id$
    \begin{equation}
        \pm \frac{|\Aut(\Gamma)\cdot e|}{|\Aut(\Gamma)|} [\Gamma \setminus \{e\}] \otimes \Gamma^{\vee} = \frac{|\Aut(\Gamma)\cdot e|}{|\Aut(\Gamma)|} [\Gamma' \sqcup \Gamma''] \otimes \Gamma^{\vee}.
    \end{equation}
    These terms are also obtained from $\frac{1}{2} [\mathfrak{s}, \mathfrak{s}]_{\mathrm{Cas}}$, since we have
    \begin{equation}
    \begin{split}
        & \frac{1}{2} \left[\frac{1}{|\Aut(\Gamma')|} \Gamma' \otimes (\Gamma')^{\vee}, \frac{1}{|\Aut(\Gamma'')|} \Gamma'' \otimes (\Gamma'')^{\vee}\right]_{\mathrm{Cas}}\\
        &= \pm \frac{|\Aut(\Gamma \setminus \{e\}) \cdot e|}{2} \cdot \frac{1}{|\Aut(\Gamma')|} \cdot \frac{1}{|\Aut(\Gamma'')|}  [\Gamma' \sqcup \Gamma''] \otimes \Gamma^{\vee} + \cdots\\
        &= \pm  \frac{|\Aut(\Gamma \setminus \{e\}) \cdot e|}{|\Aut(\Gamma \setminus \{e\})|} [\Gamma \setminus \{e\}] \otimes \Gamma^{\vee} + \cdot\\
        &= \pm \frac{|\Aut(\Gamma)\cdot e|}{|\Aut(\Gamma)|} [\Gamma' \sqcup \Gamma''] \otimes \Gamma^{\vee} + \cdots.
        \end{split}
    \end{equation}
    Here,  we use the identities \eqref{eq:conn_fixing_sub} and
    \begin{equation}
        |\Aut(\Gamma \setminus \{e\})| = 2 \cdot |\Aut(\Gamma')| \cdot |\Aut(\Gamma'')|. 
    \end{equation}
    Since $\partial_{\mathrm{Cas}}$ and its dual $\partial_{\mathrm{Cas}}^{\vee}$ yield compatible orientations on uni-trivalent graphs, they yields same terms with the same sign and therefore \eqref{eq:qme_formal_dash_divided} with $\natural = \mathrm{Cas}$ holds. 
    
    Second, we consider the case of \eqref{eq:qme_formal_dash_divided} with $\natural = S^{\ast}$. This case follows the essentially same idea as the former case. Instead of connecting or disconnecting two half-edges representing an internal edge, in this case, we connect two $S^{\ast}$-decorated half-edges into $Y$-shaped subgraph or cut along a small neighbourhood of one $S^{\ast}$-decorated external edge $e$. However, the proof goes along the same way as the former case. This completes the proof.
\end{proof}

\subsection{Hodge correlators and (non-commutative) quantum master equation}\label{ssection:HC_QME}
Here, we give a generalized Hodge correlator twistor connection as a version of the non-commutative quantum master homotopy equation. In our version of the quantum master equation, instead of using a symmetric algebra of cohomology groups, we consider a graph-valued generating series. By taking tree reduction, one recovers Goncharov's Hodge correlator twistor connection canonically.

\subsubsection{Green functions for a family of compact Riemann surfaces}\label{section:Green_family}
To begin with, let us recall a family of Green functions considered by Goncharov in \cite[Section 7.1]{GoncharovHodge1}, which is necessary to define the Hodge correlator twistor connection.

Let $p: X \rightarrow B$ be a family of compact Riemann surfaces, i.e., each fiber $p^{-1}(b)=X_b$ is a compact Riemann surface and it varies smoothly on $b \in B$. Then, a family of compact Riemann surfaces with $n+1$-marked points can be regarded as a pair of a family $p: X \rightarrow B$ of compact Riemann surfaces and $n+1$ distinct sections $s_0, s_1,\ldots, s_n : B \rightarrow X$ $(p \circ s_i = \Id)$. Here, intersections of each fiber $X_b$ and the sections are understood as marked points on $X_b$. Then, $S =\{ s_0(B), \ldots, s_n(B)\}\subset X$ is a smooth divisor in $X$ over $B$. Indeed, let $k+1$ be the dimension of $X$ and $k$ be the dimension of $B$. Then, $S$ is a submanifold of $X$ with codimension 1. As a section over $B$, $S_t$ varies smoothly on $t \in B$. Set $S^{\ast} := S \setminus \{s_0(B)\}$. Take a section of tangent vector $v \in T X_{/B}$ which factors through $s_0$. In general sections $s_i$ may intersect with other sections $s_j$ but, in the present article, we restrict ourselves to families such that no section intersects with other sections.

Then, we have the (geometric) variation of $\mathbb{R}$-mixed Hodge structures over $B$ as  
\begin{align}
V^{\vee}_{X, S^{\ast}} := \gr^W R^1 p_{\ast} (X - S, \underline{\mathbb{R}}) = H^1(X_{/B}, \mathbb{R}) \oplus \mathbb{R}[S^{\ast}_{/B}](-1)	\\
V_{X, S^{\ast}} := \gr^W R^1 p_{\ast} (X - S, \underline{\mathbb{R}})^{\vee} = H_1(X_{/B}, \mathbb{R}) \oplus \mathbb{R}[S^{\ast}_{/B}](1)	
\end{align}
where
\begin{align}
	\mathbb{R}[S^{\ast}_{/B}](-1):= \mathbb{R}[S^{\ast}_{/B}] \otimes \mathbb{R}(-1) \simeq   \mathbb{R}[S^{\ast}_{/B}] \otimes H^2(X_{/B}, \mathbb{R})\\
	\mathbb{R}[S^{\ast}_{/B}](1):= \mathbb{R}[S^{\ast}_{/B}] \otimes \mathbb{R}(1) \simeq   \mathbb{R}[S^{\ast}_{/B}] \otimes H_2(X_{/B}, \mathbb{R})
\end{align}
and $\mathbb{R}(n) := (2 \pi i)^n \mathbb{R}$ denotes the $n$-th Tate twists of weight $-2n$ with Hodge bidegree  $(-n,-n)$. The elements of the cohomology group of a compact Riemann surface are of weight 1 and elements corresponding to punctures are of weight 2. By taking the tensor product with $\mathbb{C}$ over $\mathbb{R}$, their Hodge degrees are (1,0), (0,1), and (1,1). Also, note that we set
\begin{equation}
	(X - S, \underline{\mathbb{R}}):= \iota_{\ast} \underline{\mathbb{R}}_{X-S}, \quad \iota: X - S \hookrightarrow X.
\end{equation}

Let $p: X \rightarrow B$ be a proper holomorphic submersion. Then, by Ehresmann's theorem, $p$ becomes a fiber bundle (cf.\cite[Theorem C.10]{mixed_hodge}). The Gauss--Manin connection $\nabla_{GM}$ in $R^k p_{\ast} \underline{\mathbb{C}}$ is described as follows. First, note that
\begin{equation}
	H^k(X_{/B}, \mathbb{C}) \simeq  R^k p_{\ast} \underline{\mathbb{C}} \otimes_{\mathbb{C}} \mathcal{O}_B.
\end{equation}
This is the local system whose fiber is
\begin{equation}
	H^k(X_{b}, \mathbb{C}) \simeq  R^k p_{\ast} \underline{\mathbb{C}}_b.
\end{equation}
Take a section $\lambda$ of $R^k p_{\ast} \underline{\mathbb{C}}$ and a smooth family $\alpha_b$ of differential $k$-forms on the fiber $X_t$ such that it represents the cohomology class $\lambda$, i.e., 
\begin{equation}
	\lambda(b) = [\alpha_b] \in H^k(X_{b}, \mathbb{C}).
\end{equation}
Let $\beta$ be any differential form on $X$ such that $\beta|_{X_b} = \alpha_b$. Then, de Rham differential of $\beta$ along the fiber of $p$ vanishes since $d\alpha_b =0 $. Thus, 
\begin{equation}
	d \beta \in \mathcal{A}^1(B) \otimes \mathcal{A}^{k}_{cl}(X_{/B})
\end{equation}
The cohomology class of closed $k$-form of $d\beta$ along the fiber is independent of the choice of $\beta$ representing $\lambda$. The resulting differential 1-form $d\beta$ on $B$ valued in $R^k p_{\ast} \underline{\mathbb{C}}$ is exactly the $\nabla_{GM}(\lambda)$.

Let 
\begin{equation}
	\mathcal{H}:=H_1(X_{/B}, \mathbb{R}):= R^1 p_{\ast}(\mathbb{R})^{\vee}
\end{equation}
be a local system over $B$ with Gauss--Manin connecion$\nabla_{GM}$. Its fiber over $b \in B$ is $H_1(X_b, \mathbb{R})$. By definition, $H_1(X_b, \mathbb{Z})$ is flat with respect to the Gauss--Manin connection (that is, $H_1(X_b, \mathbb{R})$ consists of horizontal sections with respect to $\nabla_{GM}$).

The Albanese variety $\mathrm{Alb}(X_b)$ of $X_b$ is given as a manifold by
\begin{equation}
	\mathrm{Alb}(X_b) \simeq H_1(X_b; \mathbb{R})/ H_1(X_b; \mathbb{Z}).
\end{equation}
Note that each fiber of the local system $\mathcal{H}$ is identified with the tangent space of $\mathrm{Alb}(X_b)$.

Let $\pi : X \times_B X \rightarrow B$ be the fiber product of $p : X \rightarrow B$ over $B$ and $\pi^{\ast} \mathcal{H} \rightarrow X \times_B X$ be the pullback of $\mathcal{H} \rightarrow B$ along $\pi$. Then, there is a $\pi^{\ast} \mathcal{H}$-valued smooth  differential form over $X \times_B X$ defined by
\begin{equation}
	\nu \in \mathcal{A}^1_{X \times_B X} \otimes \pi^{\ast} \mathcal{H}.
\end{equation}
For $(x_1, x_2) \in X \times_B X$ and $(v_1,v_2) \in T_{(x_1,x_2)}(X \times_B X)$, $\nu(v_1,v_2):=\nu_{(x_1,x_2)}(v_1,v_2)$ is defined as follows: Let $\gamma: ( - \epsilon, \epsilon) \to X \times_B X$ be a smooth path so that $\gamma(0) = (x_1, x_2)$ and $\gamma'(0) = (v_1,v_2)$. Note that, if $x_1(t)$ and $x_2(t)$ on $\gamma(t) = (x_1(t), x_2(t))$ lie on the same fiber $X_{b(t)}$, one can define
\begin{equation}
	X_{b(t)} \times X_{b(t)}  \rightarrow \mathrm{Alb}(X_{b(t)}); \quad (x_1(t), x_2(t)) \to (x_1(t) - x_2(t)).
\end{equation}
Since $(x_1(t) - x_2(t))$ is a curve on $\mathrm{Alb}(X_{b(t)})$, its differential at $t=0$ defines a tangent vector $v' \in T_{(x_1-x_2)} \mathrm{Alb}(X_{/B})$. Now by taking the horizontal lift $v''$ of $v'$ by using Gauss--Manin connection $\nabla_{GM}$, we obtain a vector $v' - v'' \in T_{(x_1-x_2)} \mathrm{Alb}(X_{b(0)})$. Then, we set
\begin{equation}
	\nu(v_1,v_2) := v' - v'' \in \pi^{\ast}\mathcal{H}.
\end{equation}

%%%%%%%%%%%%%%%%%%%%%%%%%%%%
Let $\mathcal{H}_{\mathbb{C}} := \mathcal{H} \otimes \mathbb{C} = \mathcal{H}_{\mathbb{C}}^{-1,0} \oplus \mathcal{H}_{\mathbb{C}}^{0,-1}$ be the Hodge decomposition with a paring $\langle- , - \rangle:\mathcal{H}_{\mathbb{C}}^{-1,0} \otimes \mathcal{H}_{\mathbb{C}}^{0,-1} \rightarrow \bC$ induced by the intersection paring $H_1(X_b; \mathbb{Z})\wedge H_1(X_b; \mathbb{Z}) \rightarrow \bZ(1)$.
By composing the wedge product and the intersection paring $\langle - , - \rangle$, we obtain the canonical $(1,1)$-form
\begin{equation}
	\langle \nu\wedge \nu \rangle \in \mathcal{A}^{1,1}_{X \times_B X}.
\end{equation}
Let $\langle \nu, \nu \rangle ^{[1,1]}$ denote its $1 \times 1$ component over $X \times_B X$. Let $\Delta \subset X \times_B X$ be the relative diagonal subset and $\delta_{\Delta}$ be the delta current supported on $\Delta$. A section $a : B \rightarrow X$ determines a divisor $S_a \subset X \times_B X$ such that each fiber at $t \in B$ is $\{a_t\} \times X \cup X \times \{a_t\}$. We set $\delta_{S_a}$ the $(1,1)$-current on $X$ supported on the divisor $S_a$.

\begin{definition}[{Green current in family of Riemann surfaces (\cite[Definition 7.1]{GoncharovHodge1}})] \label{def:Green_cur}
Let $p: X \rightarrow B$ be a proper map and $a: B \rightarrow X$ be a section of $p$. A \textit{Green current} $G_a(x,y)$ is a $0$-current on $X \times_B X$ satisfying the following differential equation:
\begin{equation}\label{eq:GC_family}
	\bar{\partial} \partial G_a(x,y) = \delta_{\Delta} - \delta_{S_a} - \langle \nu \wedge \nu \rangle^{[1\times 1]}.
\end{equation}
\end{definition}

\begin{remark}
\begin{enumerate}[(1)]
\item A Green current exists fiberwise by $\partial \bar{\partial}$-lemma.
\item  A Green current $G_a(x,y)$ is unique up to a function lifted from the base $B(\mathbb{C})$.
\item By taking a nowhere vanishing section $v \in T_a X_{/B}$, one can obtain the normalized Green function $G_v(x,y)$ by $v$. Here, $v$ is a section of $TX|_{a} \rightarrow TB \rightarrow B$. For any $b \in B$, we have $v(b) \in T_{a(b)} X_{/B}$.
\end{enumerate}
\end{remark}
In the subsequent article, taking a section $v_0$ of the fiber bundle $T_{s_0} X_{/B}$, we consider the Green function  $G_{v_0}(x,y)$ normalized by $v_0$.

We note that the differential of a Green current away from $S_a \cup \Delta$ has the form
\begin{equation}
	\bar{\partial} \partial G_a(x,y) = - \langle \nu \wedge \nu \rangle^{[1\times 1]} \in \mathcal{A}^{1,1}_{X \times_B X}.
\end{equation}

\subsubsection{Goncharov's twistor propagator}
Recall the notion of twistor plane and twistor line. Let $\bC^2$ with coordinate $(z,w)$ and consider a distinguished subspace denoted by $\bC_{\twi} := \{(z,w) \in \bC^2 \mid z+w=1 \in \bC\} \subset \bC^2$. We call $\bC^2$ \textit{twistor plane} and $\bC_{\twi}$ the \textit{twistor line} respectively. We often take a coordinate of $\bC_{\twi}$ as $(1-u, u)$ by using $u \in \bC$. The twistor plane $\bC^2$ is equipped with an antiholomorphic involution map $\sigma: \bC^2 \rightarrow \bC^2$ defined by $\sigma((z,w)) = (\bar{w}, \bar{z})$ and the complex conjugation $c: \bC^2 \rightarrow \bC^2$. In this terminology, the twistor line $\bC_{\twi}$ is identified with the invariant subspace $(\bC^2)^{\sigma}$.  Then, the Goncharov's twistor version of $d^{\mathbb{C}} G_{a}$ is defined as follows.
\begin{definition}[Goncharov's twistor propagator]
Let $p: X \rightarrow B$ be a proper map,  $a: B \rightarrow X$ be a section of $p$, and  $G_a(x,y)$ be the Green current on $X \times_B X$. Consider $p\times \mathrm{id}: X \times \bC^2 \rightarrow B \times \bC^2$ and pullback of $G_a(x,y)$ to  $(X \times_B X) \times \bC^2$ denoted by the same $G_a(x,y)$. Let $\mathsf{D}^{\mathbb{C}}$ be the twistor version of $d^{\mathbb{C}}$ defined by
    \begin{equation}
        \mathsf{D}^{\mathbb{C}} = (w \partial - z \bar{\partial}) + (z dw - w dz).
    \end{equation}
    for $(z,w) \in \bC^2$. We define the current 
    \begin{equation}
        \mathsf{D}^{\mathbb{C}} G_a(x,y):= ((w \partial - z \bar{\partial}) + (z dw - w dz))G_a(x,y) \in \mathcal{D}^{\ast, \ast}(X\times_{B} X) \otimes \Omega_{\bC^2}^{\bullet}
    \end{equation}
    and call it the \textit{Goncharov's twistor propagator}. Here, $\Omega_{\bC^2}^{\bullet}$ denotes the space of algebraic differential forms on $\bC^2$.
\end{definition}
Note that $\mathsf{D}^{\mathbb{C}}$ restricts to
\begin{equation}
	\mathsf{D}^{\mathbb{C}}|_{ \mathbb{C}_{\twi}} = u \partial - (1-u) \bar{\partial} + du
\end{equation}
on the twistor line $(1-u, u) \in \mathbb{C}_{\twi} \subset \mathbb{C}^2$ so that, we have, on the twistor line,
    \begin{equation}
(\mathbf{d} + d_{\mathbb{C}_{\twi}}) \mathsf{D}^{\mathbb{C}} G_a =\bar{\partial} \partial G_a(x,y) = \delta_{\Delta} - \delta_{S_a} - \langle \nu \wedge \nu \rangle^{[1\times 1]}.
\end{equation}
The twistor propagator $\mathsf{D}^{\mathbb{C}} G_a$ is real, that is,
\begin{equation}\label{eq:reality_prop}
    (c \circ \sigma)^{\ast} \mathsf{D}^{\mathbb{C}} G_a = \mathsf{D}^{\mathbb{C}} G_a
\end{equation}
since $(c \circ \sigma)^{\ast} \mathsf{D}^{\mathbb{C}} = - \mathsf{D}^{\mathbb{C}}$ and $(c \circ \sigma)^{\ast} G_a= - G_a$.

\begin{remark}\label{rem:singularity}
Let $z$ be a local holomorphic coordinate near some point $p \in X_{/B}$ corresponding to $z=0$. Let us briefly describe the singularity of $\mathsf{D}^{\mathbb{C}} G_a(z,0)$ at $z=0$ on the twistor line $\bC_{\twi}$. Considering a polar coordinate $z=re^{i \theta}$, we have up to smooth part
\begin{equation}
\begin{split}
    \mathsf{D}^{\mathbb{C}} G_a(z,0) & \sim (u \partial - (1-u) \bar{\partial} + du)\frac{1}{2\pi \sqrt{-1}}\log|z|\\
    &= \frac{1}{2\pi \sqrt{-1}} \left( u \frac{dz}{z} -(1- u) \frac{d\bar{z}}{\bar{z}} + (\log r) du\right)\\
    &= \frac{1}{2 \pi \sqrt{-1}} (-1+2u) \frac{dr}{r} + \frac{1}{2\pi } d \theta + \frac{\log r}{2\pi \sqrt{-1}} du.
\end{split}
\end{equation}
This computation implies that $\mathsf{D}^{\mathbb{C}} G_a(z,0)$ has singularities at $r=0$ one of which is $\frac{dr}{r}$ and the other is $\log(r)$. As we will see below, these singularities are canceled when defining integrations using $\mathsf{D}^{\mathbb{C}} G(z,w)$ (cf. Lemma \ref{lem:convergence}, see also \cite{GoncharovHodge1} for planar tree case).
\end{remark}

Recall that the group $\bC^{\times} \times \bC^{\times}$ acts on the twistor plane $\bC^2$ by 
\begin{equation}
    (\bC^{\times} \times \bC^{\times}) \times \bC^2 \rightarrow \bC^2;\quad ((\lambda, \mu), (z,w)) \mapsto (\lambda z, \mu w)
\end{equation}
and on $\mathcal{D}^{\ast, \ast}(X\times_{B} X)$ by
\begin{equation}
    (\bC^{\times} \times \bC^{\times}) \times \mathcal{D}^{s, t}(X\times_{B} X)\rightarrow  \mathcal{D}^{s, t}(X\times_{B} X);\quad ((\lambda, \mu), \alpha_{s,t}) \mapsto (\lambda^s \mu^t  \alpha_{s,t}).
\end{equation}
Thus, the character of the action of the group $\bC^{\times} \times \bC^{\times}$ can define the Hodge bidegree on them. Notice that by this character one sees that $\mathsf{D}^{\mathbb{C}}$ and hence $\mathsf{D}^{\mathbb{C}} G_a$ are of degree $(1,1)$.

\subsubsection{Genralized Hodge correlator twistor connection (Effective action)} \label{subsection:gen_hdoge_corr}
In this subsection, we shall define a linear map 
\begin{equation}\label{eq:5.3.27}
    Z_{-}: \mathcal{CD}_{H,S}^{\vee, \bullet} \rightarrow \mathcal{D}^{\ast, \ast}(B) \otimes \Omega^{\bullet}_{\mathbb{C}^2}.
\end{equation}
by extending the definition in \cite{GoncharovHodge1}. We define it on the twistor plane $\bC^2$ and show its well-definedness for graphs of defect degree $\leq 1$. Then, we prove its restriction to the twistor line $Z_{-}':=Z_{-}|_{B \times \bC_{\twi}}$ commutes with differentials up to sign for defect 0 part.

We set 
\begin{equation}
    V_{X, S^{\ast}}^{\vee}{}_{/B} = H^1(X_{/B}, \mathbb{R}) \oplus \mathbb{R}[S^{\ast}_{/B}](-1)
\end{equation}
and denote by $G_{s_0}(x,y)$ the normalized Green current associated with the section $s_0: B \rightarrow X$ and a section of tangent vectors $v \in TX_{/B}$ factoring through $s_0$ (cf. Definition \ref{def:Green_cur}). To emphasize the distinguished section $s_0$, we suppress the chosen section of tangent vectors $v$ in this notation.

The definition depends on how many edges a decorated connected graph $\Gamma$ has as follows. 

\noindent
Case (I): case that $|E_{\Gamma}|=1$. In this case, $\Gamma$ is the unique tree graph consisting of exactly one edge $E$ with two external vertices. Now, since there is no internal vertex, $Z_{\Gamma}$ is defined without integration. There are following three ways of decoration.

\noindent
Case (I-1): $\Gamma = T_1$ is decorated by two distinct sections $s,t \in S^{\ast}$. These sections define a map $(s,t): B \rightarrow X \times_B X$. Then, we set
\begin{equation}
	Z_{T_1} := \mathsf{D}^{\mathbb{C}} G_{s_0} (s,t):= (s,t)^{\ast} \mathsf{D}^{\mathbb{C}} G_{s_0} \in \mathcal{D}_B^{\bullet} \otimes \Omega_{\mathbb{C}^2}^{\bullet}.
\end{equation}
When $\Gamma = T_1$ is decorated by the same  sections $s \in S^{\ast}$, we set $Z_{T_1} =0$.

\noindent
Case (I-2): $\Gamma = T_2$ is a tree with exactly one edge decorated by a section $s : B \rightarrow X$ and $1$-form  $\alpha \in \mathcal{A}^1_X$ on $X$. In this case, we set
\begin{equation}
	Z_{T_2}:= s^{\ast}(\alpha) \in \mathcal{A}^1_B.
\end{equation}

\begin{remark}
When $B =\{\ast\}$, there is no need to consider the case (I-2) since we consider the pullback of a 1-form to 0-dimensional space $B =\{\ast\}$.
\end{remark}

\noindent
Case (I-3): $\Gamma = T_3$ has exactly one edge with decoration by two 1-forms $\alpha, \beta$. In this case, we set $Z_{T_3}=0$.

\noindent
Case (II): case that $|E_{\Gamma}| \geq 2$. Assume that each external vertex of a connected graph $\Gamma$ with $|E_{\Gamma}| \geq 2$ is decorated by either a generator $\{s_i\} \in \mathbb{C}[S^{\ast}_{/B}]$ or a section $\omega \in H^1(X_{/B}, \mathbb{R})$. We choose a 1-form $\alpha$ on $X$ which represents $\omega$ so that $\alpha$ is holomorphic/anti-holomorphic on each fiber $X_t$.  Recall that, then, the set $E_{\Gamma}^{\mathrm{ext}}$ of external edges of $\Gamma$ is decomposed into the set of $S^{\ast}$-decorated edges whose external vertices are $S^{\ast}$-decorated and that of special edges whose external vertices are decorated by $H^1(X_{/B})$.

When $\Gamma=(\Gamma, W; \mathrm{Or}_{\Gamma})$ is a connected decorated graph with self-loops, we set $Z_{\Gamma} = 0$. We define the weight $\omega_{\Gamma}$ associated with a connected decorated graph $\Gamma$ without self-loops as follows: Assume that $S^{\ast}$-decoration of $\Gamma$ is given by $s_1, \ldots, s_k$. Then, recalling the notation in Remark \ref{rem:3.3.2}, according to whether $E$ is in $E_{\Gamma}^{\mathrm{int}}$,  $S^{\ast}$-decorated edge, or spacial edge, we define $\omega_{E} \in \Omega^{\bullet}(\Conf_{V^{\mathrm{int}}_{\Gamma}}(X_{/B}; (s_1,\ldots, s_k))_{s_0})\otimes \Omega_{\bC^2}^{\bullet}$ as follows:
\begin{itemize}
	\item If $E = (v_i,v_j)$ is an internal edge, then we set $\omega_{E} := p^{\ast}_{E}(\mathsf{D}^{\mathbb{C}} G_{s_0})$ where the projection $p_{E} : \Conf_{V^{\mathrm{int}}_{\Gamma}}(X_{/B}; (s_1,\ldots, s_k))_{s_0} \times \bC^2 \rightarrow \Conf_{2}(X_{/B})_{s_0} \times \bC^2$ is defined by $p_{E_i}((x_1,\ldots, x_{|V_{\Gamma}^{\mathrm{int}} \cup V_{\Gamma}^{\mathrm{ext}, S^{\ast}}|}),(z,w))=((x_i, x_j), (z,w))$.
	\item If $E = (v_i, v_j)$ is an external edge with an external vertex $v_j$ decorated by a section $s \in S^{\ast}$, we set $\omega_{E}:= p^{\ast}_{E} (\mathsf{D}^{\mathbb{C}} G_{s_0})$ where $p_{E}: \Conf_{V^{\mathrm{int}}_{\Gamma}}(X_{/B}; (s_1,\ldots, s_k))_{s_0} \times \bC^2 \rightarrow \Conf_{2}(X_{/B})_{s_0} \times \bC^2$ is defined by $p_{E}((x_1,\ldots, x_{|V_{\Gamma}^{\mathrm{int}} \cup V_{\Gamma}^{\mathrm{ext}, S^{\ast}}|}),(z,w))=((x_i, x_j),(z,w))$.
	\item If $E = (v_i, v')$ is an external edge with $v'$ decorated by a section $\omega \in H^1_{dR}(X_{/B})$, we set $\omega_{E}:=  p_{v_i}^{\ast} \alpha$ where $p_{v_i}: \Conf_{V^{\mathrm{int}}_{\Gamma}}(X_{/B};(s_1, \ldots, s_k))_{s_0} \times \bC^2 \rightarrow X$ given as $p_{v_i}((x_1,\ldots, x_{|V_{\Gamma}^{\mathrm{int}} \cup V_{\Gamma}^{\mathrm{ext}, S^{\ast}}|}),(z,w)) = x_i \in X$ and $\alpha \in \mathcal{A}^1_X$ representing $\omega$.
\end{itemize}
Choose an element
\begin{equation}
	(E_0 \wedge \cdots \wedge E_r) \wedge (E_{r+1} \wedge \cdots \wedge E_{|E_{\Gamma}|}) \in \mathrm{or}_{\Gamma}
\end{equation}
of the orientation torsor of $\Gamma$ so that the edges $E_0, \ldots, E_r$ are internal or $S$-decorated, and the others are decorated by 1-forms. Then, we define $\omega_{\Gamma} \in \Omega^{\bullet}(\Conf_{V^{\mathrm{int}}_{\Gamma}}(X_{/B}; (s_1,\ldots, s_k))_{s_0}) \otimes \Omega_{\mathbb{C}^2}^{\bullet}$ by
\begin{equation}\label{eq:def_omega_gamma}
	\omega_{\Gamma}:= \sgn(E_0 \wedge \cdots  \wedge E_{|E_{\Gamma}|}) \cdot \omega_{E_0} \wedge \cdots \wedge \omega_{E_r} \wedge \omega_{E_{r+1}} \wedge \cdots \wedge \omega_{E_{|E_{\Gamma}|}} 
 \end{equation}
where $\sgn(E_0 \wedge \cdots  \wedge E_{|E_{\Gamma}|})$ is the difference of the orientation torsors $(E_0 \wedge \cdots \wedge E_r) \wedge (E_{r+1} \wedge \cdots \wedge E_{|E_{\Gamma}|})$ and $\mathrm{Or}_{\Gamma}$. Note that the sign of $\omega_{\Gamma}$ is determined only by $\mathrm{Or}_{\Gamma}$.

Then, the element  $Z_{\Gamma}$ associated with $\Gamma$ is defined as the fiber integration
	\begin{align}
		Z_{\Gamma, (b,(z,w))} &:= (p_{\Gamma}{}_{\ast} \omega_{\Gamma})_{(b, (z,w))} = \int_{p^{-1}_{\Gamma}((b, (z,w))}\omega_{\Gamma}
	\end{align}
	along $p_{\Gamma}:C_{V_{\Gamma}^{\mathrm{int}}}(X_{/B};(s_1,\ldots, s_k))_{(s_0,v)} \times \mathbb{C}^2 \rightarrow B \times \mathbb{C}^2$ where $b \in B$ and $(z,w) \in \mathbb{C}^2$, and $s_1, \ldots, s_k$ are sections appered in the $S^{\ast}$-decorations of $\Gamma$.

For any decorated connected graph with $|E_{\Gamma}|\geq 2$, it is not obvious whether $Z_{\Gamma}$ is well-defined or not. In the following, we will show that $Z_{\Gamma}$ is well-defined for any decorated connected trivalent graph or decorated connected graph of defect one. For this, we first prepare the following lemma.

\begin{lemma}\label{lem:infinite_vanishing_lemma}
    Let $\Gamma$ be a decorated connected graph with $|E_{\Gamma}|\geq 2$. Then, the associated differential form $\omega_{\Gamma} \in \Omega^{\bullet}(\Conf_{V^{\mathrm{int}}_{\Gamma}}(X_{/B}; (s_1,\ldots, s_k))) \otimes \Omega_{\bC^2}^{\bullet}$ vanishes when one of the internal vertices goes to the point $s_0$.
\end{lemma}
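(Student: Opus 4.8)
The plan is to restrict $\omega_{\Gamma}$ to the infinite boundary stratum (in the sense of Section \ref{section:boundary_strata}) of $C_{V^{\mathrm{int}}_{\Gamma}}(X_{/B};(s_1,\ldots,s_k))_{(s_0,v)}$ on which a single internal vertex $v_i$ collides with the distinguished section $s_0$, and to show that this restriction is zero as a differential form. Since $Z_{\Gamma}=0$ for graphs carrying a self-loop, I may assume $\Gamma$ has none, and since $v_i$ is internal it is incident to at least two edges (indeed at least three for the uni-trivalent and defect-one graphs of interest). Working in a local holomorphic coordinate centred at $s_0$ as in the proof of Theorem \ref{thm:ext_green}, I will use the blow-up coordinates $(r_i,\theta_i)$ describing the collision $x_i\to s_0$, so that the boundary face is cut out by $\{r_i=0\}$ and the differential $dr_i$ is the conormal direction that pulls back to zero along the inclusion of the face.

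The key step is to compute the boundary value along $\{x_i=s_0\}$ of each propagator $\omega_E=p_E^{\ast}(\mathsf{D}^{\mathbb{C}} G_{s_0})$ sitting on an edge $E=(v_i,v_j)$ incident to $v_i$, where $v_j$ is another internal vertex or an $S^{\ast}$-decorated external vertex, so that $x_j\neq s_0$ on this stratum. Using the Arakelov decomposition \eqref{eq:G_a_by_Ara} I will split
\begin{equation*}
G_{s_0}(x_i,x_j)=\underbrace{\big(G_{\mathrm{Ar}}(x_i,x_j)-G_{\mathrm{Ar}}(s_0,x_j)\big)}_{=:A(x_i,x_j)}+\underbrace{\big(-G_{\mathrm{Ar}}(x_i,s_0)+C\big)}_{=:B(x_i)} .
\end{equation*}
The term $B$ depends only on $x_i$ and the twistor variables, so its contribution $\mathsf{D}^{\mathbb{C}} B=:\omega_0$ is one and the same $1$-form on the face for \emph{every} edge incident to $v_i$; by Remark \ref{rem:singularity} and Corollary \ref{cor:extended_prop}(2) it is precisely the (possibly singular) angular/twistor $1$-form carried by the collision direction. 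The term $A$ is smooth near $x_i=s_0$ and satisfies $A(s_0,x_j)\equiv 0$, and I will check that $\mathsf{D}^{\mathbb{C}} A$ restricts to zero on $\{r_i=0\}$: its $dx_i$-component is proportional to the conormal $dr_i$, its $dx_j$-component vanishes because $\partial_{x_j}A(s_0,x_j)=0$, and its remaining twistor component is proportional to $A$ itself and hence is $O(r_i)$. Thus every propagator incident to $v_i$ has the same restriction $\omega_0$ to the face, independently of the other endpoint.

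With this in hand the conclusion is immediate. If some edge incident to $v_i$ is a special edge decorated by a class in $H^1(X_{/B})$, its weight $p_{v_i}^{\ast}\alpha$ factors through the constant map to $s_0$ on the face and therefore pulls back to zero, so the product $\omega_{\Gamma}$ of \eqref{eq:def_omega_gamma} vanishes there. Otherwise all of the (at least two) edges incident to $v_i$ carry Green propagators, each restricting to the same $1$-form $\omega_0$, and $\omega_{\Gamma}$ then contains the factor $\omega_0\wedge\omega_0=0$. In either case $\omega_{\Gamma}$ restricts to zero on the infinite stratum, as claimed.

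The main obstacle I anticipate is not the combinatorial cancellation $\omega_0\wedge\omega_0=0$, which is elementary, but making sense of the ``restriction to the boundary face'' given that, unlike $d^{\mathbb{C}} G_{\mathrm{Ar}}$ in Theorem \ref{thm:ext_green}, the twistor propagator $\mathsf{D}^{\mathbb{C}} G_{s_0}$ does not extend smoothly to the compactification (it carries $\tfrac{dr}{r}$ and $(\log r)\,du$ singularities, cf. Remark \ref{rem:singularity}). The point to argue carefully is that these singular contributions are common to all edges incident to $v_i$ and hence still cancel under the wedge, while the genuinely $x_j$-dependent corrections coming from $A$ vanish on the face; the vanishing statement therefore holds already at the level of the leading singular behaviour and is insensitive to the non-smoothness of the extension.
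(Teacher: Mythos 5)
Your proposal is correct and takes essentially the same route as the paper's own proof: your splitting $G_{s_0}(x_i,x_j)=A(x_i,x_j)+B(x_i)$ is exactly the paper's decomposition $G_{s_0}(s,x)=-G_{\mathrm{Ar}}(s,s_0)+g(s,s_0,x)+C$, the cancellation mechanism is the same wedge identity $\omega_0\wedge\omega_0=0$ for the common singular piece carried by the colliding vertex (the paper's ``$\omega^2=0$ for a $1$-form'' step, with the $A$-corrections vanishing at $s_0$ together with their relevant derivatives), and the special-edge case is dispatched the same way. Your only deviation is packaging the vanishing as a pullback to the blow-up boundary face in polar coordinates rather than the paper's pointwise limit $s\to s_0$, which is a mild sharpening that makes the treatment of the non-extendable singularities $\tfrac{dr}{r}$ and $(\log r)\,du$ of $\mathsf{D}^{\mathbb{C}}G_{s_0}$ more explicit, but it is not a genuinely different argument.
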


\begin{proof}
    To see the cancellation of singularity at $s$, we apply similar arguments as in \cite[Theorem 2.4]{Gon05}. Here, we give a proof by direct computation. Note that it suffices to consider the case that one of the internal vertices collapses at $s_0$. First, consider the case that $E$ is an internal edge connecting a vertex collapsing at $s_0$. Then, we will obtain the diagrams as in Figure \ref{fig:5.3.1}.

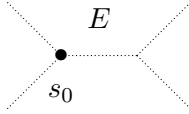
\begin{figure}[h]
 \centering
 \captionsetup{margin=2cm}
\begin{tikzpicture}
\begin{scope}
\node at (0.5,0.5) {$E$};
\draw[densely dotted] (0,0) -- (1,0);
\draw[densely dotted] (1,0) -- (1+ 1/1.41, 1/1.41);
\draw[densely dotted] (1,0) -- (1+ 1/1.41, -1/1.41);
\draw[densely dotted] (0,0) -- (- 1/1.41, 1/1.41);
\draw[densely dotted] (0,0) -- (- 1/1.41, -1/1.41);
\node at (0,0) {$\bullet$};
\node at (0,-0.5) {$s_0$};
\end{scope}
\end{tikzpicture}
\caption{A situation where a vertex connected by the internal edge $E$ collapses at $s$.}
\label{fig:5.3.1}
\end{figure}

In our situation, replacing $s_0$ with some section $s$, it suffices to show that the limit of the integrand vanishes as $s \to s_0$. To begin with, let us consider the case that there is an external edge decorated by 1-form $\alpha$ as Figure \ref{fig:5.3.2}.
\begin{figure}[h]
 \centering
 \captionsetup{margin=2cm}
\begin{tikzpicture}
\begin{scope}[xshift=5cm]
\node at (0.5,0.5) {$E$};
\draw[densely dotted] (0,0) -- (1,0);
\draw[densely dotted] (1,0) -- (1+ 1/1.41, 1/1.41);
\draw[densely dotted] (1,0) -- (1+ 1/1.41, -1/1.41);
\draw[densely dotted] (0,0) -- (- 1/1.41, 1/1.41);
\draw[densely dotted] (0,0) -- (- 1/1.41, -1/1.41);
\node at (0,0) {$\bullet$};
\node at (0,-0.5) {$s_0$};
\node at (-1, 1) {$\alpha$};
\node at(- 1/1.41, 1/1.41) {$\bullet$};
\end{scope}
\end{tikzpicture}
\caption{A situation where a vertex collapsing at $s_0$ is connected by an external edge decorated by 1-form $\alpha$}
\label{fig:5.3.2}
\end{figure}
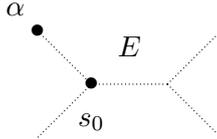
In this case, since $\alpha$ can be chosen so that $\alpha$ has compact support on the complement of a small neighbourhood of $s$, the restriction $\alpha|_{s}$ vanishes. Thus, the integrand becomes zero. Next, we consider the case that there is no external edge decorated by 1-form. Then, the integrand associated with the graph can be written as
\begin{equation}
	 \omega_{\Gamma} = \pm \mathsf{D}^{\mathbb{C}} G_{s_0}(s, x) \wedge \mathsf{D}^{\mathbb{C}} G_{s_0}(s, y) \wedge \bigwedge_{e \in E_{\Gamma} \setminus \{E, E'\}} \omega_{e}.
\end{equation}
where we set $s=x_{v}, x=x_{w_1}, y=x_{w_2}$ for $E=(v,w_1)$ and $E'=(v,w_2)$. Then, using the relation of the normalized Green function with Arakelov Green functions,
\begin{equation}
	G_{s_0}(x,y) = G_{\mathrm{Ar}}(x,y) - G_{\mathrm{Ar}}(x,s_0) - G_{\mathrm{Ar}}(s_0,y) +C
\end{equation}
we have
\begin{equation}
\begin{split}
	G_{s_0}(s,x) &= G_{\mathrm{Ar}}(s,x) - G_{\mathrm{Ar}}(s,s_0) - G_{\mathrm{Ar}}(s_0,x) +C\\
	& = -  G_{\mathrm{Ar}}(s,s_0) + g(s,s_0, x) + C,
 \end{split}
\end{equation}
and similarly, 
\begin{equation}
\begin{split}
	G_{s_0}(s,y)  = -  G_{\mathrm{Ar}}(s,s_0) + f(s,s_0, y) + C.
 \end{split}
\end{equation}
Here, we set $f(s,s_0, y)= G_{\mathrm{Ar}}(s,y)  - G_{\mathrm{Ar}}(s_0,y)$ and $g(s,s_0,x)=  G_{\mathrm{Ar}}(s,x)  - G_{\mathrm{Ar}}(s_0,x)$. Then, by definition, we have $\lim_{s \to s_0} f(s,s_0, y) = 0$ and $\lim_{s \to s_0} g(s,s_0, y) = 0$. Since the Arakelov Green function $G_{\mathrm{Ar}}(x,y)$ is smooth outside the diagonal, for $x, y \neq s_0$, we also have $\lim_{s \to s_0}\partial f(s,s_0, y) = 0$, $\lim_{s \to s_0}\partial g(s,s_0, x) = 0$ and the same formula with $\partial$ replaced by $\bar{\partial}$. Then, we have
\begin{equation}
\begin{split}
&\mathsf{D}^{\mathbb{C}} G_{s_0}(s, x) \wedge \mathsf{D}^{\mathbb{C}} G_{s_0}(s, y)\\
=& \mathsf{D}^{\mathbb{C}} (-  G_{\mathrm{Ar}}(s,s_0) + g(s,s_0, x) + C) \wedge \mathsf{D}^{\mathbb{C}} (-  G_{\mathrm{Ar}}(s,s_0) + f(s,s_0, y) + C)\\
=& \mathsf{D}^{\mathbb{C}} G_{\mathrm{Ar}}(s,s_0) \wedge  \mathsf{D}^{\mathbb{C}} G_{\mathrm{Ar}}(s,s_0) - \mathsf{D}^{\mathbb{C}}G_{\mathrm{Ar}}(s,s_0) \wedge \mathsf{D}^{\mathbb{C}} (f(s_0, s, y) +C) \\
&- \mathsf{D}^{\mathbb{C}} (g(s_0, s, x) +C) \wedge \mathsf{D}^{\mathbb{C}} G_{\mathrm{Ar}}(s,s_0) + \wedge \mathsf{D}^{\mathbb{C}} (g(s_0, s, x) +C) \wedge  \mathsf{D}^{\mathbb{C}} (f(s_0, s, y) +C)\\
=& - \mathsf{D}^{\mathbb{C}}G_{\mathrm{Ar}}(s,s_0) \wedge \mathsf{D}^{\mathbb{C}} (f(s_0, s, y) +C) \\
&- \mathsf{D}^{\mathbb{C}} (g(s_0, s, x) +C)\wedge \mathsf{D}^{\mathbb{C}} G_{\mathrm{Ar}}(s,s_0) +  \mathsf{D}^{\mathbb{C}} (g(s_0, s, x) +C) \wedge  \mathsf{D}^{\mathbb{C}} (f(s_0, s, y) +C)\\
=& - \mathsf{D}^{\mathbb{C}}G_{\mathrm{Ar}}(s,s_0) \wedge \mathsf{D}^{\mathbb{C}} f(s_0, s, y) - \mathsf{D}^{\mathbb{C}} g(s_0, s, x)  \wedge \mathsf{D}^{\mathbb{C}} G_{\mathrm{Ar}}(s,s_0)\\
& + \mathsf{D}^{\mathbb{C}} g(s_0, s, x) \wedge \mathsf{D}^{\mathbb{C}} f(s_0, s, y) \\
&+  \mathsf{D}^{\mathbb{C}} g(s_0, s, x) \wedge \mathsf{D}^{\mathbb{C}} C + \mathsf{D}^{\mathbb{C}} C \wedge \mathsf{D}^{\mathbb{C}} f(s_0, s, y)
\end{split}
\end{equation}
where we use $\omega^2 =0$ for 1-from $\omega$. From the above equation, note that every term is multiplied by $f(s,s_0, y)$, $g(s,s_0, x)$, or their differentials. Since these vanish in the order $O(s)$, one concludes that $\mathsf{D}^{\mathbb{C}} G_{s_0}(s, x) \wedge \mathsf{D}^{\mathbb{C}} G_{s_0}(s, y) \to 0$ as $s \to s_0$. Thus, we obtain, as $s \to s_0$,
\begin{equation}
	 \omega_{\Gamma} = \pm \mathsf{D}^{\mathbb{C}} G_{s_0}(s, x) \wedge \mathsf{D}^{\mathbb{C}} G_{s_0}(s, y) \wedge \bigwedge_{e \in E_{\Gamma} \setminus \{E, E'\}} \omega_{e}\to  0.
\end{equation}

Note that the above arguments can be similarly applied for the case that several internal vertices approach $s_0$. Second, consider the case that $E$ is an $S^{\ast}$-decorated external edge. In this case, we can apply the same argument as in the first case. Therefore, the assertion has been proved.
\end{proof}

\begin{lemma} \label{lem:5.3.7}
Let $\Gamma$ be a decorated connected trivalent graph with at least $2$ loops. Then,  $Z_{\Gamma}=0$ holds.
\end{lemma}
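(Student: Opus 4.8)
The plan is to prove the vanishing by a pure degree/dimension count for the fibre integral defining $Z_\Gamma$, with no analytic input needed beyond what is already set up. Write $k := |V_\Gamma^{\mathrm{ext}}|$ and $n := l(\Gamma) \geq 2$. First I would record the combinatorics of a connected uni-trivalent graph: the valency identity $3|V_\Gamma^{\mathrm{int}}| + |V_\Gamma^{\mathrm{ext}}| = 2|E_\Gamma|$ together with $|V_\Gamma| - |E_\Gamma| = 1 - n$ yields, exactly as in the proof of Theorem \ref{thm:formal_qme},
\begin{equation}
    |V_\Gamma^{\mathrm{int}}| = k + 2n - 2, \qquad |E_\Gamma| = 2k + 3n - 3.
\end{equation}

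Next I would compute the form degree of $\omega_\Gamma$. A graph with a self-loop has $Z_\Gamma = 0$ by definition, so I may assume $\Gamma$ has none; then in the product \eqref{eq:def_omega_gamma} every edge $E \in E_\Gamma$ contributes a factor $\omega_E$ of degree exactly $1$. Indeed, for an internal or $S^{\ast}$-decorated external edge one has $\omega_E = p_E^{\ast}(\mathsf{D}^{\mathbb{C}} G_{s_0})$, and $\mathsf{D}^{\mathbb{C}} = (w\partial - z\bar\partial) + (z\,dw - w\,dz)$ raises the degree of the $0$-current $G_{s_0}$ by one; for a special external edge $\omega_E = p_{v_i}^{\ast}\alpha$ is the pullback of a $1$-form on $X$. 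Therefore
\begin{equation}
    \deg \omega_\Gamma = |E_\Gamma| = 2k + 3n - 3.
\end{equation}

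Finally I would compare this against the dimension of the integration fibre. Since $p_\Gamma$ is the identity on the $\mathbb{C}^2$ factor, its fibre over $(b,(z,w))$ is the compactified configuration space $C_{V_\Gamma^{\mathrm{int}}}(X_b;(s_1,\ldots,s_k))_{(s_0,v)}$ of $|V_\Gamma^{\mathrm{int}}|$ points on the fibre surface $X_b$, of real dimension $2|V_\Gamma^{\mathrm{int}}| = 2k + 4n - 4$. The fibrewise pushforward $p_{\Gamma\ast}$ lowers total form degree by the fibre dimension, so $Z_\Gamma$ can be nonzero only if $\deg\omega_\Gamma \geq 2|V_\Gamma^{\mathrm{int}}|$; but
\begin{equation}
    \deg\omega_\Gamma - 2|V_\Gamma^{\mathrm{int}}| = (2k + 3n - 3) - (2k + 4n - 4) = 1 - n \leq -1
\end{equation}
for $n \geq 2$. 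Hence the integrand has total degree strictly below the fibre dimension, the pushforward is identically zero, and $Z_\Gamma = 0$.

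The argument is short because it is essentially algebraic, and the only point deserving care—the mild obstacle in making it fully rigorous—is to confirm that the vanishing of a fibrewise pushforward whose integrand has too low a total degree is insensitive both to the corner structure of the compactified configuration space and to the fact that the propagators $\mathsf{D}^{\mathbb{C}} G_{s_0}$ extend only as the forms provided by Corollary \ref{cor:extended_prop}; since a form of total degree strictly less than the fibre dimension has no component of top vertical degree, this requires no further input. I would also note in passing that the same count isolates trees ($n=0$) and one-loop graphs ($n=1$) as the only surviving cases, where $\deg\omega_\Gamma - 2|V_\Gamma^{\mathrm{int}}|$ equals $1$ and $0$, consistent with a connection $1$-form and a scalar term respectively.
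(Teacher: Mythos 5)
Your proposal is correct and follows essentially the same route as the paper: the paper's proof is exactly this degree count, deriving $|E_{\Gamma}| - 2|V_{\Gamma}^{\mathrm{int}}| = 1 - l(\Gamma) < 0$ for $l(\Gamma) \geq 2$ from the valency and Euler-characteristic identities and concluding that the fiber integration vanishes because the integrand's degree is strictly below the fiber dimension. Your added remarks (that each $\omega_E$ has total degree exactly $1$, and that a form of total degree below the fiber dimension has no top vertical component, so the corner structure and the limited extendability of $\mathsf{D}^{\mathbb{C}} G_{s_0}$ are irrelevant) merely make explicit what the paper leaves implicit.
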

\begin{proof}
Let us consider a uni-trivalent graph with $k$-internal vertices with $l$-loops. Note that we have
\begin{equation}
    |V^{\mathrm{ext}}_{\Gamma}| + |V^{\mathrm{int}}_{\Gamma}| - |E_{\Gamma}| = 1 - l
\end{equation}
and 
\begin{equation}
    2 |E_{\Gamma}| = |V^{\mathrm{ext}}_{\Gamma}| + 3  |V^{\mathrm{int}}_{\Gamma}|,
\end{equation}
and hence
\begin{equation}
     |V^{\mathrm{ext}}_{\Gamma}| -|V^{\mathrm{int}}_{\Gamma}|  = 2 - 2l.
\end{equation}
Thus, the difference between the degree of the integrand and the dimension of the fiber is $|E_{\Gamma}| - 2|V^{\mathrm{int}}_{\Gamma}| = |V^{\mathrm{ext}}_{\Gamma}| - |V^{\mathrm{int}}_{\Gamma}| -1 + l= 1-l$. Therefore, when $l \geq 2$, the difference is strictly less than 0, implying the resulting fiber integration is zero since we integrate a differential form with a degree strictly less than the fiber dimension. Thus, the assertion has been proved. 
\end{proof}

\begin{lemma}\label{lem:convergence}
\begin{enumerate}[(1)]
\item Let $\Gamma$ be a decorated connected graph with $\mathrm{def}(\Gamma)=0$, i.e., a decorated connected uni-trivalent graph. Suppose that $|E_{\Gamma}| \geq 2$. Then, the element $Z_{\Gamma}$ is well-defined as the element of $\mathcal{D}_B^{\bullet, \bullet} \otimes \Omega_{\mathbb{C}^2}^{\leq 2}$.
\item Let $\Gamma$ be a decorated connected graph with $\mathrm{def}(\Gamma)=1$, i.e., a decorated connected graph such that except for one $4$-valent vertex all other vertices are uni-trivalent. Then, the element $Z_{\Gamma}$ is well-defined as the element of $\mathcal{D}_B^{\bullet, \bullet} \otimes \Omega_{\mathbb{C}^2}^{\leq 2}$.
\end{enumerate}
\end{lemma}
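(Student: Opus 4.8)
The plan is to deduce well-definedness from the compactness of the fibres of $p_\Gamma$ together with a boundary analysis of the integrand $\omega_\Gamma$. By the parametrized statement of Proposition~\ref{prop:BT} (and the corollary following it), the fibre of $p_\Gamma\colon C_{V_\Gamma^{\mathrm{int}}}(X_{/B};(s_1,\dots,s_k))_{(s_0,v)}\times\bC^2\to B\times\bC^2$ over a point $(b,(z,w))$ is the compact manifold with corners $C_{V_\Gamma^{\mathrm{int}}}(X_b)$, on whose interior $\Conf_{V_\Gamma^{\mathrm{int}}}(X_b)$ the form $\omega_\Gamma$ is smooth. Hence $Z_\Gamma$ is well-defined precisely when $\omega_\Gamma$ is absolutely integrable over this fibre, i.e.\ when its singularities along the codimension-$\ge 1$ strata are integrable. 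Since $\bC^2$ carries only the algebraic forms $dz,dw$, any component of degree $>2$ in $\Omega^\bullet_{\bC^2}$ vanishes automatically, so the target $\mathcal D_B^{\bullet,\bullet}\otimes\Omega^{\le 2}_{\bC^2}$ is reached as soon as integrability is established. I would organise the boundary analysis according to the three types of codimension-one strata recalled in Section~\ref{section:boundary_strata}: infinite, principal, and hidden.

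The infinite strata cause no trouble: by Lemma~\ref{lem:infinite_vanishing_lemma} the integrand $\omega_\Gamma$ vanishes whenever an internal vertex approaches $s_0$, so it extends by zero across these faces. For a principal stratum, where exactly two internal vertices $v,w$ collide, the only factor of $\omega_\Gamma$ that becomes singular is the propagator $\omega_E$ on the edge $E=(v,w)$ (when such an edge exists; every other propagator stays smooth because its remaining endpoint stays away from the collision). Writing the normal coordinate as $r$ and the screen coordinate as $\theta$, Remark~\ref{rem:singularity} together with Corollary~\ref{cor:extended_prop} gives, on the twistor line,
\[
  \omega_E \sim \frac{-1+2u}{2\pi\sqrt{-1}}\,\frac{dr}{r}+\frac{1}{2\pi}\,d\theta+\frac{\log r}{2\pi\sqrt{-1}}\,du+(\text{smooth}).
\]
The point to verify is that neither singular term produces a divergent fibre integral. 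The $\log r\,du$ piece is locally integrable, so it is harmless. For the $\tfrac{dr}{r}$ piece I would argue by power counting: a divergent contribution would require a full $\tfrac{dr}{r}\wedge d\theta$ factor, but a single propagator supplies only one of $dr,d\theta$; the $d\theta$ needed to complete it can come only from another propagator, whose $\theta$-dependence enters through $\exp_z(r\,u_\bullet)$ and hence carries a compensating factor of $r$. Thus $\tfrac{dr}{r}$ is always paired with an $r\,d\theta$, the singularity cancels, and the integral converges near principal faces.

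The hidden strata form the main obstacle. Here $p\ge 3$ internal vertices collide and several singular propagators appear simultaneously; the analysis is the genuine analogue of the Axelrod--Singer/Kontsevich vanishing of hidden faces, complicated by the fact that the twistor propagator carries both a $\tfrac{dr}{r}$ and a $\log r\,du$ singularity rather than extending smoothly. I would introduce the radial parameter $r$ rescaling the colliding cluster together with the screen in $\Conf_p(T_zX)/T_zX\rtimes\bR_+$, expand each internal propagator of the cluster in $r$, and count the total power of $r$ against the dimension $2(p-1)$ of the collapsing directions (screen plus radial). Uni-trivalence (the $\mathrm{def}(\Gamma)=0$ case) forces the number of edges internal to the cluster and the valences to balance so that the aggregate radial weight is $>-1$, which gives integrability; the same degree bookkeeping $\deg(\Gamma)=|E_\Gamma|-2|V_\Gamma^{\mathrm{int}}|$ that underlies Lemma~\ref{lem:5.3.7} controls this count.

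For the $\mathrm{def}(\Gamma)=1$ case the single $4$-valent vertex shifts the balance by one, and I would redo the estimate to check that the radial weight stays in the integrable range; this is exactly where the hypothesis $\mathrm{def}(\Gamma)\le 1$ is used, and it is the step I expect to demand the most care, since for defect $\ge 2$ the count would fail. Higher-codimension corners are then handled by iterating the one-stratum estimate: the coordinates of Section~\ref{ssecion:3.1} exhibit a neighbourhood of a codimension-$k$ corner as an iterated radial family, so integrability along each face of the nested collision propagates to the full corner. Assembling the three cases yields absolute integrability of $\omega_\Gamma$ over the compact fibre, and hence the claimed well-definedness of $Z_\Gamma$ in both parts.
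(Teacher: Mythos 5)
Your proposal is correct in outline but proceeds by a genuinely different route from the paper. You attack the raw twistor propagator $\mathsf{D}^{\mathbb{C}}G_{s_0}$ head-on, doing a stratum-by-stratum integrability analysis (principal, hidden, infinite faces) with power counting in the Axelrod--Singer coordinates. The paper instead makes the entire stratum analysis unnecessary by an algebraic rewriting: after invoking Lemma \ref{lem:5.3.7} to reduce to $l(\Gamma)\le 1$, it observes that any term using the $(z\,dw-w\,dz)G_{s_0}$ component of a propagator dies for fiber-degree reasons, and that the Hodge-bidegree bookkeeping at each trivalent vertex (each internal vertex must receive exactly one $dz_v$ and one $d\bar z_v$, and the decorations by holomorphic/anti-holomorphic forms kill the complementary components) forces every surviving $(w\partial - z\bar\partial)G_{s_0}$ to act as a monomial in $z,w$ times $d^{\mathbb{C}}G_{s_0}$, as in \eqref{eq:omega_g_loop}. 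Since $d^{\mathbb{C}}G_{s_0}$ extends smoothly to the compactified configuration space by Corollary \ref{cor:extended_prop} (and the one bare $G_{E_1}$ in the tree case has only an integrable logarithmic singularity), well-definedness follows from compactness alone: the $\tfrac{dr}{r}$ singularity you estimate never appears in the top fiber-degree part of $\omega_\Gamma$ at all. What each approach buys: yours is more robust (it would survive propagators without such a clean algebraic cancellation), while the paper's yields the explicit closed form \eqref{eq:omega_g_loop}, which is reused later in Proposition \ref{prop:6.2.1} and Proposition \ref{prop:M0n'_GHC} and also explains conceptually (cf.\ the remark after the proof and Remark \ref{rem:singularity}) why the $\tfrac{dr}{r}$ singularities cancel.

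Two caveats on your version. First, the crux you flag but do not carry out --- the hidden-face balance --- does close, and you should record the count: for a colliding cluster of $p\ge 3$ internal vertices with $e$ cluster-internal edges, at most one $\tfrac{dr}{r}$ survives in the wedge (since $dr\wedge dr=0$), the cluster propagators supply at most $e$ one-forms in the $2p-2$ collapsing directions, and uni-trivalence with loop number $\le 1$ (again via Lemma \ref{lem:5.3.7}, which you should invoke explicitly rather than through ``degree bookkeeping'') gives $e\le p$, so at least $p-2\ge 1$ collapsing one-forms must come from propagators leaving the cluster, each carrying an $O(r)$ coefficient; hence the total is $O(1)$ up to harmless $\log r$ factors. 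Note also that your heuristic ``for defect $\ge 2$ the count would fail'' is not the right dichotomy --- the operative constraint is the loop number, not the defect. Second, the lemma is stated over all of $\mathbb{C}^2$, so your local expansion should be written with coefficients $(w-z)$ and $(z\,dw - w\,dz)$ rather than restricted to the twistor coordinate $u$; the singularity type is the same, but as written your principal-face formula only covers $\mathbb{C}_{\twi}$.
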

\begin{proof}
(1) First note that the integrand $\omega_{\Gamma}$ has singularity along diagonals corresponding to internal edges and also at the point $s_0$. When some internal vertices go to the point $s_0$, $\omega_{\Gamma}$ vanishes by Lemma \ref{lem:infinite_vanishing_lemma}. Thus, we need only take care of the singularities along the diagonals given by the internal edges of $\Gamma$. 

By Lemma \ref{lem:5.3.7}, we can assume that $\Gamma$ is a uni-trivalent graph with loop number $l(\Gamma) \leq 1$. Consider the case that $l(\Gamma) = 1$. Let $n$ be the number of external edges of $\Gamma$, then the number of edges is $2n$. We can replace $\mathsf{D}^{\mathbb{C}} G_a(x,y)$ with $(w \partial - z \bar{\partial}) G_a(x,y)$ since, if at least one of the internal or $S^{\ast}$-decorated edges are assigned with $(z dw - w dz)G_a(x,y)$, then the associated integral vanishes by degree reason as Lemma \ref{lem:5.3.7}. Suppose that $\Gamma$ has $p$ and $q$ special decorated edges decorated by holomorphic and anti-holomorphic 1-forms respectively. Let $E$ be an internal edge adjacent to two special decorated edges $E_1$ and $E_2$. If both of $E_1$ and $E_2$ are decorated by holomorphic 1-forms $\alpha_{E_1}, \beta_{E_2}$, then we have 
\begin{equation}
\begin{split}
    &(w \partial - z \bar{\partial}) G_E \wedge \alpha_{E_1} \wedge \beta_{E_2} \\
    =& - z \bar{\partial} G_E\wedge \alpha_{E_1} \wedge \beta_{E_2}\\
    =& z d^{\bC} G_E\wedge \alpha_{E_1} \wedge \beta_{E_2}.
    \end{split}
\end{equation}
Thus, we can think that $E$ is assigned with $z d^{\bC} G_E$. Similarly, if both of $E_1$ and $E_2$ are decorated by anti-holomorphic 1-forms, then $E$ is assigned with $w d^{\bC} G_a(x,y)$. Otherwise, $E$ is assigned with $w \partial G_a(x,y)$ or $-z \bar{\partial} G_a(x,y)$. Since the same type of forms cannot decorate all of the three edges adjacent to an internal vertex and $dz_v \wedge d\bar{z}_v$ must be assigned for every internal vertex $v$, the top degree term along the fiber direction of integrand associated with $\Gamma$ can be written as the form
\begin{equation}\label{eq:omega_g_loop}
    \omega_{\Gamma} =\pm w^{n-p} z^{n-q} d^{\bC} G_{E_1} \wedge \cdots \wedge d^{\bC} G_{E_{2n-p-q}} \wedge \omega_{E_{2n-p-q +1}} \wedge \cdots \wedge \omega_{E_{2n}}.
\end{equation}
By Corollary \ref{cor:extended_prop}, $d^{\bC} G_E$ can be extended to the compactified configuration spaces fiberwise, the fiber integration of $\omega_{\Gamma}$ is well-defined through usual configuration space integral methods.

Next, Consider the case that $l(\Gamma) = 0$. This case is shown essentially by Goncharov in \cite{GoncharovHodge1}, but here we explain it from the viewpoint of configuration space integrals. By a similar argument as above, the top degree term along the fiber direction of integrand associated with $\Gamma$ is given as 
\begin{equation}\label{eq:5.3.8-1}
    \mathsf{D}^{\mathbb{C}} \left(\pm w^{n-1-p} z^{n-1-q} G_{E_1} \wedge d^{\bC} G_{E_2}  \wedge \cdots \wedge d^{\bC} G_{E_{2n-3-p-q}} \wedge \omega_{E_{2n-3-p-q +1}} \wedge \cdots \wedge \omega_{E_{2n-3}}\right).
\end{equation}
As elements of currents $\mathcal{D}_B^{\bullet, \bullet}$, the differentials on $B$ direction commute with integration so that the only thing we need to show is the well-definedness of the form
\begin{equation}
    G_{E_1} \wedge d^{\bC} G_{E_2} \wedge \cdots \wedge d^{\bC} G_{E_{2n-3-p-q}} \wedge \omega_{E_{2n-3-p-q +1}} \wedge \cdots \wedge \omega_{E_{2n-3}}
\end{equation}
and, except for $G_{E_1}$ which have singularities of logarithm functions, other factors can be regarded as smooth forms on compactified configuration spaces. However, as is well known the logarithmic function is integrable. Thus, $Z_{\Gamma}$ is well-defined.

\noindent
(2) When $l(\Gamma) = 2$ or $l(\Gamma) = 1$, the well-definedness of $Z_{-}$ follows immediately by the same argument as (1) of 1 loop and tree case respectively. Suppose $l(\Gamma) = 0$. By similar reasoning, the well-definedness for this case also follows from the integrabilities of $G_{s_0}$ and $d^{\bC} G_{s_0}$.
% even if the associated integrand $\omega_{\Gamma}$ do not have simple expression as \eqref{eq:5.3.8-1}.
\end{proof}

Note that, in the above proof, the expressions of $\omega_{\Gamma}$ in terms of $d^{\mathbb{C}} G_{s_0}$ corresponds to the cancellation of singularity of $\frac{dr}{r}$ in $\mathsf{D}^{\bC} G_{s_0}$ described in Remark \ref{rem:singularity}.

\begin{remark}
    At least for the tree graph case,  an integral of currents associated with a graph makes sense since it can be understood as an iteration of integral operators. However, in the case of diagrams possibly with loops, the associated integrals do not make sense in general. Thus, we need to take care of the singularity of integrands or use the theory of configuration space integral if it is applicable as above. In our case, for decorated connected trivalent graph $\Gamma$, $\omega_{\Gamma}$ defines a $|E_{\Gamma}|$-current on $C_{|V_{\Gamma}^{\mathrm{int}}|}(X_{/B};(s_1,\ldots, s_k))_{(s_0,v)} \times \mathbb{C}^2$ and algebraic on $\bC^2$, i.e., $\omega_{\Gamma} \in \mathcal{D}^{2|V_{\Gamma}^{\mathrm{int}}|}(C_n(X_{/B};(s_1,\ldots, s_k))_{(s_0,v)})\otimes \Omega^{\leq 2}_{\bC^2}$
\end{remark}

\begin{remark}
    For a decorated connected graph $\Gamma$ with $|E_{\Gamma}|\geq 2$, the integration $Z_{\Gamma, (b,(z,w))}$ is same as integration of $\omega_{\Gamma}$ over codimension-0 open strata of the fiber $p^{-1}_{\Gamma}((b, (z,w))$.
\end{remark}

\begin{lemma} \label{lem:5.2.2} The map $Z_{-}$ satisfies the following conditions.
\begin{enumerate}[(i)]
\item  $Z_{(\Gamma, W; \mathrm{Or}_{\Gamma})}=-Z_{(\Gamma, W; -\mathrm{Or}_{\Gamma})}$.
    \item if $\Gamma$ has a self-loop as Figure \ref{fig:5.3.3} (A) , then $Z_{\Gamma} =0$,
    \item if $\Gamma$ is the unique uni-trivalent connected graph without internal vertex and its two external vertices are labeled by the same element $\gamma = s \in S^{\ast}$ or $\gamma = \alpha \in \Omega^1(X)$ as Figure \ref{fig:5.3.3} (B),  then $Z_{\Gamma} =0$,
    \item  if $\Gamma$ has an internal vertex $v$ which is connected to two external vertices labeled by the same $\gamma = s \in S^{\ast}$ or $\gamma = \alpha \in H^1(X_{/B})$, as Figure \ref{fig:5.3.3} (C), then $Z_{\Gamma} =0$
     \item $\Gamma$ has a non-regular edge, as Figure \ref{fig:5.3.3} (D), then $Z_{\Gamma} =0$.
\end{enumerate}

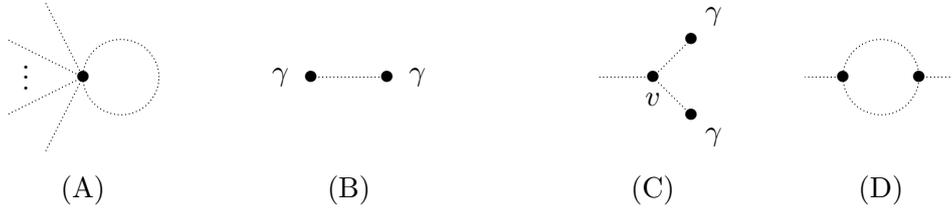
\begin{figure}[h]
\captionsetup{margin=2cm}
 \centering
 \begin{tikzpicture}
 %%%% self-loops
 \node at (-0.5, -1.5) {(A)};
  \node at (3, -1.5) {(B)};
  \node at (7, -1.5) {(C)};
  \node at (10, -1.5) {(D)};
 \begin{scope}
 \draw[densely dotted] (0,0) circle (0.5cm);
    \draw[densely dotted] (-0.5,0.0)--(-1.5,0.5);
     \draw[densely dotted] (-0.5,0.0)--(-1,1);
     \draw[densely dotted] (-0.5,0.0)--(-1.5,-0.5);
     \draw[densely dotted] (-0.5,0.0)--(-1,-1);
     \node at (-1.25, 0.1) {$\vdots$};
     \node at (-0.5, 0) {$\bullet$};
 \end{scope}
 %%%% the unique connected trivalent graph without internal vertex labeled by the same element 
 \begin{scope}[xshift=3cm]
 \draw[densely dotted] (-0.5,0) -- (0.5,0);
 \node at (0.5, 0) {$\bullet$};
 \node at (0.9, 0) {$\gamma$};
 \node at (-0.5,0) {$\bullet$};
 \node at (-0.9, 0) {$\gamma$};
 \end{scope}
 %%%% Y-shaped graph labeled by the same element
 \begin{scope}[xshift=6cm]
 \draw[densely dotted] (1-1/1.414,0) -- (1,0);
 	\draw[densely dotted] (1,0) -- (1.5, 0.5);
 	\draw[densely dotted] (1,0) -- (1.5, -0.5);
  \node at (1.5, 0.5) {$\bullet$};
   \node at (1.5, -0.5) {$\bullet$};
   \node at (1,0) {$\bullet$};
   \node at (1,-0.3) {$v$};
    % \node at (1-1/1.414,0) {$\bullet$};
  \node at (1.5+0.3, 0.5+0.3) {$\gamma$};
  \node at (1.5+0.3, -0.5-0.3) {$\gamma$};
 \end{scope}
 \begin{scope}[xshift=10cm]
  \draw[densely dotted] (0,0) circle (0.5cm);
   \draw[densely dotted] (0.5,0.0)--(1.0,0.0);
    \draw[densely dotted] (-0.5,0.0)--(-1.0,0.0);
    \node at (0.5, 0) {$\bullet$};
     \node at (-0.5, 0) {$\bullet$};
 \end{scope}
 \end{tikzpicture}
 \caption[Several typical (sub)graphs which vanish under the map $Z_{-}$]{Several typical (sub)graphs which vanish under the map $Z_{-}$. Here, we redisplay the same graphs as  Figure \ref{fig:4.3.1} for the convenience of the reader.}
 \label{fig:5.3.3}
\end{figure}
\end{lemma}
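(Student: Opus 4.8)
The plan is to read Lemma~\ref{lem:5.2.2} as the verification that $Z_{-}$ descends along the defining relations of $\mathcal{CD}^{\vee,\bullet}_{H,S^{\ast}}$: items (i)--(v) are exactly the five relations imposed in the definition of that space, so each must be checked directly against the definition of $Z_{\Gamma}$ given in Cases (I) and (II). I would organize the argument according to whether the relation is a matter of orientation bookkeeping, a convention already built into the definition, or a genuine vanishing of the integrand.

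For (i), I would observe that the dependence of $Z_{\Gamma}$ on $\mathrm{Or}_{\Gamma}$ is entirely concentrated in the scalar $\sgn(E_0 \wedge \cdots \wedge E_{|E_{\Gamma}|})$ appearing in \eqref{eq:def_omega_gamma}, which by construction measures the difference between the chosen ordering class and $\mathrm{Or}_{\Gamma}$. Replacing $\mathrm{Or}_{\Gamma}$ by $-\mathrm{Or}_{\Gamma}$ flips this sign while leaving the wedge of propagators $\omega_{E_0} \wedge \cdots \wedge \omega_{E_{|E_{\Gamma}|}}$ and the fiber integration $p_{\Gamma\ast}$ untouched, so $Z_{(\Gamma,W;\mathrm{Or}_{\Gamma})} = -Z_{(\Gamma,W;-\mathrm{Or}_{\Gamma})}$; the same sign convention governs the edge cases of Case (I). Items (ii) and (iii) are then immediate from the definitions: a graph with a self-loop is assigned $Z_{\Gamma}=0$ by fiat in Case (II), while the single-edge graph with two identical decorations falls into Case (I-1) (equal sections, set to $0$) or Case (I-3) (two $1$-forms, always set to $0$).

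The substantive cases are (iv) and (v), and here the mechanism is that the integrand $\omega_{\Gamma}$ acquires two wedge factors which are \emph{literally the same} pulled-back $1$-form, so that $\omega_{\Gamma}$ vanishes identically as a form on the open configuration space before any integration is performed. In (iv) with $\gamma = \alpha \in H^1(X_{/B})$, the two external edges meeting the internal vertex $v$ both contribute $p_{x_v}^{\ast}\alpha$, and $p_{x_v}^{\ast}\alpha \wedge p_{x_v}^{\ast}\alpha = 0$; with $\gamma = s \in S^{\ast}$ both $S^{\ast}$-decorated edges contribute $p^{\ast}\mathsf{D}^{\mathbb{C}} G_{s_0}(x_v,s)$, whose self-wedge vanishes since $\mathsf{D}^{\mathbb{C}} G_{s_0}$ has total form-degree one. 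In (v), a non-regular (double) edge joining $v$ and $w$ contributes $p_{(v,w)}^{\ast}\mathsf{D}^{\mathbb{C}} G_{s_0}$ twice, and again the self-wedge is zero. Since the two coinciding factors are restrictions of a single smooth $1$-form to $\Conf_{V^{\mathrm{int}}_{\Gamma}}(X_{/B};(s_1,\ldots,s_k))_{s_0}$ (their only singular locus being the excised diagonal), the identity $\omega \wedge \omega = 0$ already holds at the level of forms, whence $Z_{\Gamma} = p_{\Gamma\ast}\omega_{\Gamma} = 0$.

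The one point requiring care — and what I would flag as the main obstacle — is this last justification that $\omega \wedge \omega = 0$ is legitimate despite the diagonal singularities of $\mathsf{D}^{\mathbb{C}} G_{s_0}$. I would settle it by noting that in each of (iv) and (v) the coinciding factors are smooth on the open configuration space, so the wedge vanishes there and the resulting current is identically zero; in particular the convergence subtleties of Lemma~\ref{lem:convergence} and the extension of Corollary~\ref{cor:extended_prop} never enter, precisely because the integrand is zero before integration rather than merely after it.
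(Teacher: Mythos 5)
Your proof is correct and takes essentially the same route as the paper's: (i) is orientation bookkeeping through the sign in \eqref{eq:def_omega_gamma}, (ii)--(iii) are built into the definition of $Z_{-}$ (Cases (I-1), (I-3), and the self-loop convention in Case (II)), and (iv)--(v) follow because the integrand contains a repeated total-degree-one factor, so $\pi_v^{\ast}\gamma \wedge \pi_v^{\ast}\gamma = 0$, respectively the self-wedge of $\mathsf{D}^{\mathbb{C}} G_{s_0}$, kills $\omega_{\Gamma}$ before integration. Your closing remark that the coinciding factors are smooth on the open configuration space, so the vanishing holds at the level of forms and no convergence input from Lemma~\ref{lem:convergence} is needed, is a slightly more explicit justification of the same $\omega \wedge \omega = 0$ mechanism the paper invokes.
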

\begin{proof}
(i) It follows from the well-definedness of $\omega_{(\Gamma, W; \mathrm{Or}_{\Gamma})}$, i.e., the sign of $\omega_{(\Gamma, W; \mathrm{Or}_{\Gamma})}$ depends only on $\mathrm{Or}_{\Gamma}$. The conditions (ii) and (iii) are just consequences of our definition of the map $Z_{-}$, so it suffices to show (iv) and (v).
    For (iv), assume that $\gamma \in H^1(X_{/B})$. In this case, the associated integrand $\omega_{\Gamma} = \beta \wedge \pi_v^{\ast}\gamma \wedge \pi_v^{\ast}\gamma$ for some forms $\beta$. Since $\pi_v^{\ast}\gamma \wedge \pi_v^{\ast}\gamma =0$, we get $\omega_{\Gamma} =0$. The case that $\gamma = s \in S^{\ast}$ is similarly proved by replacing $\pi_v^{\ast}\gamma$ with $\mathsf{D}^{\mathbb{C}} G_{s_0} (x_v, s)$ for a $s \in S^{\ast}$. (v) Since we associate the same 1-forms $\mathsf{D}^{\mathbb{C}} G_{s_0} $ on two internal edges of the graph, the associated integral vanishes since $\omega \wedge \omega =0$ for any 1-form $\omega$.
\end{proof}

The following is a key theorem that states that $Z_{-}$ defines a CDGA morphism (up to sign) though $Z_{\Gamma}$ is given as a fiber integration of non-smooth form $\omega_{\Gamma}$ on compactified configuration space.
\begin{theorem}\label{thm:key_thm}
    Let $\bC_{\twi}=\{(1-u,u) \in \bC^2\} \subset \bC^2$ be the twistor line. Then, the restriction of the map $Z_{-}$ to $\bC_{\twi}$ 
    \begin{equation}
        Z'_{-}:=  Z_{-}|_{B \times \bC_{\twi}} : \mathcal{D}_{H, S^{\ast}, \mathrm{def}}^{\vee,0} \rightarrow \mathcal{D}^{\ast, \ast}(B) \otimes \Omega_{\bC_{\twi}}^{\leq 1}
    \end{equation}
    satisfies the equation
    \begin{equation}\label{eq:cdga_v}
    (d_B + d_{\mathbb{C}_{\twi}}) Z_{\Gamma}' = - Z_{\partial_{\mathrm{Cas}}\Gamma}' + Z_{\partial_{\Delta} \Gamma}' - Z_{\partial_{S^{\ast}}\Gamma}'.
\end{equation}
\end{theorem}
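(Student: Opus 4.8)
The plan is to prove this by a configuration-space-integral computation, applying the fibrewise Stokes theorem to the proper fibration $p_\Gamma : C_{V_\Gamma^{\mathrm{int}}}(X_{/B};(s_1,\dots,s_k))_{(s_0,v)}\times\bC_{\twi}\to B\times\bC_{\twi}$ and matching the resulting interior and boundary terms with the three pieces $\partial_{\mathrm{Cas}},\partial_{\Delta},\partial_{S^{\ast}}$ of the graph differential. Since Lemma \ref{lem:convergence} guarantees that $Z'_\Gamma=p_{\Gamma\ast}\omega'_\Gamma$ is well defined for $\Gamma$ of defect $0$, and since the fibre is a compact manifold with corners on which the relevant forms extend smoothly by Corollary \ref{cor:extended_prop}, Stokes applies and yields, up to signs fixed by the orientation conventions of Section \ref{section:4.4},
\begin{equation}
(d_B+d_{\bC_{\twi}})Z'_\Gamma = \pm\, p_{\Gamma\ast}\bigl(d_{\mathrm{fib}}\,\omega'_\Gamma\bigr)\ \pm\ (\partial p_\Gamma)_\ast\,\omega'_\Gamma ,
\end{equation}
the first being the integral of the fibrewise differential over the open fibre and the second a sum over the codimension-one strata of the fibre. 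The whole argument is the two-dimensional analogue of the planar-tree computation in \cite[Theorem 6.4]{GoncharovHodge1}, and the restriction to the twistor line $\bC_{\twi}$ is exactly what makes the identity $(\mathbf{d}+d_{\bC_{\twi}})\mathsf{D}^{\mathbb{C}} G_{s_0}=\bar{\partial}\partial G_{s_0}$ available.

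For the interior term I would compute $d_{\mathrm{fib}}\omega'_\Gamma$ by the Leibniz rule. The special-edge factors are fibrewise closed $1$-forms, so only the propagator edges get differentiated; on the twistor line Corollary \ref{cor:extended_prop} gives $\tfrac12 d\,\widetilde{d^{\mathbb{C}}G_{s_0}} = -\tfrac{\sqrt{-1}}{2}\sum_k\bigl(p_1^{\ast}\alpha_k\wedge p_2^{\ast}\bar\alpha_k + p_2^{\ast}\alpha_k\wedge p_1^{\ast}\bar\alpha_k\bigr)=-\langle\nu\wedge\nu\rangle^{[1\times 1]}$. Replacing the propagator on an edge $E$ by this harmonic (Casimir) kernel is precisely the operation of cutting $E$ and decorating the two new external legs by a symplectic dual basis; summing over all internal and $S^{\ast}$-decorated edges reproduces $\partial_{\mathrm{Cas}}\Gamma$ (Figures \ref{fig:part_cas} and \ref{fig:Ca_S}), so that $p_{\Gamma\ast}(d_{\mathrm{fib}}\omega'_\Gamma)=-Z'_{\partial_{\mathrm{Cas}}\Gamma}$, which accounts for the first term on the right-hand side.

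For the boundary term I would run through the classification of codimension-one strata in Section \ref{section:boundary_strata}. Infinite strata, where an internal vertex approaches $s_0$, contribute nothing by Lemma \ref{lem:infinite_vanishing_lemma}. On a principal stratum exactly two configuration points collide; here the $q^{\ast}\psi$ piece of Corollary \ref{cor:extended_prop} is horizontal and so integrates to zero over the collision circle, while the fibrewise volume form $\omega$ integrates to $1$, so only collisions along an actual edge survive. A collision of two internal vertices joined by an internal edge contracts that edge and gives $Z'_{\partial_{\Delta}\Gamma}$, whereas a collision of the internal vertex of an $S^{\ast}$-decorated edge with its fixed point $s_i$ deletes that vertex and turns its remaining legs into $s_i$-decorated external edges, giving $-Z'_{\partial_{S^{\ast}}\Gamma}$ (compare Figure \ref{fig:S-deco}). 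Finally the hidden strata, where three or more points collide (including the anomalous all-collision face, which is harmless in dimension two as noted after Section \ref{section:boundary_strata}), must be shown to vanish; I expect this to follow from a degree count on the local collision model together with the $\omega\wedge\omega=0$ and symmetry vanishings already exploited in Lemma \ref{lem:5.2.2}.

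The main obstacle is twofold. First, the Goncharov propagator $\mathsf{D}^{\mathbb{C}} G_{s_0}$ does not extend smoothly to the compactified configuration space, carrying the $dr/r$ and $\log r$ singularities of Remark \ref{rem:singularity}; before invoking Stokes I must therefore verify, as in the proof of Lemma \ref{lem:convergence}, that the particular combinations $\omega'_\Gamma$ occurring here reduce to wedge products of the smoothly extending forms $\widetilde{d^{\mathbb{C}}G_{s_0}}$ with the integrable $G_{s_0}$, and that the singular parts contribute nothing on the boundary faces. Second, the precise signs $-,+,-$ in the statement require comparing the orientation of each boundary face (outward normal plus induced orientation of the stratum) with the orientation torsors $\mathrm{Or}_{\Gamma/E}$ and $\mathrm{Or}_{\Gamma_1}\wedge\mathrm{Or}_{\Gamma_2}$ fixed in the definitions of $\partial_{\Delta}$, $\partial_{\mathrm{Cas}}$ and $\partial_{S^{\ast}}$. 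This sign bookkeeping, rather than any analytic difficulty, is where I expect the proof to be most delicate.
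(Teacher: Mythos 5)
Your overall route is the paper's route (regularized fibrewise Stokes; interior term giving $-Z'_{\partial_{\mathrm{Cas}}\Gamma}$ from $(\mathbf{d}+d_{\bC_{\twi}})\mathsf{D}^{\bC}G_{s_0}=-\langle\nu\wedge\nu\rangle^{[1\times1]}$ off-diagonal; principal faces giving $Z'_{\partial_{\Delta}\Gamma}-Z'_{\partial_{S^{\ast}}\Gamma}$; infinite faces killed by Lemma \ref{lem:infinite_vanishing_lemma}), but your treatment of the hidden faces has a genuine gap. A degree count disposes only of collisions of $q\geq 4$ points: for uni-trivalent graphs with $l(\Gamma)\leq 1$ on a surface, the dangerous hidden faces are exactly those where three points joined by a triangle of edges collapse, and there the fiber $F_{\mathcal{S}}\cong \Conf_3(\bR^2)/\bR^2\rtimes\bR_+$ is $3$-dimensional while the restricted integrand is generically of top vertical degree, so neither dimension counting, nor $\omega\wedge\omega=0$, nor any of the vanishings of Lemma \ref{lem:5.2.2} applies. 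The paper needs two further inputs here. First, Kontsevich's vanishing lemma (Lemma \ref{thm:Kontsevich}) kills the term in which all three collapsing edges carry the angular part $\frac{1}{2}d^{\bC}G_{s_0}$. Second, since on the twistor line the propagator decomposes as $\mathsf{D}^{\bC}G_{s_0}=\frac{1}{2}(2u-1)\,dG_{s_0}+\frac{1}{2}d^{\bC}G_{s_0}+G_{s_0}\,du$, the radial $dG_{s_0}$ factors require a separate argument: the all-$dG_{s_0}$ term vanishes because its restriction carries no volume form along the $S^1$-fiber of the rotation fibration $C_3'\rightarrow C_3''$, and the mixed cases reduce to the pure ones via identities such as $dG_{E_1}\wedge dG_{E_2}\wedge d^{\bC}G_{E_3}=d^{\bC}G_{E_1}\wedge d^{\bC}G_{E_2}\wedge d^{\bC}G_{E_3}$. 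None of these mechanisms is in your toolkit, and without them the boundary contributions from triangle collapses are simply unaccounted for.

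A secondary, fillable hole is the justification of Stokes itself. You invoke fibrewise Stokes on the compactified fiber on the grounds that the relevant forms extend smoothly by Corollary \ref{cor:extended_prop}, and then correctly note in your closing paragraph that $\mathsf{D}^{\bC}G_{s_0}$ does not extend; but Lemma \ref{lem:convergence} only gives integrability on the open fiber, not the boundary control Stokes requires. The paper resolves this by truncating along a collar neighbourhood $\mathcal{N}_{\epsilon,b}$ of the boundary, applying Stokes on $p^{-1}_{\Gamma,\epsilon}$, and letting $\epsilon\rightarrow 0$, checking at each principal face that the $(\log\epsilon)\,du$ and $\frac{dr}{r}$ pieces contribute nothing because they supply no form along the collision circle, so that only the extendable $\frac{1}{2}d^{\bC}G_{s_0}$ survives and pushes forward to $Z'_{\Gamma/E}$ (and, for $S^{\ast}$-decorated edges, to the $-Z'_{\partial_{S^{\ast}}\Gamma}$ term, the sign reflecting \eqref{eq:GC_family}). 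Your interior-term and principal-face matching, including these signs, otherwise agrees with the paper's proof.
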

\begin{proof}
   To begin with, we show that the map $Z_{-}$ is a morphism of commutative graded algebras, i.e., for two decorated connected graphs $\Gamma$ and $\Gamma'$ $Z_{\Gamma \sqcup \Gamma'} = Z_{\Gamma} \wedge Z_{\Gamma'}$ and $Z_{\Gamma \sqcup \Gamma'} = (-1)^{\deg(\Gamma) \deg(\Gamma')} Z_{\Gamma' \sqcup \Gamma}$. By construction \eqref{eq:def_omega_gamma} and suppressing $S^{\ast}$-decorations for simplicity, we have
    \begin{equation}\label{eq:omega_gg'}
        \omega_{\Gamma \sqcup \Gamma} = \iota^{\ast}(p_{V_{\Gamma}}^{\ast} \omega_{\Gamma} \wedge p^{\ast}_{V_{\Gamma'}} \omega_{\Gamma'}) \in \Omega^{\bullet}(\Conf_{V_{\Gamma \sqcup \Gamma'}^{\mathrm{int}}}(X_{/B})) \otimes \Omega_{\mathbb{C}_{\twi}}^{\leq 1}
    \end{equation}
    where $\iota: \Conf_{V_{\Gamma \sqcup \Gamma'}^{\mathrm{int}}}(X_{/B}) \rightarrow \Conf_{V_{\Gamma}^{\mathrm{int}}}(X_{/B}) \times \Conf_{V_{\Gamma'}^{\mathrm{int}}}(X_{/B})$ is the canonical inclusion map, and $p_{V_{\Gamma}}: \Conf_{V_{\Gamma}^{\mathrm{int}}}(X_{/B}) \times \Conf_{V_{\Gamma'}^{\mathrm{int}}}(X_{/B})  \rightarrow \Conf_{V_{\Gamma}^{\mathrm{int}}}(X_{/B})$ and  $p_{V_{\Gamma'}}: \Conf_{V_{\Gamma}^{\mathrm{int}}}(X_{/B}) \times \Conf_{V_{\Gamma'}^{\mathrm{int}}}(X_{/B}) \rightarrow \Conf_{V_{\Gamma'}^{\mathrm{int}}}(X_{/B})$ are projection maps forgetting points indexed by $V_{\Gamma'}^{\mathrm{int}}\cup V_{\Gamma'}^{\mathrm{ext},S^{\ast}}$ and $V_{\Gamma}^{\mathrm{int}}\cup V_{\Gamma}^{\mathrm{ext},S^{\ast}}$ respectively. This decomposition \eqref{eq:omega_gg'} implies that $Z_{\Gamma \sqcup \Gamma'} = Z_{\Gamma} \wedge Z_{\Gamma'}$. Since we have $\Gamma \sqcup \Gamma' = (-1)^{\deg(\Gamma) \deg(\Gamma')} \Gamma' \sqcup \Gamma$ and $Z_{\Gamma} \wedge Z_{\Gamma'} = (-1)^{\deg(\Gamma) \deg(\Gamma')} Z_{\Gamma'}\wedge Z_{\Gamma}$, we have shown that the $Z_{-}$ is a well-defined morphism of commutative graded algebras.

    Next, we prove that the map $Z_{-}'$ satisfies \eqref{eq:cdga_v}. For this, we will use several ideas presented in \cite{AS92, AS94}. Similar to \cite[Section 5]{AS92}, for a sufficiently small positive real number $\epsilon >0$,
    we take a collar neighbourhood $\mathcal{N}_{\epsilon,b}$ of $\partial p^{-1}_{\Gamma}((b, (1-u,u))$ (cf. \cite[Theorem 17]{hajek14}). Then, there is a diffeomorphism 
    \begin{equation}\label{eq:diff_F}
        F:  \partial p^{-1}_{\Gamma}((b, (1-u,u)) \times [0, \epsilon) \simeq \mathcal{N}_{\epsilon,b}
    \end{equation}
    such that $F(v, 0)$ is identity map for $v \in \partial p^{-1}_{\Gamma}((b, (1-u,u))$. Since $\omega_{\Gamma}$ gives convergent integrals on $p^{-1}_{\Gamma}((b, (1-u,u))$,  we have 
    \begin{equation}
       Z_{\Gamma}' = p_{\Gamma}{}_{\ast} \omega_{\Gamma} = \lim_{\epsilon \to 0} \int_{p^{-1}_{\Gamma}((b,(1-u, u)) \setminus \mathcal{N}_{\epsilon,b}}\omega_{\Gamma}.
    \end{equation}
    Therefore, by setting $p^{-1}_{\Gamma, \epsilon}((b,u)):=p^{-1}_{\Gamma}((b,(1-u, u)) \setminus \mathcal{N}_{\epsilon,b}$ we have
    \begin{equation}
        \begin{split}
        (d_B + d_{\mathbb{C}_{\twi}})  Z_{\Gamma}' & =  \lim_{\epsilon \to 0} (d_B + d_{\mathbb{C}_{\twi}})  \int_{p^{-1}_{\Gamma, \epsilon}((b,u))}\omega_{\Gamma}\\
        &=  \lim_{\epsilon \to 0}\left[ \int_{p^{-1}_{\Gamma, \epsilon}((b,u))} (\mathbf{d} + d_{\mathbb{C}_{\twi}})\omega_{\Gamma} +  \int_{\partial p^{-1}_{\Gamma, \epsilon}((b,u))} \omega_{\Gamma}|_{\partial p^{-1}_{\Gamma, \epsilon}((b,u))}  \right].
        \end{split}  
    \end{equation}
    Here, in the second equality, we use the Stokes formula on $p^{-1}_{\Gamma, \epsilon}((b, u)$. As we will see below, the two integrations on the right-hand side of the second equality are convergent as $\epsilon \to 0$, so we can justify the exchange of the order of differentials and integrations. Then, it is enough to show the following (I) and (II):
    \begin{enumerate}[(I)]
    \item $\displaystyle \lim_{\epsilon \to 0}\int_{p^{-1}_{\Gamma, \epsilon}((b, u)} (\mathbf{d} + d_{\mathbb{C}_{\twi}})\omega_{\Gamma}= -Z_{\partial_{\mathrm{Cas}}\Gamma}'$,
    \item $\displaystyle \lim_{\epsilon \to 0} \int_{\partial p^{-1}_{\Gamma, \epsilon}((b, u)} \omega_{\Gamma}|_{\partial p^{-1}_{\Gamma, \epsilon}(b,u)}= Z_{\partial_{\Delta} \Gamma}' - Z_{\partial_{S^{\ast}}\Gamma}'$
    \end{enumerate}
where note that $Z_{\partial_{\mathrm{Cas}}\Gamma}$, $Z_{\partial_{S^{\ast}}\Gamma}$ and $Z_{\partial_{S^{\ast}}\Gamma}$ are well-defined by Lemma \ref{lem:convergence}. 
For equation (I), the proof is given straightforward computation as follows. First note that $\displaystyle \lim_{\epsilon \to 0}\int_{p^{-1}_{\Gamma, \epsilon}((b, u)} (\mathbf{d} + d_{\mathbb{C}_{\twi}})\omega_{\Gamma}$ is convergent integral. Indeed, recalling that we have
\begin{equation}
(\mathbf{d} + d_{\mathbb{C}_{\twi}}) \mathsf{D}^{\mathbb{C}} G_{s_0} = d d^{\mathbb{C}} G_{s_0} = - \langle \nu \wedge \nu \rangle^{[1\times 1]} =: K
\end{equation}
off-diagonal and it canonically extends to the compactified configuration space, the differential of $\omega_{\Gamma}$ is given over the fiber of $(b,u) \in B \times \bC_{\twi}$
\begin{equation}\label{eq:diff_omega}
    (\mathbf{d} + d_{\mathbb{C}_{\twi}}) \omega_{\Gamma} = \sum_{E_i \in E_{\Gamma}^{\mathrm{int}} \sqcup E_{\Gamma}^{\mathrm{ext}}}(-1)^i p_{E_{i}}^{\ast} K \wedge \sum_{E \in E_{\Gamma}\setminus E_i} \omega_E
\end{equation}
so that the associated integral is well-defined. Here, the reason why we ignore the action of differential $\mathbf{d} + d_{\mathbb{C}_{\twi}}$ on 1-forms associated with special decorated external vertices is that on each fiber these forms are sections of local systems. Thus, one can directly see that 
\begin{equation}
\begin{split}
     \lim_{\epsilon \to 0}\int_{p^{-1}_{\Gamma, \epsilon}((b, u)} (\mathbf{d} + d_{\mathbb{C}_{\twi}})\omega_{\Gamma} & = \int_{p^{-1}_{\Gamma}((b, u)} (\mathbf{d} + d_{\mathbb{C}_{\twi}})\omega_{\Gamma}\\
     &=  \sum_{E_i \in E_{\Gamma}^{\mathrm{int}} \sqcup E_{\Gamma}^{\mathrm{ext}}} \int_{p^{-1}_{\Gamma}((b, t)}  \left( (-1)^i  p_{E_{i}}^{\ast} K \wedge \sum_{E \in E_{\Gamma}\setminus E_i} \omega_E \right) \\
     &=-Z_{\partial_{\mathrm{Cas}}\Gamma}'.
\end{split}
\end{equation}
Here, the minus sign in the last equality comes from the difference of the sign of the Casimir element $K$ above and one in $\partial_{\mathrm{Cas}}$.

For equation (II), we need more careful arguments in the evaluation of integration on the boundary, since $ \mathsf{D}^{\mathbb{C}} G_{s_0}$ does not extend to a form on $C_2(X)$. Indeed, by similar computation as in the proof of Theorem \ref{thm:ext_green}, $ \mathsf{D}^{\mathbb{C}} G_{s_0}$ near the diagonal is expressed as
\begin{equation}
\begin{split}
       \mathsf{D}^{\mathbb{C}} G_{s_0} &= \frac{1}{2}(-1 +2u) d G_{s_0} + \frac{1}{2} d^{\bC} G_{s_0} + G_{s_0} du \\
       &= -\frac{1}{2 \pi \sqrt{-1}}(-1+2u) \frac{dr}{r} + \omega_{S^1} + \frac{\log r}{2 \pi \sqrt{-1}} du + \text{(smooth part)}
       \end{split}
\end{equation}
with the same notation as Theorem \ref{thm:ext_green}. 

Note that the boundary $\partial p^{-1}_{\Gamma, \epsilon}((b,u))=\partial (p^{-1}_{\Gamma}((b,(1-u, u)) \setminus \mathcal{N}_{\epsilon,b})$ is diffeomorphic to the unions $ \bigcup_{S \in \mathcal{S}; |S|=2}  \partial_{S} C_n(X) \times \{ \epsilon\}$ where we set $C_n(X):=C_{n}(X_b;(s_1,\ldots, s_k))_{(s_0,v)}$ with $n=|V_{\Gamma}^{\mathrm{int}}|$. Therefore, the computation of the boundary contribution can be divided into that of $\partial_{S} C_n(X) \times \{ \epsilon\}$ for each $S \in \mathcal{S}$ with $|S|=2$. Moreover, to know $\lim_{\epsilon \to 0} p^{\partial}_{\Gamma}{}_{\ast} (\omega_{\Gamma}|_{\partial_S C_n(X) \times \{\epsilon\}})$, we can use the coordinate system on $\partial_S C_n(X) \times \{\epsilon\}$ induced by diffeomorphisms $\psi$ in Section \ref{ssecion:3.1} since the diffeomorphism $F$ in \eqref{eq:diff_F} and $\psi$ are identity at $\epsilon = 0$ and their difference converges to 0 as $\epsilon \to 0$. Recall that the face $\partial_{\mathcal{S}} C_n(X) $ in $\partial C_n(X)$ associated with the collision of $q$ points has the structure of fiber bundle over $C_{n-q+1}(X)$ with the $(2q- 3)$-dimensional fiber $F_{\mathcal{S}}$ isomorphic to $C_q(\mathbb{R}^2)/\mathbb{R}^2 \rtimes \mathbb{R}_{+}$. Here, $\mathbb{R}^2$ and $\mathbb{R}_{+}$ act on $C_q(\mathbb{R}^2)$ by translations and scalings respectively. 

Now, we compute the boundary contribution $p^{\partial}_{\Gamma}{}_{\ast} (\omega_{\Gamma}|_{\partial C_n(X) \times \{\epsilon\}})$. The arguments are divided into the following three cases corresponding to the types of boundary strata of $C_n(X)$ (cf. Section \ref{section:boundary_strata}):
    \begin{itemize}
    \item[(II-1)] Contributions from the principal faces. From such faces, we will obtain the terms $Z_{\partial_{\Delta} \Gamma} - Z_{\partial_{S^{\ast}}\Gamma}$.
    \item[(II-2)] Contributions from the hidden faces where the integration is shown to vanish by Kontevich's vanishing lemma with an additional argument.
    \item[(III-3)] Contributions from the infinite faces. On these faces, we show that all the integrands $\omega_{\Gamma}$ vanish since $\omega_{\Gamma}$ have tame logarithmic singularity at $s_0$.
    \end{itemize}
\noindent
Case (II-1): Case of principal faces (case that $q=2$ and two points collapse away from $s_0$). In this case, the fiber integration does not vanish in general. Assume that two points that are connected by an edge $E$ collide. First, suppose that the edge $E=(i,j)$ is internal. Then,  restriction of $\mathsf{D}^{\bC} G_E$ to the boundary stratum is given by up to smooth part
\begin{equation}
     \mathsf{D}^{\mathbb{C}} G_{s_0}(x_i,x_j)|_{ \partial_{\{\{i,j\}\}}C_2(X) \times \{\epsilon\}} \sim  \omega_{S^1} + \frac{\log \epsilon}{2 \pi \sqrt{-1}} du.
\end{equation}
Therefore, for each fixed $\epsilon$, we can assume that singularity $\frac{dr}{r}$ does not contribute when only two points collide. This means that we can replace $\mathsf{D}^{\mathbb{C}} G_{s_0}=\left((1-u) \partial - u \bar{\partial} -du \right) G_{s_0}$ with $(\frac{1}{2}d^{\mathbb{C}} - du) G_{s_0}$ for this boundary computation. 
After such replacement, the propagator is decomposed into extendable and non-extendable forms as  $\frac{1}{2} d^{\mathbb{C}}G_{s_0} - G_{s_0} du $. At least for the term $\frac{1}{2} d^{\mathbb{C}}G_{s_0}$, we know that it is extendable on $C_2(M)$. In contrast, the term $G_{s_0} du$ may diverge as $\epsilon \to 0$. Then, despite the existence of $G_{s_0} dt$ term, the usual argument of the theory of configuration space integral can be applied. Indeed, if $G_{s_0} dt$ is assigned on the edge that connects the two collapsing vertices, then no 1-forms along the $S^1$ fiber direction survive as $\epsilon \to 0$ so that the limit converges to zero. Thus, it is enough to consider the case that $\frac{1}{2} d^{\mathbb{C}} G_{s_0}$ is assigned on the edge $E$ with two vertices colliding to one point. Thus, we obtain
\begin{equation}
\begin{split}
    \lim_{\epsilon \to 0} \int_{\partial_{\mathcal{S}} p^{-1}_{\Gamma, \epsilon}} \omega_{\Gamma}|_{\partial_{\mathcal{S}} p^{-1}_{\Gamma, \epsilon}} 
    &= \int_{C_{V_{\Gamma/E}^{\mathrm{int}}}(X)} \omega_{\Gamma/E}\\
    &= Z_{\Gamma/E}
    \end{split}
\end{equation}
where in the second equality we use the fiber integration of $\frac{1}{2} d^{\mathbb{C}} G_a$ along $S^1$ fiber. By taking all over the internal edges $E$ of $\Gamma$, we obtain $Z_{\partial_{\Delta} \Gamma}$.

Next, consider the case that $E=(x,s)$ is an $S^{\ast}$-decorated external edge. In this case, the boundary contribution corresponds to the map $\partial_{S^{\ast}}$. More concretely, the integration of the term 

\begin{equation}
\begin{split}
    \omega_{\Gamma} & = \pm \prod_{E' \in E_{\Gamma}: E'=(a,b), a,b \neq x} \omega_{E'} \wedge \prod_{E=(y,x) \in E_{\Gamma}} \mathsf{D}^{\mathbb{C}} G(y,x)\wedge \left( \frac{1}{2} d^{\mathbb{C}} G(x,s)\right)\\
    & \pm \prod_{E' \in E_{\Gamma}: E'=(a,b), a,b \neq x} \omega_{E'} \wedge \prod_{E=(y,x) \in E_{\Gamma}} \mathsf{D}^{\mathbb{C}} G(y,x)\wedge G(x,s)du
\end{split}
\end{equation}
on the boundary face corresponding to the collapsing of $x$ with $s$ yields the integration of 
\begin{equation}
 \pm \prod_{E' \in E_{\Gamma}: E'=(a,b), a,b \neq x} \omega_{E'} \wedge \prod_{E=(y,x) \in E_{\Gamma}} \mathsf{D}^{\mathbb{C}} G(y,s)
\end{equation}
over $C_{V_{\Gamma}^{\mathrm{int}}}$ after fiber integration along $S^1$ fiber. By taking summation over all $S^{\ast}$-decorated external edges of $\Gamma$, we get the term $-Z_{\partial_{S^{\ast}}\Gamma}$.  
Also, note that the singular term $(\log \epsilon) du$ does not contribute to the integration since they have no forms along the $S^1$ direction as above.

Hence, from the principal faces we obtain the desired terms
\begin{equation}
    Z_{\partial_{\Delta} \Gamma}' - Z_{\partial_{S^{\ast}}\Gamma}'
\end{equation}
where the minus sign on the right-hand side reflects the difference in \eqref{eq:GC_family}.

\noindent
Case (II-2): Case of hidden faces (the case that $q \geq 3$ and $q$-points collapse away from $s$). To consider this case, we use the following theorem.

\begin{lemma}{(Kontsevich's vanishing lemma \cite[Lemma 6.4]{Kontsevich_dqpm03}} \label{thm:Kontsevich}
	Let $F_{\mathcal{S}}$ be the fiber of the face $\partial_{\mathcal{S}} C_n(X)$ corresponding to the collapse of $q$ points with coordinate $\mathbf{x}_1,\ldots, \mathbf{x}_q$. Let $d \theta \in \Omega^1(S^1; \bR)$ be standard volume form of $S^1$. Let $\pi_{ij}: F_{\mathcal{S}} \rightarrow S^1$ be the projection defined as
 \begin{equation}
     \pi_{ij}: F_{\mathcal{S}} \rightarrow S^1; \quad (\mathbf{x}_1,\ldots, \mathbf{x}_q) \mapsto \frac{\mathbf{x}_j - \mathbf{x}_i}{|\mathbf{x}_j - \mathbf{x}_i|}\quad (i \neq j)
 \end{equation}
 and $d\phi_{ij}:= \pi_{ij}^{\ast} d\theta$ be the pullback of $d\theta$ via $\pi_{ij}$. Then, for any two sequences $s_i, t_i$ $(i=1,\ldots, 2q-3)$ of integers with $s_i \neq t_i$ $(1 \leq s_i,t_i \leq q)$, the integral vanishes:
    \begin{equation}
         \int_{F_{\mathcal{S}}} \bigwedge_{i=1}^{2q-3} d\phi_{s_i t_i}  =0.
    \end{equation}
\end{lemma}

Note that, in our case, hidden faces to be considered are those corresponding to collisions of three points since we consider only uni-trivalent graphs with loop number $\leq 1$ and $X$ is of real dimension 2. Otherwise, the associated integrands are forms with vertical degree strictly less than the top degree of $(2q- 3)$-dimensional fiber $F_{\mathcal{S}}$ so that their integrations vanish by degree reason. Thus, we can assume that three points connected by three internal edges $E_1, E_2, E_3$ correspond to a hidden face.

Different from the case (II-1), we need to take care of the existence of $d G_{s_0}$ assigned to internal edges $E_1, E_2, E_3$ connecting colliding points. If one of the three edges $E_1, E_2, E_3$ is assigned with $G_{s_0} du$, then by degree reason the corresponding integral vanishes. Therefore, the singularities we need to consider are classified as follows:
\begin{enumerate}[(1)]
\item All of the edges $E_1, E_2, E_3$ are assigned with $\frac{1}{2} d^{\mathbb{C}} G_{s_0}$.
\item Two of the edges $E_1, E_2, E_3$ are assigned with $\frac{1}{2} d^{\mathbb{C}} G_{s_0}$ and one of them is assigned with  $d G_{s_0}$.
\item One of the edges $E_1, E_2, E_3$ is assigned with $\frac{1}{2} d^{\mathbb{C}} G_{s_0}$ and two of them are assigned with  $d G_{s_0}$.
\item All of the edges $E_1, E_2, E_3$ are assigned with $d G_{s_0}$.
\end{enumerate}
The case (1) immediately follows from Kontsevich's vanishing lemma \ref{thm:Kontsevich}. Note that the case (1) and (3) are related to each other as 
\begin{equation}
    d G_{E_1} \wedge d G_{E_2} \wedge d^{\mathbb{C}} G_{E_3} = d^{\mathbb{C}}G_{E_1} \wedge  d^{\mathbb{C}}G_{E_1} \wedge d^{\mathbb{C}} G_{E_3}.
\end{equation}
Therefore, the case (3) also vanishes. Similarly, the case (2) and (4) are related to each other, so we will show the vanishing of the case (4). The interior of the fiber $F_{\mathcal{S}}$ of the boundary stratum corresponding to the collision of the three points is isomorphic to $\mathrm{Conf}_3(\bR^2)/ \bR^2 \rtimes \bR_+$. As in \cite[\S 6.6]{Kontsevich_dqpm03}, we further identify $\mathrm{Int}\ F_{\mathcal{S}}$ with subspace $C_3' \subset \mathrm{Conf}_3(\bR^2)$ consisting of points $(0, \zeta, z)$ where $\zeta \in S^1$ and $z \in \bR^2 \setminus \{0,\zeta\}$. Then, we have a map $R: C_3' \rightarrow C_3''$ where $C_3'' \subset \mathrm{Conf}_3(\bR^2)$ consisting of points $(0, 1, w)$ with $w \in \bR^2 \setminus \{0,1\}$ by rotating around $0$. This map $R$ is fibration with $S^1$ fiber. The restriction of $d G_{E_1} \wedge d G_{E_2} \wedge d G_{E_3}$ to the $F_{\mathcal{S}}$ is zero under integration along $R$ since it has no volume form along the $S^1$ fiber direction. Therefore, we conclude the vanishing of the case (4) and hence (2).

\noindent
Case (II-3): Case of infinite faces (case that $q \geq 2$ and $q$-points collapse $s$) When $q=2$, the vanishing of the contributions from the strata follows from Lemma \ref{lem:infinite_vanishing_lemma}. The vanishing results follow when $q \geq 3$, using the same argument as Case (II-2).

Therefore, we conclude that 
\begin{equation}
    (d_B + d_{\mathbb{C}_{\twi}}) Z_{\Gamma}' = - Z_{\partial_{\mathrm{Cas}}\Gamma}' + Z_{\partial_{\Delta} \Gamma}' - Z_{\partial_{S^{\ast}}\Gamma}'.
\end{equation}
The assertion is proved.
\end{proof}

Next, we define the (graph valued) effective action as
\begin{equation}\label{eq:eff_action}
\begin{split}
   S^{\mathrm{eff}} &:= (Z_{-} \otimes \Id)\mathfrak{s} \\
   & =  \sum_{\Gamma} \frac{\hbar^{l(\Gamma)}}{|\Aut(\Gamma)|}  Z_{\Gamma} \cdot [\Gamma^{\vee}] \in (\mathcal{D}^{\bullet, \bullet}(B) \otimes \Omega^{\bullet}_{\mathbb{C}^2})\otimes  \pi^{\ast} H_0^{\partial_{\Delta}}(\mathcal{D}_{H_{/B}, S_{/B}^{\ast}})[[\hbar]]
\end{split}
\end{equation}
where $\Gamma$ runs over all connected uni-trivalent graphs without self-loops, $\Gamma \otimes \Gamma^{\vee}$ is pairwise decorated by the Caismir element $\gamma|\gamma^{\vee}$, and $\pi: B \times \bC^2 \rightarrow B$ is the canonical projection map. Note that the above series is independent of the choice of elements of the orientation torsor of $\Gamma$.

In \eqref{eq:eff_action}, we define $S^{\mathrm{eff}}$ as a formal power series on $\hbar$ but we see that it is in fact a polynomial of degree $1$ by dimensional reasons as follows.

\begin{lemma} \label{lem:4.2.-1}
The effective action $S^{\mathrm{eff}}$ is degree 1 polynomial on variable $\hbar$. In particular, no term is contained in $S^{\mathrm{eff}}$ which corresponds to trivalent graphs without external vertices.
\end{lemma}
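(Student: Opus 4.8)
The plan is to deduce both assertions directly from the vanishing result Lemma \ref{lem:5.3.7} together with an elementary Euler-characteristic count. Recall that $S^{\mathrm{eff}}$ in \eqref{eq:eff_action} is the sum over connected uni-trivalent graphs $\Gamma$ without self-loops, each weighted by $\hbar^{l(\Gamma)}$, so that the power of $\hbar$ attached to a term is exactly the loop number $l(\Gamma)$ of the corresponding graph.

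First I would show that $S^{\mathrm{eff}}$ has degree at most $1$ in $\hbar$. By Lemma \ref{lem:5.3.7}, every connected uni-trivalent graph with $l(\Gamma) \geq 2$ satisfies $Z_{\Gamma} = 0$, hence contributes $0$ to the sum \eqref{eq:eff_action}. Consequently only graphs with $l(\Gamma) \in \{0,1\}$ survive, and the corresponding monomials are $\hbar^{0}$ and $\hbar^{1}$; thus $S^{\mathrm{eff}}$ is a polynomial in $\hbar$ of degree $\leq 1$.

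Second, for the statement about graphs without external vertices I would argue via the relation between valency and Euler characteristic. If $\Gamma$ is a connected uni-trivalent graph with $V_{\Gamma}^{\mathrm{ext}} = \emptyset$, then all of its vertices are trivalent, so the handshake identity gives $2|E_{\Gamma}| = 3|V_{\Gamma}|$, while the Euler-characteristic identity $|V_{\Gamma}| - |E_{\Gamma}| = 1 - l(\Gamma)$ holds for any connected graph. Eliminating $|E_{\Gamma}|$ yields $|V_{\Gamma}| = 2(l(\Gamma) - 1)$, so that $l(\Gamma) = 1 + |V_{\Gamma}|/2 \geq 2$ for every nonempty such graph; the only solution with $l(\Gamma) \leq 1$ forces $|V_{\Gamma}| = 0$, i.e. the empty graph, which is not a genuine contribution. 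Therefore every honest trivalent graph without external vertices has $l(\Gamma) \geq 2$, and again by Lemma \ref{lem:5.3.7} its coefficient $Z_{\Gamma}$ vanishes, so no such term appears in $S^{\mathrm{eff}}$.

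There is essentially no analytic obstacle here, since the hard work — the dimension count showing that the integrand has fiber-degree $1 - l(\Gamma)$, strictly below the fiber dimension once $l(\Gamma) \geq 2$ — is already carried out in the proof of Lemma \ref{lem:5.3.7}. The only point requiring care is purely combinatorial: I must check that the self-loop-free and uni-trivalent conventions genuinely exclude the degenerate $|V_{\Gamma}| = 0$ case, so that no purely trivalent graph can slip in at one loop. I would make this exclusion explicit to be safe, after which both claims follow.
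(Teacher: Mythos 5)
Your proof is correct and follows essentially the same route as the paper: both parts rest on the vanishing result of Lemma \ref{lem:5.3.7}, with the observation that a connected trivalent graph without external vertices must have $l(\Gamma)\geq 2$. The only difference is that you spell out the Euler-characteristic count $|V_{\Gamma}|=2(l(\Gamma)-1)$ (and the exclusion of the degenerate empty graph) which the paper leaves implicit, a welcome but inessential elaboration.
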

\begin{proof}
The first statement is just a rephrase of Lemma \ref{lem:5.3.7}.  The second statement follows immediately from the first one because trivalent graphs without external vertices have at least 2 loops, which are coefficients of $\hbar^{l}$ with $l \geq 2$.
\end{proof}

\begin{remark}
    By Lemma \ref{lem:5.2.2} (iv), one never obtains graphs with self-loops after contracting one internal edge of graphs which appear in the effective action $S^{\mathrm{eff}}$, since $S^{\mathrm{eff}}$ does not contain graphs with non-regular edges by Lemma \ref{lem:4.2.-1}. For more details on the relation between self-loops and non-regular edges, see  \cite[Section 7]{KL23}.
\end{remark}

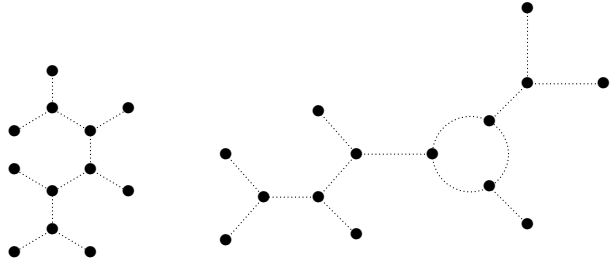
\begin{figure}[h]
\captionsetup{margin=2cm}
\centering
\begin{tikzpicture}
\begin{scope}
\draw[densely dotted] (0,0) -- (0.5, 0.3);
\draw[densely dotted] (0,0) -- (-0.5, 0.3);
\draw[densely dotted] (-0.5,0.3) -- (-1, 0);
\draw[densely dotted] (-0.5,0.3) -- (-0.5, 0.8);
    \draw[densely dotted] (0,0) -- (0,-0.5);
    \draw[densely dotted] (0,-0.5) -- (-0.5, -0.8);
     \draw[densely dotted] (0,-0.5) -- (0.5, -0.8);
      \draw[densely dotted] (-0.5,-0.8) -- (-1, -0.5);
      \draw[densely dotted] (-0.5,-0.8) -- (-0.5, -1.3);
      \draw[densely dotted] (-0.5,-1.3) -- (-1, -1.6);
      \draw[densely dotted] (-0.5, -1.3) -- (0, -1.6);
\foreach \x in {(0,0),(0.5, 0.3),(-0.5, 0.3),(-1, 0),(-0.5, 0.8),(-0.5, -0.8),(0,-0.5), (-1, -0.5),(-0.5,-1.3),(-1, -1.6),(0, -1.6),(0.5, -0.8) }{
\node at \x {$\bullet$};
}
\end{scope}
\begin{scope}[xshift=5cm, yshift=-0.3cm]
    \draw[densely dotted] (0,0) circle (0.5cm);
    \draw[densely dotted] (-0.5,0.0)--(-1.5,0.0);
    \draw[densely dotted] (-1.5,0.0)--(-2.0,0.5*1.14);
    \draw[densely dotted] (-1.5,0.0)--(-2.0,-0.5*1.14);
     \node at (-0.5, 0) {$\bullet$};
     \node at (-1.5, 0) {$\bullet$};
     \node at (-2.0,-0.5*1.14) {$\bullet$};
     \node at (-2.0,0.5*1.14) {$\bullet$};
     \draw[densely dotted] (-2.0+0.5,-0.5*1.14-0.5)--(-2.0,-0.5*1.14);
     \node at (-2.0+0.5,-0.5*1.14-0.5) {$\bullet$};
     \draw[densely dotted] (-2.0,-0.5*1.14)--(-2.72,-0.5*1.14);
     \node at (-2.72,-0.5*1.14) {$\bullet$};
     \draw[densely dotted] (0.25,0.433)--(0.25+0.5,0.433+0.5);
     \draw[densely dotted] (0.25,-0.433)--(0.25+0.5,-0.433-0.5);
     \draw[densely dotted] (-2.72,-0.5*1.14)--(-3.22,-0.5*1.14+0.5*1.14);
    \draw[densely dotted] (-2.72,-0.5*1.14)--(-3.22,-0.5*1.14-0.5*1.14);
    \node at (-3.22,-0.5*1.14-0.5*1.14) {$\bullet$};
    \node at (-3.22,-0.5*1.14+0.5*1.14) {$\bullet$};
     \node at (0.25+0.5,0.433+0.5) {$\bullet$};
     \node at (0.25,0.433) {$\bullet$};
     \node at (0.25+0.5,-0.433-0.5) {$\bullet$};
     \node at (0.25,-0.433) {$\bullet$};
      \node at (0.25+0.5+1,0.433+0.5) {$\bullet$};
      \draw[densely dotted] (0.25+0.5,0.433+0.5) -- (0.25+0.5+1,0.433+0.5);
       \node at (0.25+0.5,0.433+0.5+1) {$\bullet$};
      \draw[densely dotted] (0.25+0.5,0.433+0.5) -- (0.25+0.5,0.433+0.5+1);
\end{scope}
\end{tikzpicture}
\caption[Examples of graphs which appears in $S^{\mathrm{eff}}$]{Examples of graphs which appears in $S^{\mathrm{eff}}$. The coefficients of $S^{\mathrm{eff}}$ correspond to connected trivalent graphs such that they must have external vertices and their number of loops is less than or equal to one.}
\end{figure}

We end this section by giving our main result stating that $S^{\mathrm{eff}}$ satisfies a version of the quantum master equation. For this, we redefine the effective action $S^{\mathrm{eff}}$ by taking Tate twist $\bR(-1)$ to the uni-trivalent tree part as follows.

\begin{equation}\label{eq:eff_action}
\begin{split}
   \mathbf{S}^{\mathrm{eff}} =  \sum_{\Gamma; l(\Gamma)=0} \frac{1}{|\Aut(\Gamma)|}  Z_{\Gamma} \cdot [\Gamma^{\vee}]\otimes \bR(-1)  + \sum_{\Gamma; l(\Gamma)=1} \frac{\hbar}{|\Aut(\Gamma)|} Z_{\Gamma} \cdot [\Gamma^{\vee}]
\end{split}
\end{equation}
For later usage, we denote it as
\begin{equation}\label{eq:S_eff_dec}
    \mathbf{S}^{\mathrm{eff}} = \mathbf{S}^{\mathrm{eff}}_{\text{tree}} + \hbar \mathbf{S}^{\mathrm{eff}}_{\text{1-loop}}.
\end{equation}
\begin{theorem}\label{thm:main}
\begin{enumerate}[(1)]
\item The effective action $\mathbf{S}^{\mathrm{eff}}$ is a $\bC^{\times} \times \bC^{\times}$-equivariant real connection 1-form valued in the DG Lie algebra $\pi^{\ast} H_0^{\partial_{\Delta}}(\mathcal{CD}^{\bullet}_{H, S^{\ast}}[-1])$.
\item The restriction $\mathbf{S}^{\mathrm{eff}}{}'$ of $\mathbf{S}^{\mathrm{eff}}$ to the twistor line $\bC_{\twi} \subset \bC^2$ satisfies a non-commutative analogue of quantum master homotopy equation \eqref{eq:QMHE} 
 \begin{equation}\label{eq:5.2.33}
        ((d_B + d_{\mathbb{C}_{\twi}}) \otimes \Id) \mathbf{S}^{\mathrm{eff}}{}' + \frac{1}{2}[\mathbf{S}^{\mathrm{eff}}{}', \mathbf{S}^{\mathrm{eff}}{}'] + \hbar(\Id \otimes \delta) \mathbf{S}^{\mathrm{eff}}{}'=0 
    \end{equation}
    where we set $[-,-]$ as the wedge product of Lie algebra-valued differential forms.
\end{enumerate}

\end{theorem}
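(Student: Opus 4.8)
The plan is to treat part (2) as the heart of the theorem and to read off part (1) from the construction together with the formal properties of the propagator. Once Theorem \ref{thm:key_thm} is granted the argument is purely algebraic: all the analytic content (Stokes' theorem on the compactified configuration spaces and the vanishing of the hidden and infinite faces) has already been packaged into the intertwining relation \eqref{eq:cdga_v}, so what remains is to transport the formal quantum master equation of Theorem \ref{thm:formal_qme} along the map $Z'_{-}$.

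For part (2) I would set $D := d_B + d_{\bC_{\twi}}$ and $S^{\mathrm{eff}}{}' := ((Z'_{-})\otimes\Id)\mathfrak{s}$, which equals $\mathbf{S}^{\mathrm{eff}}{}'$ up to the Tate twist $\bR(-1)$ on the tree part. First I would record three inputs: (a) $Z'_{-}$ is a morphism of commutative graded algebras, $Z'_{\Gamma_1\sqcup\Gamma_2} = Z'_{\Gamma_1}\wedge Z'_{\Gamma_2}$, established at the start of the proof of Theorem \ref{thm:key_thm}; (b) the relation \eqref{eq:cdga_v}, read as the identity of maps
\[
D\circ Z'_{-} = Z'_{(\partial_\Delta - \partial_{\mathrm{Cas}} - \partial_{S^\ast})(-)}
\]
on $\mathcal{D}^{\vee,0}_{H,S^\ast,\mathrm{def}}$; and (c) the identity $(\partial_\Delta\otimes\Id)\mathfrak{s}=0$ proved inside Theorem \ref{thm:formal_qme} by the IHX relation. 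Applying $D\otimes\Id$ to $S^{\mathrm{eff}}{}'$ and using (b) gives $(D\otimes\Id)S^{\mathrm{eff}}{}' = (Z'_{-}\otimes\Id)\bigl((\partial_\Delta - \partial_{\mathrm{Cas}} - \partial_{S^\ast})\otimes\Id\bigr)\mathfrak{s}$. By (c) and the formal quantum master equation \eqref{eq:qme_formal} one has $((\partial_{\mathrm{Cas}}+\partial_{S^\ast})\otimes\Id)\mathfrak{s} = \tfrac12[\mathfrak{s},\mathfrak{s}] + \hbar(\Id\otimes\delta)\mathfrak{s}$, so the right-hand side equals $-\tfrac12(Z'_{-}\otimes\Id)[\mathfrak{s},\mathfrak{s}] - \hbar(Z'_{-}\otimes\Id)(\Id\otimes\delta)\mathfrak{s}$. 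Using (a) with the bracket convention $[\Gamma_1\otimes\Gamma_1',\Gamma_2\otimes\Gamma_2'] = \Gamma_1\wedge\Gamma_2\otimes[\Gamma_1',\Gamma_2']$ identifies $(Z'_{-}\otimes\Id)[\mathfrak{s},\mathfrak{s}] = [S^{\mathrm{eff}}{}',S^{\mathrm{eff}}{}']$, while the commutation of $Z'_{-}\otimes\Id$ with $\Id\otimes\delta$ identifies the last term with $(\Id\otimes\delta)S^{\mathrm{eff}}{}'$, yielding \eqref{eq:5.2.33} for $S^{\mathrm{eff}}{}'$.

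A point needing care is the $\hbar$-truncation. Since $Z'_\Gamma = 0$ for connected $\Gamma$ with $l(\Gamma)\ge 2$ (Lemma \ref{lem:5.3.7}, Lemma \ref{lem:4.2.-1}), only the tree and one-loop parts survive, so $S^{\mathrm{eff}}{}' = S^{\mathrm{eff}}_{\text{tree}}{}' + \hbar\, S^{\mathrm{eff}}_{\text{1-loop}}{}'$; nevertheless $\tfrac12[S^{\mathrm{eff}}{}',S^{\mathrm{eff}}{}']$ carries an $\hbar^2$ contribution from the one-loop$\times$one-loop term, matched against $\hbar^2(\Id\otimes\delta)S^{\mathrm{eff}}_{\text{1-loop}}{}'$. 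Because $(D\otimes\Id)S^{\mathrm{eff}}{}'$ has no $\hbar^2$ component, the derivation above forces these two $\hbar^2$ terms to cancel automatically: the consistency is a formal consequence of transporting the all-orders equation \eqref{eq:qme_formal}, so no separate argument is needed. For part (1) I would then verify three things. First, that $\mathbf{S}^{\mathrm{eff}}$ is homogeneous of total degree one, via the degree count $\deg Z_\Gamma = |E_\Gamma| - 2|V_\Gamma^{\mathrm{int}}| = 1 - l(\Gamma)$ from Lemma \ref{lem:5.3.7} balanced against the internal degree of $[\Gamma^\vee]$ in the shifted complex $H_0^{\partial_\Delta}(\mathcal{CD}^\bullet_{H,S^\ast}[-1])$, the shift and the Tate twist $\bR(-1)$ on the tree part being exactly what align the de Rham and internal gradings. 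Second, that it is real, which follows from the reality \eqref{eq:reality_prop} of the twistor propagator $\mathsf{D}^{\mathbb{C}} G_{s_0}$. Third, that it is $\bC^\times\times\bC^\times$-equivariant, which follows from $\mathsf{D}^{\mathbb{C}} G_{s_0}$ being of bidegree $(1,1)$, so that each $Z_\Gamma$ transforms by a definite character matching the Hodge bidegrees of the decorating cohomology classes.

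The main obstacle I anticipate is one of bookkeeping of signs and gradings rather than of ideas. The relation \eqref{eq:cdga_v} is sign-twisted ($+\partial_\Delta$ but $-\partial_{\mathrm{Cas}}$, $-\partial_{S^\ast}$), and one must confirm that these signs, together with the Koszul signs coming from the graded commutativity \eqref{eq:product_sign} preserved by $Z'_{-}$ and the sign hidden in $(Z'_{-}\otimes\Id)(\Id\otimes\delta)$, conspire to produce precisely $+\tfrac12[\cdot,\cdot]$ and $+\hbar(\Id\otimes\delta)$ in \eqref{eq:5.2.33} with no residual sign. This is where the orientation conventions fixed in Section \ref{section:4.3}, and the compatible orientations built into $\partial_{\mathrm{Cas}}$, $\partial_{S^\ast}$ and their duals, must be tracked faithfully. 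A secondary, more bureaucratic point is checking that the Tate twist inserted in \eqref{eq:eff_action} to pass from $S^{\mathrm{eff}}$ to $\mathbf{S}^{\mathrm{eff}}$ is compatible with $\delta$ being weight-homogeneous, so that the equation for $S^{\mathrm{eff}}{}'$ transfers verbatim to $\mathbf{S}^{\mathrm{eff}}{}'$; this is the graph-complex analogue of Goncharov's twist $\calLieV_{X,S^\ast}(1)$ needed to make the cobracket a morphism of Hodge structures.
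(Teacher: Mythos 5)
Your proposal is correct and takes essentially the same route as the paper: part (2) is proved there by exactly your chain of equalities --- transporting the formal quantum master equation (Theorem \ref{thm:formal_qme}, including the IHX-based vanishing $(\partial_{\Delta}\otimes\Id)\mathfrak{s}=0$) along $Z'_{-}$ via the intertwining relation \eqref{eq:cdga_v} of Theorem \ref{thm:key_thm} and the algebra-morphism property of $Z'_{-}$ --- and part (1) is likewise checked by Hodge bidegree counting together with the reality \eqref{eq:reality_prop}, noting that the one-loop part needs no Tate twist since $|E_{\Gamma}|=2|V^{\mathrm{int}}_{\Gamma}|$ there. Your supplementary remarks --- that the $\hbar^{2}$ cancellation is automatic (in the paper it is absorbed into Theorem \ref{thm:key_thm} applied to two-loop graphs, where $Z'_{\Gamma}=0$ yet the boundary identity is nontrivial) and that the tree-part Tate twist must be compatible with $\delta$ --- are points the paper leaves implicit rather than deviations from its argument.
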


\begin{proof}
(1) For the tree part, this is the same as \cite[\S 7.3.1]{GoncharovHodge1} by Hodge bidegree counting and using the property \eqref{eq:reality_prop}. For 1-loop terms, the essential argument is the same as the tree case except that any adjustment by twisting by $\bR(n)$ is not needed since $|E_{\Gamma}| = 2 |V_{\Gamma}|$ holds for a connected uni-trivalent 1-loop graph. (2) This is a consequence of Theorem \ref{thm:formal_qme} and Theorem \ref{thm:key_thm}. Indeed, since $Z_{-}'$ satisfies the relation $(d_B + d_{\mathbb{C}_{\twi}}) Z_{-}'=-Z_{-}' \partial_{\mathrm{Cas}} -Z_{-}' \partial_{S^{\ast}} + Z_{-}' \partial_{\Delta}$, we have

\begin{equation}
    \begin{split}
         ((d_B + d_{\mathbb{C}_{\twi}}) \otimes \Id) \mathbf{S}^{\mathrm{eff}}{}' & =  ((d_B + d_{\mathbb{C}_{\twi}}) \otimes \Id) (Z_{-}' \otimes \Id)\mathfrak{s}\\
         &= -(Z_{-}'\otimes \Id) ((\partial_{\mathrm{Cas}} - \partial_{\Delta} + \partial_{S^{\ast}}) \otimes \Id)\mathfrak{s}\\
         &=  -(Z_{-}'\otimes \Id) \left(\frac{1}{2}[\mathfrak{s}, \mathfrak{s}] + \hbar(\Id \otimes \delta)\mathfrak{s}\right)\\
         &= - \frac{1}{2}[\mathbf{S}^{\mathrm{eff}}{}',  \mathbf{S}^{\mathrm{eff}}{}'] - \hbar(\Id \otimes \delta) \mathbf{S}^{\mathrm{eff}}{}'.
    \end{split}
\end{equation}
Thus, the desired equation is obtained.
\end{proof}

Note that Theorem \ref{thm:main} can be regarded as one of the generalizations of \cite[Theorem 7.8]{GoncharovHodge1} for the Hodge correlator class $\mathbf{G}$. The next section explains it explicitly (Theorem \ref{thm:tree_red}).
%%%%%%%%%%%%%%%%%%%%%%%%%%%%%%%%%%%%%%%%%%%%%%%%%%%%%%%%%%%%%%%%%%%%%%%%
\section{Some Properties of generalized Hodge correlator twistor connection}\label{section:6}
This section describes several properties of our generalized version of Hodge correlator connection. To begin with, we relate our class and the original one defined by Goncharov (\cite[Definition 7.6]{GoncharovHodge1}). Then, a bit of detail of one loop term of our connection is studied. Finally, we show that a special case of our connection is related to the holomorphic/anti-holomorphic formal KZ connection along the twistor line, that is, our connection is master homotopic to the formal KZ connection. 

\subsection{Tree reduction of $\mathbf{S}^{\mathrm{eff}}$ and Hodge correlator class $\mathbf{G}$}

In this subsection, we make it explicit how $\mathbf{S}^{\mathrm{eff}}$ relates to  Goncharov's original Hodge correlator twistor connection.

For this, first we briefly recall that the cyclic envelope $\calCV_{X,S^{\ast}}$ (\ref{eq:2.1.2}) can be identified with decorated planar uni-trivalent trees which vanish under the map $\partial_{\Delta}$ following \cite[\S 6]{GoncharovHodge1}. Let $(\mathcal{T}^{\vee, 1}_{H,S^{\bullet}}, \partial_{\Delta})$ be the graded differential vector space of planar trees decorated by $V_{X,S^{\ast}}^{\vee} = H^1(X; \bC) \oplus \bC[S^{\ast}]$ where the grading structure and the differential $\partial_{\Delta}$ are defined as the same way as in Section \ref{section:4.4}. Then, there is an injective linear map
\begin{equation}\label{eq:sum_over_tree}
F: \calCV_{X,S^{\ast}} \hookrightarrow \mathcal{T}^{\vee, 1}_{H,S^{\ast}}[1];\quad W \mapsto \sum_{T} (T, W; \mathrm{Or}_T)
\end{equation}
where the summation runs over all uni-trivalent planar tree graphs decorated by the cyclic word $W$ equipped with the canonical orientation $\mathrm{Or}_T$ (cf. \ref{section:4.3}). Then, it turns out that the image of $F$ coincides with $H^0_{\partial_{\Delta}}(\mathcal{T}^{\vee, \bullet}_{H,S^{\ast}}[1])$, i.e., we have
\begin{equation}
    \calCV_{X,S^{\ast}} = H^0_{\partial_{\Delta}}(\mathcal{T}^{\vee, \bullet}_{H,S^{\ast}}[1]).
\end{equation}

Let $\mathbf{S}^{\mathrm{eff}}_{\mathrm{tree}}:= \mathbf{S}^{\mathrm{eff}}|_{\hbar =0}$ denote the tree reduction of $\mathbf{S}^{\mathrm{eff}}$, that is,  $\mathbf{S}^{\mathrm{eff}}_{\mathrm{tree}}$ is obtained from $\mathbf{S}^{\mathrm{eff}}$ by setting uni-trivalent graphs with at least one loop to be zero. 

Next, we briefly recall the Hodge correlator class $\mathbf{G}$ in our situation. For details and original definition, see \cite[\S 7.2]{GoncharovHodge1}. 
\begin{equation}
    \mathbf{G} := \sum_{W \in \mathcal{C}^{\vee}_{X,S^{\ast}}} \sum_{(T, W; \mathrm{Or}_T)} \frac{1}{|\Aut(W)|} Z_{(T, W; \mathrm{Or}_T)} \cdot W^{\vee}(-1)
\end{equation}
where $(T, W; \mathrm{Or}_T)$ runs over all planar uni-trivalent trees decorated by $W$ with the canonical orientation $\mathrm{Or}_T$, $Z_{(T, W; \mathrm{Or}_T)}$ is by \eqref{eq:5.3.27} and $|\Aut(W)|$ is the number of automorphisms of the cyclic word $W$. It turns out that the class $\mathbf{G}$ is $\pi^{\ast}(\mathcal{CL}ie_{X,S^{\ast}})(-1)$-valued 1-current on $B \times \bC^2$ (\cite[Lemma 7.7]{GoncharovHodge1}). Note that $\mathbf{G}$ has the following expansion in terms of Hodge bidegree
\begin{equation}
        \mathbf{G} = \mathbf{G}_{0,0} + \sum_{s,t \geq 0} z^s w^t ((z dw - wdz) \wedge (s + t +1) \mathbf{G}_{s+1, t+1}  + ( w \partial - z \bar{\partial}) \mathbf{G}_{s+1, t+1})
\end{equation}
where $\mathbf{G}_{s+1,t+1}$ is of Hodge bidegree $(-s-1,-t-1)$ (cf. \eqref{eq:2.3.21}). Then, we will state the following theorem relating $\mathbf{G}$ and $\mathbf{S}^{\mathrm{eff}}_{\mathrm{tree}}$.
\begin{theorem}\label{thm:tree_red}
    Keep the same notation as above. The map $F$ given in \eqref{eq:sum_over_tree} induces a canonical identification
    \begin{equation}\label{eq:tree_red}
        \mathbf{G} = (\Id \otimes F)^{\ast} \mathbf{S}^{\mathrm{eff}}_{\mathrm{tree}}.
    \end{equation}
\end{theorem}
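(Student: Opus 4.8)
The plan is to prove the identity by pairing both sides against cyclic words and checking that the two ways of assembling the elementary integrals $Z_{(T,W;\mathrm{Or}_T)}$ coincide coefficient by coefficient. The starting observation is that the map $Z_{-}$ of \eqref{eq:5.3.27} depends only on the underlying \emph{abstract} decorated graph together with its orientation, not on a planar embedding: for a planar trivalent tree $T$ decorated by $W$ with canonical orientation $\mathrm{Or}_T$, the value $Z_{(T,W;\mathrm{Or}_T)}$ equals $Z_\Gamma$ for the abstract decorated tree $\Gamma$ underlying $(T,W)$. Thus both $\mathbf{G}$ and $\mathbf{S}^{\mathrm{eff}}_{\mathrm{tree}}$ are built from one and the same family of integrals, and the theorem reduces to a comparison of the combinatorial weights and of the coefficient (value) spaces.

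First I would make the value spaces precise. By the identification $\calCV_{X,S^{\ast}} = H^0_{\partial_{\Delta}}(\mathcal{T}^{\vee,\bullet}_{H,S^{\ast}}[1])$ recalled above, together with the nondegenerate pairing $\langle-,-\rangle$ of \eqref{eq:inner_prod_graph} and the dualization $W\mapsto W^{\vee}$, the transpose of $F$ furnishes the promised identification of the tree part of $H_0^{\partial_{\Delta}}(\mathcal{CD}^{\bullet}_{H,S^{\ast}}[-1])(-1)$ with $\calLie_{X,S^{\ast}}(-1)$, the space in which $\mathbf{G}$ takes values. Under this identification $(\Id\otimes F)^{\ast}$ is nothing but evaluation of the Lie-algebra-valued form $\mathbf{S}^{\mathrm{eff}}_{\mathrm{tree}}$ against the elements $F(W)=\sum_T (T,W;\mathrm{Or}_T)$, so the content of \eqref{eq:tree_red} becomes the term-by-term assertion that, for every cyclic word $W\in\calCV_{X,S^{\ast}}$, the pairing of $\mathbf{S}^{\mathrm{eff}}_{\mathrm{tree}}$ with $F(W)$ reproduces the coefficient
\[
    \frac{1}{|\Aut(W)|}\sum_{(T,W;\mathrm{Or}_T)} Z_{(T,W;\mathrm{Or}_T)}
\]
of $W^{\vee}(-1)$ in $\mathbf{G}$, the inner sum ranging over all planar trivalent trees decorated by $W$.

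The heart of the argument, and the step I expect to be the main obstacle, is the reconciliation of the three automorphism counts that enter: the factor $1/|\Aut(\Gamma)|$ weighting each abstract tree in the formal effective action $\mathfrak{s}$, the normalization $\langle\Gamma,\Gamma\rangle=|\Aut(\Gamma)|$ of the inner product, and Goncharov's factor $1/|\Aut(W)|$ attached to each cyclic word. I would organize this via orbit--stabilizer applied to the action of the cyclic symmetry group of $W$ on the set of planar trees occurring in $F(W)$: several distinct planar trees may become isomorphic abstract decorated trees after forgetting the planar structure, and the point is to show that the multiplicity with which a given $\Gamma$ arises, weighted by $|\Aut(\Gamma)|$ from the pairing and by $1/|\Aut(\Gamma)|$ from $\mathfrak{s}$, collapses exactly to Goncharov's $1/|\Aut(W)|$. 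This is delicate precisely when $W$ or $\Gamma$ carries nontrivial symmetry, so the bookkeeping must be done with care. Simultaneously I would track the orientation signs, verifying via Lemma \ref{lemm:or_T} that the canonical orientation $\mathrm{Or}_T$ of a planar tree maps, under the forgetful map, to the orientation of $\Gamma$ used in $\mathfrak{s}$, so that no spurious sign appears. Finally I would confirm that the passage to $\calLie_{X,S^{\ast}}$ (quotient by shuffle relations) on the $\mathbf{G}$-side is matched by the IHX relations defining $H_0^{\partial_{\Delta}}$, well-definedness being guaranteed by the first shuffle relations \eqref{eq:first_shuffle} for Hodge correlators. The remaining checks---that the Tate twist $\bR(-1)$ and the Hodge bidegree decomposition of $\mathbf{G}$ into the $\mathbf{G}_{s+1,t+1}$ agree with what $Z_{-}$ produces on trees---are immediate from the degree count for $l(\Gamma)=0$ graphs and the definition \eqref{eq:def_omega_gamma}.
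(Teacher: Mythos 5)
Your proposal is correct and follows essentially the same route as the paper's proof: both reduce \eqref{eq:tree_red} to a coefficientwise comparison by pairing $\mathbf{S}^{\mathrm{eff}}_{\mathrm{tree}}$ against $F(W)=\sum_T(T,W;\mathrm{Or}_T)$ for each cyclic word $W$, with the normalization $\langle\Gamma,\Gamma^{\vee}\rangle=|\Aut(\Gamma)|$ of \eqref{eq:inner_prod_graph} cancelling the $1/|\Aut(\Gamma)|$ weights in $\mathfrak{s}$ so that the evaluation collapses to $\sum_T Z_{(T,W;\mathrm{Or}_T)}$, which is by definition the value of $\mathbf{G}$ at $W$. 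The paper's argument is terser, leaving the orientation, symmetry, and shuffle/IHX bookkeeping implicit, whereas you spell these checks out; they are elaborations of, not departures from, the same proof.
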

\begin{proof}
It suffices to show that, for each cyclic word $W \in \calCV_{X,S^{\ast}}$, the evaluations of $\mathbf{G}$ and $(\Id \otimes F)^{\ast} \mathbf{S}^{\mathrm{eff}}_{\mathrm{tree}}$ at $W$ coincide. Since we use the inner product of uni-trivalent graphs given as \eqref{eq:inner_prod_graph}, taking the pairing of graphs yields the factors of numbers of automorphisms of graphs. This implies that the evaluation of $S^{\mathrm{eff}}_{\mathrm{tree}}$ at $F(W)$ is given as the form
\begin{equation}
     \sum_{(T, W; \mathrm{Or}_T)} Z_{(T, W; \mathrm{Or}_T), (b,(z,w))} 
\end{equation}
where the summation runs over all uni-trivalent tree graphs decorated by the cyclic word $W$ equipped with the canonical orientation $\mathrm{Or}_T$. Notice that this is the same as the value of $\mathbf{G}$ at $W$ by definition. This leads to the desired formula \eqref{eq:tree_red}.
\end{proof}

\subsection{One loop term of $\mathbf{S}^{\mathrm{eff}}$}

Next, this subsection studies one loop term of $\mathbf{S}^{\mathrm{eff}}$ and shows that this term is given by linear combinations of integrations defined by using a 2-dimensional analogue of Chern--Simons propagators as in \cite{CW} with a little modification.

Let $\mathbf{S}^{\mathrm{eff},1/2}$ denote the restriction of $\mathbf{S}^{\mathrm{eff}}$ on $(\frac{1}{2}, \frac{1}{2}) \in \bC_{\twi} \subset \bC^2$. 
Then, $\mathbf{S}^{\mathrm{eff},1/2}$ defines a  $H_0^{\partial_{\Delta}}(\mathcal{D}_{H, S^{\ast}})[[\hbar]]$-valued current on $B$ and has the following explicit formula.
\begin{proposition}\label{prop:6.2.1}
The restriction $\mathbf{S}^{\mathrm{eff},1/2}$ of $\mathbf{S}^{\mathrm{eff}}$ on $(\frac{1}{2}, \frac{1}{2}) \in \bC_{\twi} \subset \bC^2$ can be written as the following form:
    \begin{equation}
    \begin{split}
    \mathbf{S}^{\mathrm{eff},1/2} = \mathbf{S}^{\mathrm{eff},1/2}_{\text{tree}} + \hbar \mathbf{S}^{\mathrm{eff},1/2}_{\text{1-loop}}
    \end{split}
\end{equation}
where
\begin{equation}\label{eq:S_one_loop}
\begin{split}
&\mathbf{S}^{\mathrm{eff},1/2}_{\text{tree}} =\mathbf{G}_{0,0} + \sum_{s,t \geq 0} \left(\frac{1}{2}\right)^{s+t+1} d^{\mathbb{C}} \mathbf{G}_{s+1, t+1},\\
    & \mathbf{S}^{\mathrm{eff},1/2}_{\text{1-loop}} = \sum_{\Gamma; l(\Gamma)=1}  \frac{1}{|\Aut(\Gamma)|} I_{\Gamma} \cdot[\Gamma^{\vee}].
\end{split}
\end{equation}
Here, the summation runs over all connected decorated uni-trivalent graphs with one loop, and $I_{\Gamma}$ is defined as
\begin{equation}
    I_{\Gamma} := p_{\Gamma \ast} \omega_{\Gamma} = p_{\Gamma \ast}\left( \bigwedge_{E \in E_{\Gamma}} \omega_E\right)
\end{equation}
where $p_{\Gamma}:C_{V_{\Gamma}^{\mathrm{int}}}(X_{/B};(s_1,\ldots, s_k))_{(s_0,v)}  \rightarrow B$ is the smooth map induced by the composition of projection maps $C_{V_{\Gamma}^{\mathrm{int}}}(X_{/B};(s_1,\ldots, s_k))_{(s_0,v)} \rightarrow C_{V^{\mathrm{ext, S^{\ast}}}_{\Gamma}}(X_{/B})\rightarrow B$ and $\omega_E$ is obtained by replacing $\mathsf{D}^{\bC} G_E$ with $\frac{1}{2} \widetilde{d^{\bC} G}_E$ (cf. Corollary \ref{cor:extended_prop}) in \eqref{eq:def_omega_gamma} for internal or $S^{\ast}$-decorated external edge $E$. 
\end{proposition}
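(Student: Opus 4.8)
The plan is to evaluate $\mathbf{S}^{\mathrm{eff}}$ at the single point $(\tfrac12,\tfrac12)\in\bC_{\twi}$ edge by edge, treating the tree and one-loop parts of the decomposition \eqref{eq:S_eff_dec} separately. The guiding observation is that restriction to a point annihilates every differential in the twistor variables ($dz=dw=0$) while substituting $z=w=\tfrac12$ into all scalar coefficients. Applied to the twistor propagator $\mathsf{D}^{\mathbb{C}}G_{s_0}=(w\partial-z\bar\partial)G_{s_0}+(z\,dw-w\,dz)G_{s_0}$, the second summand drops out and the first collapses, so that
\[
\mathsf{D}^{\mathbb{C}}G_{s_0}\big|_{(1/2,1/2)}=\tfrac12(\partial-\bar\partial)G_{s_0}=\tfrac12\,d^{\mathbb{C}}G_{s_0}.
\]
The point $u=\tfrac12$ is precisely where the singular coefficient $(-1+2u)$ of the non-integrable $\tfrac{dr}{r}$ term in \cref{rem:singularity} vanishes, while the $\log r\,du$ term disappears under restriction; hence the restricted propagator is the smooth one-form which, by \cref{cor:extended_prop}, extends over the compactified configuration space as $\tfrac12\,\widetilde{d^{\mathbb{C}}G}$. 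This is what turns each one-loop weight into a genuine configuration-space integral of a smooth form.

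For the tree part I would invoke \cref{thm:tree_red}, which identifies $\mathbf{S}^{\mathrm{eff}}_{\text{tree}}$ with Goncharov's class $\mathbf{G}$ through $F$, together with the Hodge-bidegree expansion
\[
\mathbf{G}=\mathbf{G}_{0,0}+\sum_{s,t\geq0}z^sw^t\bigl((z\,dw-w\,dz)\wedge(s+t+1)\mathbf{G}_{s+1,t+1}+(w\partial-z\bar\partial)\mathbf{G}_{s+1,t+1}\bigr).
\]
Restricting to $(\tfrac12,\tfrac12)$ kills the $(z\,dw-w\,dz)$ summand, turns $z^sw^t$ into $(\tfrac12)^{s+t}$, and sends $(w\partial-z\bar\partial)$ to $\tfrac12\,d^{\mathbb{C}}$, giving at once $\mathbf{S}^{\mathrm{eff},1/2}_{\text{tree}}=\mathbf{G}_{0,0}+\sum_{s,t\geq0}(\tfrac12)^{s+t+1}d^{\mathbb{C}}\mathbf{G}_{s+1,t+1}$.

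For the one-loop part, \cref{lem:5.3.7} shows that only graphs with exactly one loop enter the coefficient of $\hbar$, so it suffices to restrict $Z_\Gamma$ for a connected uni-trivalent $\Gamma$ with $l(\Gamma)=1$. Since $p_{\Gamma}$ is proper and restriction to $(\tfrac12,\tfrac12)$ is pullback along $B\times\{(\tfrac12,\tfrac12)\}\hookrightarrow B\times\bC^2$, base change gives $Z_\Gamma|_{(1/2,1/2)}=p_{\Gamma\ast}\bigl(\omega_\Gamma|_{(1/2,1/2)}\bigr)$. By the first paragraph, restricting the weight $\omega_\Gamma$ of \eqref{eq:def_omega_gamma} replaces the propagator on every internal or $S^{\ast}$-decorated edge by $\tfrac12\,\widetilde{d^{\mathbb{C}}G}_E$ and leaves the special-edge one-forms unchanged, so $\omega_\Gamma|_{(1/2,1/2)}=\bigwedge_{E\in E_\Gamma}\omega_E$ with $\omega_E$ exactly as in the statement. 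Since $\tfrac12\,\widetilde{d^{\mathbb{C}}G}_E$ is smooth on $C_2(X)$, this is a smooth form on the compactified fiber and $Z_\Gamma|_{(1/2,1/2)}=I_\Gamma$, which assembled over all one-loop graphs yields $\mathbf{S}^{\mathrm{eff},1/2}_{\text{1-loop}}$.

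The step I expect to require the most care is the interchange of restriction and fiber integration: $Z_\Gamma$ is defined by integrating the singular twistor form over the open stratum, so one must check that evaluating at $u=\tfrac12$ first and then integrating the smooth extension yields the same number. This is exactly where the vanishing of the $\tfrac{dr}{r}$ and $\log r\,du$ contributions at $u=\tfrac12$ is used: the integrand extends continuously to all boundary faces, and the convergence together with the vanishing of the hidden and infinite boundary contributions is controlled precisely as in the proof of \cref{thm:key_thm}, to which I would reduce the justification.
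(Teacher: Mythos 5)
Your proposal is correct and follows essentially the same route as the paper's proof: restrict the twistor propagator to $(\tfrac12,\tfrac12)$ so that $\mathsf{D}^{\mathbb{C}}$ collapses to $\tfrac12 d^{\mathbb{C}}$, then invoke Corollary \ref{cor:extended_prop} for the smooth extension making $I_{\Gamma}$ well-defined. Your additional details --- the explicit tree-part computation from the Hodge-bidegree expansion of $\mathbf{G}$, the observation that the coefficient $(-1+2u)$ of the $\tfrac{dr}{r}$ singularity vanishes exactly at $u=\tfrac12$, and the compatibility of evaluation with fiber integration --- are all sound elaborations of steps the paper leaves implicit.
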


\begin{proof}
    We shall take a close look at one loop term $\mathbf{S}^{\mathrm{eff},1/2}_{\text{1-loop}}$. Note that direct computation indicates that the differential operator $\mathsf{D}^{\mathbb{C}}$ restricts to the operator
\begin{equation}
   \mathsf{D}^{\mathbb{C}}|_{(\frac{1}{2}, \frac{1}{2})} = \frac{1}{2}\partial - \frac{1}{2}\bar{\partial} = \frac{1}{2} d^{\bC}.
\end{equation}
Thus, $Z_{\Gamma, (b,(1/2,1/2))}$ for a decorated connected uni-trivalent graph is given by replacing $\mathsf{D}^{\mathbb{C}} G_{v}$ with $\frac{1}{2}d^{\bC} G_{v}$ in the definition of the map $Z_{-}$ explained in Section \ref{subsection:gen_hdoge_corr}. By Corollary \ref{cor:extended_prop}, we can also see that $I_{\Gamma}$ is well-defined. This completes the proof.
\end{proof}

\subsection{$\mathbf{S}^{\mathrm{eff}}$, $\mathbf{G}$, and the formal KZ connection}
This subsection gives an explicit relation between $\mathbf{S}^{\mathrm{eff}}$ and the formal KZ connection introduced by Drinfel'd (\cite{Dri90}). Consider the restriction $\mathbf{G}'$ of the Hodge correlator class $\mathbf{G}$ to the twistor line $(1-u, u) \in \mathbb{C}_{\twi} \subset \mathbb{C}^2$:

\begin{equation}
        \mathbf{G}' = \mathbf{G}_{0,0} + \sum_{s,t \geq 0} (1-u)^s u^t (du \wedge (s + t +1) \mathbf{G}_{s+1, t+1}  +(u \partial - (1-u) \bar{\partial}) \mathbf{G}_{s+1, t+1})
\end{equation}
This provides a gauge transformation between two flat connection $\mathbf{G}'|_{u}$ and $\mathbf{G}'|_{u'}$ over $B$ for parameters $ u, u' \in \mathbb{C}_{\twi}=\{(1-u,u) \in \mathbb{C}^2\} \subset \mathbb{C}^2$. In particular, considering the case that $u=1, 1/2, 0$ gives gauge transformations between holomorphic, real, and anti-holomorphic flat connections respectively.

Now, we take the enhanced moduli space $\mathcal{M}_{0,4}' \simeq \mathbb{P}^1(\bC) \setminus\{0,1,\infty\} (+\text{tangential base point})$ of genus 0 surface with 4 marked points as $B$ and consider the fiber $\gr^W \pi_1^{\mathrm{nil}}( \mathbb{P}^1 (\bC)\setminus\{0,1,\infty\}, z)$ over $z  \in \mathbb{P}^1(\bC) \setminus\{0,1,\infty\}$. Then, we have $\mathbf{G}_{0,0}=0$ and $\mathbf{G}$ has coefficients only of the form $\mathbf{G}_{k,k}$ $(k \geq 1)$. Therefore, 
\begin{equation}
	\mathbf{G}'|_{u=1}  = \partial \mathbf{G}_{1,1}.
\end{equation}
Since it is not explicit in \cite{GoncharovHodge1}, we compute it further. Recall that the Green function is given by 
\begin{equation}
	(2 \pi \sqrt{-1}) G_{v_z}(x,y) = \log|x-y| - \log|z - x| - \log|z-y| + C(z, v_z).
\end{equation}
Here, $C(z, v_z)$ is a function which depends smoothly on $(z,v_z) \in T \mathbb{P}^1(\mathbb{C})$. Since  we now have $S = \{0,1,\infty, z\}$, $S^{\ast}=\{0,1,\infty\}$ and $\partial \log|s_i - s_j|=0$ for $s_i,s_j \in S^{\ast}$ with $s_i \neq s_j$, 
\begin{equation}
    \begin{split}
        \partial \mathbf{G}_{1,1} & = \partial G_{v_z}(0,1)\otimes \mathcal{C}(X_0 X_1) + \partial G_{v_z}(0,\infty)\otimes \mathcal{C}(X_0 X_{\infty}) +\partial G_{v_z}(1, \infty)\otimes \mathcal{C}(X_1 X_{\infty}) \\
	 & =\frac{-1}{2 \pi \sqrt{-1}} \left( \left(\frac{dz}{z} + \frac{dz}{z-1} \right)\otimes \mathcal{C}(X_0 X_1) + \left(\frac{dz}{z} \right)\otimes \mathcal{C}(X_0 X_{\infty}) +  \left( \frac{dz}{z-1} \right)\otimes \mathcal{C}(X_1X_{\infty})\right)\\
  &+ \frac{1}{2 \pi \sqrt{-1}}\partial C(z, v_z) \otimes \left(\mathcal{C}(X_0 X_1) + \mathcal{C}(X_0 X_{\infty}) + \mathcal{C}(X_1 X_{\infty}) \right).
    \end{split}
\end{equation}
where $\mathcal{C}(X_i X_j)$ for $i,j \in S^{\ast}$ is the cyclic word in $\mathcal{C}(A_{H, S^{\ast}})$ represented by $X_i \otimes X_j$ and $A_{H, S^{\ast}}$ denotes the tensor algebra of $V_{X, S^{\ast}}$.
%%%%%%%%%%%%%%%%%%%%%%%%%%%%%%%%%%%%%%%%%
% keisan memo
% \begin{align}
% & G_{\infty}(x,y) = G_{\mathrm{Ar}}(x,y) - G_{\mathrm{Ar}}(x,\infty) - G_{\mathrm{Ar}}(\infty,y)\\
% & G_{\infty}(x,z) = G_{\mathrm{Ar}}(x,z) - G_{\mathrm{Ar}}(x,\infty) - G_{\mathrm{Ar}}(\infty,z)\\
% &  G_{\infty}(z,y) = G_{\mathrm{Ar}}(z,y) - G_{\mathrm{Ar}}(z,\infty) - G_{\mathrm{Ar}}(\infty,y).
% \end{align}
% Thus, 
% \begin{align}
% G_{z}(x,y) &= G_{\mathrm{Ar}}(x,y) -  G_{\mathrm{Ar}}(x,z) -  G_{\mathrm{Ar}}(z,y)\\
% & = G_{\infty}(x,y) - G_{\infty}(x,z) - G_{\infty}(z,y) - 2 G_{\mathrm{Ar}}(z,\infty).
% \end{align}
% Here note that $\partial G_{\mathrm{Ar}}(z,\infty)$ can be written by the form
% \begin{equation}
% 	\frac{dz}{z - \infty} + (\text{regular term})
% \end{equation}
%%%%%%%%%%%%%%%%%%%%%%%%%%%%%%%%%%%%%%%%%%
Consider the map $\mathsf{D}$ for cyclic words (\cite[\S 8]{GoncharovHodge1}) defined by
\begin{equation}
	\mathsf{D}: \mathcal{C}(A_{H, S^{\ast}}) \rightarrow A_{H, S^{\ast}} \otimes \mathbb{Q}[S^{\ast}], \quad \mathsf{D}(F) = \sum_{s \in S^{\ast}} \frac{\partial F}{\partial X_s} \otimes X_s.
\end{equation}
Then, the concerning cyclic words are mapped by $\mathsf{D}$ as follows:
\begin{align}
&\mathcal{C}(X_0 X_1) \mapsto X_0 \otimes X_1 + X_1 \otimes X_0,\\
& \mathcal{C}(X_0 X_{\infty}) \mapsto X_0 \otimes X_{\infty} + X_{\infty} \otimes X_0, \\
& \mathcal{C}(X_1 X_{\infty}) \mapsto X_1 \otimes X_{\infty} + X_{\infty} \otimes X_1.
\end{align}
By using the relation $X_{\infty} = - X_0 - X_1$, we have
\begin{equation}
    \begin{split}
        &X_1 \otimes X_0 + X_{\infty} \otimes X_0 = - X_0 \otimes X_0,\\
&X_0 \otimes X_1 + X_{\infty} \otimes X_1 = - X_1 \otimes X_1
    \end{split}
\end{equation}
so that $\mathsf{D}$ gives
\begin{equation}
\begin{split}
   &\mathcal{C}(X_0 X_1) + \mathcal{C}(X_0 X_{\infty}) \mapsto X_0 \otimes X_1 + X_0 \otimes X_{\infty} - X_0 \otimes X_0 \\
   & \mathcal{C}(X_0 X_1) + \mathcal{C}(X_1 X_{\infty}) \mapsto X_1 \otimes X_0 + X_1 \otimes X_{\infty} - X_1 \otimes X_1.
   \end{split}
\end{equation}
Since $H_1(\mathbb{P}^1(\bC))) =0$, for cyclic word $F \in \calLie_{H,S^{\ast}}$, we can assign the corresponding special derivation (cf. \cite[\S 8.2]{GoncharovHodge1})
\begin{equation}
    \calLie_{H,S^{\ast}} \overset{\sim}{\rightarrow} \mathrm{Der}^S(L_{H, S^{\ast}})
\end{equation}
by setting
\begin{equation}
    X_{s_0} \mapsto 0,\quad X_s \mapsto \left[X_s, \frac{\partial F}{\partial X_s}\right] \quad (s \in S^{\ast})
\end{equation}
where $L_{H, S^{\ast}}$ denotes the free Lie algera generated by $V_{X,S^{\ast}}$ over $\bC$ and $\mathrm{Der}^S(L_{H, S^{\ast}})$ the special deribation Lie algebra acting on $L_{H, S^{\ast}}$.
Therefore, as a special derivation Lie algebra valued 1-form, one obtains
\begin{equation}\label{eq:KZ=G}
    \begin{split}
        \mathbf{G}'|_{u=1} & =  \partial \mathbf{G}_{1,1} \\
	&= - \frac{1}{2 \pi \sqrt{-1}} \left( \frac{dz}{z}\otimes \ad(X_0) + \frac{dz}{z-1}\otimes \ad(X_1) \right) \in \Omega^1(\mathbb{P}^1(\bC) \setminus\{0,1,\infty\}) \otimes \mathrm{Der}^S(L_{H, S^{\ast}}).
    \end{split}
\end{equation}
Note that the regular part vanishes as spacial derivations since 
\begin{align}
	&\mathcal{C}(X_0 X_1) + \mathcal{C}(X_0 X_{\infty}) + \mathcal{C}(X_1 X_{\infty})\\
	\mapsto & (X_0 + X_{\infty}) \otimes X_1 + (X_1 + X_{\infty}) \otimes X_0 + (X_0 + X_1) \otimes X_{\infty}\\
	=& - X_0 \otimes X_0 - X_1 \otimes X_1 -  X_{\infty} \otimes X_{\infty}.
\end{align}
Recall that the formal KZ connection $\omega^{\mathrm{KZ}}$ is given by
\begin{equation}\label{eq:formal_kz}
    \omega^{\mathrm{KZ}} = \frac{1}{2 \pi \sqrt{-1}} \left( \frac{dz}{z}\otimes X_0 + \frac{dz}{z-1}\otimes X_1\right) \in \Omega^1(\mathbb{P}^1(\bC)  \setminus\{0,1,\infty\}) \otimes \bC\langle\langle  X_0, X_1 \rangle \rangle.
\end{equation}
where $\bC\langle\langle  X_0, X_1 \rangle \rangle$ denotes the ring of non-commutative formal power series on variables $X_0$ and $X_1$.
Thus, \eqref{eq:KZ=G} and \eqref{eq:formal_kz} read
\begin{equation}
    \mathbf{G}'|_{u=1}  =  \partial \mathbf{G}_{1,1} = -(\Id \otimes \mathrm{ad})\omega^{\mathrm{KZ}}.
\end{equation}

Therefore, as a special derivation Lie algebra valued 1-form, the formal KZ connection has been obtained from $\mathbf{G}'|_{u=0}$ explicitly way. When $u=1$ we obtain its anti-holomorphic version by the same argument. Then, this argument implies the following theorem.
\begin{proposition}\label{prop:M0n'_GHC}
    Take $X \rightarrow B$ as the universal family $\mathcal{M}_{0,n}'$ of compact Riemann surfaces of genus $0$ with $n$ distinct points $s_0, s_1,\ldots, s_{n-1}$ and  tangent vector at $s_0$ over $\mathcal{M}_{0,n}'$. Then, we have  $\mathbf{S}^{\mathrm{eff}}{}'|_{u=0} =\mathbf{S}^{\mathrm{eff}}_{\mathrm{tree}}{}'|_{u=0}  = \mathbf{G}{}'|_{u=0}$ and $\mathbf{S}^{\mathrm{eff}}{}'|_{u=1} =\mathbf{S}^{\mathrm{eff}}_{\mathrm{tree}}{}'|_{u=1}  = \mathbf{G}{}'|_{u=1}$. In particular, when $\mathcal{M}_{0,4}'\simeq \mathbb{P}^1(\bC)\setminus \{0,1,\infty\} (+\text{tangent vector at $s_0$})$, we have
    \begin{equation}\label{eq:prop:M0n'_GHC}
    \begin{split}
    &\mathbf{S}^{\mathrm{eff}}{}'|_{u=1}  = \mathbf{S}^{\mathrm{eff}}_{\mathrm{tree}}{}'|_{u=1}= \mathbf{G}{}'|_{u=1} = -(\Id \otimes \mathrm{ad})\omega^{\mathrm{KZ}} \\
         &\mathbf{S}^{\mathrm{eff}}{}'|_{u=0}  =\mathbf{S}^{\mathrm{eff}}_{\mathrm{tree}}{}'|_{u=0} =  \mathbf{G}{}'|_{u=0} = -(\Id \otimes \mathrm{ad})\overline{\omega}^{\mathrm{KZ}}
         \end{split}
    \end{equation}
    where $\omega^{\mathrm{KZ}}$ denotes the formal KZ connection 1-form taking values in the special derivation algebra of the free Lie algebra with two variables. Here, equalities between $\mathbf{G}{}'$ and $\mathbf{S}^{\mathrm{eff}}{}'$ means those in the sense of Theorem \ref{thm:tree_red}.
\end{proposition}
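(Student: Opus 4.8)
The plan is to prove Proposition \ref{prop:M0n'_GHC} by reducing everything to the explicit computation already carried out in the preceding paragraphs of the subsection, combined with Theorem \ref{thm:tree_red} and a dimension/loop-count argument. First I would establish that on the moduli space $\mathcal{M}_{0,n}'$ of genus $0$ surfaces, the loop contributions to $\mathbf{S}^{\mathrm{eff}}$ vanish, so that $\mathbf{S}^{\mathrm{eff}}{}' = \mathbf{S}^{\mathrm{eff}}_{\mathrm{tree}}{}'$ at all twistor parameters. The key input here is that $X = \mathbb{P}^1(\bC)$ has $H^1(X;\bC)=0$, hence the symplectic vector space $H^{\vee}$ contributing to the Casimir decoration is trivial; any connected uni-trivalent graph with $l(\Gamma)=1$ must either carry a cycle built from internal edges (which forces $\partial_{\mathrm{Cas}}$-type gluings involving $H^1$-decorations that are now absent) or reduce to graphs excluded by Lemma \ref{lem:5.2.2}. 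More directly, by Lemma \ref{lem:5.3.7} and the Hodge-bidegree bookkeeping of Section \ref{subsection:gen_hdoge_corr}, the one-loop weights $I_{\Gamma}$ of Proposition \ref{prop:6.2.1} involve wedges of the single smooth $(1,1)$-form $K=-\langle\nu\wedge\nu\rangle^{[1\times 1]}$ and the propagator, and on $\mathbb{P}^1$ the relevant integrands vanish for degree reasons. I would isolate this as the first step, phrasing it as: on $\mathcal{M}_{0,n}'$, $\hbar\,\mathbf{S}^{\mathrm{eff}}_{\text{1-loop}}{}'=0$.

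Second, I would invoke Theorem \ref{thm:tree_red}, which gives the identification $\mathbf{G} = (\Id\otimes F)^{\ast}\mathbf{S}^{\mathrm{eff}}_{\mathrm{tree}}$ via the inclusion $F:\calCV_{X,S^{\ast}}\hookrightarrow H^0_{\partial_{\Delta}}(\mathcal{T}^{\vee,\bullet}_{H,S^{\ast}}[1])$. Restricting both sides to the twistor line $(1-u,u)\in\bC_{\twi}$ yields $\mathbf{G}{}' = (\Id\otimes F)^{\ast}\mathbf{S}^{\mathrm{eff}}_{\mathrm{tree}}{}'$, which combined with Step~1 gives the first pair of asserted equalities $\mathbf{S}^{\mathrm{eff}}{}'|_{u}=\mathbf{S}^{\mathrm{eff}}_{\mathrm{tree}}{}'|_{u}=\mathbf{G}{}'|_{u}$ (in the sense of Theorem \ref{thm:tree_red}) for every $u$, in particular for $u=0$ and $u=1$. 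This step is essentially formal once Step~1 is in place, since it is just specialization of the identity of Lie-coalgebra-valued currents to fixed points of $\bC_{\twi}$.

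Third, to obtain the KZ statement \eqref{eq:prop:M0n'_GHC}, I would take $\mathcal{M}_{0,4}'\simeq\mathbb{P}^1(\bC)\setminus\{0,1,\infty\}$ and reproduce the explicit evaluation already displayed in the running text: since $H_1(\mathbb{P}^1(\bC))=0$ we have $\mathbf{G}_{0,0}=0$ and only the $\mathbf{G}_{k,k}$ survive, so $\mathbf{G}{}'|_{u=1}=\partial\mathbf{G}_{1,1}$. I would then carry over the computation of $\partial\mathbf{G}_{1,1}$ in terms of the Green function $G_{v_z}(x,y)=(2\pi\sqrt{-1})^{-1}(\log|x-y|-\log|z-x|-\log|z-y|+C(z,v_z))$, observing that the $\partial\log|s_i-s_j|$ terms drop for distinct $s_i,s_j\in S^{\ast}$ and that the $C(z,v_z)$-contribution is annihilated after passing to special derivations via the map $\mathsf{D}$, using $X_{\infty}=-X_0-X_1$. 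This identifies $\mathbf{G}{}'|_{u=1}=-(\Id\otimes\ad)\omega^{\mathrm{KZ}}$, and the $u=0$ case follows by applying complex conjugation together with the reality relation \eqref{eq:reality_prop} and $\overline{\mathbf{G}}_{p,q}=-\mathbf{G}_{q,p}$ from \eqref{eq:2.3.22}, which exchanges holomorphic and anti-holomorphic propagators and yields $\overline{\omega}^{\mathrm{KZ}}$.

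The main obstacle I anticipate is Step~1: rigorously ruling out all one-loop contributions on the genus-$0$ family rather than merely the generic degree count. One must check that no one-loop graph with $S^{\ast}$-decorations alone (no $H^1$ decorations, since $H^{\vee}=0$) can produce a nonvanishing fiber integral $I_{\Gamma}$, and this requires care because such graphs can still have cycles formed purely by internal edges joined through the propagator $d^{\bC}G_{s_0}$. The cleanest route is probably to argue via Lemma \ref{lem:5.3.7} together with the explicit form \eqref{eq:omega_g_loop} of $\omega_{\Gamma}$ specialized to $p=q=0$ (no special-decorated edges), showing the vertical degree of the integrand is strictly below the fiber dimension, or alternatively to note that a one-loop graph on $\mathbb{P}^1$ always contains a configuration forbidden by Lemma \ref{lem:5.2.2}(iv)--(v). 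I would present whichever of these gives the shortest self-contained justification, and flag it as the one genuinely nontrivial point, the remaining two steps being citation and direct calculation already spelled out in the excerpt.
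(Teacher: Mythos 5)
Your Steps 2 and 3 are fine and match the paper's route (tree reduction via Theorem \ref{thm:tree_red}, then the explicit Green-function computation of $\partial\mathbf{G}_{1,1}$ already displayed in the running text). But Step 1, which you yourself flag as the nontrivial point, contains a genuine error: you claim that on genus-$0$ families the one-loop contributions vanish \emph{at all twistor parameters}, so that $\mathbf{S}^{\mathrm{eff}}{}'=\mathbf{S}^{\mathrm{eff}}_{\mathrm{tree}}{}'$ identically in $u$. That is false, and neither of your proposed justifications works. The degree count cannot kill one-loop graphs: for a connected uni-trivalent graph with $l(\Gamma)=1$ one has $|E_{\Gamma}|=2|V_{\Gamma}^{\mathrm{int}}|$, so the vertical degree of the integrand equals the fiber dimension exactly (Lemma \ref{lem:5.3.7} only forces vanishing for $l\geq 2$, and the computation in its proof gives degree deficit $1-l=0$ when $l=1$). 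Nor does Lemma \ref{lem:5.2.2}(iv)--(v) exclude one-loop graphs with purely $S^{\ast}$-decorated external legs: a cycle of length $\geq 2$ with external edges attached (e.g.\ the graph $\Gamma_2$ of Figure \ref{fig:4.1.1}) has no self-loop and no non-regular edge. Indeed the paper's Proposition \ref{prop:6.2.1} and the appendix exhibit the one-loop weights $I_{\Gamma}$ at $u=1/2$ as generally nonzero quantities; if your Step 1 were true, that whole discussion would be vacuous in genus $0$.

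The mechanism the paper actually uses is different and strictly weaker, and it is exactly what the proposition needs: since $H^1(\mathbb{P}^1(\bC))=0$, every external vertex is $S^{\ast}$-decorated, so in the normal form \eqref{eq:omega_g_loop} one has $p=q=0$ and every one-loop weight carries the scalar prefactor $z^n w^n$ with $n=|V_{\Gamma}^{\mathrm{ext}}|>0$ (a one-loop uni-trivalent graph must have external vertices, since a bare cycle would have bivalent vertices). On the twistor line $(z,w)=(1-u,u)$ this prefactor is $(1-u)^n u^n$, which vanishes at the two endpoints $u=0$ and $u=1$ --- and only there. So the correct statement is $\mathbf{S}^{\mathrm{eff}}_{\text{1-loop}}{}'|_{u=0}=\mathbf{S}^{\mathrm{eff}}_{\text{1-loop}}{}'|_{u=1}=0$, not vanishing for all $u$; your stronger claim is both unprovable and unnecessary, since the proposition asserts the equalities only at $u\in\{0,1\}$. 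With Step 1 replaced by this prefactor argument, the rest of your proposal goes through as written.
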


\begin{proof}
    Since all the external vertices of connected trivalent graphs are labeled by sections in $S^{\ast}$, $\mathbf{S}^{\mathrm{eff}}_{\text{1-loop}}$ must consist of terms multiplied by $z^kw^k$ for $k > 0$ as in \eqref{eq:omega_g_loop}. This implies that $\mathbf{S}^{\mathrm{eff}}_{\text{1-loop}}{}'|_{u=0} = \mathbf{S}^{\mathrm{eff}}_{\text{1-loop}}{}'|_{u=1} =0$. Therefore, $\mathbf{S}^{\mathrm{eff}}{}'|_{u=0} = \mathbf{S}^{\mathrm{eff}}_{\mathrm{tree}}{}'|_{u=0}= \mathbf{G}{}'|_{u=0}$. Similarly, we get $\mathbf{S}^{\mathrm{eff}}{}'|_{u=1} = \mathbf{S}^{\mathrm{eff} '}_{\mathrm{tree}}{}'|_{u=1}=\mathbf{G}{}'|_{u=1}$. The latter statement directly follows from the former one together with \eqref{eq:KZ=G}.
\end{proof}

\begin{remark}
    The properties described in Proposition \ref{prop:M0n'_GHC} are close to the properties satisfied by Alekseev--Torossian connections (\cite{AT}) studied in Rossi--Wilwacher (\cite[\S 8]{RW14}) and further by Furusho from the viewpoints of Drinfel'd associators as generating series of multiple zeta values (\cite{Fur18}). However, at this moment, their relations to Hodge correlator twistor connection are not clear.
\end{remark}

\appendix
\section{Goncharov's quantum field theory and one loop part of $\mathbf{S}^{\mathrm{eff}}$}\label{section:appendix}
\renewcommand{\thesubsection}{\thesection.\arabic{subsection}}
In this appendix, we recall Goncharov's quantum field theory and discuss relations between one loop part of $\mathbf{S}^{\mathrm{eff}}$ and that of the asymptotic expansion of Goncharov's quantum field theory. Besides, we also describe propagators explicitly which will be used to compute higher loop terms of the asymptotic expansion.

\subsection{Goncharov's quantum field theory}
To begin with, following \cite[\S 13]{GoncharovHodge1}, we briefly recall Goncharov's quantum field theory. Throughout this appendix $X(\mathbb{C})$ denotes a complex curve. Let $N$ be a fixed positive integer. Let $\mathrm{Mat}_N(\mathbb{C})$ denote the vector space of $N \times N$ complex matrices. Then, the space of fields we consider is $\mathrm{Mat}_N(\mathbb{C})$-valued smooth functions on $X(\mathbb{C})$, i.e., $C^{\infty}(X(\mathbb{C}); \mathrm{Mat}_N(\mathbb{C})):= C^{\infty}(X(\mathbb{C}))\otimes_{\mathbb{C}} \mathrm{Mat}_N(\mathbb{C})$. The action functional
\begin{equation}
	S: C^{\infty}(X(\mathbb{C}); \mathrm{Mat}_N(\mathbb{C})) \rightarrow \mathbb{C}
\end{equation}
of the theory is defined as
\begin{equation}
	S(\varphi) = \frac{1}{2\pi \sqrt{-1}} \int_{X(\mathbb{C})} \Tr \left(\frac{1}{2} \partial \varphi \wedge \bar{\partial} \varphi + \frac{\hbar}{6} \cdot \varphi[\partial \varphi, \bar{\partial} \varphi]\right), \quad \varphi \in C^{\infty}(X(\mathbb{C}); \mathrm{Mat}_N(\mathbb{C})).
\end{equation}
To relate Hodge correlators, we need to consider some correlation functions of the functional $\mathcal{F}_W$ associated with cyclic words $W$ as follows: Choose a cyclic word
\begin{equation}
	W = \mathcal{C}(\{a_0\}\otimes \omega^0_1 \otimes \cdots \otimes \cdots \otimes \omega^0_{n_0} \otimes \cdots \otimes  \{a_m\} \otimes \omega^m_1 \otimes \cdots \otimes \omega^m_{n_m}) \in \mathcal{C} T(V^{\vee}_{X,S}),
\end{equation}
where $a_0, \ldots, a_m \in X(\mathbb{C})$ and $\omega^i_j \in \Omega^1_X \oplus  \overline{\Omega}_X^1$. Note that $a_0, \ldots, a_m$ are not necessarily distinct points. For such a cyclic word $W$, we define a functional
\begin{equation}
	\mathcal{F}_W : C^{\infty}(X(\mathbb{C}); \mathrm{Mat}_N(\mathbb{C})) \rightarrow \mathbb{C} 
\end{equation}
as a trace of a combination of the following three types of elementary matrix-valued functionals on the space of fields:
\begin{enumerate}[(1)]
\item For each point $a \in X(\mathbb{C})$, we assign to it the functional
\begin{equation}
	\mathcal{F}_a(\varphi):= \varphi(a) \in \mathrm{Mat}_N(\mathbb{C}).
\end{equation}
\item For a 1-form $\omega$ on $X(\mathbb{C})$, we assign  to it the functional
\begin{equation}
	\mathcal{F}_{\omega}(\varphi) := \int_{X(\mathbb{C})} [ \varphi, d^{\mathbb{C}} \varphi] \wedge \omega \in \mathrm{Mat}_N(\mathbb{C})
\end{equation}
\item For a pair of 1-forms $(\omega_1, \omega_2)$ on $X(\mathbb{C})$, we assign to it the functional
\begin{equation}
	\mathcal{F}_{(\omega_1, \omega_2)}(\varphi):= \int_{X(\mathbb{C})} \varphi(x) \omega_1 \wedge \omega_2 \in \mathrm{Mat}_N(\mathbb{C}).
\end{equation}
\end{enumerate}

The functional $\mathcal{F}_W(\varphi)$ by following way: Let us choose a (possibly empty) collection $\mathcal{P}$ of consecutive pairs $\{ \omega^i_{j}, \omega^i_{j+1}\}$ of 1-forms entering the cyclic word $W$. For a field $\varphi$, according to the cyclic ordering of $W$, we assign $\mathcal{F}_a(\varphi)$ for $\{a\}$, $\mathcal{F}_{\omega}(\varphi)$ for a 1-form $\omega$ not contained in $\mathcal{P}$ and $\mathcal{F}_{(\omega^i_j, \omega^i_{j+1})}(\varphi)$ for a consecutive pair  $\{ \omega^i_{j}, \omega^i_{j+1}\}$ in $\mathcal{P}$. Thus, we obtain a cyclic word of $\mathrm{Mat}_N(\mathbb{C})$ so that its trace is well-defined and denote it by $\mathcal{F}_{W, \mathcal{P}}$. In this way, the $\mathbb{C}$-valued functional $\mathcal{F}_{W, \mathcal{P}}$ is defined. Then, We set 
\begin{equation}
	\mathcal{F}_W(\varphi) := \sum_{\mathcal{P}} \mathcal{F}_{W, \mathcal{P}}(\varphi)
\end{equation}
where the sum is taken over all possible collection $\mathcal{P}$. 
\begin{example}
(1) For $W = \mathcal{C}(\{s_0\} \otimes \{s_1\} \otimes \cdots \otimes \{s_m\})$, its corresponding functional $\mathcal{F}_W$ is given as
\begin{equation}
	\mathcal{F}_W(\varphi) = \mathcal{F}_{ \mathcal{C}(\{s_0\} \otimes \{s_1\} \otimes \cdots \otimes \{s_m\})}(\varphi) = \Tr\left(\varphi(s_0) \varphi(s_1) \cdots \varphi(s_m)\right).
\end{equation}
(2) Let us choose the cyclic word $W = \mathcal{C}(\{s_0\} \otimes \omega_1 \otimes \omega_2)$. Then, there are two possible collection of $\mathcal{P}$; $\mathcal{P}=\emptyset$ and $\mathcal{P}=\{ \{\omega_1, \omega_2\}\}$. Thus, the functional corresponding to $W$ is given as
\begin{equation}
\begin{split}
	\mathcal{F}_W(\varphi) &= \mathcal{F}_{\mathcal{C}(\{s_0\} \otimes \omega_1 \otimes \omega_2)}(\varphi) \\
	&= \Tr\left(\varphi(s_0) \int_{X(\mathbb{C})} [ \varphi, d^{\mathbb{C}} \varphi]\wedge  \omega_1  \int_{X(\mathbb{C})} [ \varphi, d^{\mathbb{C}} \varphi]\wedge  \omega_2  \right) + \Tr \left(\varphi(s_0) \int_{X(\mathbb{C})} \varphi(x) \omega_1 \wedge \omega_2 \right).
\end{split} 
\end{equation}
\end{example}
Then, the correlator corresponding to $W$ is ``defined'' as the Feynman integral
\begin{equation}
	Z_{X,N, \hbar, \mu}(W):= \langle \mathcal{F}_W \rangle_{X,N, \hbar, \mu} := \int \mathcal{F}_W(\varphi) e^{\sqrt{-1} S(\varphi)} \mathcal{D} \varphi.
\end{equation}

So far, this integral has not been defined mathematically rigorously in general, but its perturbative series may be mathematically rigorously defined if its coefficients are finite or regularized.

\subsection{Feynman rules for the theory}
Following \cite[\S 13.2]{GoncharovHodge1}, we describe the Feynman rules for the quantum field theory and we reformulate it for our purpose.

The quadratic form $B$ of $S(\varphi)$ is given as
\begin{equation}
	B(\varphi) = \frac{1}{2\pi \sqrt{-1}} \sum_{i,j=1}^N \int_{X(\mathbb{C})} \partial \varphi^{i}_j \wedge \bar{\partial} \varphi^{j}_i = - \frac{1}{2 \pi \sqrt{-1}} \sum_{i,j=1}^N \int_{X(\mathbb{C})} \varphi^i_j \cdot \partial \bar{\partial} \varphi^j_i = \sum_{i,j=1}^N \int_{X(\mathbb{C})} \varphi^i_j \cdot \Delta \varphi^j_i
\end{equation}
so the propagator is given by
\begin{equation}
	\langle \varphi(z) \varphi(w) \rangle := B^{-1} = - G_{\mu} (z,w)  \sum_{i,j=1}^N e^i_j \otimes e^j_i 
\end{equation}
where $e^i_j$ is the elementary matrix with all zero entries except that the $(i,j)$ entry is given by 1. Since the cubical term of $S(\varphi)$ is given by
\begin{align}
& \hbar \cdot \int_{X(\mathbb{C})} \Tr\left(\varphi \cdot [\partial \varphi, \bar{\partial} \varphi] \right)\\
=& \hbar \int_{X(\mathbb{C})} \sum_{i,j,k=1}^N \left( \varphi^i_j \partial{\varphi}^j_k \bar{\partial} \varphi^k_i +  \varphi^i_j \bar{\partial}{\varphi}^j_k \partial \varphi^k_i \right)\\
= & - \hbar  \int_{X(\mathbb{C})} \sum_{i,j,k=1}^N \left( \varphi^i_j d^{\mathbb{C}} \varphi^j_k \wedge d^{\mathbb{C}} \varphi^k_i \right)
\end{align}
where we use
\begin{align}
	 [\partial \varphi, \bar{\partial} \varphi] & =  \sum_{a,b,c,d=1}^N \partial \varphi^{a}_b \wedge \bar{\partial} \varphi^c_d (e^a_b e^c_d - e^c_d e^a_b)\\
	& =  \sum_{a,b,c,d=1}^N \partial \varphi^{a}_b \wedge \bar{\partial} \varphi^c_d e^a_b e^c_d  + \bar{\partial} \varphi^{c}_d \wedge \partial \varphi^a_b  e^c_d e^a_b)\\
	& = \sum_{a,b,c,d=1}^N  (\partial \varphi^{a}_b \wedge \bar{\partial} \varphi^c_d + \bar{\partial} \varphi^{a}_b \wedge \partial \varphi^c_d) e^a_b e^c_d.
\end{align}

To obtain the integration associated with a Feynman graph, we need the following formulas involving the differentials:
\begin{enumerate}[(i)]
\item $\langle \varphi(z) \partial \varphi(w) \rangle = \partial_w G_{\mu}(z,w)$,
\item $\langle \partial \varphi(z) \partial \varphi(w) \rangle = \partial_z \partial_w \log E(z,w)= \partial_z \partial_w \log \theta(z-w)$,
\item  $\langle \bar{\partial} \varphi(z) \bar{\partial} \varphi(w) \rangle = \bar{\partial}_z \bar{\partial}_w \log \overline{E(z,w)}= \partial_z \partial_w \log \overline{\theta(z-w)}$,
\item $\langle \partial \varphi(z) \bar{\partial} \varphi(w) \rangle = \frac{\sqrt{-1}}{2} \sum_{i=1}^g p^{\ast}_1 \alpha_i \wedge p^{\ast}_2 \bar{\alpha}_i$,
\item $\langle \bar{\partial} \varphi(z) \partial \varphi(w) \rangle = - \frac{\sqrt{-1}}{2}\sum_{i=1}^g p_2^{\ast} \bar{\alpha}_i \wedge p_1^{\ast}\alpha_i$,
\end{enumerate}
where $E(z,w)$ is the prime form (cf. \cite[Definition 2.1]{Fay73}, \cite{Wen91}) and $\theta(z)$ is a theta function on $X(\mathbb{C})$ (cf. \cite[Corollary 2.6]{Fay73}). 
If we use $d^{\mathbb{C}}$ instead of using $\partial$ and $\bar{\partial}$ separately, we obtain the following another formulas:
\begin{itemize}
\item $\langle \varphi(z) d^{\mathbb{C}} \varphi(w) \rangle = d^{\mathbb{C}}_w G_{\mu}(z,w)$,
\item $\langle d^{\mathbb{C}} \varphi(z)d^{\mathbb{C}} \varphi(w) \rangle = d^{\mathbb{C}}_z d^{\mathbb{C}}_w G_{\mu}(z,w)= \eta(z,w) + \varrho(z,w)$
\end{itemize}
where we set
\begin{align}
	& \eta(z,w) := \langle \partial \varphi(z) \bar{\partial} \varphi(w) \rangle + \langle \bar{\partial} \varphi(z) \partial \varphi(w) \rangle = \frac{\sqrt{-1}}{2}\sum_{i=1}^g p_1^{\ast} \bar{\alpha}_i \wedge p_2^{\ast}\alpha_i -  \frac{\sqrt{-1}}{2}\sum_{i=1}^g p_2^{\ast} \bar{\alpha}_i \wedge p_1^{\ast}\alpha_i, \label{eq:A.2.9}\\
	& \varrho(z,w) := \langle \partial \varphi(z) \partial \varphi(w) \rangle+ \langle \bar{\partial} \varphi(z) \bar{\partial} \varphi(w) \rangle = \partial_z \partial_w \log E(z,w) + \bar{\partial}_z \bar{\partial}_w \log \overline{E(z,w)}.\label{eq:A.2.10}
\end{align}

Note that except for (iv) and (v) the differential forms that appeared in the above do not extend smoothly to the compactified two-point configuration space $C_2(X)$.

\begin{example}\label{example:A.2.1}
    Let us consider the case of genus $g =1$ compact Riemann surfaces. Let $E_{\tau} = \mathbb{C}/(\bZ \oplus \tau \bZ)$ be the elliptic curve associated with $\tau \in \mathbb{H}$, the upper half complex plane. For $z \in E_{\tau}$, we define a character $\chi_z: \bZ \oplus \tau \bZ  \rightarrow U(1)$ by $\chi_z(\gamma):= \exp\left(\frac{2\pi \sqrt{-1} (z \bar{\gamma} - \bar{z} \gamma)}{\tau - \bar{\tau}} \right)$. Then, there are explicit formulas as follows:
\begin{align}
        & G(z-w) = \frac{\tau - \bar{\tau}}{2 \pi \sqrt{-1}} \sum_{\gamma \in (\mathbb{Z} \oplus \mathbb{Z} \tau)\setminus \{(0,0)\}} \frac{\chi_{z-w}(\gamma)}{|\gamma|^2}\label{eq:A.2.11}\\
& d^{\mathbb{C}} G(z-w) = \sum_{\gamma \in (\mathbb{Z} \oplus \mathbb{Z} \tau)\setminus \{(0,0)\}} \frac{\chi_{z-w}(\gamma)}{\gamma} (dz - dw) + \sum_{\gamma \in (\mathbb{Z} \oplus \mathbb{Z} \tau)\setminus \{(0,0)\}} \frac{\chi_{z-w}(\gamma)}{\bar{\gamma}} (d\bar{z} - d\bar{w}) \label{eq:A.2.12} \\
& \eta(z,w) = \frac{1}{\Im(\tau)} \left(dz\wedge d\bar{w} + d\bar{z} \wedge dw \right)\label{eq:A.2.13}\\
& \varrho(z,w) = \left(\wp(z-w) + \frac{\pi^2}{3} E^{\ast}_{2}(\tau, \bar{\tau}) \right) dz \wedge dw + (\text{its conjugation}) \label{eq:A.2.14}
   \end{align}
where $\wp(z)$ is the Weierstrass's elliptic function and $E_2^{\ast}(\tau, \bar{\tau}):=E_2(\tau) -\frac{3}{\pi \Im(\tau)}$.  Here $E_2$ is the Eisenstein series of weight 2 given by
\begin{equation}
    E_2(\tau) = 1 - 24 \sum_{n=1}^{\infty} \frac{ n q^n}{1 - q^n}, \quad q = e^{2\pi \sqrt{-1} \tau}
\end{equation}
(cf.\cite[p34, Example]{Fay73}, \cite[\S 2]{Li12}). Note that $\varrho(z,w)$ is smooth outside the diagonal but has poles of order 2 along it due to $\wp(z-w)$. 
\end{example}

\begin{remark}
    If we consider only the $(2, 0)$-form of \eqref{eq:A.2.14}, one can see a similarity to the BCOV propagator in \cite{CoSL12, Li12}, but the precise relation between Goncharov's QFT and BCOV theory seems not to be known as far as author's knowledge. See also \cite{Dijk95} and \cite{KZ95}.
\end{remark}
\subsection{Tree part}
Next, we recall the fundamental result by Goncharov on Hodge correlator integrals and the leading term of a perturbative series of some quantum field theory.

Goncharov showed that Hodge correlators are thought of as planer tree part of the formal power series as follows.
\begin{theorem}[Goncharov(\cite{GoncharovHodge1})]
Set $\hbar = N^{-1/2}$ and $Z_{X,N,\mu}(W)= Z_{X,N,N^{-1/2}, \mu}(W)$. Then, the leading term of asymptotic expansion of $\log Z_{X,N,\mu}(W)$ as $N \to \infty$ corresponds to the sum of integrals associated with planar trivalent trees. Moreover, the following holds up to sign,
	\begin{equation}
		\log Z_{X,N,\mu}(W) \sim \mathrm{Cor}_{\mathcal{H}}(W)\quad \text{as $N \to \infty$}.
	\end{equation}
\end{theorem}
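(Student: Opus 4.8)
The plan is to follow the standard 't Hooft large-$N$ organization of the perturbative expansion of the matrix-model Feynman integral $Z_{X,N,\mu}(W)=\langle \mathcal{F}_W\rangle$ and to match the resulting planar-tree contributions term by term against the integrand $\kappa_T$ defining $\mathrm{Cor}_{\mathcal{H}}(W)$. First I would expand $e^{\sqrt{-1}S(\varphi)}$ by separating the Gaussian weight determined by the quadratic form $B$ from the cubic interaction $\frac{\hbar}{6}\Tr(\varphi[\partial\varphi,\bar\partial\varphi])$, and apply Wick's theorem: the propagator is $\langle\varphi(z)\varphi(w)\rangle=-G_\mu(z,w)\sum_{i,j}e^i_j\otimes e^j_i$, each cubic vertex carries a factor $\hbar$ together with a trace over matrix indices, and the external data of $\mathcal{F}_W$ contribute the three elementary functionals attached to the letters of the cyclic word $W$. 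Passing to $\log Z_{X,N,\mu}(W)$ restricts the sum to connected Feynman diagrams by the linked-cluster theorem.

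Second, I would perform the $N$-counting in the double-line (ribbon) notation. Because the propagator carries the index structure $\sum_{i,j}e^i_j\otimes e^j_i$ and each cubic vertex is a single-trace interaction, every Feynman diagram becomes a ribbon graph whose closed index loops each produce a factor $N=\Tr(\mathrm{Id})$. For a connected ribbon diagram with $V$ trivalent vertices, $E$ propagators and $F$ index loops, trivalency gives $3V=2E$, so with $\hbar=N^{-1/2}$ the diagram scales as $\hbar^V N^F=N^{F-V/2}$; since the cyclic word $W$ plays the role of a single boundary, the Euler-characteristic identity forces the leading power of $N$ to be realized exactly by the genus-zero, tree-like diagrams, while internal loops and handles are strictly suppressed. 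This isolates the planar trivalent trees decorated by $W$ as the leading term of $\log Z_{X,N,\mu}(W)$ as $N\to\infty$.

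Third, I would identify the surviving integral. On a planar trivalent tree $T$ attached to $W$, each internal and $S^{\ast}$-decorated edge is assigned a propagator $-G_\mu$ (or its $d^{\mathbb{C}}$-differential at a cubic vertex, using $[\varphi,d^{\mathbb{C}}\varphi]$ and the formula $\langle\varphi\,d^{\mathbb{C}}\varphi\rangle=d^{\mathbb{C}}G_\mu$), and each special edge a holomorphic or anti-holomorphic form; collecting these reproduces precisely the current $\kappa_T$ of \eqref{eq:kappa} up to the orientation sign $\mathrm{Or}_T$ and the constant $c_T$. The combinatorial constant $c_T=(-2)^r\binom{r}{\frac12(r+p-q)}^{-1}$ should emerge from the number of Wick contractions realizing a given $T$ together with the sum over the admissible collections $\mathcal{P}$ of consecutive pairs of $1$-forms in the functional $\mathcal{F}_W$, the two options at each internal vertex corresponding to the choice between $\mathcal{F}_\omega$ and $\mathcal{F}_{(\omega_1,\omega_2)}$. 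Summing over all trees decorated by $W$ then yields $\sum_T p_{T\ast}(\kappa_T)=\mathrm{Cor}_{\mathcal{H}}(W)$ of \eqref{eq:kappa_int}.

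The hard part will be making the last two steps rigorous: since the Feynman integral itself is defined only through its perturbative series, the genus expansion must be set up directly as a formal ribbon-graph sum with controlled $N$-dependence, and one must verify that the naive singularities of the individual diagrams (coming from the non-smoothness of $G_\mu$ and $d^{\mathbb{C}}G_\mu$ along the diagonals) cancel or regularize exactly as in the convergent integrals of Lemma \ref{lem:convergence}. The most delicate bookkeeping is the precise matching of signs, orientations, and the constant $c_T$, which requires tracking the interplay between the cyclic ordering of $W$, the collections $\mathcal{P}$, and the antisymmetrization in the bracket $[\partial\varphi,\bar\partial\varphi]$.
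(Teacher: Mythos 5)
You should first note that the paper itself contains no proof of this statement: it is quoted verbatim from Goncharov \cite{GoncharovHodge1} as background for Appendix~\ref{section:appendix}, so your proposal can only be judged on its own counting --- and there step~2 has a genuine gap. With the action exactly as transcribed (the quadratic form $\frac{1}{2\pi\sqrt{-1}}\int\Tr(\frac12\partial\varphi\wedge\bar\partial\varphi)$ carries no factor of $N$, hence the propagator is $O(1)$) and $\hbar=N^{-1/2}$, the effective 't~Hooft coupling $\hbar^2N$ equals $1$, which is exactly marginal: planar diagrams of \emph{every} loop order contribute at the same power of $N$. Concretely, for a connected planar diagram with $n$ point insertions on the single-trace boundary, $V$ internal trivalent vertices and $E$ propagators (legs included), the cell decomposition of the disk gives $F=1+E-V$ closed index loops, while trivalence gives $2E=3V+n$, so
\begin{equation*}
N^{F}\,\hbar^{V} \;=\; N^{\,1+E-V-\frac{V}{2}} \;=\; N^{\,1+\frac{n}{2}},
\end{equation*}
independent of $V$ and hence of the loop number. (Check on $W=\mathcal{C}(\{a\}\otimes\{b\})$: the tree gives $N^2$, and the planar one-loop bigon gives $\hbar^{2}N^{3}=N^{2}$.) The Euler-characteristic identity you invoke therefore suppresses only handles (a factor $N^{-2g}$) and non-planar attachments --- consistent with the paper's remark that non-planar \emph{trees} have different degree in $N$ --- but it does not suppress internal loops. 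Your assertion that ``internal loops and handles are strictly suppressed'' is false under the stated normalization, and tree-dominance, which is the actual content of the theorem, is exactly what your step~2 fails to deliver.

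Nor can the extra planar loop diagrams be waved away by convergence: they are generically divergent. For instance, assigning the two edges of the planar bigon the propagators $\langle\partial\varphi(x)\,\partial\varphi(y)\rangle\wedge\langle\bar\partial\varphi(x)\,\bar\partial\varphi(y)\rangle$, i.e.\ the $\varrho$-type kernel $\partial_x\partial_y\log E(x,y)$ and its conjugate, produces an $|x-y|^{-4}$ singularity along the diagonal, which is non-integrable on $X\times X$ --- precisely the phenomenon behind the renormalization caveat in the higher-loop part of Appendix~\ref{section:appendix}. To repair the argument you must either (a) work in a normalization in which loops are genuinely subleading --- e.g.\ a quadratic form proportional to $N$, so the propagator is $O(N^{-1})$ and the weight becomes $N^{F-E-V/2}=N^{1-3V/2}$, costing $N^{-3}$ per added loop --- or (b) identify the precise grading used in \cite{GoncharovHodge1} under which the planar loop amplitudes are separated from the tree terms \emph{before} any analytic issues arise, and then prove the tree coefficient is $\mathrm{Cor}_{\mathcal{H}}(W)$. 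Your steps~1 and~3 (linked-cluster reduction to connected diagrams; matching the surviving tree amplitudes, including the constant $c_T$ and orientation signs, against $\kappa_T$ via the Feynman rules of Appendix~\ref{section:appendix}) are the right plan, but as written the central large-$N$ mechanism of the proposal does not isolate the planar trees.
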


\begin{remark}
    For non-planar tree uni-trivalent diagrams, the associated integrals themselves are the same as planar tree diagrams (up to sign) but their degree in $N$ are different.
\end{remark}

\subsection{One loop part}
This section studies one loop part of Goncharov's quantum field theory and we observe that it contains one loop terms of our generalized twistor connection. 

For simplicity, we restrict ourselves to the case that one loop connected trivalent graphs containing no special external vertices and they are one particle irreducible graphs. Let $\Gamma$ be such a trivalent graph. We will assign two $d^{\bC}_v$ to two of three adjacent half-edges associated with an internal vertex $v$ and $G_{\mu}(v,w)$ to each edge. There are two possibilities of assignments of $d^{\bC}_v$ as follows: (a) There is at least one edge both of whose two half-edges are assigned by $d^{\bC}_v$; (b) Every edge is assigned by exactly one $d^{\bC}_v$. In this subsection, we focus on case (b), since it is unclear how case (a) is related to our generalized twistor connection. Now, we explain the integrations associated with $\Gamma$ by assigning each edge with $d^{\bC}G_{\mu}$ can be written as linear combinations of the integrations given as case (b) by example. Consider the trivalent graph with one loop as Figure \ref{fig:A.4.1}. The leftmost graph is a one-loop term in our generalized twistor connection. Note that $X$ is manifold of real dimension 2 and  $2|V^{\mathrm{int}}_{\Gamma}| = |E_{\Gamma}|$. Then, by expanding $d^{\bC}$ as $d^{\bC} = d^{\bC}_x + d^{\bC}_y$ and so on, the associated integrals can be written by the combination given in the right-hand side of Figure \ref{fig:A.4.1} by dimensional reason.

\begin{figure}[h]
\captionsetup{margin=2cm}
\begin{center}
\begin{tikzpicture}
\node at (2,0) {$=$};
\node at (6,0) {$+$};
\begin{scope}
	% \draw (0,0) circle (1.5cm);
	\draw[densely dotted] (0,0) circle (0.8cm);
	\draw[densely dotted] (-1.5,0) -- (-0.8,0);
 \node at (-0.8, 0) {$\bullet$};
 \node at (-1.5, 0) {$\bullet$};
 \foreach \x in {60, -60}{
 \node at (\x:0.8cm) {$\bullet$};
 \node at (\x:1.5cm) {$\bullet$};
 \draw[densely dotted] (\x:0.8cm) --(\x:1.5cm);
 }
\node at (-0.5,0) {$x$};
\node at (0.4,0.4) {$y$};
\node at (0.4,-0.4) {$z$};
\node at  (-1.15,0.4) {$d^{\bC}$};
\node at  (0,1.1) {$d^{\bC}$};
\node at  (0,-1.1) {$d^{\bC}$};
\node at  (1.1,0) {$d^{\bC}$};
\node at  (0.9,1) {$d^{\bC}$};
\node at  (0.9,-0.9) {$d^{\bC}$};
\end{scope}
\begin{scope}[xshift=4cm]
% \draw (0,0) circle (1.5cm);
	\draw[densely dotted] (0,0) circle (0.8cm);
	\draw[densely dotted] (-1.5,0) -- (-0.8,0);
 \node at (-0.8, 0) {$\bullet$};
 \node at (-1.5, 0) {$\bullet$};
 \foreach \x in {60, -60}{
 \node at (\x:0.8cm) {$\bullet$};
 \node at (\x:1.5cm) {$\bullet$};
 \draw[densely dotted] (\x:0.8cm) --(\x:1.5cm);
 }
\node at (-0.5,0) {$x$};
\node at (0.4,0.4) {$y$};
\node at (0.4,-0.4) {$z$};
\node at  (-1.15,0.4) {$d_x^{\bC}$};
\node at  (0,1.1) {$d_x^{\bC}$};
\node at  (0,-1.1) {$d_z^{\bC}$};
\node at  (1.1,0) {$d_y^{\bC}$};
\node at  (0.9,1) {$d_y^{\bC}$};
\node at  (0.9,-0.9) {$d_z^{\bC}$};
\end{scope}
\begin{scope}[xshift=8cm]
	% \draw (0,0) circle (1.5cm);
	\draw[densely dotted] (0,0) circle (0.8cm);
	\draw[densely dotted] (-1.5,0) -- (-0.8,0);
 \node at (-0.8, 0) {$\bullet$};
 \node at (-1.5, 0) {$\bullet$};
 \foreach \x in {60, -60}{
 \node at (\x:0.8cm) {$\bullet$};
 \node at (\x:1.5cm) {$\bullet$};
 \draw[densely dotted] (\x:0.8cm) --(\x:1.5cm);
 }
\node at (-0.5,0) {$x$};
\node at (0.4,0.4) {$y$};
\node at (0.4,-0.4) {$z$};
\node at  (-1.15,0.4) {$d_x^{\bC}$};
\node at  (0,1.1) {$d_y^{\bC}$};
\node at  (0,-1.1) {$d_x^{\bC}$};
\node at  (1.1,0) {$d_z^{\bC}$};
\node at  (0.9,1) {$d_y^{\bC}$};
\node at  (0.9,-0.9) {$d_z^{\bC}$};
\end{scope}
\end{tikzpicture}	
\end{center}
\caption[Example of a trivalent graph whose edges are labeled by $d^{\bC}$]{Example of a trivalent graph whose edges are labeled by $d^{\bC}$ can be decomposed into a combination of the same underlying graphs whose half-edges are labeled by $d_v^{\bC}$.}
\label{fig:A.4.1}
\end{figure}
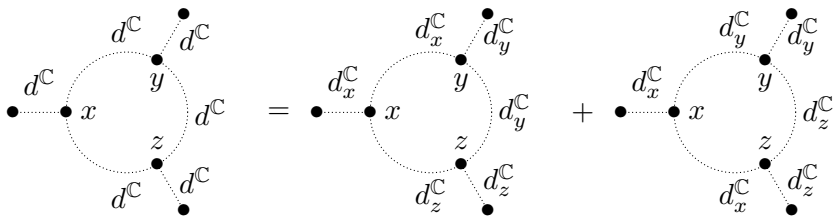

In this way, we can conclude that one-loop terms of our generalized twistor connection can be written as combinations of integrations obtained as the case (b). Therefore, in this sense, our generalized twistor connection is compatible with Goncharov's quantum field theory.

\subsection{Higher loop part}
For the higher loop part, in general, we would need some renormalizations to obtain well-defined integrals such as those developed in \cite{Cos11}. However, at least, one can expect that such integrations partially contain information on those represented by planar uni-trivalent graphs. This section briefly sketches them through examples for the case that $X$ is an elliptic curve. 

This section uses the same notation as Example \ref{example:A.2.1}. Let us consider the uni-trivalent graph $\Gamma$ as shown in Figure \ref{fig:A.5.1} whose two external vertices are labeled by $0, a \in E_{\tau}$. We assign forms to each edge of $\Gamma$ in the following way: To the leftmost external edge labeled by $0$, we assign the Green function $G$ in \eqref{eq:A.2.11}. For each loop, we assign $\eta$ \eqref{eq:A.2.13} to one of the edges consisting of a loop. To remaining edges, we assign $d^{\mathbb{C}} G$ \eqref{eq:A.2.12}.

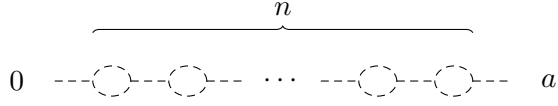
\begin{figure}[h]
\captionsetup{margin=2cm}
 \centering
 \begin{tikzpicture}
 \begin{scope}[yscale=0.8]
 \node at (3, 1.2) {$n$};
\draw[decorate,decoration={brace}] (0.5,0.8) --  (5.5,0.8);
 \draw[densely dashed] (0,0) -- (0.5,0);
 \draw[densely dashed] (0.75,0) circle (0.25cm);
 \draw[densely dashed] (1,0) -- (1.5,0);
 \draw[densely dashed] (1.75,0) circle (0.25cm);
 \draw[densely dashed] (2,0) -- (2.5,0);
 \node at (3,0) {$\cdots$};
 \draw[densely dashed] (3.5,0) -- (4,0);
 \draw[densely dashed] (4.25, 0) circle (0.25cm);
 \draw[densely dashed] (4.5,0) -- (5,0);
 \draw[densely dashed] (5.25, 0) circle (0.25cm);
 \draw[densely dashed] (5.5, 0)-- (6,0);
 \node at (-0.5,0) {$0$};
 \node at (6.5, 0) {$a$};
 \end{scope}
 \end{tikzpicture}
 \caption[Example of uni-trivalent graph with more than one loops]{Example of uni-trivalent graph with more than one loops}
\label{fig:A.5.1}
\end{figure}

Then, the associated integration to $\Gamma$ reduces to that of the uni-trivalent tree graph as in Figure \ref{fig:A.6.1} whose $2n$-external edges originated from the loop edges of $\Gamma$ are labeled by $\eta$.

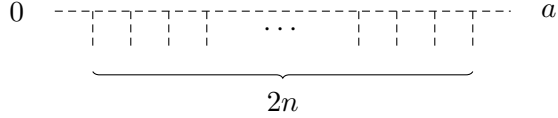
\begin{figure}[h]
\captionsetup{margin=2cm}
 \centering
 \begin{tikzpicture}
 \begin{scope}[yshift=-1.5cm]
 \draw[densely dashed] (0,0) -- (6,0);
 \foreach \x in {0.5, 1, 1.5, 2, 4, 4.5, 5, 5.5}{
 \draw[densely dashed] (\x,0) -- (\x, -0.5);
 }
 \node at (3,-0.25) {$\cdots$};
 \node at (-0.5,0) {$0$};
 \node at (6.5, 0) {$a$};
 \node at (3, -1.2) {$2n$};
\draw[decorate,decoration={brace}] (5.5,-0.8) --  (0.5,-0.8);
 \end{scope}
 \end{tikzpicture}
 \caption[Example of the uni-trivalent graph obtained by cutting along cycle edges of graphs with loops]{Example of the uni-trivalent graph obtained by cutting along cycle edges of graphs with loops}
\label{fig:A.6.1}
\end{figure}

By using the result by Goncharov on tree diagrams (\cite[Proposition 11.8]{GoncharovHodge1}), such associated integrals are given by the following Eisenstein--Kronecker series  up to sign,
\begin{equation}
	G(n+1, \chi_a) := \sum_{\gamma \in (\mathbb{Z} \oplus \mathbb{Z} \tau)\setminus \{(0,0)\}} \frac{\chi_a(\gamma)}{|\gamma|^{2n+2}}. 
\end{equation}

Therefore, one may expect that the asymptotic series of the 2-point correlation function 
\begin{equation}
	Z_{E, N,\vol_E}(\mathcal{C}(\{0\} \otimes \{a\})) = \int \Tr\left( \varphi(0)\varphi(a) \right) e^{\sqrt{-1} S(\varphi)} \mathcal{D} \varphi.
\end{equation}
 contains the generating function  of the Eisenstein--Kronecker $G(n+1, \chi_a)$ for $n \geq 0$. Note that the tree part of $Z_{E, N, \vol_E}(\mathcal{C}(\{0\} \otimes \{a\}))$ does not yield $G(n+1, \chi_a)$ for $n \geq 1$.

% \listoffigures

\bibliographystyle{amsalpha}
\bibliography{refs}

\noindent
Hisatoshi Kodani\\
Institute of Mathematics for Industry,
Kyushu University, 
744, Motooka, Nishi-ku, Fukuoka, 819-0395, Japan\\
Email address: kodani@imi.kyushu-u.ac.jp

\ 

\noindent
Yuji Terashima\\ 
Graduate School of Science, 
Tohoku University, 
6-3, Aoba, Aramaki-aza, Aoba-ku, Sendai, 980-8578, Japan\\
Email address: yujiterashima@tohoku.ac.jp

\end{document}